\newtheorem{thm}{Theorem}[section]
\newtheorem{prop}[thm]{Proposition}
\newtheorem{lem}[thm]{Lemma}
\newtheorem{cor}[thm]{Corollary}
\theoremstyle{definition}
\newtheorem{defn}[thm]{Definition}
\theoremstyle{remark}
\newtheorem{remk}[thm]{Remark}
\newtheorem{remks}[thm]{Remarks}
\newtheorem{exm}[thm]{Example}
\newtheorem{exms}[thm]{Examples}
\newtheorem{notat}[thm]{Notation}
\numberwithin{equation}{section}
\newcommand{\thmref}{Theorem~\ref}
\newcommand{\propref}{Proposition~\ref}
\newcommand{\corref}{Corollary~\ref}
\newcommand{\lemref}{Lemma~\ref}
\newcommand{\sW}{{\mathcal W}}
\newcommand{\A}{{\mathbb A}}
\newcommand{\G}{{\mathbb G}}
\renewcommand{\P}{{\mathbb P}}
\newcommand{\W}{{\mathbb W}}
\newcommand{\CH}{{\rm CH}}
\newcommand{\inj}{\hookrightarrow}
\newcommand{\codim}{{\rm codim}}
\newcommand{\Spec}{{\rm Spec \,}}
\newcommand{\Sch}{{\operatorname{\mathbf{Sch}}}}
\newcommand{\op}{{\text{\rm op}}}
\newcommand{\Sm}{{\mathbf{Sm}}}
\newcommand{\SmProj}{{\mathbf{SmProj}}}
\newcommand{\SmAff}{{\mathbf{SmAff}}}
\newcommand{\SmLoc}{{\mathbf{SmLoc}}}
\newcommand{\ds}{{/\kern-3pt/}}
\newcommand{\ess}{\text{\rm{ess}}}
\newcommand{\Supp}{{\operatorname{Supp}}}
\newcommand{\colim}{\mathop{\text{\rm colim}}}
\newcommand{\sgn}{{\operatorname{\rm sgn}}}
\newcommand{\TZ}{{\operatorname{Tz}}}
\renewcommand{\TH}{{\operatorname{TCH}}}
\newcommand{\un}{\underline}
\newcommand{\ov}{\overline}
\newcommand{\dgn}{{\operatorname{degn}}}
\renewcommand{\dim}{\text{\rm dim}}
\newcommand{\tuborg}{\left\{\begin{array}{ll}}
\newcommand{\sluttuborg}{\end{array}\right.}
\newcommand{\tch}{{\mathcal{TCH}}}
\begin{document}
\title{On additive higher Chow groups of affine schemes}
\author{Amalendu Krishna and Jinhyun Park}
\address{School of Mathematics, Tata Institute of Fundamental Research, 1 Homi Bhabha Road, Colaba, Mumbai, India}
\email{amal@math.tifr.res.in}
\address{Department of Mathematical Sciences, KAIST, 291 Daehak-ro Yuseong-gu, Daejeon, 34141, Republic of Korea (South)}
\email{jinhyun@mathsci.kaist.ac.kr; jinhyun@kaist.edu}


\keywords{algebraic cycle, additive higher Chow group, Witt vectors, de Rham-Witt complex}

\subjclass[2010]{Primary 14C25; Secondary 13F35, 19E15}
\begin{abstract}We show that the multivariate additive higher Chow groups of a smooth affine $k$-scheme $\Spec (R)$ essentially of finite type over a perfect field $k$ of characteristic $\not = 2$ form a differential graded module over the big de Rham-Witt complex $\W_m\Omega^{\bullet}_{R}$. In the univariate case, we show that additive higher Chow groups of $\Spec (R)$ form a Witt-complex over $R$. We use these structures to prove an \'etale descent for multivariate additive higher Chow groups.
\end{abstract}

\maketitle


\section{Introduction}
The additive higher Chow groups $\TH^q (X, n;m)$ emerged originally in \cite{BE2} in part as an attempt to understand certain relative higher algebraic $K$-groups of schemes in terms of algebraic cycles. Since then, several papers \cite{KL}, \cite{KP}, \cite{KP Jussieu}, \cite{KP2},  \cite{P2}, \cite{P1}, \cite{R} have studied various aspects of these groups. But lack of a suitable moving lemma for smooth \emph{affine} varieties has been a hindrance in studies of their local behaviors. Its projective sibling was known by \cite{KP}. During the period of stagnation, the subject has evolved into the notion of `cycles with modulus' $\CH^q (X|D, n)$ by Binda-Kerz-Saito in \cite{BS}, \cite{KS} associated to pairs $(X,D)$ of schemes and effective Cartier divisors $D$, setting a more flexible ground, while this desired moving lemma for the affine case was obtained by W. Kai \cite{Kai} (See Theorem \ref{thm:moving affine}).

The above developments now propel the authors to continue their program of realizing the relative $K$-theory $K_n (X \times \Spec k[t]/(t^{m+1}), (t))$ in terms of additive higher Chow groups. More specifically, one of the aims in the program considered in this paper is to understand via additive higher Chow groups, the part of the above relative $K$-groups which was proven in \cite{Bloch crys} to give the crystalline cohomology. This part turned out to be isomorphic to the de Rham-Witt complexes as seen in \cite{Illusie}. This article is the first of the authors' papers that relate the additive higher Chow groups to the big de Rham-Witt complexes $\mathbb{W}_m \Omega _R ^{\bullet}$ of \cite{HeMa} and to the crystalline cohomology theory. 
This gives a motivic description of the latter two objects.

While the general notion of cycles with modulus for $(X,D)$ provides a wider picture, the additive higher Chow groups still have a non-trivial operation not shared by the general case. One such is an analogue of the Pontryagin product on homology groups of Lie groups, which turns the additive higher Chow groups into a differential graded algebra (DGA). This product is induced by the structure of algebraic groups on $\A^1$ and $\G_m$ and their action on $X \times \A^r=:X[r]$ for $r \geq 1$.

The usefulness of such a product was already observed in the earliest papers on additive 0-cycles by Bloch-Esnault \cite{BE2} and R\"ulling \cite{R}. This product on higher dimensional additive higher Chow cycles was given in \cite{KP2} for smooth projective varieties. In \S \ref{sec:DGA} of this paper, we extend this product structure in two directions: (1) toward multivariate additive higher Chow groups and (2) on smooth affine varieties. In doing so, we generalize some of the necessary tools, such as the following normalization theorem, proven as Theorem \ref{thm:normalization}. 
Necessary definitions are recalled in \S \ref{sec:definitions}.

\begin{thm}
Let $X$ be a smooth scheme which is either quasi-projective or essentially of finite type over a field $k$. Let $D$ be an effective Cartier divisor on $X$. Then each cycle class in $ \CH^q (X|D, n)$ has a representative, all of whose codimension $1$ faces are trivial.
\end{thm}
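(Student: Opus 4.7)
My plan is to adapt the classical normalization theorem for Bloch's cubical higher Chow complex (due to Bloch and Levine) to the cycles-with-modulus setting. The new technical input is checking that all auxiliary cycles produced in the classical combinatorial argument continue to satisfy the modulus condition.

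\medskip

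The first step is to show that the degeneracy maps preserve modulus. For $1 \le j \le n$, let $\pi_j \colon \square^n \to \square^{n-1}$ be the projection omitting the $j$-th coordinate; it extends to the natural compactification $\overline{\pi}_j \colon (\P^1)^n \to (\P^1)^{n-1}$. I would verify that the pullback $\pi_j^* := (\id_X \times \pi_j)^*$ sends $z^q(X|D, n-1)$ into $z^q(X|D, n)$. Admissibility with respect to faces of $\square^n$ is a direct cubical identity. For the modulus condition, one uses that the modulus divisor on $X \times (\P^1)^n$ is, up to effective differences, a sum of pullbacks of modulus divisors on the factors along coordinate projections; flatness of $\pi_j$ then yields the required containment of divisors on closures.

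\medskip

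With degeneracies in hand, I follow the standard pattern: let $\sD_\bullet \subseteq z^q(X|D, \bullet)$ denote the subcomplex generated by the images of the $\pi_j^*$, and construct a contracting homotopy $h \colon \sD_n \to \sD_{n+1}$ with $\partial h + h\partial = \id_{\sD_\bullet}$ using the classical cubical formula built from iterated degeneracies and face maps. Modulus compatibility of each intermediate cycle reduces to the preceding step. Acyclicity of $\sD_\bullet$ then implies that every class in $\CH^q(X|D, n) = H_n(z^q(X|D, \bullet))$ has a representative in the normalized subcomplex of cycles with trivial codimension one faces. An explicit construction of such a representative proceeds by iterated subtraction of degenerate cycles of the form $\pi_j^*(\partial_j^\epsilon z)$, processed in a carefully chosen order (e.g.\ lexicographic in $(j,\epsilon)$) so that each subtraction kills a face without reintroducing non-triviality in previously killed ones; the cubical face-degeneracy identities guarantee that this terminates.

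\medskip

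The main obstacle is verifying that the classical contracting homotopy preserves both admissibility and the modulus condition at every step. Modulus-preservation for each elementary operation reduces to the first step, via the fact that every cycle produced by the formula is (a sum of) cycles obtained by alternating applications of $\pi_j^*$ and $\partial_j^\epsilon$ to admissible cycles. Proper intersection of the intermediate cycles with faces, however, is delicate for cycles on a smooth affine $X$: it is precisely here that Kai's moving lemma~\ref{thm:moving affine} enters, allowing one to first place $z$ in general position while respecting the modulus constraint, so that the iterated face-and-degeneracy operations keep one inside $z^q(X|D, \bullet)$. Finally, the essentially-of-finite-type case reduces to the quasi-projective case by a standard limit argument passing to smooth quasi-projective approximations.
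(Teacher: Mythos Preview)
Your approach has a genuine gap. Recall that $z^q(X|D,\bullet)$ is already defined as the \emph{non-degenerate} quotient $\un{z}^q(X|D,\bullet)/\un{z}^q(X|D,\bullet)_{\dgn}$, so the degenerate subcomplex $\sD_\bullet$ you build from the $\pi_j^*$ is zero in $z^q$, and its acyclicity in $\un{z}^q$ only recovers the tautological quasi-isomorphism $\un{z}^q \to z^q$. In particular, subtracting cycles of the form $\pi_j^*(\partial_j^\epsilon z)$ does not change the class in $z^q$ at all, and hence cannot alter the individual faces $\partial_i^\epsilon$ of that class. The underlying obstruction is structural: a cubical object has a single degeneracy $s_j$ per coordinate but two faces $\partial_j^0,\partial_j^\infty$ with $\partial_j^0 s_j=\partial_j^\infty s_j=\mathrm{id}$, so any degenerate correction shifts both $j$-th faces by the same amount, and there is no Dold--Kan-type splitting $z^q \cong z_N^q \oplus (\text{degenerate})$ in the cubical setting.

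What the paper does is supply the missing structure: it shows that $(\un{n} \mapsto z^q(X|D,n))$ is an \emph{extended} cubical abelian group, meaning that the multiplication $\mu:\square^2\to\square$ (in suitable coordinates $(y_1,y_2)\mapsto y_1 y_2$) induces a well-defined pullback on admissible cycles with modulus, and then invokes \cite[Lemma~1.6]{LevineSM}, which needs precisely this extra map, to conclude that $z_N^q\hookrightarrow z^q$ is a quasi-isomorphism. The substantive work is the modulus check for $\mu^*$: since $\mu$ does not extend to a morphism $(\P^1)^2\to\P^1$, one cannot simply pull back divisors on the compactification, and the paper argues via an explicit correspondence $W_n$ (\lemref{lem:mod norm}). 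By contrast, modulus-preservation for the coordinate projections $\pi_j$ is immediate and is already part of the known cubical structure. Finally, Kai's moving lemma plays no role here; it concerns moving cycles into good position relative to fixed subvarieties of $X$, which is irrelevant to normalization.
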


The above theorem for ordinary higher Chow groups was proven by Bloch and has been a useful tool in dealing with algebraic cycles. In this paper, we use the above theorem to construct the following structure of differential graded algebra and differential graded modules on the multivariate additive higher Chow groups, where Theorem \ref{thm:Witt intro} is proven in Theorems \ref{thm:Witt over k}, \ref{thm:sm Witt}, and \ref{cor:sm Witt}, while Theorem \ref{thm:Main-multi} is proven in Theorem \ref{thm:DGA}.

\begin{thm}\label{thm:Witt intro}
Let $X$ be a smooth scheme which is either affine essentially of finite type or projective over a perfect field $k$ of characteristic $\not = 2$
\begin{enumerate}
\item The additive higher Chow groups $\{ \TH^q(X, n; m) \}_{q, n, m \in \mathbb{N} }$  has a functorial structure of a restricted Witt-complex over $k$.
\item If $X=\Spec (R)$ is affine, then $\{ \TH^q(X, n; m) \}_{q, n, m \in \mathbb{N} }$  has a structure of a restricted Witt-complex over $R$.
\item For $X$ as in $(2)$, there is a natural map of restricted Witt-complexes $\tau_{n,m} ^R: \mathbb{W}_m \Omega_R ^{n-1} \to \TH^n (R, n;m)$.
\end{enumerate}
\end{thm}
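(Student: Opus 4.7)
The plan is to equip the bigraded object $\{\TH^q(X,n;m)\}_{q,n,m}$ with each of the operations in the definition of a restricted Witt-complex of Hesselholt--Madsen \cite{HeMa}, verify the Witt-complex axioms, and then deduce part~(3) formally from the universal property of $\mathbb{W}_m\Omega_R^{\bullet}$. Throughout, the normalization theorem above and Kai's affine moving lemma (Theorem \ref{thm:moving affine}) are what make the cycle-level constructions descend to well-defined operations on Chow groups in the smooth affine setting.

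The first step is to construct the graded-commutative DGA structure. The Pontryagin-type product extends the construction of \cite{KP2} via the multiplication $\mu:\A^1\times\A^1\to\A^1$ on the modulus factor combined with an external product of cycles; the normalization theorem is essential for verifying that $\mu^{*}$ of the modulus divisor remains admissible. The differential $d$ is induced by a distinguished codimension one face map in the cubical additive complex. Frobenius and Verschiebung are defined at the cycle level by pushforward and pullback along the finite flat map $\rho_r:\A^1\to\A^1$, $t\mapsto t^r$, applied to the modulus coordinate, yielding $F_r:\TH^q(X,n;rm+r-1)\to\TH^q(X,n;m)$ and $V_r:\TH^q(X,n;m)\to\TH^q(X,n;rm+r-1)$, while the restriction $R:\TH^q(X,n;m)\to\TH^q(X,n;m-1)$ is pullback along the obvious closed immersion of moduli. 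The axioms --- graded commutativity, the Leibniz rule, $d^2=0$, $F_rV_r=r$, $F_r\,dV_r = d + (r-1)\,d\log[-1]$, the projection formula $F_r(a\cdot V_r b) = F_r(a)\cdot b$, and compatibility with $R$ --- are then checked by cycle computations pulled back from universal identities on $\A^1$ or $\A^1\times\A^1$, with normalization and Kai's moving lemma guaranteeing that the intersections in question are proper.

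For part~(2), I would extend R\"ulling's explicit formula \cite{R} from $0$-cycles over a field to produce a ring homomorphism $W_m(R)\to\TH^1(R,1;m)$ compatible with $F_r$, $V_r$, and $R$, upgrading the Witt-complex of~(1) to one over $R$. Part~(3) is then formal: the universal property of the big de Rham--Witt complex \cite{HeMa} applied to the restricted Witt-complex $\{\TH^{q+1}(R,q+1;m)\}_{q,m}$ over $R$ produces a unique morphism of Witt-complexes $\tau_{n,m}^R:\mathbb{W}_m\Omega_R^{n-1}\to\TH^n(R,n;m)$ extending the map in degree zero, and the degree shift by one is forced by the identification in degree zero.

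The principal obstacle will be the cycle-level verifications on the smooth affine $X$: one must simultaneously move cycles into admissible position, preserve the modulus condition under pullbacks by $\mu$, $\rho_r$, and their iterates, and deduce each Witt-complex axiom as an identity of cycle classes rather than merely up to uncontrollable boundaries. This is precisely where the combination of the normalization theorem and Kai's affine moving lemma is indispensable; once these are in force, the remaining work is a systematic but bookkeeping-heavy translation of each axiom into a proper intersection on $\A^r\times X\times \sq^n$ with the imposed modulus along the $\A^r$ factor.
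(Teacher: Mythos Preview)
Your overall architecture is right and matches the paper's: build the DGA via the Pontryagin product, define $F_r$, $V_r$, $\mathfrak{R}$, verify the Witt-complex axioms (using the moving lemma and normalization to make the cycle-level identities meaningful), construct $\tau_R:\W_m(R)\to\TH^1(R,1;m)$ from R\"ulling's formula and show it is a ring homomorphism compatible with $F_r,V_r$, and then invoke the universal property of $\W_m\Omega_R^{\bullet}$ for part~(3).

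However, there is a genuine gap: your description of the differential is wrong. You say ``the differential $d$ is induced by a distinguished codimension one face map in the cubical additive complex.'' The face maps $\partial_i^{\epsilon}$ are the boundary operators of the chain complex; they \emph{lower} $n$ and cannot give a degree-raising derivation. The actual differential $\delta:\TH^q(X,n;m)\to\TH^{q+1}(X,n+1;m)$ in the paper (\S\ref{sec:differential}) is a genuinely new geometric construction: it is induced by the closed immersion
\[
\delta_n:(t,y_1,\dots,y_n)\mapsto(t,\tfrac{1}{t},y_1,\dots,y_n)
\]
from $X\times\G_m^{\times}\times\square^n$ into $X\times\G_m\times\square^{n+1}$, which inserts a new cube coordinate equal to $1/t$. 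Proving that this preserves the modulus, that $\delta^2=0$ (which uses the normalization theorem plus $2\in\W_{m}(k)^{\times}$, hence the hypothesis ${\rm char}(k)\neq 2$), and that the Leibniz rule holds with respect to $\wedge_X$ (via an explicit homotopy cycle, \S\ref{sec:Leibniz}) is the substantive content here. Without this construction you have no DGA and no Witt-complex.

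Two smaller points: the restriction $\mathfrak{R}$ is not a pullback along a closed immersion but simply the tautological inclusion $\TZ^q(X,\bullet;m+1)\hookrightarrow\TZ^q(X,\bullet;m)$ of cycles satisfying a stronger modulus condition; and the relevant axiom here is $F_r\,dV_r=d$ (as in \cite{HeMa}), not the version with $d\log[-1]$, since we work over a $\Z_{(p)}$-algebra with $p$ odd.
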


\begin{thm}\label{thm:Main-multi}
Let $r \geq 1$. For a smooth scheme $X$ which is either affine essentially of finite type or projective over a perfect field $k$ of characteristic $\not = 2$, the multivariate additive higher Chow groups $\{\CH^q(X[r]|D_{\un{m}},n)\}_{q, n \ge 0}$ with  modulus $\un{m}= (m_1, \cdots, m_r)$, where $m_i \geq 1$, form a differential graded module over the DGA $\{ \TH^q(X, n; |\un{m}|-1) \}_{q, n \ge 1}$, where $| \un{m}| = \sum_{i=1} ^r m_i$. In particular, each $\CH^q(X[r]|D_{\un{m}},n)$ is a $\W_{(|\un{m}|-1)}(R)$-module, when $X= \Spec (R)$ is affine.
\end{thm}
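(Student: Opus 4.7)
The plan is to construct the required action by adapting the Pontryagin-type external product used in Section \ref{sec:DGA} to build the DGA structure on $\{\TH^q(X,n;|\un{m}|-1)\}$. The action should take the form
$$\star : \TH^q(X,n;|\un{m}|-1) \otimes \CH^{q'}(X[r]|D_{\un{m}},n') \longrightarrow \CH^{q+q'}(X[r]|D_{\un{m}},n+n'-1),$$
obtained by external product, pullback along the diagonal $\Delta_X : X \to X \times X$, and transfer along the diagonal translation map $\mu : \A^1 \times \A^r \to \A^r$, $(t, t_1, \ldots, t_r) \mapsto (t+t_1, \ldots, t+t_r)$, with one $\sq$-coordinate absorbed into the group action (this is the source of the $n+n'-1$).

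In more detail, I would first invoke the normalization theorem (Theorem \ref{thm:normalization}) to choose representatives $\alpha, \beta$ whose codimension-one faces all vanish. This simultaneously simplifies the boundary calculation and controls the external product $\alpha \boxtimes \beta$ on $X[1] \times X[r] \times \sq^{n+n'}$. Pulling back by $\Delta_X$ (after reordering factors) yields a cycle on $X \times \A^1 \times \A^r \times \sq^{n+n'}$, and pushing forward by $\mu$ combined with a linear change of variables on the cubes produces a cycle on $X[r] \times \sq^{n+n'-1}$. Well-definedness at the cycle level, including handling the non-properness of $\mu$, rests on the general-position arguments provided by Kai's moving lemma (Theorem \ref{thm:moving affine}) in the affine case and its projective analogue from \cite{KP}.

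The crux is the modulus verification: if $\alpha$ has modulus $|\un{m}|$ at $\{t=0\}$ on $X[1]$ and $\beta$ has modulus $\un{m}$ on $X[r]$, the output $\alpha \star \beta$ must have modulus $\un{m}$ on $X[r]$. This is a divisorial inequality that exploits the identity $|\un{m}| = \sum_i m_i$: under the diagonal translation $\mu$, the pullback of $\sum m_i\{s_i=0\}$ is $\sum m_i\{t+t_i=0\}$, and the combined modulus $|\un{m}|\{t=0\} + \sum m_i\{t_i=0\}$ on $X[1]\times X[r]$ dominates this pullback in a suitable sense on a normal compactification. The argument is parallel to the univariate Pontryagin modulus computation of \cite{KP2}, but is more delicate because $r$ divisors must be controlled simultaneously and the balance between $|\un{m}|$ and the individual $m_i$ must be used in an essential way.

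With the action defined, the DG-module axioms --- associativity and the Leibniz rule with respect to $\partial$ --- follow by formal arguments patterned on the DGA case in Section \ref{sec:DGA}: associativity reduces to that of $\boxtimes$ together with associativity of $\mu$ as a group action, and the Leibniz rule uses the normalization theorem to discard the mixed boundary contributions. Finally, specializing to $(q,n) = (1,1)$ and applying the isomorphism $\TH^1(X,1;|\un{m}|-1) \cong \W_{|\un{m}|-1}(R)$ from Theorem \ref{thm:Witt intro}(3) yields the Witt-ring module structure on $\CH^q(X[r]|D_{\un{m}},n)$. The principal obstacle I expect is the multi-divisor modulus inequality in the third paragraph; a secondary subtlety is the non-properness of $\mu$, to be handled by restricting to cycles in the admissible subgroups cut out by Theorem \ref{thm:moving affine}.
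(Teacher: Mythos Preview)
Your proposal has a fundamental error in the choice of the map $\mu$. You propose the \emph{additive} translation $(t, t_1, \ldots, t_r) \mapsto (t+t_1, \ldots, t+t_r)$, but the paper uses the \emph{multiplicative} map $(t, t_1, \ldots, t_r) \mapsto (tt_1, \ldots, tt_r)$ (Definition~\ref{defn:mu}). This difference is decisive for the modulus verification. With multiplication one has the exact divisorial identity $\mu^*\{s_i=0\} = \{t=0\} + \{t_i=0\}$, so $\mu^*\bigl(\sum_i m_i\{s_i=0\}\bigr) = |\un{m}|\,\{t=0\} + \sum_i m_i\{t_i=0\}$, which is precisely the combined modulus on $V_1 \times V_2$; this is Lemma~\ref{lem:mu mod} and is the reason $|\un{m}|$ appears. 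With your additive map, $\mu^*\{s_i=0\} = \{t+t_i=0\}$ is a prime divisor unrelated to $\{t=0\}$ and $\{t_i=0\}$, and the domination you assert fails: if $V_1$ meets $\{t=a\}$ and $V_2$ meets $\{t_i=-a\}$ for some $a \ne 0$, then $V_1 \times V_2$ meets $\{t+t_i=0\}$ even though both factors satisfy their modulus conditions, so the image need not avoid $\{s_i=0\}$ at all. No compactification argument repairs this, since the three divisors remain distinct there.

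Two further corrections. First, the non-properness of $\mu$ is not handled via the moving lemma: Proposition~\ref{prop:mu finite} shows directly that $\mu|_{V_1 \times V_2}$ is \emph{finite} whenever $V_1, V_2$ satisfy their modulus conditions, by analyzing the closure of the graph of $\mu$ in $X \times \P^1 \times (\P^1)^r \times (\P^1)^r$ and checking that over $\A^r$ it misses the boundary strata; the moving lemmas (Theorems~\ref{thm:moving affine}, \ref{thm:moving proj}) enter only to define $\Delta_X^*$. Second, no $\square$-coordinate is ``absorbed'': the shift to $n+n'-1$ is purely the notational shift $\TH^q(X,n;m) = \CH^q(X[1]|D_{m+1}, n-1)$, while the codimension drop (to $q+q'-1$, not $q+q'$ as you wrote) comes from collapsing the extra $\A^1$ under $\mu$. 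For the $\W_{(|\un{m}|-1)}(R)$-module structure, the paper does not use an isomorphism $\TH^1(R,1;|\un{m}|-1) \cong \W_{|\un{m}|-1}(R)$ but only the ring homomorphism $\tau_R$ of Proposition~\ref{prop:smooth tau R}, which suffices.
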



The above structures on the univariate and multivariate additive higher Chow groups suggest an expectation that these groups may describe the algebraic $K$-theory relative to nilpotent thickenings of the coordinate axes in an affine space over a smooth scheme. The calculations of such relative $K$-theory by Hesselholt in \cite{Hess-1} and \cite{Hess-2} show that any potential motivic cohomology which describes the above relative $K$-theory may have such a structure.

As part of our program of connecting the additive higher Chow groups with the relative $K$-theory, we show in \cite{KP crys} that the above map $\tau_{n,m} ^R$ is an isomorphism when $X$ is semi-local in addition, and we show how one deduces crystalline cohomology from additive higher Chow groups. The results of this paper form a crucial part in the process. 

Recall that the higher Chow groups of Bloch and algebraic $K$-theory do not satisfy {\'e}tale descent with integral coefficients.
As an application of \thmref{thm:Main-multi}, we show that the {\'e}tale descent is actually true for the multivariate additive higher Chow groups in the following setting:

\begin{thm}\label{thm:descent}
Let $r \geq 1$ and let $X$ be a smooth scheme which is either affine essentially of finite type or projective over a perfect field $k$ of characteristic $\not = 2$. Let $G$ be a finite group of order prime to ${\rm char}(k)$, acting freely on $X$ with the quotient $f: X \to X/G$. Then for all $q, n \ge 0$ and and $\un{m} = (m_1, \cdots, m_r)$ with $m_i \ge 1$ for $1 \le i \le r$, the pull-back map $f^*$ induces an isomorphism
\[
\CH^q(X/G[r]|D_{\un{m}},n) \xrightarrow{\simeq} {\rm H}^0(G, \CH^q(X[r]|D_{\un{m}},n)). 
\]

\end{thm}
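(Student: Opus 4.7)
The plan is the classical transfer argument: construct a pullback $f^*$ and a pushforward $f_*$ on the multivariate additive Chow groups with modulus, establish the identities $f_* f^* = |G|\cdot \mathrm{id}$ and $f^* f_* = \sum_{g} g^*$, and then use the module structure from \thmref{thm:Main-multi} to invert $|G|$. Since $G$ acts freely on the smooth scheme $X$, the quotient $X/G$ is a smooth $k$-scheme and $f : X \to X/G$ is finite \'etale of degree $|G|$. Taking the product with the identity on $\A^r$ produces a finite \'etale map $\tilde f : X[r] \to (X/G)[r]$ under which $D_{\un m}$ pulls back to $D_{\un m}$, since this divisor is supported only on the $\A^r$-factors. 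Flat pullback defines $f^*$ on cycle groups with modulus, while properness of $\tilde f$ defines a pushforward $f_*$; both preserve admissibility and the modulus condition, as \'etale pullback of Cartier divisors is compatible with normalizations of cycle closures. Kai's affine moving lemma (\thmref{thm:moving affine}) would be invoked whenever cycles must be placed in sufficiently general position for these operations to be defined on the nose.

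With $f^*$ and $f_*$ in hand, the identities
\[
f_* \circ f^* = |G|\cdot\mathrm{id}, \qquad f^* \circ f_* = \sum_{g \in G} g^*
\]
follow from $\deg \tilde f = |G|$ (via the projection formula) and from the Cartesian decomposition $X \times_{X/G} X \cong \coprod_{g \in G} X$ provided by the free $G$-action. To invert $|G|$, note by \thmref{thm:Main-multi} that $\CH^q(X[r]|D_{\un m}, n)$ is a module over the DGA $\TH^*(X, *; |\un m| - 1)$; through the structure map $k \to \mathcal O(X)$ together with \thmref{thm:Witt intro}(1), this restricts to a natural $\W_{|\un m|-1}(k)$-action. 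Since $\W_{|\un m|-1}(k)$ is a local ring with maximal ideal $(p)$, where $p = \mathrm{char}(k)$, and $|G|$ is prime to $p$, the integer $|G|$ is a unit in $\W_{|\un m|-1}(k)$ and hence acts invertibly on both $\CH^q(X[r]|D_{\un m}, n)$ and $\CH^q((X/G)[r]|D_{\un m}, n)$.

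The proof then concludes by the standard Maschke-type argument: the identity $f_* f^* = |G|\cdot\mathrm{id}$ combined with the invertibility of $|G|$ yields injectivity of $f^*$; the relation $f \circ g = f$ for each $g \in G$ places the image of $f^*$ inside $H^0(G, \CH^q(X[r]|D_{\un m}, n))$; and for any $G$-invariant class $\alpha$ the element $|G|^{-1}\, f_*(\alpha)$ is a preimage, as $f^*\bigl(|G|^{-1} f_* \alpha\bigr) = |G|^{-1}\sum_{g} g^*(\alpha) = \alpha$. The main technical obstacle expected in the full write-up is verifying that the proper pushforward along $\tilde f$ is well-defined on cycles with modulus — namely, that the modulus inequality on the normalization of the closure of $f_* Z$ follows from that of $Z$, and that $f_*$ commutes with the cubical face operators. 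This should ultimately be a local calculation, much simplified by the \'etaleness of $\tilde f$, but it must be carried out carefully on the normalizations, where the modulus condition is defined.
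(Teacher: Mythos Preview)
Your approach is the same as the paper's: the transfer argument via $f_* f^* = |G|\cdot\mathrm{id}$ and $f^* f_* = \sum_{g} g^*$, followed by inverting $|G|$ using the $\W_{|\un m|-1}(k)$-module structure. Two small corrections are needed, though neither is a structural gap. First, the big Witt ring $\W_{|\un m|-1}(k)$ is \emph{not} local (for a perfect field of characteristic $p$ it is a finite product of $p$-typical Witt rings, and in characteristic $0$ your description makes no sense); the conclusion that $|G|$ is a unit is still correct, but you should justify it differently --- the paper uses that $|G| \in k^\times$ and the multiplicativity of the Teichm\"uller lift. Second, the case $|\un m| = 1$ (i.e., $r=1$, $m_1=1$) must be disposed of separately, since then $\W_0(k)$ is trivial and the module argument is vacuous; the paper invokes \corref{cor:mod-1} to show both sides vanish. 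Your worry about the moving lemma is a red herring here: flat pullback and finite pushforward are defined directly on the cycle complexes with modulus, and the paper simply cites \cite[Theorem~3.12]{KPv} for the identity $f_* f^* = |G|$.
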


Note that the quotient $X/G$ exists under the hypothesis on $X$. Since the corresponding descent is not yet known for the relative $K$-theory of nilpotent thickenings of the coordinate axes in an affine space over a smooth scheme, the above theorem suggests that this descent could be indeed true for the relative $K$-theory. 

\bigskip

\textbf{Conventions.} In this paper, $k$ will denote the base field which will be assumed to be perfect after \S\ref{sec:moving}. A $k$-scheme is a separated scheme of finite type over $k$. A $k$-variety is a reduced $k$-scheme. The product $X \times Y$ means usually $X \times _k Y$, unless said otherwise. We let $\Sch_k$ be the category of $k$-schemes, $\Sm_k$ of smooth $k$-schemes, and $\SmAff_k$ of smooth affine $k$-schemes. A scheme essentially of finite type is a scheme obtained by localizing at a finite subset (including $\emptyset$) of a finite type $k$-scheme. For $\mathcal{C} =  \Sch_k,\Sm_k, \SmAff_k$, we let $\mathcal{C}^{\ess}$ be the extension of the category $\mathcal{C}$ obtained by localizing at a finite subset (including $\emptyset$) of objects in $\mathcal{C}$. We let $\SmLoc_k$ be the category of smooth semi-local $k$-schemes essentially of finite type over $k$. So, $\SmAff_k ^{\ess} = \SmAff_k \cup \SmLoc_k$ for the objects. When we say a semi-local $k$-scheme, we always mean one that is essentially of finite type over $k$. Let $\SmProj_k$ be the category of smooth projective $k$-schemes.

\section{Recollection of  basic definitions}
\label{sec:definitions}

For $\P^1  = {\rm Proj}_k(k[s_0, s_1])$, we let $y = {s_1}/{s_0}$ its coordinate. Let $\square:= \P^1 \setminus \{1\}$.  For $n \geq 1$, let $(y_1, \cdots, y_n) \in \square^n$ be the coordinates. A face $F\subset \square^n$ means a closed subscheme defined by the set of equations of the form $\{y_{i_1} = \epsilon_{1}, \cdots , y_{i_s} = \epsilon_{s}\}$ for an increasing sequence $\{ i_j | 1 \leq j \leq s \} \subset \{ 1, \cdots, n \}$ and $\epsilon_j \in \{ 0, \infty \}$. We allow $s=0$, in which case $F = \square^n$. Let $\ov{\square}:= \mathbb{P}^1$. A face of $\ov{\square}^n$ is the closure of a face in $\square^n$. For $1 \leq i \leq n$, let $F_{n,i} ^1 \subset \ov{\square}^n$ be the closed subscheme given by $\{ y_i = 1 \}$. Let $F_n ^1:= \sum_{i=1} ^n F_{n,i} ^1$, which is the cycle associated to the closed subscheme $\ov{\square}^n \setminus \square^n$. Let $\square^0 = \ov{\square}^0 := \Spec (k)$. Let $\iota_{n,i, \epsilon}: \square^{n-1} \inj \square^{n}$ be the inclusion $(y_1, \cdots, y_{n-1} ) \mapsto (y_1, \cdots, y_{i-1}, \epsilon, y_i, \cdots, y_{n-1})$.

\subsection{Cycles with modulus}\label{sect:CWM}

Let $X \in \Sch_k ^{\ess}$. Recall (\cite[\S 2]{KPv}) that for effective Cartier divisors $D_1$ and $D_2$ on $X$, we say $D_1 \leq D_2$ if $D_1 + D= D_2$ for some effective Cartier divisor $D$ on $X$. A \emph{scheme with an effective divisor} (sed) is a pair $(X,D)$, where $X \in \Sch_k ^\ess$ and $D$ an effective Cartier divisor. A morphism $f: (Y, E) \to (X, D)$ of seds is a morphism $f: Y \to X$ in $\Sch_k^\ess$ such that $f^*(D)$ is defined as a Cartier divisor on $Y$ and $f^* (D) \leq E$. In particular, $f^{-1} (D) \subset E$. If $f: Y \to X$ is a morphism of $k$-schemes, and $(X, D)$ is a sed such that $f^{-1} (D ) = \emptyset$, then $f: (Y , \emptyset) \to (X, D)$ is a morphism of seds.

\begin{defn}[{\cite{BS}, \cite{KS}}]\label{defn:modulus}
Let $(X,D)$ and $(\ov{Y},E)$ be schemes with effective divisors. Let $Y = \ov{Y} \setminus E$. Let $V \subset X \times Y$ be an integral closed subscheme with closure $\ov{V} \subset X \times \ov{Y}$. We say $V$ \emph{has modulus $D$ (relative to $E$)} if
$
\nu^*_V(D \times \ov{Y}) \le \nu^*_V(X \times E)
$
on $\ov{V}^N$, where $\nu_V: \ov{V}^N \to \ov{V} \inj X \times \ov{Y}$ is the normalization followed by the closed immersion.
\end{defn}  

Recall the following containment lemma from \cite[Proposition 2.4]{KPv} (see also \cite[Lemma~2.1]{BS} and \cite[Proposition 2.4]{KP}):

\begin{prop}\label{prop:CL*}
Let $(X,D)$ and $(\ov{Y}, E)$ be schemes with effective divisors and $Y= \ov{Y} \setminus E$. If $V \subset X \times Y$ is a closed subscheme with modulus $D$ relative to $E$, then any closed subscheme $W \subset V$ also has modulus $D$ relative to $E$.
\end{prop}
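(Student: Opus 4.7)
The plan is to verify the modulus inequality for $W$ by checking it against every DVR mapping into $\ov{W}^N$, and then to lift each such DVR, via the valuative criterion for the finite morphism $\nu_V: \ov{V}^N \to \ov{V}$, to one mapping into $\ov{V}^N$, where the hypothesis on $V$ applies directly. I may assume $W$ is integral; its closure $\ov{W} \subset X \times \ov{Y}$ then lies inside $\ov{V}$, since $\ov{V}$ is closed in $X \times \ov{Y}$ and contains $W$. The task is to prove that the Weil divisor
\[
\Delta_W := \nu_W^*(X \times E) - \nu_W^*(D \times \ov{Y})
\]
is effective on the integral normal scheme $\ov{W}^N$. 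Since $\ov{W}^N$ is normal, effectivity can be checked one codimension-one point $\xi$ at a time.

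Fixing such a $\xi$, set $A := \sO_{\ov{W}^N, \xi}$, which is a DVR, and consider the composite $\varphi: \Spec(A) \to \ov{W}^N \to \ov{W} \hookrightarrow \ov{V}$. The fiber product $\Spec(A) \times_{\ov{V}} \ov{V}^N$ is finite and surjective over $\Spec(A)$ because $\nu_V$ is. I would choose an irreducible component of its reduction that dominates $\Spec(A)$, normalize it, and localize at a maximal ideal, producing a DVR $A'$, a finite local inclusion $A \hookrightarrow A'$ of some ramification index $e \geq 1$, and a lift $\widetilde{\varphi}: \Spec(A') \to \ov{V}^N$ with $\nu_V \circ \widetilde{\varphi}$ equal to the pullback of $\varphi$ along $\Spec(A') \to \Spec(A)$. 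The hypothesis on $V$ then yields that $\widetilde{\varphi}^{\,*}\bigl(\nu_V^*(X \times E) - \nu_V^*(D \times \ov{Y})\bigr)$ is effective on $\Spec(A')$. By commutativity of the resulting square, this agrees with the pullback of $\Delta_W$ from $\Spec(A)$ to $\Spec(A')$, whose coefficient at the closed point is $e$ times the coefficient of $\Delta_W$ at $\xi$. Since $e > 0$, the coefficient of $\Delta_W$ at $\xi$ must be nonnegative, which is what was needed.

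The step I expect to be most delicate is the construction and compatibility of the valuative lift $\widetilde{\varphi}$: it requires exploiting the properness (finiteness) of $\nu_V$ to produce $A'$, and verifying commutativity of the resulting square comes down to the universal property of the fiber product. Everything else is routine bookkeeping about Weil divisors on normal integral schemes.
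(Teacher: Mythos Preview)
The paper does not actually prove this proposition; it is quoted from \cite[Proposition~2.4]{KPv} (with parallel references to \cite{BS} and \cite{KP}), so there is no in-paper argument to compare against directly. That said, your valuative argument is correct. A couple of small points worth making explicit: you should also reduce $V$ to the integral case (since Definition~\ref{defn:modulus} is stated only for integral $V$, and an integral $W\subset V$ lies in some integral component of $V$), and you should note that the generic point of $\Spec(A')$ maps, through your chain, to a point of $\ov{V}^N$ lying over the generic point of $\ov{W}$, which sits in $X\times Y$ and hence outside the supports of $X\times E$ and (under the implicit hypothesis $W\not\subset D\times Y$) of $D\times\ov{Y}$; this is exactly what makes the pullbacks along $\widetilde\varphi$ well-defined.

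For comparison, the argument one finds in the cited sources---and the one the paper's own toolkit (specifically \lemref{lem:cancel}) is set up for---is global rather than pointwise: choose an irreducible component $\widetilde{W}_0$ of $\nu_V^{-1}(\ov{W})\subset\ov{V}^N$ dominating $\ov{W}$, and normalize to obtain $h:\widetilde{W}_0^N\to\ov{V}^N$ together with a finite surjective $g:\widetilde{W}_0^N\to\ov{W}^N$ from the universal property of normalization. Pulling back the effective divisor $\nu_V^*(X\times E)-\nu_V^*(D\times\ov{Y})$ along $h$ and rewriting via the commuting square yields $g^*\Delta_W\ge 0$, and then a single application of \lemref{lem:cancel} to $g$ gives $\Delta_W\ge 0$. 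Your approach is the same mechanism carried out one codimension-one point at a time, replacing the invocation of \lemref{lem:cancel} with an explicit ramification-index argument; it is slightly more elementary in that it avoids the universal property of normalization, at the cost of constructing the valuative lift by hand. Both routes are equally valid.
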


\begin{defn}[{\cite{BS}, \cite{KS}}]\label{defn:partial complex}Let $(X,D)$ be a scheme with an effective divisor. For $s \in \mathbb{Z}$ and $n \geq 0$, let $\un{z}_s (X|D, n)$ be the free abelian group on integral closed subschemes $V \subset X \times \square^n$ of dimension $s+n$ satisfying the following conditions:

\begin{enumerate}
\item (Face condition) for each face $F \subset \square^n$, $V$ intersects $X \times F$ properly. 
\item (Modulus condition) $V$ has modulus $D$ relative to $F_n ^1$ on $X \times \square^n$. 
\end{enumerate}
\end{defn}

We usually drop the phrase ``relative to $F_n^1$'' for simplicity. A cycle in $\un{z}_s (X|D, n)$ is called an \emph{admissible cycle with modulus $D$}. One checks that $(n \mapsto \un{z}_s(X|D,n))$ is a cubical abelian group. In particular, the groups  $\un{z}_s (X|D, n)$ form a complex with the boundary map $\partial= \sum_{i=1} ^n (-1)^i (\partial_i ^{\infty} - \partial_i ^0)$, where $\partial_i ^{\epsilon} = \iota_{n, i, \epsilon} ^*$. 

\begin{defn}[{\cite{BS}, \cite{KS}}]
The complex $(z_s(X|D, \bullet), \partial)$ is the nondegenerate complex associated to $(n\mapsto \un{z}_s(X|D,n))$, i.e.,
$
z_s(X|D, n): = {\un{z}_s(X|D, n)}/{\un{z}_s(X|D, n)_{\dgn}}.
$
The homology $\CH_s(X|D, n): = {\rm H}_n (z_s(X|D, \bullet))$ for $n \geq 0$ is called \emph{higher Chow group of $X$ with modulus $D$}. If $X$ is equidimensional of dimension $d$, for $q \geq 0$, we write $\CH^q(X|D, n) = \CH_{d-q}(X|D, n)$.
\end{defn}

Here is a special case from \cite{KPv}:

\begin{defn}\label{defn:AHC}Let $X \in \Sch_k ^{\rm ess}$. For $r \geq 1$, let $X[r] := X \times \mathbb{A}^r$. When $(t_1, \cdots, t_r) \in \mathbb{A}^r$ are the coordinates, and $m_1, \cdots, m_r \geq 1$ are integers, let $D_{\un{m}}$ be the divisor on $X[r]$ given by the equation $\{ t_1 ^{m_1} \cdots t_r ^{m_r} = 0 \}$. The groups $\CH^q (X[r]|D_{\un{m}}, n)$ are called \emph{multivariate additive higher Chow groups} of $X$. For simplicity, we often say ``a cycle with modulus $\un{m}$'' for ``a cycle with modulus $D_{\un{m}}$.'' For an $r$-tuple of integers $\un{m} = (m_1, \cdots , m_r)$, we write $|\un{m}| = \sum_{i=1} ^r  m_i$. We shall say that $\un{m} \ge p$ if $m_i \ge p$ for each $i$.

When $r=1$, we obtain additive higher Chow groups, and as in \cite{KP2}, we often use the older notations $\TZ^q (X, n+1; m-1)$ for $z^q (X[1]|D_{m}, n)$ and $\TH^q (X, n+1; m-1)$ for $\CH^q (X[1]|D_m, n)$. In such cases, note that the modulus $m$ is shifted by $1$ from the above sense.
\end{defn}

\begin{defn}\label{defn:complex for moving}
Let $\mathcal{W}$ be a finite set of locally closed subsets of $X$ and let $e: \mathcal{W} \to \mathbb{Z}_{\geq 0}$ be a set function. Let $\un{z} ^q _{\mathcal{W}, e} (X|D, n) $ be the subgroup generated by integral cycles $Z \in \un{z}^q (X|D, n)$ such that for each $W \in \mathcal{W}$ and each face $F \subset \square^n$, we have $\codim _{W \times F} (Z \cap (W \times F)) \geq q - e(W)$. They form a subcomplex $\un{z}^q _{\mathcal{W},e}(X|D, \bullet)$ of $\un{z}^q (X|D, \bullet)$. Modding out by degenerate cycles, we obtain the subcomplex $z^q _{\mathcal{W},e} (X|D, \bullet)\subset z^q (X|D, \bullet)$. We write $z^q _{\mathcal{W}} (X|D, \bullet) := z^q_{\mathcal{W}, 0} (X|D, \bullet)$. For additive higher Chow cycles, we write $\un{\TZ} ^q _{\mathcal{W}} (X, n;m)$ for $\un{z}^q _{\mathcal{W}[1]} ( X[1] | D_{m+1}, n-1)$, where $\mathcal{W}[1] = \{ W[1] \ | \ W \in \mathcal{W} \}$.
\end{defn}

Here are some basic lemmas used in the paper:


\begin{lem}[{\cite[Lemma 2.2]{KPv}}]\label{lem:cancel}
Let $f: Y \to X$ be a dominant map of normal integral $k$-schemes. Let $D$ be a Cartier divisor on $X$ such that the generic points of $\Supp(D)$ are contained in $f(Y)$. Suppose that $f^*(D) \ge 0$ on $Y$. Then $D \ge 0$ on $X$.
\end{lem}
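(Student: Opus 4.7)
The plan is to reduce the problem to a statement about valuations at codimension one points of $X$ and to exploit the normality of both $X$ and $Y$. Since $X$ is normal and integral, the group of Cartier divisors injects into the group of Weil divisors, and $D\ge 0$ holds if and only if $v_{\eta}(D)\ge 0$ for every codimension one point $\eta\in X$. If $\eta\notin\Supp(D)$ there is nothing to check, so fix $\eta$ with $v_{\eta}(D)\ne 0$; then $\overline{\{\eta\}}$ is an irreducible component of $\Supp(D)$, and by hypothesis there exists $\xi\in Y$ with $f(\xi)=\eta$. Writing $D$ locally near $\eta$ as $(g)$ with $g=u\pi^{n}$, where $\pi$ is a uniformizer of the DVR $\sO_{X,\eta}$, $u\in\sO_{X,\eta}^{\times}$, and $n=v_{\eta}(D)$, the task reduces to showing $n\ge 0$.

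The next step is to produce a codimension one point $\zeta\in Y$ with $f(\zeta)=\eta$. The local homomorphism $f^{\#}:\sO_{X,\eta}\to\sO_{Y,\xi}$ is injective since $f$ is dominant and $Y$ is integral, and it sends $\pi$ to an element $\pi'$ of the maximal ideal of $\sO_{Y,\xi}$. By Krull's Hauptidealsatz every minimal prime of $\pi'\sO_{Y,\xi}$ has height one; pick such a prime $\mathfrak{q}$. It corresponds to a codimension one point $\zeta\in Y$ generalizing $\xi$, and since the image of $\pi$ in $\sO_{Y,\zeta}$ lies in the maximal ideal $\fm_{\zeta}$, the induced map $\sO_{X,\eta}\to\sO_{Y,\zeta}$ is local, so $f(\zeta)=\eta$.

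Now normality of $Y$ makes $\sO_{Y,\zeta}$ a DVR, and the composition $\sO_{X,\eta}\to\sO_{Y,\xi}\to\sO_{Y,\zeta}$ is an injective local homomorphism of DVRs with ramification index $e:=v_{\zeta}(f^{\#}\pi)\ge 1$. Since $f^{\#}u$ is a unit in $\sO_{Y,\zeta}$, one computes
\[
v_{\zeta}(f^{\ast}g)\;=\;v_{\zeta}(f^{\#}u)+n\,v_{\zeta}(f^{\#}\pi)\;=\;ne.
\]
The hypothesis $f^{\ast}D\ge 0$ on the normal scheme $Y$ forces $v_{\zeta}(f^{\ast}g)\ge 0$, and since $e>0$ this gives $n\ge 0$, as required.

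The main obstacle is the construction of $\zeta$: because $f$ is only assumed dominant, not finite, flat, or even quasi-finite, the fibre over $\eta$ can be arbitrarily large and the chosen $\xi$ need not itself be codimension one in $Y$. The passage through a minimal prime of $f^{\#}(\pi)$ inside $\sO_{Y,\xi}$ is what manufactures a codimension one specialization of $\xi$ lying over $\eta$, and this is precisely the data needed to read off the sign of $v_{\eta}(D)$ from the effectivity of $f^{\ast}D$.
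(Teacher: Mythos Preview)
Your argument is correct. The paper does not supply its own proof of this lemma; it is merely quoted from \cite[Lemma~2.2]{KPv}, so there is nothing to compare against directly. Your reduction to codimension one points via normality, together with the use of Krull's Hauptidealsatz to manufacture a height-one prime over $\eta$ from an arbitrary preimage $\xi$, is exactly the standard way to prove such a statement and matches what one expects the cited proof to do.
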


\begin{lem}[{\cite[Lemma 2.9]{KPv}}]\label{lem:projective image}Let $f: Y \to X$ be a proper morphism of quasi-projective $k$-varieties. Let $D \subset X$ be an effective Cartier divisor such that $f(Y) \not \subset D$. Let $Z \in z^q (Y|f^* (D), n)$ be an irreducible cycle. Let $W = f(Z)$ on $X \times \square^n$. Then $W \in z^s (X|D, n)$, where $s= \codim_{X \times \square^n} (W)$. 
\end{lem}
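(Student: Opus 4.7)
The plan is to verify, for the cycle $W$, the two defining properties of membership in $z^s(X|D,n)$---the face condition and the modulus condition---by transporting the corresponding properties of $Z$ along the proper morphism $g := f \times \id_{\square^n}: Y \times \square^n \to X \times \square^n$ and its extension $\ov{g} := f \times \id_{\ov{\square}^n}$. Since $f$ is proper, so is $g$, and therefore $W = g(Z)$ is an integral closed subscheme of $X \times \square^n$ of codimension $s$, with closure $\ov{W} = \ov{g}(\ov{Z}) \subset X \times \ov{\square}^n$ (using properness of $\ov{g}$ to identify the image of the closure with the closure of the image). The hypothesis $f(Y) \not\subset D$ ensures $f^*(D)$ is a genuine Cartier divisor on $Y$, so that the modulus condition on $Z$ is well-posed. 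The single set-theoretic observation used throughout is $g^{-1}(X \times F) = Y \times F$ for each face $F \subset \square^n$, which gives $W \cap (X \times F) = g(Z \cap (Y \times F))$.

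For the face condition, I would set $k := \dim Z - \dim W \geq 0$, the generic relative dimension of the dominant proper map $g|_Z: Z \twoheadrightarrow W$. By the fiber dimension theorem, \emph{every} fiber of $g|_Z$ has dimension at least $k$, so for any irreducible component $W_0$ of $W \cap (X \times F)$,
\[
\dim\bigl(Z \cap (Y \times F)\bigr) \;\geq\; \dim (g|_Z)^{-1}(W_0) \;\geq\; \dim W_0 + k.
\]
Combining with $\dim(Z \cap (Y \times F)) \leq \dim Z - (n - \dim F)$, which is the face condition on $Z$, I obtain $\dim W_0 \leq \dim W - (n - \dim F)$---exactly the proper intersection requirement $\codim_{X \times F}(W \cap (X \times F)) \geq s$.

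For the modulus condition, I would pass to normalizations: the proper surjection $\ov{g}|_{\ov{Z}}: \ov{Z} \twoheadrightarrow \ov{W}$ lifts canonically to a finite surjective morphism $\pi: \ov{Z}^N \to \ov{W}^N$. Let
\[
A \;:=\; \nu_W^*(X \times F_n^1) - \nu_W^*(D \times \ov{\square}^n),
\]
a Cartier divisor on $\ov{W}^N$ whose effectivity is the modulus condition for $W$. Functoriality of pullbacks and commutativity of $\pi$ with the normalization maps yield
\[
\pi^* A \;=\; \nu_Z^*(Y \times F_n^1) - \nu_Z^*\bigl(f^*(D) \times \ov{\square}^n\bigr),
\]
which is effective by the modulus condition on $Z$. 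Because $\pi$ is a surjective morphism between normal integral schemes, the cancellation Lemma~\ref{lem:cancel} lets me descend effectivity from $\pi^*A$ to $A$, completing the verification.

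The main obstacle lies in this last modulus step. One must carefully check that $\pi$ is well-defined, dominant and surjective onto $\ov{W}^N$ (which relies on the properness of $\ov{g}$ and the universal property of normalization), and then apply Lemma~\ref{lem:cancel} to a Cartier divisor $A$ that is only a \emph{difference} of two effective divisors rather than obviously effective. The face argument, by contrast, is routine once one notices that the correct codimension bookkeeping requires the fiber-dimension inequality; a naive image-dimension estimate alone is insufficient since $g|_Z$ need not be generically finite.
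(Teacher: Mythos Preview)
The paper does not supply its own proof of this lemma; it is quoted from \cite[Lemma~2.9]{KPv}. Your argument is correct and is precisely in the spirit of the modulus verifications carried out elsewhere in the paper (e.g., Lemmas~\ref{lem:brace}, \ref{lem:gamma^i}, \ref{lem:deltaZ mod}): build the commutative square of normalizations, pull back the divisor inequality, and descend via Lemma~\ref{lem:cancel}.

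One small correction: the lift $\pi:\ov{Z}^N\to\ov{W}^N$ is in general only \emph{proper and surjective}, not finite, because $\ov{g}|_{\ov{Z}}$ may well have positive-dimensional fibres (indeed your face argument exploits exactly this). Fortunately this does not matter: Lemma~\ref{lem:cancel} requires only that $\pi$ be a dominant map of normal integral schemes whose image contains the generic points of $\Supp(A)$, and surjectivity certainly gives that. So simply replace ``finite surjective'' by ``proper surjective (hence dominant)'' and the proof stands.
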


\begin{lem}\label{lem:admislocal}Let $X$ be a $k$-scheme, and let $\{ U_i \}_{i \in I}$ be an open cover of $X$. Let $Z \in z^q (X \times \square^n)$ and let $Z_{U_i}$ be the flat pull-back to $U_i \times \square^n$. Then $Z \in z^q (X|D, n)$ if and only if for each $i \in I$, we have $Z_{U_i} \in z^q (U_i | D_{U_i}, n)$, where $D_{U_i}$ is the restriction of $D$ on $U_i$.
\end{lem}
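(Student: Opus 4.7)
The plan is to verify that both ingredients of admissibility — the face condition and the modulus condition — are local on the base $X$, and then combine these two local-to-global statements. Since $Z_{U_i}$ is the flat pullback along the open immersion $U_i \times \square^n \hookrightarrow X \times \square^n$, it is literally the restriction $Z \cap (U_i \times \square^n)$ with the same multiplicities. Writing $Z = \sum_j n_j V_j$ with $V_j$ integral, it suffices to prove both conditions componentwise, so I reduce to the case of a single integral closed subscheme $V \subset X \times \square^n$.

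For the face condition, I would argue purely via dimensions. Let $F \subset \square^n$ be a face. Any irreducible component $W$ of $V \cap (X \times F)$ has nonempty intersection with some $U_i \times F$, and since $W$ is irreducible, $W \cap (U_i \times F)$ is a dense open in $W$ of the same dimension. Under the identification $Z \cap (U_i \times F) = Z_{U_i} \cap (U_i \times F)$, the open piece $W \cap (U_i \times F)$ is contained in a component of $Z_{U_i} \cap (U_i \times F)$. Conversely every component of $Z_{U_i} \cap (U_i \times F)$ has its closure in $X \times F$ contained in some component of $Z \cap (X \times F)$. Combining these two directions with the elementary lower bound on codimension of an intersection shows that $V$ meets $X \times F$ properly if and only if each $V \cap (U_i \times \square^n)$ meets $U_i \times F$ properly.

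For the modulus condition, the key geometric input is that normalization commutes with open immersions into integral schemes. Let $\overline{V} \subset X \times \overline{\square}^n$ be the closure of $V$, and put $V_i = V \cap (U_i \times \square^n)$. I would first check that $\overline{V_i} = \overline{V} \cap (U_i \times \overline{\square}^n)$, using that $V$ is dense in $\overline{V}$ and hence $V_i$ is dense in the open subscheme $\overline{V} \cap (U_i \times \overline{\square}^n)$, which is itself closed in $U_i \times \overline{\square}^n$. Consequently the normalization $\overline{V_i}^N$ is the preimage of $U_i \times \overline{\square}^n$ under $\nu_V \colon \overline{V}^N \to X \times \overline{\square}^n$, and as $i$ ranges, these form an open cover of $\overline{V}^N$ since $\{U_i \times \overline{\square}^n\}$ covers $X \times \overline{\square}^n \supset \overline{V}$. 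On the open piece $\overline{V_i}^N$, the pullbacks $\nu_V^*(D \times \overline{\square}^n)$ and $\nu_V^*(X \times F_n^1)$ agree with $\nu_{V_i}^*(D_{U_i} \times \overline{\square}^n)$ and $\nu_{V_i}^*(U_i \times F_n^1)$ respectively.

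Since effectiveness of a Cartier divisor can be checked on an open cover of the ambient scheme, the inequality $\nu_V^*(D \times \overline{\square}^n) \le \nu_V^*(X \times F_n^1)$ on $\overline{V}^N$ is equivalent to its restriction to each $\overline{V_i}^N$, which is the modulus inequality for $V_i$ inside $U_i \times \square^n$. This completes both directions. The only genuinely delicate point I expect is the identification of $\overline{V_i}^N$ with the open $\nu_V^{-1}(U_i \times \overline{\square}^n) \subset \overline{V}^N$; everything else reduces to the two elementary principles that codimension and effectiveness of Cartier divisors are local on the scheme in question.
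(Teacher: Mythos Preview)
Your proposal is correct and follows the same approach as the paper: reduce to an irreducible cycle and verify that both the face and modulus conditions are Zariski-local on $X$. The paper's proof simply asserts that these two conditions are ``easily checked'' to be local (and cites flat pullback for the forward direction), whereas you spell out the details---the dimension argument for the face condition and the identification $\overline{V_i}^N = \nu_V^{-1}(U_i \times \overline{\square}^n)$ via compatibility of normalization with open immersions for the modulus condition---but there is no difference in strategy.
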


\begin{proof}
The direction $(\Rightarrow)$ is obvious since flat pull-backs respect admissibility of cycles with modulus by \cite[Proposition 2.12]{KPv}. For the direction $(\Leftarrow)$, we may assume $Z$ is irreducible. In this case, it is easily checked that the face and the modulus conditions are both local on the base $X$. 
\end{proof}

\subsection{de Rham-Witt complexes}

\subsubsection{Ring of big Witt-vectors}\label{subsection:Witt ring}
Let $R$ be a commutative ring with unit. We recall the definition of the ring of big Witt-vectors of $R$ (see \cite[\S 4]{Hesselholt AM} or \cite[Appendix A]{R}). A \emph{truncation set} $S \subset \mathbb{N}$ is a non-empty subset such that if $s \in S$ and $t | s$, then $t \in S$. As a set, let $\mathbb{W}_S (R):= R^S$ and define the map $w: \mathbb{W}_S (R) \to R^S$ by sending $a = (a_s)_{s \in S}$ to $w(a) = (w(a)_s)_{s \in S}$, where $  w(a)_s := \sum_{ t | s} t a_t ^{ s/t}.$ When $R^S$ on the target of $w$ is given the component-wise ring structure, it is known that there is a unique functorial ring structure on $\mathbb{W}_S (R)$ such that $w$ is a ring homomorphism (see \cite[Proposition 1.2]{Hesselholt AM}). When $S= \{ 1, \cdots, m \}$, we write $\mathbb{W}_m (R): = \mathbb{W}_S (R)$. 

There is another description. Let $\mathbb{W}(R):= \mathbb{W}_{\mathbb{N}} (R)$. Consider the multiplicative group $(1 + tR[[t]])^{\times}$, where $t$ is an indeterminate. Then there is a natural bijection $\mathbb{W}(R) \simeq (1+ tR[[t]])^{\times}$, where the addition in $\mathbb{W}(R)$ corresponds to the multiplication of formal power series. For a truncation set $S$, we can describe $\mathbb{W}_S (R)$ as the quotient of $(1 + tR[[t]])^{\times}$ by a suitable subgroup $I_S$. See \cite[A.7]{R} for details. In case $S= \{ 1, \cdots, m \}$, we can write $\mathbb{W}_m (R) = (1+ tR[[t]])^{\times} / (1 + t^{m+1} R[[t]])^{\times}$ as an additive group.

For $a \in R$, the Teichm\"uller lift $[a] \in \mathbb{W}_S (R)$ corresponds to the image of $1-at \in (1 + t R[[t]])^{\times}$. This yields a multiplicative map $[-]: R \to \mathbb{W}_S (R)$. The additive identity element of $\mathbb{W}_m (R)$ corresponds to the unit polynomial $1$ and the multiplicative identity element corresponds to the polynomial $1-t$.

\subsubsection{de Rham-Witt complex}\label{sec:DRW}
Let $p$ be an odd prime and $R$ be a $\mathbb{Z}_{(p)}$-algebra.{\footnote{A definition of Witt-complex over a more general ring $R$ can be found in \cite[Definition~4.1]{Hesselholt AM}.} For each truncation set $S$, there is a differential graded algebra $\mathbb{W}_S \Omega_R ^{\bullet}$ called the big de Rham-Witt complex over $R$. This defines a contravariant functor on the category of truncation sets. This is an initial object in the category of $V$-complexes and in the category of Witt-complexes over $R$. For details, see \cite{HeMa} and \cite[\S 1]{R}. When $S$ is a finite truncation set, we have $\mathbb{W}_S \Omega_R ^{\bullet} = \Omega_{\mathbb{W}_S (R)/\mathbb{Z}}^{\bullet}/ N_S ^{\bullet},$ where $N_S ^{\bullet}$ is the differential graded ideal given by some generators (\cite[Proposition 1.2]{R}). In case $S= \{ 1, 2, \cdots, m \}$, we write $\mathbb{W}_m \Omega_R ^{\bullet}$ for this object. 

Here is another relevant object for this paper from \cite[Definition 1.1.1]{HeMa}; \emph{a restricted Witt-complex over $R$} is a pro-system of differential graded $\mathbb{Z}$-algebras $((E_m)_{m \in \mathbb{N}}, \mathfrak{R}: E_{m+1} \to E_m)$, with homomorphisms of graded rings $(F_r: E_{rm+r-1} \to E_m)_{m , r \in \mathbb{N}}$ called the \emph{Frobenius} maps, and homomorphisms of graded groups $(V_r: E_m \to E_{rm+r-1})_{m,r \in \mathbb{N}}$ called the \emph{Verschiebung} maps, satisfying the following relations for all $n, r,s  \in \mathbb{N}$:
\begin{enumerate}
\item [(i)] $\mathfrak{R}F_r= F_r \mathfrak{R}^r, \mathfrak{R}^r V_r = V_r \mathfrak{R}, F_1 = V_1= {\rm Id}, F_r F_s = F_{rs}, V_r V_s = V_{rs};$
\item [(ii)] $F_r V_r = r$. When $(r,s) = 1$, $F_r V_s = V_s F_r$ on $E_{rm+r-1};$
\item [(iii)] $V_r (F_r (x)y) = x V_r (y)$ for all $x \in E_{rm+r-1}$ and $y \in E_m;$ (projection formula)
\item [(iv)] $F_r d V_r = d$, where $d$ is the differential of the DGAs.
\end{enumerate}
Furthermore, we require that there is a homomorphism of pro-rings $(\lambda: \mathbb{W}_m (R) \to E_m ^0)_{m \in \mathbb{N}}$ that commutes with $F_r$ and $V_r$, satisfying
\begin{enumerate}
\item [(v)] $F_r d \lambda ([a]) = \lambda ([a]^{r-1}) d \lambda ([a])$ for all $a \in R$ and $r \in \mathbb{N}$.
\end{enumerate}

The pro-system $\{\mathbb{W}_m \Omega_R ^{\bullet} \}_{m \geq 1}$ is the initial object in the category of restricted Witt-complexes over $R$
(See \cite[Proposition 1.15]{R}).

\section{Normalization theorem}\label{sec:normalization}
Let $k$ be any field. The aim of this section is to prove Theorem \ref{thm:normalization}. Such results were known when $D=\emptyset$, or when $X$ is replaced by $X \times \mathbb{A}^1$ with $D= \{ t^{m+1} = 0 \}$ for $t \in \mathbb{A}^1$. We generalize it to higher Chow groups with modulus.

\begin{defn} Let $(X,D)$ be a scheme with an effective divisor. Let $z_N ^q (X|D, n) $ be the subgroup of cycles $\alpha\in z^q (X|D,n)$ such that $\partial_i ^0 (\alpha) = 0$ for all $1 \leq i \leq n$ and $\partial ^{\infty} _i (\alpha) = 0$ for $2 \leq i \leq n$. One checks that $\partial_1 ^{\infty} \circ \partial_1 ^{\infty} = 0$. Writing $\partial_1 ^{\infty}$ as $\partial^N$, we obtain a subcomplex $\iota: (z _N ^q (X|D, \bullet), \partial^N) \hookrightarrow  (z^q (X|D, \bullet), \partial)$. 
\end{defn}

\begin{thm}\label{thm:normalization}Let $X \in \Sm^{\rm ess}_k$ and let $D \subset X$ be an effective Cartier divisor. Then $\iota: z_N ^q (X|D, \bullet) \to z^q (X|D, \bullet)$ is a quasi-isomorphism. In particular, every cycle class in $\CH^q (X|D, n)$ can be represented by a cycle $\alpha$ such that $\partial_i ^{\epsilon} (\alpha) = 0$ for all $1 \leq i \leq n$ and $\epsilon = 0, \infty$.
\end{thm}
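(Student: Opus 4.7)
The plan is to show that the quotient complex $z^q(X|D,\bullet)/z_N^q(X|D,\bullet)$ is acyclic, by adapting the classical normalization argument for Bloch's higher Chow groups to the modulus setting. I would split $\iota$ into two inclusions $z_N^q \hookrightarrow z_{N_0}^q \hookrightarrow z^q$, where $z_{N_0}^q$ is the subcomplex of cycles with $\partial_i^0\alpha = 0$ for all $1 \le i \le n$. Each inclusion is then treated by a decreasing filtration that peels off one face-vanishing condition at a time: for $z_{N_0}^q \hookrightarrow z^q$, define $F^r z^q(X|D, n)$ to consist of cycles with $\partial_i^0\alpha = 0$ for $1 \le i \le r$, and similarly for the $\partial_i^\infty$ with $i \ge 2$ in the second reduction. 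Inductively showing that each $F^{r+1} \hookrightarrow F^r$ is a quasi-isomorphism yields the result.

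At each filtration step, I would construct an explicit chain homotopy $h_r$ coming from pullback along a rational map $\mu_r : X \times \square^{n+1} \dashrightarrow X \times \square^n$ of the form $(x, y_1, \dots, y_{n+1}) \mapsto (x, y_1, \dots, \phi(y_r, y_{n+1}), \dots, y_n)$, where $\phi : \square \times \square \dashrightarrow \square$ is chosen to satisfy boundary conditions such as $\phi(y, 0) = y$ and $\phi(y, \infty) = 0$ (and dually for the $\partial^\infty$ reduction). The standard cubical identities then produce a homotopy relation $\partial h_r + h_r \partial = \mathrm{id} - \pi_r$ with $\pi_r$ projecting into $F^{r+1}$; this is how the same setup works in Bloch's original argument.

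The principal obstacle, and the reason the modulus case is genuinely new, is verifying that $h_r(\alpha)$ actually lies in $z^q(X|D, n+1)$. The proper intersection (face) condition is routine by a genericity argument in the $\square$-coordinates, but the modulus condition demands a separate geometric check. Since the support of $h_r(\alpha)$ is contained in $\mu_r^{-1}(\alpha)$, which in turn factors through the graph of $\phi$ in $\overline{\square}^3$, the containment lemma \propref{prop:CL*} reduces the modulus condition to a check on a universal cycle built from $\alpha$ and the graph of $\phi$. One must then choose $\phi$ so that on the normalization of the closure in $X\times\overline{\square}^{n+1}$, one has $\nu^*(D \times \overline{\square}^{n+1}) \le \nu^*(X \times F_{n+1}^1)$; this amounts to arranging the indeterminacy and $y=1$ locus of $\phi$ so that the modulus condition of $\alpha$ transfers across the pullback without being disturbed.

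Once the modulus preservation is established, the filtration argument is formal: telescoping the inductive quasi-isomorphisms gives $z_N^q \simeq z^q$. The ``in particular'' statement is then immediate, because for any cycle $\alpha \in z_N^q$ with $\partial\alpha = 0$ one has $\partial_i^0\alpha = 0$ and $\partial_i^\infty\alpha = 0$ for $i \ge 2$ by membership in $z_N^q$, while $\partial_1^\infty\alpha = \pm\partial\alpha = 0$ because $\partial_1^\infty$ is the only surviving differential on the subcomplex.
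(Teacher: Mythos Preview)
Your strategy is correct and is, at bottom, the same argument the paper gives, though the paper packages it differently. Rather than running the filtration by hand, the paper shows that $(\underline{n}\mapsto z^q(X|D,n))$ is an \emph{extended} cubical abelian group (\thmref{thm:ecube}) and then invokes \cite[Lemma~1.6]{LevineSM}, whose proof is exactly the peeling-off-one-face-at-a-time filtration you describe. In both cases the only nontrivial input is that pullback along the ``multiplication'' map $q_1:\square^2\to\square$ (your $\phi$) sends admissible cycles to admissible cycles; the paper isolates this as \propref{prop:norm pullback} and proves it via the correspondence $W_n$ (the graph closure of $q_1$) in \lemref{lem:pi_n flat}--\lemref{lem:pi norm}. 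So the extended-cubical formalism buys nothing mathematically over your approach; it just separates the formal homological algebra from the geometric check.

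One point of divergence worth noting: your reduction of the modulus check to the containment lemma (\propref{prop:CL*}) is not how the paper proceeds, and as written it is somewhat imprecise. Since $\phi$ is only rational, the pullback has to be defined through the graph correspondence, and the paper's modulus verification (\lemref{lem:mod norm}) is a direct divisor comparison on normalizations using \lemref{lem:cancel}: the key inequality $\overline{\pi}_n^* F_{n,n}^{\infty}\le \overline{i}_n^*(F_{n+2,n}^{\infty}+F_{n+2,n+1}^{\infty})$ comes from the defining equation of $\overline{W}_n$, and this is what lets the modulus condition on $Z$ propagate to $Z'$. Your containment-lemma route could presumably be made to work, but it would still require producing an explicit ambient cycle with modulus containing $h_r(\alpha)$, which amounts to the same divisor estimate; the containment lemma by itself does not supply it.
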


Let $\textbf{Cube}$ be the standard category of cubes (see \cite[\S 1]{LevineSM}) so that a cubical abelian group is a functor $\textbf{Cube}^{\rm op} \to (\mathbf{Ab})$. Recall also from \emph{loc.cit.} that an extended cubical abelian is a functor $\textbf{ECube}^{\rm op} \to (\mathbf{Ab})$, where $\textbf{ECube}$ is the smallest symmetric monoidal subcategory of \textbf{Sets} containing \textbf{Cube} and the morphism $\mu: \un{2} \to \un{1}$. The essential point of the proof of Theorem \ref{thm:normalization} is

\begin{thm}\label{thm:ecube} Let $X \in \Sm^{\rm ess}_k$ and $D \subset X$ be an effective Cartier divisor. Then $(\un{n} \mapsto z^q (X|D;n))$ is an extended cubical abelian group.
\end{thm}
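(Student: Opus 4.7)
The plan is to construct, for each $1 \leq i \leq n$, a pullback operation $\mu_i^*: z^q(X|D, n) \to z^q(X|D, n+1)$ induced by a multiplication $\mu: \square \times \square \to \square$ applied in the $(i, i+1)$-coordinates, and to verify that this pullback together with the standard cubical operations of \defref{defn:partial complex} satisfies the axioms of an extended cubical abelian group in the sense of \cite[\S 1]{LevineSM}. The multiplication I would use is
\[
\mu(y_1, y_2) = 1 - (1-y_1)(1-y_2) = y_1 + y_2 - y_1 y_2,
\]
which is a genuine morphism $\square \times \square \to \square$ because $\mu(y_1, y_2) = 1$ forces $y_1 = 1$ or $y_2 = 1$, and which extends to a rational map $\bar\mu: \ov\square \times \ov\square \dashrightarrow \ov\square$ regular away from the two points $(1, \infty)$ and $(\infty, 1)$.

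The entire approach hinges on the divisor identity
\[
\bar\mu^*(\{y = 1\}) = \{y_1 = 1\} + \{y_2 = 1\}
\]
on $\ov\square \times \ov\square$, since this gives the analogous identity $\bar\mu_i^*(F_n^1) = F_{n+1}^1$ that is precisely what makes the modulus condition pull back correctly. For an irreducible $\alpha = [V] \in z^q(X|D, n)$, I define $\mu_i^* \alpha$ as the flat pullback $[(\id_X \times \mu_i)^{-1}(V)]$. Proper intersection with faces (\defref{defn:partial complex}(1)) follows from flatness of $\mu_i$ combined with the fact that each face $F \subset \square^{n+1}$ either maps flatly onto a face of $\square^n$ or is collapsed (since $\mu(0, y) = y$, $\mu(y, 0) = y$, $\mu(\infty, y) = \mu(y, \infty) = \infty$), so the flat pullback restricts well to faces. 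The remaining extended cubical identities (associativity and symmetry of $\mu$, compatibility with zero faces and degeneracies, compatibility with permutations) all reduce to the corresponding equalities of $\square$-schemes by functoriality of flat pullback.

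The main obstacle is verifying the modulus condition (\defref{defn:partial complex}(2)) for each irreducible component $W$ of $\mu_i^{-1}(V)$. Because $\bar\mu_i$ is not a morphism on the compactification $\ov\square^{n+1}$, I would resolve its indeterminacy by a proper birational map $\pi: Y \to X \times \ov\square^{n+1}$ blowing up the two indeterminacy points $(1, \infty)$ and $(\infty, 1)$ in the $(y_i, y_{i+1})$-factors (crossed with $\ov\square^{n-1}$), so that $\bar\mu_i$ lifts to a morphism $\tilde\mu_i: Y \to X \times \ov\square^n$. On the strict transform $\widetilde W \subset Y$ of the closure $\ov W \subset X \times \ov\square^{n+1}$, and after passing to the normalization $\widetilde W^N$, the modulus inequality for $V$ pulls back along $\tilde\mu_i$; combined with the key divisor identity above, this yields an inequality $\pi^*(D \times \ov\square^{n+1}) \leq \pi^*(X \times F_{n+1}^1)$ of Cartier divisors on $\widetilde W^N$. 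Applying \lemref{lem:cancel} to the dominant morphism $\widetilde W^N \to \ov W^N$ of normal integral schemes descends this inequality to $\ov W^N$, establishing the modulus condition for $W$, and \propref{prop:CL*} extends admissibility to any non-integral component of $\mu_i^{-1}(V)$.
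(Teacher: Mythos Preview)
Your strategy is correct and is essentially the paper's argument in different clothing: the graph closure $\ov W_n \subset X \times \ov\square_\psi^{n+1} \times \ov\square_\psi^1$ that the paper uses (Lemmas~3.6--3.8, Proposition~3.5) \emph{is} the blow-up of $X \times \ov\square^{n+1}$ at the two indeterminacy loci you describe, and both arguments resolve $\bar\mu$ this way before comparing divisors. The paper packages the computation as a pull--push through the correspondence $\ov W_n$ and then applies \lemref{lem:projective image}, whereas you pull back to the blow-up and descend via \lemref{lem:cancel}; these are equivalent.

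There is, however, a genuine gap in the modulus step. Your ``key divisor identity'' $\bar\mu^*(\{y=1\}) = \{y_1=1\}+\{y_2=1\}$ is only meaningful on the locus where $\bar\mu$ is a morphism. On the blow-up $Y$ you need the \emph{inequality}
\[
\tilde\mu^*(\{y=1\}) \ \le \ \pi^*(\{y_1=1\}+\{y_2=1\}),
\]
and this does not follow formally from the identity on the open part: one must compute the contribution of the exceptional divisors $E, E'$. The computation gives $\pi^*(\{y_1=1\}+\{y_2=1\}) = \tilde\mu^*(\{y=1\}) + E + E'$, so the inequality holds in the direction you need; but had it gone the other way your chain
\[
\pi^*(D) = \tilde\mu_i^*(D) \le \tilde\mu_i^*(F_n^1) \le \pi^*(F_{n+1}^1)
\]
would collapse. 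This exceptional-divisor calculation is exactly the content of the paper's \lemref{lem:mod norm} (the line $\ov\pi_n^*F_{n,n}^\infty \le \ov i_n^*(F_{n+2,n}^\infty + F_{n+2,n+1}^\infty)$, read from the defining equation of $\ov W_n$), and you should supply it rather than invoke the open-locus identity. Your final appeal to \propref{prop:CL*} is unnecessary: once each irreducible component of $\mu_i^{-1}(V)$ has modulus, the cycle does.
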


 If Theorem \ref{thm:ecube} holds, then \cite[Lemma 1.6]{LevineSM} implies Theorem \ref{thm:normalization}. We suppose $(X,D)$ is as in Theorem \ref{thm:normalization} in what follows. The idea is similar to that of \cite[Appendix]{KP2}.

Let $q_1: \square^2 \to \square$ be the morphism $(y_1, y_2) \mapsto y_1 + y_2 - y_1 y_2$ if $y_1, y_2 \not = \infty$, and $(y_1, y_2) \mapsto \infty$ if $y_1$ or $y_2 = \infty$. Under the identification $\psi: \square \simeq \mathbb{A}^1$ given by $y \mapsto {1}/{(1-y)}$ (which sends $\{ \infty, 0 \}$ to $\{ 0, 1 \}$), this map $q_1$ is equivalent to $q_{1, \psi} : \mathbb{A}^2 \to \mathbb{A}^1$ given by $(y_1, y_2) \mapsto y_1 y_2$. For our convenience, we use this $\square_{\psi}:= (\mathbb{A}^1, \{ 0, 1 \})$ and cycles on $X \times \square_{\psi} ^n$. The boundary operator is $\partial = \sum_{i=1} ^n (-1)^i (\partial_i ^0 - \partial_i ^1 )$, and we replace $F_{n,i} ^1$ by $F_{n,i} ^{\infty} = \{ y_i = \infty \}$. We write $F_n ^{\infty}= \sum_{i=1} ^n F_{n,i} ^{\infty}$. We write $\ov{\square}_{\psi} = (\mathbb{P}^1, \{ 0, 1 \})$. The group of admissible cycles is $\un{z}^q _{\psi} (X|D, n)$. Consider $q_{n, \psi}: X \times \square^{n+1}_{\psi}  \to X \times \square^n_{\psi} $ given by $(x, y_1, \cdots, y_{n+1}) \mapsto (x, y_1, \cdots, y_{n-1}, y_n y_{n+1})$. 

\begin{prop}\label{prop:norm pullback}
For $Z \in z^q _{\psi}(X|D, n)$, we have $q_{n,\psi} ^* (Z) \in z^q _{\psi}  (X|D, n+1)$. 
\end{prop}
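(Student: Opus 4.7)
The plan is to verify, for $q_{n,\psi}^*(Z)$, the two defining conditions of $z^q_\psi(X|D, n+1)$: proper intersection with every face of $\square^{n+1}_\psi$, and the modulus condition with respect to $F^{\infty}_{n+1}$. The \emph{face condition} is routine: $q_{n,\psi}$ factors as $\id_{X \times \square^{n-1}_\psi} \times m$, where $m\colon \mathbb{A}^2 \to \mathbb{A}^1$ is multiplication, and $m$ is flat by miracle flatness ($\mathbb{A}^2$ is Cohen--Macaulay, $\mathbb{A}^1$ is regular, all fibres are equidimensional of dimension $1$), so $q_{n,\psi}^*$ is a bona fide flat pull-back. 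A case analysis on the values imposed on $y_n, y_{n+1}$ shows that for each face $F' \subset \square^{n+1}_\psi$ the restriction $q_{n,\psi}|_{X \times F'}$ is either flat onto a face $F \subset \square^n_\psi$, or (when precisely one of $y_n = 1, y_{n+1} = 1$ is imposed) an isomorphism onto a face or onto $X \times \square^n_\psi$; in each case $q_{n,\psi}^*(Z) \cap (X \times F')$ is a flat or iso pull-back of $Z \cap (X \times F)$ (or of $Z$ itself), and so has codimension $\ge q$ by the face condition on $Z$.

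For the \emph{modulus condition}, fix an irreducible component $V$ of $q_{n,\psi}^*(Z)$ with closure $\ov{V} \subset X \times \ov{\square}^{n+1}_\psi$. I resolve the indeterminacy of the rational extension $\ov{q}\colon X \times \ov{\square}^{n+1}_\psi \dashrightarrow X \times \ov{\square}^n_\psi$, which lies over $X \times \{(0,\infty), (\infty, 0)\}$ in the last two $\ov{\square}_\psi$-factors, by blowing up: call the blow-up $\pi\colon Y \to X \times \ov{\square}^{n+1}_\psi$ and the induced morphism $\widetilde{q}\colon Y \to X \times \ov{\square}^n_\psi$. Pick a normal proper birational modification $\mu\colon \widetilde{V} \to \ov{V}^N$ lifting through $Y$; since $\widetilde{q}(\widetilde{V}) \subset \ov{Z}$, the universal property of normalization produces $\sigma\colon \widetilde{V} \to \ov{Z}^N$. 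The key local claim is the Cartier-divisor inequality
\[
\widetilde{q}^*(X \times F^{\infty}_n) \;\le\; \pi^*(X \times F^{\infty}_{n+1}) \quad \text{on } Y.
\]
At the corner $u_n = u_{n+1} = 0$ with $u_i = 1/z_i$ (already free of indeterminacy, where multiplication reads $u = u_n u_{n+1}$) one gets $\widetilde{q}^*(F^{\infty}_{n,n}) = \{u_n = 0\} + \{u_{n+1} = 0\} = \pi^*(F^{\infty}_{n+1,n} + F^{\infty}_{n+1,n+1})$, equality. In a chart of the blow-up at $(z_n, z_{n+1}) = (0, \infty)$, with coordinates $z_n = s, u_{n+1} = s t'$, the extended map has last target coordinate $1/t'$, so $\widetilde{q}^*(F^{\infty}_n) = \{t' = 0\}$, whereas $\pi^*(F^{\infty}_{n+1}) = \{s = 0\} + \{t' = 0\}$; the defect is exactly the exceptional divisor $\{s = 0\}$, and the inequality holds. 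The $(\infty, 0)$ corner is symmetric.

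Granted this, pull back to $\widetilde{V}$: since $q_{n,\psi}$ is the identity on the $X$-factor, $\mu^*\nu_V^*(D \times \ov{\square}^{n+1}_\psi) = \sigma^*\nu_Z^*(D \times \ov{\square}^n_\psi)$, and combining with the modulus of $Z$ on $\ov{Z}^N$ together with the key inequality yields
\[
\mu^*\nu_V^*(D \times \ov{\square}^{n+1}_\psi) \;\le\; \mu^*\nu_V^*(X \times F^{\infty}_{n+1}) \quad \text{on } \widetilde{V}.
\]
A standard descent for Cartier inequalities under a proper birational morphism of normal schemes (the $\mu$-exceptional locus maps into a codimension $\ge 2$ subset of $\ov{V}^N$, or one can invoke \lemref{lem:cancel} directly) transfers the inequality to $\ov{V}^N$, giving the modulus condition for $V$. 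The main obstacle I anticipate is the chart-level divisor computation at the resolution of the multiplication map on $\ov{\square}^2_\psi$; once it is in hand, the remaining steps are routine birational manipulations in the framework of \cite{KPv}, paralleling the strategy of \cite[Appendix]{KP2}.
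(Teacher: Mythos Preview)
Your argument is correct; the chart-level divisor inequality on the blow-up holds in every chart (the second chart at each indeterminacy point is even easier), and \lemref{lem:cancel} descends it to $\ov{V}^N$ since $\mu$ is proper birational, hence surjective. One point you use implicitly: for $\sigma\colon \widetilde{V} \to \ov{Z}^N$ to exist via the universal property of normalization you need each irreducible component $V$ of $q_{n,\psi}^{-1}(Z)$ to dominate $Z$, which follows from going-down for the flat morphism $q_{n,\psi}$. The paper resolves the same rational map, but via its explicit graph closure $\ov{W}_n \subset X \times \ov{\square}_\psi^{n+1} \times \ov{\square}_\psi^1$ (isomorphic to your blow-up $Y$, cf.\ \lemref{lem:pi_n flat}), and then realizes $q_{n,\psi}^*(Z)$ as the push-pull $p_{n*} i_{n*} \pi_n^*(Z)$: the modulus condition is verified for $i_{n*}\pi_n^*(Z)$ as a cycle over $X \times \P^1$ using the defining equation of $\ov{W}_n$ in place of your chart computation (\lemref{lem:mod norm}), the face condition is quoted from \cite{LevineK} (\lemref{lem:pi norm}), and the final push-forward along $p_n$ invokes \lemref{lem:projective image}. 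Your route is more self-contained --- you handle the face condition directly and avoid the detour through $X \times \P^1$ --- whereas the paper's route has the virtue that every step is an instance of an already-established operation (flat pull-back, projective push-forward) on cycles with modulus.
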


The delicacy of its proof lies in that the product map $q_{1, \psi} : \mathbb{A}^2 \to \mathbb{A}^1$ \emph{does not} extend to a morphism $(\mathbb{P}^1)^2 \to \mathbb{P}^1$ of varieties so that checking the modulus condition becomes nontrivial. We use a correspondence instead. For $n \geq 1$, let $i_n: W_n \hookrightarrow X \times \square_{\psi} ^{n+1} \times \ov{\square}^1_{\psi}$ be the closed subscheme defined by the equation $u_0 y_n y_{n+1} = u_1$, where $(y_1, \cdots, y_{n+1}) \in \square_{\psi} ^{n+1}$ and $(u_0; u_1) \in \ov{\square}^1_{\psi}$ are the coordinates. Let $y:= u_1 / u_0$. Its Zariski closure $\ov{W}_n \hookrightarrow X \times \ov{\square}_{\psi} ^{n+1} \times \ov{\square}_{\psi} ^1$ is given by the equation $u_0 u_{n,1} u_{n+1, 1} = u_1 u_{n,0} u_{n+1, 0}$, where $(u_{1,0}, u_{1,1}), \cdots, (u_{n+1, 0}, u_{n+1, 1})$ are the homogeneous coordinates of $\ov{\square}_{\psi}^{n+1}$ with $y_i = u_{i,1}/ u_{i,0}$. 

Consider $\theta_n: X \times \square_{\psi} ^{n+1} \times \ov{\square}_{\psi}^1 \to X \times \square_{\psi} ^n$ given by $(x, y_1, \cdots, y_{n+1}, (u_0; u_1)) \mapsto (x, y_1, \cdots, y_{n-1}, y_n y_{n+1})$, and let $\pi_n := \theta_n |_{{W}_n}$. To extend this $\pi_n$ to a morphism $\ov{\pi}_n$ on $\ov{W}_n$, we use the projection $\ov{\theta}_n : X \times \ov{\square} _{\psi}^{n+1} \times \ov{\square}_{\psi} ^1 \to X \times \ov{\square}_{\psi}^{n-1} \times \ov{\square}_{\psi}^1$, that drops the coordinates $(u_{n,0}; u_{n,1})$ and $(u_{n+1, 0}; u_{n+1, 1})$, and the projection $p_n: X \times \square_{\psi} ^{n+1} \times \ov{\square}_{\psi} ^1 \to X \times \square_{\psi} ^{n+1}$, that drops the last coordinate $(u_0; u_1)$.

\begin{lem}\label{lem:pi_n flat}
$(1)$ $W_n \cap \{ u_0 = 0 \} = \emptyset$, so that $W_n \subset X \times \square_{\psi}^{n+1} \times \square_{\psi} ^1$. $(2)$ $\ov{\theta}_n|_{W_n} = \pi_n$. Thus, we define $\ov{\pi}_n := \ov{\theta}_n|_{\ov{W}_n}$, which extends $\pi_n$. $(3)$ The varieties $W_n$ and $\ov{W}_n$ are smooth. $(4)$ Both $\pi_n$ and $\ov{\pi}_n$ are surjective flat morphisms of relative dimension $1$.
\end{lem}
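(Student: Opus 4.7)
The plan is to dispose of the four parts in order, using the explicit defining equations of $W_n$ and $\ov{W}_n$ throughout. Part (1) is immediate: on $W_n$ the equation $u_0 y_n y_{n+1} = u_1$ forces $u_1 = 0$ whenever $u_0 = 0$, but $(u_0 : u_1)$ are homogeneous coordinates on $\ov{\square}_\psi^1 = \P^1$ and cannot both vanish. Hence $W_n \cap \{u_0 = 0\} = \emptyset$ and $W_n \subset X \times \square_\psi^{n+1} \times \square_\psi^1$. For part (2), we normalize $u_0 = 1$ on $W_n$, so the equation becomes $u_1 = y_n y_{n+1}$. Since $\ov{\theta}_n$ retains the $(u_0:u_1)$-factor and drops the $n$-th and $(n+1)$-th $\ov{\square}_\psi$-factors, it sends $(x, y_1, \ldots, y_{n+1}, (1: y_n y_{n+1}))$ to $(x, y_1, \ldots, y_{n-1}, y_n y_{n+1})$, which is exactly $\pi_n$; therefore $\ov{\pi}_n := \ov{\theta}_n |_{\ov{W}_n}$ extends $\pi_n$.

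For part (3): by (1), $W_n$ sits in $X \times \square_\psi^{n+1} \times \square_\psi^1$ as the graph of the multiplication morphism $(y_n, y_{n+1}) \mapsto y_n y_{n+1}$ (trivially extended over the other coordinates), so the restriction of the projection $p_n$ identifies $W_n$ with $X \times \square_\psi^{n+1}$ and thereby exhibits $W_n$ as smooth. For $\ov{W}_n$, the defining equation involves only the three $\P^1$-factors indexed by $n$, $n+1$, and the extra one, so $\ov{W}_n \cong X \times \ov{\square}_\psi^{n-1} \times V$ where $V \subset (\P^1)^3$ is the $(1,1,1)$-hypersurface cut out by the analogous trilinear equation. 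Smoothness of $V$ I would verify via the Jacobian criterion on the eight standard affine charts of $(\P^1)^3$; in each chart the equation reduces to a visibly smooth shape such as $df = b$, $d = be$, or $bce = 1$. Equivalently, $V$ can be identified with the blow-up of $\P^1 \times \P^1$ at the two indeterminacy points $(0, \infty)$ and $(\infty, 0)$ of the rational multiplication map, from which smoothness is automatic.

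For part (4), surjectivity of both $\pi_n$ and $\ov{\pi}_n$ follows at once from the fact that the multiplication $\square_\psi^2 \to \square_\psi^1$ is surjective and its projective resolution $V \to \ov{\square}_\psi^1$ inherits surjectivity. For flatness, in both cases the source is smooth (hence Cohen--Macaulay), the target is regular, and the source has dimension one more than the target; moreover the fibers are equidimensional of dimension $1$. Indeed, a fiber of $\pi_n$ over a point with last coordinate $y$ is either the hyperbola $\{y_n y_{n+1} = y\} \cong \G_m$ if $y \ne 0$, or the nodal union of two affine lines $\{y_n = 0\} \cup \{y_{n+1} = 0\}$ if $y = 0$; a fiber of $\ov{\pi}_n$ is its projective closure, either a $(1,1)$-curve isomorphic to $\P^1$ on $\P^1 \times \P^1$ or the union of two projective lines meeting at a point. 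Miracle flatness (e.g.\ Matsumura, Theorem~23.1) then yields flatness of relative dimension $1$ in both cases. The main technical obstacle is the smoothness verification for $V$: the eight-chart Jacobian check is mildly tedious but routine, and the alternate blow-up identification also requires careful bookkeeping of the indeterminacy locus of the multiplication map.
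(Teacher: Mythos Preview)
Your proof is correct and follows essentially the same approach as the paper: parts (1) and (2) are handled identically, part (3) reduces to the Jacobian criterion on the standard affine charts of the three relevant $\P^1$-factors (the paper phrases this as checking on each open set where some $u_{n,i}, u_{n+1,i}, u_i$ is nonzero, which is your eight-chart check after factoring out the irrelevant $X \times \ov{\square}_\psi^{n-1}$), and part (4) uses miracle flatness (Hartshorne, Exercise III-10.9) for $\ov{\pi}_n$. The only minor differences are cosmetic: the paper observes that $\pi_n$ is literally a coordinate projection under the graph identification and hence smooth, rather than computing fibers, and it does not mention the blow-up interpretation of $V$---your extra identification of $V$ with the blow-up of $\P^1 \times \P^1$ at the two indeterminacy points is a pleasant bonus but unnecessary given the direct Jacobian verification.
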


\begin{proof}Its proof is almost identical to that of \cite[Lemma A.5]{KP2}. Part (1) follows from the defining equation of $W_n$, and (2) holds by definition. Let $\rho_n:= p_n |_{W_n}: W_n \to X \times \square_{\psi} ^{n+1}$. Since $X$ is smooth, using Jacobian criterion we check that $W_n$ is smooth. Furthermore, $\rho_n$ is an isomorphism with the obvious inverse. Under this identification, the morphism $\pi_n$ can also be regarded as the projection $(x, y_1, \cdots, y_n, y)\mapsto (x, y_1, \cdots, y_{n-1}, y)$ that drops $y_n$. In particular, $\pi_n$ is a smooth and surjective of relative dimension $1$. To check that $\ov{W}_n$ is smooth, one can do it locally on each open set where each of $u_{n,i}, u_{n+1, i}, u_i$ is nonzero for $i=0,1$. In each such open set, the equation for $\ov{W}_n$ takes the same form as for $W_n$, so that it is smooth again by Jacobian criterion. Similarly as for $\pi_n$, one sees $\ov{\pi}_n$ is of relative dimension $1$. Since $\ov{\theta}_n$ is projective and $\pi_n$ is surjective, the morphism $\ov{\pi}_n$ is projective and surjective. So, since $\ov{W}_n$ is smooth, the map $\ov{\pi}_n$ is flat by \cite[Exercise III-10.9, p.276]{Hartshorne}. Thus, we have (3) and (4).
\end{proof}

\begin{lem}\label{lem:mod norm}
Let $n \geq 1$ and let $Z \subset X \times \square_{\psi} ^n$ be a closed subscheme with modulus $D$. Then $Z':= (i_n)_* (\pi_n ^* (Z))$ also has modulus $D$.
\end{lem}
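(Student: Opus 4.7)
I will compare the modulus condition for $Z'$ on $X\times\ov{\square}_\psi^{n+1}\times\ov{\square}_\psi^1$ to the modulus condition for $Z$ on $X\times\ov{\square}_\psi^n$ using the flat projective surjection $\ov{\pi}_n:\ov{W}_n\to X\times\ov{\square}_\psi^n$ from \lemref{lem:pi_n flat}. By linearity I may assume $Z$ is integral. Because $\ov{\pi}_n$ is flat of relative dimension~$1$, the cycle $\ov{\pi}_n^*(\ov{Z})$ is equidimensional, and its irreducible components are precisely the closures $\ov{Z'_\alpha}$ in $X\times\ov{\square}_\psi^{n+1}\times\ov{\square}_\psi^1$ of the irreducible components of $\pi_n^*(Z)\subset W_n$, each dominating $\ov{Z}$. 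Fix one such component $\ov{Z'_0}$; by the universal property of normalization the composite $\ov{Z'_0}^N\to\ov{Z'_0}\xrightarrow{\ov{\pi}_n}\ov{Z}$ factors uniquely as $\nu_Z\circ\tilde{\pi}_0$ for a morphism $\tilde{\pi}_0:\ov{Z'_0}^N\to\ov{Z}^N$.

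Next I evaluate both sides of the modulus inequality for $Z'_0$ on $\ov{Z'_0}^N$. Since the projection from $X\times\ov{\square}_\psi^{n+1}\times\ov{\square}_\psi^1$ to $X$ factors through $\ov{\pi}_n$, the LHS equals
\[
\nu_{Z'_0}^*\bigl(D\times\ov{\square}_\psi^{n+1}\times\ov{\square}_\psi^1\bigr)\;=\;\tilde{\pi}_0^*\nu_Z^*\bigl(D\times\ov{\square}_\psi^n\bigr),
\]
which by the modulus condition for $Z$ is bounded above by $\tilde{\pi}_0^*\nu_Z^*(X\times F_n^\infty)$. Let $E:=F_{n+1}^\infty\times\ov{\square}_\psi^1$ be the boundary divisor of $Y=\square_\psi^{n+1}\times\ov{\square}_\psi^1$ inside $\ov{Y}=\ov{\square}_\psi^{n+1}\times\ov{\square}_\psi^1$. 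It therefore suffices to establish the divisor identity
\[
\ov{\pi}_n^*(F_n^\infty)\;=\;F_{n+1}^\infty\big|_{\ov{W}_n}
\]
on the smooth variety $\ov{W}_n$; pulling it back along $\ov{Z'_0}^N\to\ov{W}_n$ then matches the upper bound above with $\nu_{Z'_0}^*(X\times E)$, giving the required modulus condition for $Z'_0$.

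For $i<n$ the $i$-th coordinate of $\ov{\pi}_n$ is the identity, so $\ov{\pi}_n^*\{y_i=\infty\}=\{y_i=\infty\}|_{\ov{W}_n}$. The essential case $i=n$ uses that the $n$-th target coordinate equals $y=u_1/u_0$, which yields $\ov{\pi}_n^*\{y_n=\infty\}=\{u_0=0\}|_{\ov{W}_n}$; the content is the Cartier divisor identity
\[
\{u_0=0\}\big|_{\ov{W}_n}\;=\;\{y_n=\infty\}\big|_{\ov{W}_n}+\{y_{n+1}=\infty\}\big|_{\ov{W}_n},
\]
which in the affine chart $u_1=u_{n,1}=u_{n+1,1}=1$ reduces to the tautology $\{u_0=0\}=\{u_{n,0}u_{n+1,0}=0\}$ read off from the defining equation $u_0=u_{n,0}u_{n+1,0}$ of $\ov{W}_n$, and extends to the remaining charts by the symmetric form of the global equation $u_0u_{n,1}u_{n+1,1}=u_1u_{n,0}u_{n+1,0}$. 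The main technical obstacle is this chart-by-chart verification: because $\ov{W}_n$ is not a product and the three ``infinity'' divisors $\{y_n=\infty\}$, $\{y_{n+1}=\infty\}$, $\{y=\infty\}$ all meet along an exceptional locus, one must track how the single effective Cartier divisor $\{u_0=0\}$ on $\ov{W}_n$ breaks as a sum of two reduced prime divisors with multiplicity one each; once this local decomposition is pinned down, the remainder of the argument is a direct diagram chase combining flat pullback on normalizations with the modulus condition for $Z$.
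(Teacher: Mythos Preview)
Your overall architecture is the same as the paper's: form the commutative square with $\ov{\pi}_n:\ov{W}_n\to X\times\ov{\square}_\psi^n$, lift to normalizations, pull back the modulus inequality for $Z$, and compare boundary divisors on $\ov{W}_n$. The gap is in the key divisor comparison.

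You assert the \emph{equality} $\{u_0=0\}\big|_{\ov{W}_n}=\{u_{n,0}=0\}\big|_{\ov{W}_n}+\{u_{n+1,0}=0\}\big|_{\ov{W}_n}$, arguing from the chart $u_1=u_{n,1}=u_{n+1,1}=1$ and then claiming it ``extends to the remaining charts by the symmetric form of the global equation.'' This equality is false. For instance, in the chart $u_1=u_{n,1}=u_{n+1,0}=1$ the defining equation reads $u_0\,u_{n+1,1}=u_{n,0}$, so on $\ov{W}_n$ one has $\operatorname{div}(u_{n,0})=\operatorname{div}(u_0)+\operatorname{div}(u_{n+1,1})$, and the restricted divisor $\{u_{n,0}=0\}|_{\ov{W}_n}$ picks up the extra prime component $\{u_{n,0}=0,\ u_{n+1,1}=0\}$ (that is, $\{y_n=\infty,\ y_{n+1}=0\}$) which is not contained in $\{u_0=0\}$. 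Symmetrically, $\{u_{n+1,0}=0\}|_{\ov{W}_n}$ contains the extra component $\{y_{n+1}=\infty,\ y_n=0\}$. So the right-hand side is strictly larger.

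The fix is immediate and is exactly what the paper does: you only need the \emph{inequality}
\[
\ov{i}_n^*\{u_0=0\}\ \le\ \ov{i}_n^*\{u_{n,0}=0\}+\ov{i}_n^*\{u_{n+1,0}=0\},
\]
which follows at once from the global equation $u_0u_{n,1}u_{n+1,1}=u_1u_{n,0}u_{n+1,0}$ (the left side is the divisor of $u_0/u_1$, and the extra factors $u_{n,1},u_{n+1,1}$ contribute nonnegatively). Since the modulus condition is itself an inequality, replacing your claimed identity with this inequality makes the rest of your diagram chase go through verbatim, and it matches the paper's argument.
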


\begin{proof}Let $\ov{Z}$ and $\ov{Z}'$ be the Zariski closures of $Z$ and $Z'$ in $X \times \ov{\square}_{\psi}^n$ and $X \times \ov{\square}_{\psi} ^{n+1}$, respectively. By Lemma \ref{lem:pi_n flat} and the projectivity of $\ov{\theta}_n$, we see that $\ov{\theta}_n ({\ov{Z}'}) = \ov{Z}$. Consider the commutative diagram
\begin{equation}\label{eqn:mod-1}
\xymatrix{
{\ov{Z}'} ^N \ar[d] ^f \ar[r] ^{g} \ar@/^1.2pc/[rr] ^{\nu_{Z'}} & \ov{W}_n \ar@{^{(}->}[r] ^{\ov{i}_n \ \ \ \ \ \ \ \ } \ar[rd] ^{\ov{\pi}_n} & X \times \ov{\square} _{\psi}^{n+1} \times \ov{\square}_{\psi} ^1 \ar[d] ^{\ov{\theta}_n} \\
\ov{Z}^N \ar[rr]^{\nu_Z} & & X \times \ov{\square}_{\psi} ^n,}
\end{equation}
where $f$ is induced by the surjection $\ov{\theta}_n |_{\ov{Z}'} : \ov{Z}' \to \ov{Z}$, the maps $g$ and $\nu_Z$ are normalizations of $\ov{Z}'$ and $\ov{Z}$ composed with the closed immersions, and $\nu_{Z'}:= \ov{i}_n \circ g$. By the definition of $\ov{\theta}_n$, we have $\ov{\theta}_n ^* (D \times \ov{\square}_{\psi}^n)= D\times \ov{\square} ^{n+2}_{\psi}$, $\ov{\theta}_n ^* (F_{n,n} ^{\infty}) = F_{n+2, n+2} ^{\infty}$, while $\ov{\theta}_n ^* (F_{n,i} ^{\infty})  = F_{n+2, i} ^{\infty}$ for $1 \leq i \leq n-1$. By the defining equation of $\ov{W}_n$, we have ${\ov{\pi}}_n ^* F_{n,n} ^{\infty}  = \ov{i}_n^* F_{n+2, n+2} ^{\infty} =\ov{i}_n ^* \{ u _ 0 = 0 \} \leq \ov{i}_n ^* ( \{ u_{n,0} = 0 \} + \{ u_{n+1, 0} = 0 \} ) = \ov{i}_n ^* ( F_{n+2, n} ^{\infty} + F_{n+2, n+1} ^{\infty}).$

Thus, $\nu_{Z'} ^* \ov{\theta}_n ^* \sum_{i=1} ^n F_{n,i} ^{\infty}  = \sum_{i=1} ^{n-1} \nu_{Z'} ^* F_{n+2, i} ^{\infty} + g^* \ov{\pi}_n ^* F_{n,n} ^{\infty} $ $\leq \sum_{i=1} ^{n-1} \nu_{Z'} ^* F_{n+2, i} ^{\infty}  + g^* \ov{i}_n ^* ( F_{n+2, n} ^{\infty}  + F_{n+2, n+1} ^{\infty}) = \sum_{i=1} ^{n+1} \nu_{Z'}^* F_{n+2, i} ^{\infty} \leq \sum_{i=1} ^{n+2} \nu_{Z'} ^* F_{n+2, i} ^{\infty}.$ (In case $n=1$, we just ignore the terms with $\sum_{i=1} ^{n-1}$ in the above.)

That $Z$ has modulus $D$ means $\nu_Z ^* (D \times \ov{\square}_{\psi} ^n ) \leq \sum_{i=1} ^n \nu_Z ^*  F_{n,i} ^{\infty}$. Applying $f^* $ and using ~\eqref{eqn:mod-1}, we have $\nu^*_{Z'}(D \times \ov{\square}^{n+2}_{\psi}) = \nu_{Z'} ^* \ov{\theta}_n ^* (D \times \ov{\square}^n_{\psi}) \leq \nu_{Z'} ^* \ov{\theta}_n ^* \sum_{i=1} ^n F_{n,i} ^{\infty}$, which is bounded by $\sum_{i=1} ^{n+2} \nu_{Z'} ^* F_{n+2, i} ^{\infty}$ as we saw above. This means $Z'$ has modulus $D$. 
\end{proof}

\begin{defn}For any closed subscheme $Z \subset X \times \square_{\psi} ^n$, we define $W_n (Z):= p_{n*} i_{n*} \pi_n ^* (Z)$, which is closed in $X \times \square_{\psi} ^{n+1}$. 
\end{defn}

\begin{lem}\label{lem:pi norm}
Let $n \geq 1$. If a closed subscheme $Z \subset X \times \square_{\psi} ^n$ intersects all faces properly, then $W_n (Z)$ intersects all faces of $X \times \square_{\psi} ^{n+1}$ properly.
\end{lem}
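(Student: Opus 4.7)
The plan is to translate the statement into a property of the flat pullback along the multiplication-type morphism $q_{n,\psi}$, and then verify proper intersection face by face. First, using the isomorphism $\rho_n = p_n \circ i_n : W_n \xrightarrow{\sim} X \times \square_\psi^{n+1}$ from Lemma \ref{lem:pi_n flat}, I would identify $W_n(Z) = p_{n*} i_{n*} \pi_n^*(Z)$ with the flat pullback $q_{n,\psi}^*(Z)$ along
\[
q_{n,\psi}: X \times \square_\psi^{n+1} \to X \times \square_\psi^n, \qquad (x, y_1, \ldots, y_{n+1}) \mapsto (x, y_1, \ldots, y_{n-1}, y_n y_{n+1}).
\]
This morphism is the identity on $X \times \square_\psi^{n-1}$ times the multiplication $\mu: \A^2 \to \A^1$, which is flat of relative dimension $1$ by miracle flatness (all fibers are one-dimensional). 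In particular $\codim_{X \times \square_\psi^{n+1}} W_n(Z) = \codim_{X \times \square_\psi^n} Z =: q$.

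Next, I would fix a face $F \subset X \times \square_\psi^{n+1}$, set $F' := q_{n,\psi}(F)$, and verify by case analysis on which of $y_n, y_{n+1}$ are fixed by $F$ that $F'$ is always a face of $X \times \square_\psi^n$ and the restriction $q_{n,\psi}|_F : F \to F'$ is flat. If neither $y_n$ nor $y_{n+1}$ is fixed, then $F' = F_0 \times \square_\psi$, where $F_0$ records the constraints on $y_1, \ldots, y_{n-1}$, and $q_{n,\psi}|_F = \mathrm{id}_{F_0} \times \mu$. If exactly one of $y_n, y_{n+1}$ is fixed to $\epsilon \in \{0,1\}$ with the other free, then $q_{n,\psi}|_F$ is an isomorphism onto a face when $\epsilon = 1$ (since $y_n y_{n+1}$ equals the free variable) and the trivial $\square_\psi$-bundle projection onto $F_0' \times \{0\}$ when $\epsilon = 0$. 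If both are fixed to $\epsilon_1, \epsilon_2 \in \{0,1\}$, the restriction is an isomorphism onto $F_0'' \times \{\epsilon_1 \epsilon_2\}$, which is a face because $\epsilon_1 \epsilon_2 \in \{0,1\}$.

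Finally, the composition $F \hookrightarrow X \times \square_\psi^{n+1} \xrightarrow{q_{n,\psi}} X \times \square_\psi^n$ factors scheme-theoretically through $F'$, so by associativity of fiber products
\[
q_{n,\psi}^*(Z) \cap F \;=\; Z \times_{X \times \square_\psi^n} F \;=\; (Z \cap F') \times_{F'} F.
\]
Since $q_{n,\psi}|_F$ is flat, pullback preserves codimension, and the hypothesis that $Z$ meets $F'$ properly yields $\codim_F(W_n(Z) \cap F) = \codim_{F'}(Z \cap F') \geq q$, which is the required proper intersection. The subtle case is $\epsilon = 0$: there $q_{n,\psi}^{-1}(F')$ has two irreducible components (the zero loci of $y_n$ and $y_{n+1}$ respectively), so $F$ is only one of them and the identification of $q_{n,\psi}^*(Z) \cap F$ must be checked from the defining equations rather than invoked as a textbook base change; however, since $q_{n,\psi}|_F$ still is flat as a trivial $\square_\psi$-bundle, the argument proceeds unchanged and the codimension bound goes through.
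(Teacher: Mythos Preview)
Your argument is correct and gives a self-contained proof, but it differs from the paper's approach. The paper does not argue directly at all: it simply observes that, after applying the involutions $x \mapsto 1-x$ in the coordinates $y, y_n, y_{n+1}$, the subvariety $W_n$ coincides with the corresponding object $W_n^X$ in \cite[Lemma~4.1]{LevineK}, and therefore the proper-intersection statement is a special case of Levine's lemma. In other words, the paper's ``proof'' is a one-line reduction to the literature.

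Your route, by contrast, first invokes the identification $W_n(Z) = q_{n,\psi}^*(Z)$ (which the paper derives later, in the proof of Proposition~\ref{prop:norm pullback}, using only Lemma~\ref{lem:pi_n flat}, so there is no circularity), and then checks directly that the flat morphism $q_{n,\psi}$ restricts to a flat map from each face $F \subset X \times \square_\psi^{n+1}$ onto a face $F' \subset X \times \square_\psi^n$. The case analysis is clean, and the subtlety you flag in the $\epsilon = 0$ case (that $q_{n,\psi}^{-1}(F')$ is reducible while $F$ is only one component) is handled correctly: the relevant identity $q_{n,\psi}^{-1}(Z) \cap F = (Z \cap F') \times_{F'} F$ only needs that $F \to X \times \square_\psi^n$ factors through $F'$, not that $F = q_{n,\psi}^{-1}(F')$. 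Your flatness of $\mu: \A^2 \to \A^1$ is also fine (e.g.\ $k[y_n,y_{n+1}]$ is torsion-free over the PID $k[t]$ via $t \mapsto y_n y_{n+1}$, or by miracle flatness as you say).

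What your approach buys is a direct argument that does not depend on tracking down and translating Levine's conventions; what the paper's citation buys is brevity. Both are valid.
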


\begin{proof}Our $W_n$ is equal to $\tau^* \tau_n ^* \tau_{n+1} ^* W_n ^X$, where $W_n ^X$ is that of \cite[Lemma 4.1]{LevineK}, and $\tau, \tau_n, \tau_{n+1}$ are the involutions ($x \mapsto 1-x$) for $y, y_n, y_{n+1}$, 
respectively. So, the lemma is a special case of \emph{loc.cit}. 
\end{proof}

\begin{proof}[Proof of Proposition \ref{prop:norm pullback}]
Consider the commutative diagram
$$
\xymatrix{
  W_n \ar[d]_{\pi_n} \ar[dr] ^{\rho_n = p_n|_{W_n}} \ar@{^{(}->}[r] ^{i_n \ \ \ \ \ } 
& X \times \square_{\psi} ^{n+1} \times \ov{\square}_{\psi} \ar[d]^{p_n} \\
 X \times \square_{\psi} ^n & X \times \square_{\psi} ^{n+1} 
\ar[l] _{q_{n, \psi}}.  }
 $$
By Lemma \ref{lem:pi_n flat}, $\rho_n$ is an isomorphism so that $\rho_{n*} i_n ^* p_n ^*= {\rm Id}$. Hence, $q_{n,\psi} ^* (Z) = \rho_{n*} i_n^* p_n ^* q_{n,\psi} ^* (Z) =^{\dagger} \rho_{n*} \pi_n ^* (Z) =^{\ddagger} p_{n*} i_{n*} \pi_n ^* (Z) = W_n (Z),$ where $\dagger$, $\ddagger$ are due to commutativity. So, we have reduced to showing that $W_n (Z) \in z_{\psi} ^q (X| D, n+1)$. But, by Lemmas \ref{lem:mod norm} and \ref{lem:pi norm}, we have $i_{n*} \pi_n ^* (Z) \in z^{q+1} _{\psi} (X \times \mathbb{P}^1|D \times \mathbb{P}^1, n+1)$. Now, for the projection $p_n$, by Lemma \ref{lem:projective image}, we have $ W_n (Z) = p_{n*} i_{n*} \pi_n ^* (Z) \in z^q _{\psi} (X |D, n+1)$. This proves Proposition \ref{prop:norm pullback}.
\end{proof}

\begin{proof}[Proof of Theorem \ref{thm:ecube}]
Since we know that $(\un{n} \mapsto z^q (X|D;n))$ is a cubical abelian group, every morphism $h: \un{r} \to \un{s}$ in \textbf{Cube} induces a morphism $h: \square^r \to \square^s$ which gives a homomorphism $h^*: z^q (X|D, s) \to z^q (X|D, r)$. Furthermore, the morphism $\mu: \un{2} \to \un{1}$ induces the morphism $q_1: \square^2 \to \square^1$ of varieties, and for each $Z \in z^q (X|D, 1)$, we have $q_1 ^* (Z)\in z^q (X|D, 2)$. Indeed, under the isomorphism $\psi: \square \simeq \mathbb{A}^1, y \mapsto 1/ (1-y)$, this is equivalent to show that $q_{1,\psi} ^*$ sends admissible cycles to admissible cycles, which we know by Proposition \ref{prop:norm pullback}.

So, it only remains to show the following ``stability under products'': if $h_i: \un{r_i} \to \un{s_i}$, $i=1,2$, are morphisms in \textbf{ECube} such that the corresponding morphisms $h_i: \square^{r_i} \to \square^{s_i}$ induce homomorphisms $h_i^*: z^q (X|D, s_i) \to z^q (X|D, r_i)$, for $i=1,2$ and all $q \geq 0$, then $h:=h_1 \times h_2: \square^{r_1 + r_2} \to \square^{s_1 + s_2}$ induces a homomorphism $h^* : z^q (X|D, s) \to z^q (X|D, r)$ for all $q \geq 0$, where $r=r_1 + r_2$ and $s=s_1 +s_2$. 

Since $h= h_1 \times h_2 = ({\rm Id}_{r_1} \times h_2) \circ (h_1 \times {\rm Id}_{r_2})$, we reduce to prove it when $h$ is either ${\rm Id}_{r_1} \times h_2$ or $h_1 \times {\rm Id}_{r_2}$. But the statement obviously holds for these cases.
\end{proof}

\section{On moving lemmas}\label{sec:moving}
Let $k$ be any field. In this section, we discuss some of moving lemmas on algebraic cycles with modulus conditions. By a `moving lemma', we ask whether the inclusion $z^q _{\mathcal{W}} (Y|D, \bullet) \subset z^q (Y|D, \bullet)$ in Definition \ref{defn:complex for moving} is a quasi-isomorphism. It is known when $Y$ is smooth quasi-projective and $D=0$ (by \cite{Bl2}), and when $Y= X \times \mathbb{A}^1$, with $X$ smooth projective, $D=X \times \{ t^{m+1} = 0 \}$, and $\mathcal{W}$ consists of $W \times \mathbb{A}^1$ for finitely many locally closed subsets $W \subset X$ (by \cite{KP}). Recently, W. Kai \cite{Kai} proved it when $Y$ is smooth affine with a suitable condition. Kai's cases include the above case of $Y= X \times \mathbb{A}^1$, where $X$ is this time smooth affine. His proof applies to more general cases, possibly after Nisnevich sheafifications. 

In \S \ref{sec:Kai moving}, we sketch the argument of Kai in the case of multivariate additive higher Chow groups of smooth affine $k$-variety. In \S \ref{sec:KP moving}, we generalize the moving lemma of \cite{KP} in the case of pairs $(X \times S, X \times D)$ where $X$ is smooth projective. In \S \ref{sec:contravariant} and \ref{sec:presheaf TCH}, we discuss the standard pull-back property and its consequences. In \S \ref{sec:local moving}, we discuss a moving lemma for additive higher Chow groups of smooth semi-local $k$-schemes essentially of finite type.

\subsection{Kai's affine method for multivariate additive higher Chow groups}\label{sec:Kai moving}The moving lemma of W. Kai \cite{Kai} is the first moving result that applies to cycle groups with a \emph{non-zero} modulus over a smooth \emph{affine} scheme. We sketch the proof of the following special case on multivariate additive higher Chow groups that we need, which is a bit simpler than the general case considered by him. Following Definition \ref{defn:AHC}, we write $X[r] := X \times \mathbb{A}^r$.

\begin{thm}[W. Kai]\label{thm:moving affine}Let $X$ be a smooth affine variety over any field $k$. Let $\mathcal{W}$ be a finite set of locally closed subsets of $X$. Let $\mathcal{W}[r]:= \{ W [r]  \ | \ W \in \mathcal{W}\}$. Let $ \un{m}= (m_1, \cdots, m_r) \ge 1$. Then the inclusion $z^q _{\mathcal{W}[r]}(X[r] | D_{\un{m}} , \bullet) \hookrightarrow z^q (X[r] | D_{\un{m}}, \bullet)$ is a quasi-isomorphism.
\end{thm}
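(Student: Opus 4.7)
The plan is to adapt Bloch's moving lemma strategy to cycles with modulus, by constructing a family of self-maps of $X[r]$ parametrized by an affine space, where each member of the family preserves $D_{\un{m}}$. First, I would choose a closed embedding $X \hookrightarrow \mathbb{A}^M$ and, using smoothness of $X$, construct an \'etale retraction of a Nisnevich neighborhood of $X$ in $\mathbb{A}^M$ onto $X$. This yields an analogue of a translation action $\sigma: \mathbb{A}^M \times X \to X$ with $\sigma(0,x) = x$ for every $x \in X$. On $X[r]$, I then form the family $\Sigma_{\un{s}}(x, \un{t}) = (\sigma(P(\un{t}) \un{s}, x), \un{t})$, where $P(\un{t}) = t_1^{m_1+1} \cdots t_r^{m_r+1}$ (or a sufficiently large power thereof), so that each $\Sigma_{\un{s}}$ fixes $D_{\un{m}}$ pointwise and coincides with the identity at $\un{s}=0$.

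For any $Z \in z^q(X[r] | D_{\un{m}}, n)$, I would pull back along the total family map $\Sigma: \mathbb{A}^M \times X[r] \times \square^n \to X[r] \times \square^n$ to obtain a cycle $\widetilde{Z}$ on $\mathbb{A}^M \times X[r] \times \square^n$. The central modulus-preservation statement, in Kai's spirit, is that $\widetilde{Z}$ has modulus $p^*(D_{\un{m}})$, where $p: \mathbb{A}^M \times X[r] \to X[r]$ is the projection: since $\Sigma_{\un{s}}$ agrees with $p$ modulo the ideal $(P(\un{t}))$, the normalization-based modulus inequality for $\widetilde{Z}$ reduces, via \lemref{lem:cancel} and the containment property of \propref{prop:CL*}, to the original inequality for $Z$, with the vanishing order of $P$ absorbing boundary contributions acquired upon compactifying in the $\un{s}$ and $\square^n$ directions.

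A Bertini-type argument then shows that for $\un{s}$ in a dense open $U \subset \mathbb{A}^M$, the specialization $\Sigma_{\un{s}}^*(Z)$ belongs to $z^q_{\mathcal{W}[r]}(X[r] | D_{\un{m}}, n)$. To convert this into a chain homotopy, I would restrict $\widetilde{Z}$ to a generic affine line $\mathbb{A}^1_\lambda \subset \mathbb{A}^M$ through the origin and reparametrize $\mathbb{A}^1_\lambda$ as an additional cube coordinate by a suitable linear fractional change of variable. The resulting cycle on $X[r] \times \square^{n+1}$ is admissible with modulus $D_{\un{m}}$ by the same argument as above. A standard alternating sum over such data (as in Bloch's original construction, or directly the ``sfs-move'' trick of Kai) then yields a chain homotopy between the identity on $z^q(X[r] | D_{\un{m}}, \bullet)$ and a chain map landing in $z^q_{\mathcal{W}[r]}(X[r] | D_{\un{m}}, \bullet)$, giving the quasi-isomorphism.

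The main obstacle lies in the modulus-preservation step: when closing up $\widetilde{Z}$ in the compactification $X[r] \times \ov{\square}^n$ and further in the parameter direction, the normalization of this closure can acquire new components over $D_{\un{m}}$, and one must verify that the divisor inequality of \defref{defn:modulus} still holds on these new components. The vanishing order of $P(\un{t})$ along $D_{\un{m}}$ is precisely calibrated to absorb these extra contributions, and the bookkeeping—carried out by pulling divisors back along the composed morphism from the normalization through $\Sigma$—is the technical heart of Kai's argument.
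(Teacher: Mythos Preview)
Your proposal captures Kai's central insight---deforming by translations weighted by the defining equation of $D_{\un{m}}$ so that the deformation degenerates along the divisor---but there are two genuine gaps relative to the paper's argument.

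First, the global ``translation action'' $\sigma: \mathbb{A}^M \times X \to X$ does not exist for a general smooth affine $X$. An \'etale retraction of a Nisnevich neighborhood of $X$ in $\mathbb{A}^M$ is only defined on an open set $U \supset X$, so $\sigma(P(\un{t})\un{s}, x)$ makes sense only when $x + P(\un{t})\un{s} \in U$; but for $\un{t}$ away from $D_{\un{m}}$ and $\un{s}$ generic this fails. The paper avoids this by a two-step structure: Step~1 treats $X = \mathbb{A}^d$, where genuine linear translations are available, and Step~2 reduces the general smooth affine case to $\mathbb{A}^d$ via the generic linear projection trick of \cite[\S 6]{KP}. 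Your direct approach would need either that reduction or a much more delicate local-to-global patching (which is what forces Nisnevich sheafification in Kai's general theorem).

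Second, your choice of a single $P(\un{t})$ (``or a sufficiently large power thereof'') is not enough. The paper stresses that the exponent required for modulus preservation depends on the individual cycle: for each irreducible $V$ there is a \emph{threshold} $s(V)$, and only for $s > s(V)$ does $H_{g,s}^*(V)$ retain modulus $D_{\un{m}}$. Since no single $s$ works for the whole complex, one must filter by threshold, form the subgroups $z^q_{\mathcal{W}[r],e}(X[r]|D_{\un{m}},n)^{\le s}$, build the homotopy at each finite level, and then pass to the colimit over $s$. Your sketch does not address this, and without it the ``modulus-preservation step'' you single out as the heart of the argument does not go through for a chain map on the full complex.

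Finally, the paper does not produce a homotopy over $k$ directly. It base-changes to $K = k(\mathbb{A}^d)$, gets a homotopy over $K$ between $p_{K/k}^*$ and a map factoring through $z^q_{\mathcal{W}[r]}$, and then invokes the spreading lemma (\propref{prop:spread}) to conclude that $p_{K/k}^*$ is injective on homology, hence the quotient is acyclic over $k$. Your ``restrict to a generic affine line and reparametrize'' is morally the same move, but you would still need the spreading/specialization argument to descend the conclusion from the generic parameter back to $k$.
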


First recall some preparatory results:

\begin{lem}[{\cite[Lemma 4.5]{KP}}]\label{effective}Let $f: X \to Y$ be a dominant morphism of normal varieties. Suppose that $Y$ is integral with the generic point $\eta \in Y$, and let $X_{\eta}$ be the fiber over $\eta$, with the inclusion $j_{\eta}: X_{\eta} \inj X$. 

Let $D$ be a Weil divisor on $X$ such that $j_{\eta}^* (D) \geq 0$. Then there exists a non-empty open subset $U \subset Y$ such that $j_U^* (D) \geq 0$, where $j_U: f^{-1} (U) \inj X$ is the inclusion.
\end{lem}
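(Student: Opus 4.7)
The plan is to decompose $D$ into its prime components and classify them by whether their image in $Y$ is dense. Components that dominate $Y$ will be controlled by the hypothesis on $X_\eta$, while components whose image is a proper closed subset of $Y$ can be killed simply by shrinking $Y$. First I would write $D = \sum_{i=1}^N n_i D_i$ as a finite $\Z$-linear combination of pairwise distinct prime Weil divisors on $X$, and partition the indices into $I_{\mathrm{dom}} = \{i : \overline{f(D_i)} = Y\}$ and $I_{\mathrm{nd}} = \{i : \overline{f(D_i)} \subsetneq Y\}$. I would then set $U := Y \setminus \bigcup_{i \in I_{\mathrm{nd}}} \overline{f(D_i)}$; finiteness of the index set ensures that $U$ is a non-empty open subset of $Y$, and by construction $D_i \cap f^{-1}(U) = \emptyset$ for every $i \in I_{\mathrm{nd}}$.

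The central step is to compute the flat pullback $j_\eta^*(D_i)$ prime by prime. The map $j_\eta: X_\eta \to X$ is a localization, hence flat, so $j_\eta^*$ is a well-defined pullback on cycles. For $i \in I_{\mathrm{dom}}$, the generic point $d_i$ of $D_i$ maps to $\eta$, so $d_i \in X_\eta$; since $\mathcal{O}_{Y,\eta} = K(Y)$ is a field, base change yields $\mathcal{O}_{X_\eta, d_i} = \mathcal{O}_{X, d_i}$, which is a DVR because $X$ is normal. Consequently $d_i$ is a codimension-one point of $X_\eta$, its closure $E_i$ in $X_\eta$ is a prime Weil divisor, and $j_\eta^*(D_i) = E_i$ with multiplicity one. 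Distinct indices in $I_{\mathrm{dom}}$ give distinct generic points $d_i$ and hence distinct $E_i$, while for $i \in I_{\mathrm{nd}}$ one has $D_i \cap X_\eta = \emptyset$, so $j_\eta^*(D_i) = 0$. Combining these, $j_\eta^*(D) = \sum_{i \in I_{\mathrm{dom}}} n_i E_i$ is a $\Z$-linear combination of pairwise distinct prime divisors on $X_\eta$, and the effectivity hypothesis forces $n_i \geq 0$ for every $i \in I_{\mathrm{dom}}$.

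To conclude, I would use that $j_U$ is an open immersion (hence flat), so by the construction of $U$ the pullback $j_U^*(D) = \sum_{i \in I_{\mathrm{dom}}} n_i (D_i \cap f^{-1}(U))$ is a sum with non-negative coefficients and is therefore effective. The main technical point is the multiplicity-one identity $j_\eta^*(D_i) = E_i$ for $i \in I_{\mathrm{dom}}$; it rests on the stalk computation $\mathcal{O}_{X_\eta, d_i} = \mathcal{O}_{X, d_i}$, which in turn uses crucially that $\mathcal{O}_{Y,\eta}$ is a field. Everything else reduces to elementary bookkeeping with generic points and finite unions of proper closed subsets of the integral base $Y$.
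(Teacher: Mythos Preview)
The paper does not supply its own proof of this lemma; it merely records the statement and cites \cite[Lemma~4.5]{KP}. Your argument is correct and is the standard one: separate the prime components of $D$ according to whether they dominate $Y$, use the hypothesis on the generic fibre to force the coefficients of the dominating components to be non-negative (via the stalk identification $\sO_{X_\eta,d_i}=\sO_{X,d_i}$, which guarantees that distinct dominating prime divisors restrict to distinct prime divisors on $X_\eta$ with multiplicity one), and then remove the images of the non-dominating components from $Y$. This is almost certainly the same argument as in the cited reference, so there is nothing to contrast.
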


The following generalizes \cite[Proposition 4.7]{KP}:

\begin{prop}[Spreading lemma]\label{prop:spread} Let $k \subset K$ be a purely transcendental extension. Let $(X,D)$ be a smooth quasi-projective $k$-scheme with an effective Cartier divisor, and let $\mathcal{W}$ be a finite collection of locally closed subsets of $X$. Let $(X_K, D_K)$ and $\mathcal{W}_K$ be the base changes via $\Spec (K) \to \Spec (k)$. Let $p_{K/k} : X_K \to X_k$ be the base change map. Then the pull-back map
$$p_{K/k} ^* : \frac{z^q (X|D, \bullet) }{z_{\mathcal{W}}^q (X|D, \bullet)} \to \frac{z^q (X_K|D_K, \bullet )}{z_{\mathcal{W}_K}^q (X_K|D_K, \bullet)}$$ is injective on homology.
\end{prop}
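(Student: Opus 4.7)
The plan is to follow the strategy of \cite[Proposition 4.7]{KP}: reduce to a single transcendental variable, spread a bounding chain over an affine open curve, and specialize at a rational point. Since any identity in the quotient complex involves only finitely many elements of $K$, we may assume $K = k(t_1, \ldots, t_d)$ is finitely generated, and by induction on $d$ reduce to $K = k(t)$ (the inductive step treating the base field as $k(t_1,\ldots,t_{d-1})$, which is allowed since the proposition imposes no hypothesis on $k$).

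Suppose then that $\alpha \in z^q(X|D, n)$ satisfies $p^*_{K/k}\alpha - \partial \beta = \gamma$ in $z^q(X_K|D_K, n)$ for some $\beta \in z^q(X_K|D_K, n+1)$ and $\gamma \in z^q_{\mathcal{W}_K}(X_K|D_K, n)$. The components of $\beta$ and $\gamma$ are generic fibers of integral subvarieties of $X \times U \times \square^\bullet$ for a nonempty affine open $U \subset \mathbb{A}^1_k$, giving spread-out cycles $\tilde\beta, \tilde\gamma$ over $X \times U$. Writing $\pi_U : X \times U \to X$ for the projection, by shrinking $U$ I arrange that $\pi_U^*\alpha - \partial \tilde\beta = \tilde\gamma$ over $X \times U$, that the face conditions hold for $\tilde\beta$ and $\tilde\gamma$, and that $\tilde\gamma$ meets $W \times U \times F$ properly for every $W \in \mathcal{W}$ and face $F$; these are all open conditions valid at the generic fiber of $X \times U \to U$.

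The crux of the argument is to descend the modulus condition. For each component $Z$ of $\tilde\beta$, consider the normalization $\nu: \ov{Z}^N \to X \times U \times \ov{\square}^{n+1}$ of its Zariski closure. The Weil divisor $E_Z := \sum_i \nu^* F_{n+1, i}^1 - \nu^*(D \times \ov{\square}^{n+1})$ is effective on the generic fiber over $U$ by the modulus hypothesis on $\beta$. Applying \lemref{effective} to the dominant morphism $\ov{Z}^N \to U$ yields a dense open $U_Z \subset U$ over which $E_Z$ is effective; intersecting the finitely many such $U_Z$ (and the analogous opens arising from $\tilde\gamma$), I obtain an open over which $\tilde\beta$ and $\tilde\gamma$ both have modulus $D \times U$.

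Finally, I pick a $k$-rational point $t_0$ in this open, which exists automatically when $k$ is infinite; the finite field case is handled by a standard base-change trick. After one further shrinking to ensure that specialization at $t_0$ commutes with the relevant normalizations and that intersections with $X \times \{t_0\} \times \square^\bullet$ are proper, the specializations $\beta_0 := \tilde\beta|_{t=t_0}$ and $\gamma_0 := \tilde\gamma|_{t=t_0}$ lie in $z^q(X|D, n+1)$ and $z^q_\mathcal{W}(X|D, n)$ respectively, with $\alpha - \partial\beta_0 = \gamma_0$, so $[\alpha] = 0$ in the quotient homology. The main obstacle is the modulus descent, which is precisely the role of \lemref{effective}; a secondary technical point is reconciling the family-level modulus condition with its fiber at $t_0$, which is absorbed into the shrinking process.
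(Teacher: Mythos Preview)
Your proposal is correct and follows essentially the same route as the paper: reduce to a single transcendental variable, spread the bounding data over an open $U\subset\mathbb{A}^1_k$, invoke \lemref{effective} to propagate the modulus condition from the generic fiber to an open, and specialize at a rational point (handling finite $k$ by a pro-$\ell$ argument).

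One refinement worth noting: for the final step---passing from the family modulus condition for $\tilde\beta,\tilde\gamma$ over $U$ to the modulus condition for the specializations $\beta_0,\gamma_0$---the paper does not shrink further to force normalization to commute with base change. Instead it simply observes that $\beta_0\subset\tilde\beta$ and $\gamma_0\subset\tilde\gamma$ are closed subschemes and applies the containment lemma (\propref{prop:CL*}), which immediately gives the modulus condition on the fiber. This avoids the somewhat delicate issue you allude to of controlling normalizations under specialization, and is the cleaner way to close the argument.
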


\begin{proof}
It is similar to\cite[Proposition 4.7]{KP}. We sketch its proof for the reader's convenience. If $k$ is finite, then we can use the standard pro-$\ell$-extension argument to reduce the proof to the case when $k$ is infinite, which we assume from now. We may also assume that ${\rm tr.deg}_k K < \infty$ and furthermore that ${\rm tr.deg}_k K =1$, by induction. So, we have $K= k (\mathbb{A}^1_k)$.

Suppose $Z \in z^q (X|D, n)$ is a cycle that satisfies $\partial Z \in z^q _{\mathcal{W}} (X|D, n-1)$, and $Z_K = \partial (B_K) + V_K$ for some $B_K \in z^q (X_K|D_K, n+1)$ and $V_K \in z^q _{\mathcal{W}_K} (X_K|D_K, n)$. Consider the inclusion $z^q (X_K|D_K, \bullet) \hookrightarrow z^q (X_K, \bullet )$. Then there is a non-empty open $U' \subset \mathbb{A}_k ^1$ such that $B_K = B_{U'}|_{\eta}, V_K= V_{U'}|_{\eta}$, $Z \times U' = \partial (B_{U'}) + V_{U'}$ for some $B_{U'} \in z^q (X \times U' , n+1)$, $V_{U'} \in z^q _{\mathcal{W} \times U'} (X \times U' , n)$, where $\eta$ is the generic point of $U'$.  Let $j_\eta: X \times \eta \to X \times U'$ be the inclusion, which is flat.

 Since $B_K, V_K$ satisfy the modulus condition, we have $j_{\eta} ^* (X \times U' \times F^1 _{n+1} - D \times U' \times \ov{\square}^{n+1}) \geq 0$ on $\ov{B}_K ^N$ and similarly for $\ov{V}_K ^N$. Furthermore, $\ov{B}_{U'} ^N \to U', \ov{V}_{U'} ^N \to U'$ are dominant. Thus by Lemma \ref{effective}, there is a non-empty open $U \subset U'$ such that $j_U^* (X \times U' \times F_{n+1} ^1 - D \times U' \times \ov{\square}^{n+1}) \geq 0$ on $\ov{B}_U ^N$ and similarly for $\ov{V}_U ^N$, for $j_U: X \times U \hookrightarrow X \times U'$. This proves that $B_U$ and $V_U$ have modulus $D \times U$. Hence, $B_U \in z^q (X \times U | D \times U, n+1)$ and $V_U \in z^q _{\mathcal{W}\times U} (X \times U|D \times U, n)$ with $Z \times U = \partial (B_U) + V_U$. 

Since $k$ is infinite, the set $U(k) \inj U$ is dense. We claim the following:

\noindent \textbf{Claim:} \emph{There is a point $u \in U(k)$ such that the pull-backs of $B_U$ and $V_U$ under the inclusion $i_u : X \times \{ u \} \inj X \times U$ are both defined in $z^q (X , n+1)$ and $z^q _{\mathcal{W} } (X , n)$, respectively.}

 Its proof requires the following elementary fact:

\noindent \textbf{Lemma:} \emph{Let $Y$ be any $k$-scheme. Let $B \in z^q (Y \times U)$ be a cycle. Then there exists a nonempty open subset $U'' \subset U$ such that for each $u \in U''(k)$, the closed subscheme $Y \times \{ u \}$ intersects $B$ properly on $Y \times U$, thus it defines a cycle $i_u ^* (B) \in z^q (Y)$, where $Y$ is identified with $Y \times \{ u \}$.}

Note that for each $u \in U(k)$, the subscheme $Y \times \{ u \} \subset Y \times U$ is an effective divisor so that its proper intersection with $B$ is equivalent to that $Y \times \{ u \}$ does not contain any irreducible component of $B$. If there exists a point $u_i \in U (k)$ such that $Y \times \{ u_i \}$ contains an irreducible component $B_i$ of $B$, then for any other $u \in U (k) \setminus \{u_i \}$, we have $(Y \times \{ u \}) \cap B_i = \emptyset$. So, for every irreducible component $B_i$ of $B$, there exists at most one $u_i \in U(k)$ such that $Y \times \{ u_i\}$ contains $B_i$. Let $S$ be the union of such points $u_i$, if they exist. Because there are only finitely many irreducible components of $B$, we have $|S|< \infty$. So, taking $U'' := U \setminus S$, we have proven \textbf{Lemma}.

We now prove \textbf{Claim}. Let $F \subset \square^{n+1}$ be any face, including the case $F= \square^{n+1}$. Since $B_U \in z^q (X\times U, n+1)$, by definition $X \times U \times F$ and $B_U$ intersect properly on $X \times U \times \square^{n+1}$, so their intersection gives a cycle $B_{U,F} \in z^q (X \times U \times F)$. By \textbf{Lemma} with $Y= X \times F$, there exists a nonempty open subset $U_F \subset U$ such that $B_{U,F}$ defines a cycle in $z^q (X \times \{u \} \times F)$ for every $u \in U_F (k)$. Let $\mathcal{U}_1:= \bigcap_F U_F$, where the intersection is taken over all faces $F$ of $\square^{n+1}$. This is a nonempty open subset of $U$. Similarly, let $F \subset \square^n$ be any face, including the case $F= \square^n$. Here, $V_U \in z^q _{\mathcal{W} \times U} (X \times U, n)$, and repeating the above argument involving \textbf{Lemma} with $Y= W \times F$ for $W \in \mathcal{W}$, we get a nonempty open subset $U_{W,F} \subset U$ such that we have an induced cycle in $z^q (W \times \{ u \} \times F)$ for every $u \in U_{W,F} (k)$. Let $\mathcal{U}_2:= \bigcap_{W, F} U_{W,F}$, where the intersection is taken over all pairs $(W,F)$, with $W \in \mathcal{W}$ and a face $F \subset \square^n$. Taking $\mathcal{U}:= \mathcal{U}_1 \cap \mathcal{U}_2$, which is a nonempty open subset of $U$, we now obtain \textbf{Claim} for every $ u \in \mathcal{U}(k)$. 

Finally, for such a point $u$ as in \textbf{Claim}, by the containment lemma (Proposition \ref{prop:CL*}), $i_u ^* (B_U)$ and $ i_u ^* (V_U)$ have modulus $D$. Hence, $i_u ^* (B_U) \in z^q (X|D, n+1)$ and $i_u ^* (V_U) \in z^q _{\mathcal{W}} (X|D, n)$. This finishes the proof.
\end{proof}

\begin{proof}[Sketch of the proof of Theorem \ref{thm:moving affine}]\textbf{Step 1}. We first show it when $X= \mathbb{A}^d_k$. Let $K= k(\mathbb{A}^d_k)$ and let $\eta \in X$ be the generic point. For simplicity, using the automorphism $y \mapsto 1/ (1-y)$ of $\mathbb{P}^1$, we replace $(\square, \{ \infty , 0 \})$ by $(\mathbb{A}^1, \{ 0, 1 \})$, and write $\square = \mathbb{A}^1$. For any $g \in \mathbb{A}^d$ and an integer $s >0$, define $\phi_{g,s}: \mathbb{A}^d _{k(g)} [r] \times_{k(g)} \square^1 _{k (g)} \to \mathbb{A}^d _{k(g)} [r]$ by $\phi_{g,s} (\un{x}, \un{t}, y) := (\un{x} + y (t_1 ^{m_1} \cdots t_r ^{m_r})^s g, \un{t})$, where $k(g)$ is the residue field of $g$. (N.B. In terms of W. Kai's homotopy, our $g \in \mathbb{A}^d$ corresponds to his $v=(g, 0, \cdots, 0) \in \mathbb{A}^d[r]= \mathbb{A}^{d+r}$.) For any cycle $V \in z^q (X[r] | D_{\un{m}}, n)$, define $H_{g, s} ^* (V):= (\phi_{g,s} \times {\rm Id}_{\square^n})^* p_{k(g)/k} ^* (V)$, where $p_{k(g)/k} : \mathbb{A}^d_{k(g)} [r] \times \square^n \to \mathbb{A}^d _k [r] \times \square^n$ is the base change map.

Using \cite[Lemma 1.2]{Bl1}, one checks that $H_{g,s} ^* (V)$ preserves the face condition for $V$. Moreover, if $V \in  z^q_{\sW}(X[r], n)$, then so does $H_{g,s} ^* (V)$. When $g = \eta$, another application of \cite[Lemma 1.2]{Bl1} shows that $H_{g,s} ^* (V)$ intersects with all $W[r] \times F$ properly, where $W \in \mathcal{W}$ and a $F \subset \square^n$ is a face. The argument for proving these face conditions follows the same steps as that of the proof of \cite[Lemma~5.5, Case~2]{KP} though the present case is slightly different so that we need to use \cite[Lemma 1.2]{Bl1} instead of \cite[Lemma 1.1]{Bl1} (see \cite[Lemma 3.5]{Kai} for more detail).

On the other hand, by \cite[Proposition 3.3]{Kai}, for each irreducible $V \in z^q (X[r]| D_{\un{m}}, n)$, there is an integer $s(V) \geq 0$ such that for any $s > s(V)$ and for any $g \in \mathbb{A}^d$, the cycle $H_{g,s} ^* (V)$ has modulus $D_{\un{m}}$. Let's call the smallest such integer $s(V)$, \emph{the threshold of $V$} for simplicity. (N.B. The existence of this number is one of the important points of W. Kai's contributions. While the reader can find its proof in \emph{loc.cit.}, the rough idea is that instead of translations by $g$ used in usual higher Chow groups, which correspond to $s=0$, W. Kai uses ``adjusted" translations as in the above definition of $\phi_{g,s}$, so that near the divisors $\{t_i = 0\}$, the effect of ``adjusted'' translation is also small, while away from the divisors $\{t_i =0\}$, the effect of ``adjusted" translation gets larger.)

So, as in \cite[\S 3.4]{Kai}, if we consider the subgroup $z^q _{\mathcal{W}[r], e}(X[r] |D_{\un{m}}, n) ^{\leq s}$  $\subset z^q _{\mathcal{W}[r], e}(X[r] | D_{\un{m}}, n)$ for $s>0$, consisting of cycles all of whose irreducible components $V$ have threshold $s(V) \leq s$, then $$\frac{z^q _{\mathcal{W}[r], e} ( X[r] |D_{\un{m}}, n)}{  z^q _{\mathcal{W}[r]} (X[r]|D_{\un{m}}, n)} = \underset{\rightarrow s}{\lim} \frac{z^q _{\mathcal{W}[r], e} ( X[r] |D_{\un{m}}, n) ^{\leq s}}{  z^q _{\mathcal{W}[r]} (X[r]|D_{\un{m}}, n) ^{\leq s}}.$$ Then one has the map 
$$H_{\eta, s}^* : \frac{z^q _{\mathcal{W}[r], e} ( X[r] |D_{\un{m}}, n) ^{\leq s}}{  z^q _{\mathcal{W}[r]} (X[r]|D_{\un{m}}, n) ^{\leq s}} \to \frac{z^q _{\mathcal{W}[r], e} ( X_K[r] |D_{\un{m}}, n+1)}{  z^q _{\mathcal{W}[r]} (X_K[r]|D_{\un{m}}, n+1)},$$
which gives a homotopy between the base change $p_{K/k} ^*$ and $H_{\eta, s} ^* |_{y_1=1}$. However, $H_{\eta, s} ^*|_{y_1 = 1}$ is zero on the quotient, while $p_{K/k} ^*$ is injective on homology by Proposition \ref{prop:spread}, after taking $s \to \infty$, so that the map $p_{K/k} ^*$ is in fact zero on homology. This means, the quotient $ z^q _{\mathcal{W}[r], e} ( X[r] |D_{\un{m}}, n) /  z^q _{\mathcal{W}[r]} (X[r]|D_{\un{m}}, n)$ is acyclic, proving the moving lemma for $X= \mathbb{A}^d_k$.

\textbf{Step 2}. If $X$ is a general smooth affine $k$-variety of dimension $d$, we use the standard generic linear projection trick. We choose a closed immersion $X \hookrightarrow \mathbb{A}^N$ for some $N \gg d$ and run the steps of \S 6 of \cite{KP} (with $\P^n$ replaced by $\A^N$ everywhere) \emph{mutatis mutandis} to conclude the proof of the moving lemma for $X$ from that of affine spaces. We leave the details for the reader.
\end{proof}

\subsection{Projective method for multivariate additive higher Chow groups}\label{sec:KP moving}
The following theorem generalizes the moving lemma for additive higher Chow groups of smooth projective schemes \cite[Theorem 4.1]{KP} to a general setting which includes the multivariate additive higher Chow groups.

\begin{thm}\label{thm:moving proj}Let $(S,D)$ be a smooth quasi-projective $k$-variety with an effective Cartier divisor. Let $X$ be a smooth projective $k$-variety. Let $\mathcal{W}$ be a finite collection of locally closed subsets of $X$. We let $\mathcal{W} \times S:= \{ W \times S | W \in \mathcal{W} \}$. Then the inclusion $z^q _{\mathcal{W} \times S} (X\times S| X \times D, \bullet) \hookrightarrow z^q (X \times S| X \times D, \bullet)$ is a quasi-isomorphism. In particular, when $\un{m}= (m_1, \cdots, m_r) \ge 1$, and $(S,D) = (\mathbb{A}^r, D_{\un{m}})$, the moving lemma holds for multivariate additive higher Chow groups of smooth projective varieties over $k$.
\end{thm}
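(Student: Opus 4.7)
The plan is to adapt the proof of \cite[Theorem~4.1]{KP} \emph{mutatis mutandis}, exploiting the crucial feature that the modulus divisor $X \times D$ on $X \times S$ is pulled back from $D$ via the projection $X \times S \to S$. Consequently, any family of automorphisms of $X$ acting trivially on $S$ preserves the modulus condition automatically, so the moving argument reduces to the ordinary (non-modulus) problem of placing cycles in good position against $\sW \times S$, which is handled by Bloch--Levine style generic translations after a projective embedding of $X$. In particular, no Kai-type ``adjusted translation'' is needed, since there is nothing to adjust in the $X$-direction.

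First, I would treat the case when $X$ is a projective space, $X = \P^n$, following \cite[\S 5]{KP}. Let $G := \mathrm{PGL}_{n+1}$ and $K := k(G)$, and pick a line $L \subset G$ through the identity. This yields $\Phi : \P^n \times L \to \P^n$, $(x, g) \mapsto g \cdot x$, and hence $\widetilde{\Phi} := \Phi \times \mathrm{id} : \P^n \times S \times L \times \square^n \to \P^n \times S \times \square^n$. For $V \in z^q(\P^n \times S \mid \P^n \times D, n)$, the pullback $\widetilde{\Phi}^*(V)$ provides, after identifying $L \simeq \square^1$, a homotopy between the base change $p_{K/k}^*(V)$ and a cycle lying in $z^q_{\sW \times S}(\P^n_K \times S_K \mid \P^n_K \times D_K, n)$. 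The proper intersection with $\sW \times S \times F$ at the generic fiber follows from Bloch's lemma \cite[Lemma~1.1]{Bl1} applied fiberwise over $S \times \square^n$, since $G$ acts transitively on $\P^n$. Crucially, the modulus condition is preserved verbatim, as $\widetilde{\Phi}^*(\P^n \times D) = \P^n \times L \times D$.

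Next, for a general smooth projective $X$, I would reduce to the case $X = \P^n$ via the generic linear projection argument of \cite[\S 6]{KP}. Choosing a closed embedding $X \hookrightarrow \P^N$ with $N \gg n$ and a generic linear projection $\pi : \P^N \dashrightarrow \P^n$ whose restriction $\pi|_X : X \to \P^n$ is finite and surjective (with $n = \dim X$), one transfers the moving property from $\P^n \times S$ back to $X \times S$ through pushforward--pullback along $\pi \times \mathrm{id}_S$. Since these manipulations live entirely in the $X$-direction and commute with the projection $X \times S \to S$, admissibility of the resulting cycles with respect to $X \times D$ is automatic from the containment lemma (Proposition~\ref{prop:CL*}); only the face-type proper intersection estimates against $\sW \times S$ require bookkeeping, exactly as in \emph{loc.\,cit.}

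Finally, the spreading lemma (Proposition~\ref{prop:spread}) for the purely transcendental extension $k \subset K$ shows that $p_{K/k}^*$ is injective on homology of the quotient complex $z^q(X \times S \mid X \times D, \bullet)/z^q_{\sW \times S}(X \times S \mid X \times D, \bullet)$. The homotopy $\widetilde{\Phi}^*$ identifies $p_{K/k}^*$ on this quotient with a map that factors through the subcomplex and is therefore zero on homology; thus the quotient complex is acyclic, yielding the desired quasi-isomorphism. The principal obstacle I anticipate lies in the linear projection step for general $X$: one must verify that the blow-ups and residual cycles used in \cite[\S 6]{KP} to correct the pushforward--pullback along the finite projection remain admissible with modulus $X \times D$. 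Because these blow-ups take place in the $X$-factor alone and never perturb the $S$-coordinate, Proposition~\ref{prop:CL*} should make the verification essentially formal.
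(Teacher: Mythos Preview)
Your approach is the paper's approach, and the outline is correct. Two points need sharpening beyond what you wrote.

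First, the homotopy must be set up over $K = k(G)$, not over $k$: one needs a $K$-morphism $\phi: \square^1_K \to G_K$ with $\phi(0) = \mathrm{Id}$ and $\phi(\infty) = \eta$ the generic point of $G$, as in \cite[Lemma~5.4]{KP}. A line $L \subset G$ over $k$ never reaches $\eta$, so the Kleiman--Bloch transversality argument via \cite[Lemma~1.1]{Bl1} would not force good position at the far end of the homotopy. (The paper uses $G = SL_{d+1}$ and checks explicitly that $k(SL_{d+1})$ is purely transcendental, needed for \propref{prop:spread}; $\mathrm{PGL}_{n+1}$ is rational and works as well.)

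Second, your one-line modulus verification ``$\widetilde{\Phi}^*(\P^n \times D) = \P^n \times L \times D$'' is a correct divisor identity but does not by itself yield the modulus condition for $Z' := H_n^*(Z)$. The issue is that $\mu_\phi$ does not extend to the compactification $\P^d_K \times S_K \times \ov{\square}_K^{n+1}$ (since $\phi$ need not extend over $y_1 = 1$), so you cannot directly transport the inequality on normalizations. The paper's fix realizes your intuition via containment: set $V = p(Z)$ on $S \times \square^n$ for the proper projection $p: \P^d \times S \times \square^n \to S \times \square^n$; then $V$ is admissible by \lemref{lem:projective image}, and since $\mu_\phi$ fixes the $S \times \square$-coordinates one has $Z' \subset \P^d_K \times \square^1_K \times V =: Z_1$. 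The cycle $Z_1$ manifestly has modulus $\P^d_K \times D_K$, so \propref{prop:CL*} gives the modulus for $Z'$. This is where projectivity of $X$ enters essentially.
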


\begin{proof}Most arguments of \cite[Theorem 4.1]{KP} work with minor changes, so we sketch the proof.

\textbf{Step 1.} We first prove the theorem when $X= \mathbb{P}^d_k$. The algebraic group $SL_{d+1, k}$ acts on $\mathbb{P}^d$. Let $K= k(SL_{d+1, k})$. Then there is a $K$-morphism $\phi: \square_K ^1 \to  SL_{d+1,K}$ such that $\phi (0) = {\rm Id},$ and $\phi (\infty) = \eta$, where $\eta$ is the generic point of $SL_{d+1, k}$. See \cite[Lemma 5.4]{KP}. For such $\phi$, consider the composition $H_n$ of morphisms
$$\mathbb{P}^d \times S \times \square_K ^{n+1} \overset{\mu_{\phi}}{\to} \mathbb{P}^d \times S \times \square_K ^{n+1} \overset{ {\rm pr}_K'}{\to} \mathbb{P}^d \times S \times \square_K ^n \overset{ p_{K/k}}{\to} \mathbb{P}^d \times S \times \square_k ^n,$$
where $\mu_{\phi} (\un{x}, s, y_1, \cdots, y_{n+1}) = (\phi(y_1) \un{x}, s, y_1, \cdots, y_{n+1})$, ${\rm pr}_K '$ is the projection dropping $y_1$, and $p_{K/k}$ is the base change. We claim that $H_n^*$ carries $z_{\mathcal{W}\times S} ^q (\mathbb{P}^d \times S| \mathbb{P}^d \times D, n)$ to $z_{\mathcal{W}\times S }^q (\mathbb{P}^d_K \times S | \mathbb{P}^d _K \times D, n+1)$, i.e., for an irreducible cycle $Z \in z_{\mathcal{W}\times S } ^q (\mathbb{P}^d \times S | \mathbb{P}^d \times S, n)$, we show that $Z':= H_n ^* (Z) \in z_{\mathcal{W}\times S }^q (\mathbb{P}^d_K \times S | \mathbb{P}^d _K \times D, n+1)$.

To do so, we first claim that $Z'$ intersects with $W\times S \times F_K$ properly for each $W \in \mathcal{W}$ and each face $F \subset \square^{n+1}$. 

(1) In case $F= \{ 0 \} \times F'$ for some face $F' \subset \square^n$, because $\phi (0) = {\rm Id}$, we have $Z' \cap (W \times S \times F_K) \simeq Z_K \cap (W \times S \times F_K ')$. Note that $\dim ( W \times S \times F_K) = \dim (W \times S \times F'_K)$. Hence, $\codim _{W \times S \times F_K} (Z' \cap (W \times S \times F_K)) = \dim (W \times S \times F_K) - \dim (Z' \cap (W \times S \times F_K)) = \dim (W \times S \times F' _K) - \dim (Z_K \cap (W \times S \times F'_K)) = \dim (W \times S \times F') - \dim (Z \cap (W \times S \times F')) = \codim_{ W \times S \times F'} (Z \cap (W \times S \times F')) \geq q$, because $Z \in z^q _{\mathcal{W} \times S} (\mathbb{P}^d \times S |\mathbb{P}^d \times D , n)$. 

(2) In case $F = \{ \infty \} \times F'$ for some face $F' \subset \square^n$, $\dim (W \times S \times F_K) = \dim (W \times S \times F'_K)$ and $Z' \cap (W \times S \times F_K) \simeq \eta \cdot (Z_K) \cap (W \times S \times F'_K)$, where $SL_{d+1,k}$ acts on $\mathbb{P}^d \times S \times F'$, naturally on $\mathbb{P}^d$ and trivially on $S \times F'$. Let $A:= W \times S \times F'$ and $B:= Z \cap (\mathbb{P}^d \times S \times F')$. Thus, $\codim_{W \times S \times F_K} (Z' \cap (W \times S \times F_K)) = \dim (W \times S \times F_K) - \dim (Z' \cap (W \times S \times F_K)) = \dim (W \times S \times  F'_K) - \dim (\eta \cdot (Z_K) \cap (W \times S \times F_K')) = ^{\dagger} \dim (A_K) - \dim (\eta \cdot B_K \cap A_K) = \codim _{A_K} (\eta \cdot B_K \cap A_K),$ where $\dagger$ holds because $Z \cap A = B \cap A$. By applying \cite[Lemma 1.1]{Bl1} to $G=SL_{d+1, k}$, and the above $A, B$ on $\mathcal{X}:= \mathbb{P}^d \times S \times F'$, there is a non-empty open subset $U \subset G$ such that for all $g \in U$, the intersection $(g  \cdot A) \cap B$ is proper on $\mathcal{X}$. By shrinking $U$, we may assume $U$ is invariant under inverse map, so $g = \eta^{-1} \in U$. Thus, $\codim_{A_K} ((\eta \cdot B_K )\cap A_K) \geq \codim_{\mathcal{X}_K} (\eta \cdot B_K)$. Since $\codim_{\mathcal{X}_K} (\eta \cdot B_K) = \codim _{\mathcal{X}_K} B_K$ and $\codim_{\mathcal{X}_K} B_K = q$, we get $\codim_{W \times S \times F_K} (Z' \cap ( W \times S \times F_K)) = \codim_{A_K} ((\eta \cdot B_K) \cap A_K) \geq \codim _{\mathcal{X}_K} B_K = q$.

(3) In case $F = \square \times F'$ for some face $F' \subset \square^n$, the projection $Z' \cap (W \times S \times \square \times F'_K) \to \square_K$ is flat, being a dominant map to a curve, so $\dim ( Z' \cap (W \times S \times \square \times F'_K)) = \dim (Z' \cap (W \times S \times \{ \infty \} \times F'_K)) + 1$. We also have $\dim (W \times S \times \square \times F'_K) = \dim (W \times S \times \{ \infty \} \times F'_K)+1$. Hence, we deduce $\codim_{W \times S \times F_K} (Z' \cap (W \times S \times F_K)) = \dim (W \times S \times \square \times F_K) - \dim (Z' \cap (W \times S \times \square \times F'_K)) = \codim _{W \times S \times \{ \infty \} \times F'_K} (Z' \cap (W \times S  \times \{ \infty \} \times F'_K)) \geq^{\dagger} q$, where $\dagger$ follows from case (2). This shows $Z'$ intersects all faces properly.

Now we show that $Z'$ has modulus $\mathbb{P}^d \times D$. We drop all exchange of the factors, for simplicity. For $p: \mathbb{P}^d\to \Spec (k)$, we take $V= p(Z)$ on $S \times \square^n$. Because $Z \subset p^{-1} (p(Z)) = \mathbb{P}^r \times V$, we have $Z' = \mu_{\phi} ^* (\square^1_K\times Z ) \subset \mu_{\phi} ^* (\mathbb{P}^d \times \square_K ^1 \times V) = \mathbb{P}^d \times \square_K ^1 \times V :=Z_1$. By Lemma \ref{lem:projective image}, $V$ is admissible on $S \times \square^n$. So, $p^* [V] = \mathbb{P}^d \times V$ is admissible on $\mathbb{P}^d \times S \times \square^n$. In particular, $\mathbb{P}^d \times V$ has modulus $\mathbb{P}^d \times D$. Hence, $Z_1 = \mathbb{P}^d \times \square_K ^1 \times V$ also has modulus $\mathbb{P}^d _K  \times D$. Now, $Z' \subset Z_1$ shows that $Z'$ has modulus $\mathbb{P}^d_K  \times D$ by Proposition \ref{prop:CL*}. Thus, we proved $Z' \in z_{\mathcal{W}\times S }^q (\mathbb{P}^d_K \times S | \mathbb{P}^d _K \times D, n+1)$.

Going back to the proof, one checks that $H_{\bullet} ^*: z^q (\mathbb{P}^d \times S | \mathbb{P}^d \times D, \bullet) \to z^q (\mathbb{P}^d_K \times S  | \mathbb{P}^d \times D, \bullet+1)$ is a chain homotopy satisfying $\partial H^* (Z)  + H^* \partial (Z) = Z_K - \eta \cdot (Z_K)$, and the same holds for $z_{\mathcal{W}\times S}$ by a straightforward computation (see \cite[Lemma 5.6]{KP}). Furthermore, for each admissible $Z$, we have $\eta \cdot Z_K \in z^q _{\mathcal{W}_K \times S } (\mathbb{P}_K ^d \times S | \mathbb{P}^d _K \times D, n)$, by the above proof of proper intersection of $Z'$ with $W \times S \times F_K$, where $F = \{ \infty \} \times F'$ for a face $F' \subset \square^n$. Hence, the base change $p_{K/k} ^* : z^q (\mathbb{P}_k ^d \times S | \mathbb{P}_k ^d \times D, \bullet) / z^q_{\mathcal{W} \times S } (\mathbb{P}^d_k \times S |\mathbb{P}^d _k \times D,  \bullet) \to z^q (\mathbb{P}_K ^d \times S | \mathbb{P}^d _K \times D, \bullet) / z^q _{\mathcal{W}_K \times S } (\mathbb{P}_K ^d \times S | \mathbb{P}_K ^d \times D, \bullet)$ is homotopic to $\eta \cdot p_{K/k} ^*$, which is zero on the quotient. That is, $p_{K/k} ^*$ on the above quotient complex is zero on homology. However, by the spreading argument (Proposition \ref{prop:spread}), $p_{K/k} ^*$ is injective on homology. (N.B. We used here an elementary fact that $k(SL_{d+1,k})$ is purely transcendental over $k$. To check this fact, first note that by definition $k[SL_{d+1,k}]\simeq k[\{T_{i,j}| 1 \leq i,j \leq d+1 \}]/ (\det (M) -1)$ for the $(d+1, d+1)$-matrix $M=[T_{ij}]$ consisting of indeterminates $T_{i,j}$ for $1 \leq i,j \leq d+1$. Here by Cramer's rule we can write $\det(M) -1 = \alpha T_{d+1, d+1} - \beta -1$, where $\alpha = \det (M_{d+1, d+1})$, $\beta = \sum_{1 \leq j \leq d} (-1) ^{d+1+j} \det (M_{d+1, j})$ and $M_{ij}$ is the $(i,j)$-minor of $M$. Here both $\alpha$ and $ \beta$ do not have $T_{d+1, d+1}$. Hence $k[SL_{d+1,k}]\simeq k[\{T_{ij}|1\leq i,j \leq d+1, (i,j) \not = (d+1, d+1)\}, \frac{ \beta +1}{\alpha}]$. Thus, $k(SL_{d+1, k})\simeq k(\{T_{ij}|1\leq i,j \leq d+1, (i,j) \not = (d+1, d+1)\})$, which is purely transcendental over $k$.) Hence, the quotient complex $z^q (\mathbb{P}^d \times S |\mathbb{P}^d \times D, \bullet)/ z^q _{\mathcal{W}\times S} (\mathbb{P}^d \times S| \mathbb{P}^d \times D,  \bullet)$ is acyclic, i.e., the moving lemma holds for $(\mathbb{P}^d \times S, \mathbb{P}^d \times D)$, finishing Step 1.

 \textbf{Step 2.} Now let $X$ be a general smooth projective variety of dimension $d$. In this case, we choose a closed immersion $X \hookrightarrow \mathbb{P}^N$ for some $N \gg d$. We now run the linear projection argument of \cite[\S 6]{KP} again without any extra argument to deduce the proof of the moving lemma for $X$ from that of the projective spaces. We leave out the details.
\end{proof}

\subsection{Contravariant functoriality}\label{sec:contravariant}
The following contravariant functoriality of multivariate additive higher Chow groups is an immediate application of the moving lemma and the proof is identical to that of \cite[Theorem~7.1]{KP}.

\begin{thm}\label{thm:contravariant} Let $f:X \to Y$ be a morphism of $k$-varieties, with $Y$ smooth affine or smooth projective. Let $r \geq 1$ and $\un{m} = (m_1, \cdots, m_r) \ge 1$. Then there exists a pull-back $f^*: \CH^q (Y[r] |D_{\un{m}},n) \to \CH^q (X[r] | D_{\un{m}}, n)$.

If $g: Y \to Z$ is another morphism with $Z$ smooth affine or smooth projective, then we have $(g\circ f)^* = f^* \circ g^*$.
\end{thm}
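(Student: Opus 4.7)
The plan is to define $f^*$ by the usual graph construction combined with the affine/projective moving lemmas of \S\ref{sec:moving}. Factor $f \times \mathrm{id}_{\mathbb{A}^r \times \square^n}$ as
\[
X[r] \times \square^n \xrightarrow{\gamma_f \times \mathrm{id}} X \times Y[r] \times \square^n \xrightarrow{p} Y[r] \times \square^n,
\]
where $\gamma_f : X \hookrightarrow X \times Y$ is the graph immersion and $p$ is the flat projection. For a cycle $Z \in z^q(Y[r] | D_{\un{m}}, n)$, the flat pull-back $p^*Z \cong X \times Z$ automatically lies in $z^q(X \times Y[r] | X \times D_{\un{m}}, n)$, since flat pull-backs preserve cycles with modulus (\cite[Proposition~2.12]{KPv}). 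Refine-intersecting with $\Gamma_f \times \mathbb{A}^r \times \square^n$ then yields a codimension-$q$ cycle on $X[r] \times \square^n$, and the modulus condition descends to it by the containment lemma (\propref{prop:CL*}) applied to its irreducible components, after the identification $\Gamma_f \times \mathbb{A}^r \cong X[r]$ that sends $X \times D_{\un{m}} \cap (\Gamma_f \times \mathbb{A}^r)$ to $D_{\un{m}}$.

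The obstruction is that a general $Z$ need not meet $f(X) \times \mathbb{A}^r \times F$ properly for every face $F \subset \square^n$, so the refined intersection above is not globally defined at the chain level. To overcome this, I would stratify the constructible subset $f(X) \subset Y$ into a finite collection $\mathcal{W}$ of locally closed subsets, endow it with an excess function $e : \mathcal{W} \to \Z_{\geq 0}$ recording the fibre dimensions of $f$ over each stratum, and observe that any $Z \in z^q_{\mathcal{W}[r]}(Y[r]|D_{\un{m}}, n)$ (in the sense of \defref{defn:complex for moving}) does admit the refined pull-back $(\gamma_f \times \mathrm{id})^* p^* Z$ with all face conditions satisfied. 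Because $Y$ is either smooth affine or smooth projective, \thmref{thm:moving affine} or \thmref{thm:moving proj} produces the quasi-isomorphism
\[
z^q_{\mathcal{W}[r]}(Y[r]|D_{\un{m}}, \bullet) \hookrightarrow z^q(Y[r]|D_{\un{m}}, \bullet),
\]
and inverting it, the chain-level assignment descends to a well-defined homomorphism $f^* : \CH^q(Y[r]|D_{\un{m}}, n) \to \CH^q(X[r]|D_{\un{m}}, n)$.

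For the composition law $(g \circ f)^* = f^* \circ g^*$, I would choose on $Z$ a single collection refining the stratifications needed for $g$, for $g \circ f$, and for the $f$-pull-back of each $g$-stratum, apply the moving lemma on $Z$ to represent classes by cycles good with respect to all these data simultaneously, and then conclude from the transitivity of flat pull-back and the associativity of refined intersection at the chain level. The main obstacle throughout is the bookkeeping of the pair $(\mathcal{W}, e)$: the excess function must be large enough to absorb every face intersection, yet small enough that the moving theorems of \S\ref{sec:moving} still deliver a quasi-isomorphism in codimension $q$. Once this calibration is in place, the argument is a formal transcription of the proof of \cite[Theorem~7.1]{KP}, with the projective moving lemma there replaced by \thmref{thm:moving affine} or \thmref{thm:moving proj} according to the hypothesis on $Y$.
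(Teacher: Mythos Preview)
Your proposal is correct and follows essentially the same approach as the paper, which simply records that the proof is identical to \cite[Theorem~7.1]{KP} once the projective moving lemma there is replaced by \thmref{thm:moving affine} or \thmref{thm:moving proj}. One small clarification: your worry about calibrating the excess function $e$ is unnecessary---for the graph pull-back one needs only $e=0$ (proper intersection of $Z$ with each $W[r]\times F$), and this is exactly what the moving lemmas of \S\ref{sec:moving} supply, so there is no genuine bookkeeping obstacle.
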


\begin{remk}As a special case, when $r=1$, we have the pull-back map $f^*: \TH ^q (Y, n; m) \to \TH^q (X, n; m)$.
\end{remk}

\subsection{The presheaf $\mathcal{TCH}$}\label{sec:presheaf TCH}For the rest of the section, we concentrate on additive higher Chow groups. Let $m \geq 0$. By Theorem \ref{thm:contravariant}, we see that $T^q_{n,m} := \TH^q (-, n; m)$ is a presheaf of abelian groups on the category $\SmAff_k$, but we do not know if it is a presheaf on the categories $\Sm_k$ or $\Sch_k$. However, we can exploit Theorem \ref{thm:contravariant} further to {define} a new presheaf on $\Sm_k$ and $\Sch_k$. The idea of this detour occurred to the authors while working on \cite{sst}. We do it for somewhat more general circumstances.

Let $\mathcal{C}$ be a category and $\mathcal{D}$ be a full subcategory. Let $F$ be a presheaf of abelian groups on $\mathcal{D}$, i.e. $F: \mathcal{D}^{\op} \to (\textbf{Ab})$ is a functor to the category of abelian groups. 
For each object $X \in \mathcal{C}$, let $(X \downarrow \mathcal{D})$ be the category whose objects are the morphisms $X \to A$ in $\mathcal{C}$, with $A \in \mathcal{D}$, and a morphism from $h_1 : X \to A$ to $h_2: X \to B$, with $A,B \in \mathcal{D}$, is given by a morphism $g: A \to B$ in $\mathcal{C}$ such that $g \circ h_1 = h_2$. The functor $F: \mathcal{D}^{\op} \to (\textbf{Ab})$ induces the functor $(X \downarrow \mathcal{D})^{\op} \to (\textbf{Ab})$ given by $(X\overset{h}{\to} A) \mapsto F(A)$, also denoted by $F$. 

\begin{defn}\label{defn:tch_colimit}Suppose that for each $X \in \mathcal{C}$, the category $(X \downarrow \mathcal{D})$ is cofiltered. Then define $\mathcal{F} (X):= \underset{(X \downarrow \mathcal{D})^{\op}}{\colim} F$. 

In particular, when $\mathcal{C}= \Sch_k$ and $\mathcal{D} = \SmAff_k$, one checks that $(X \downarrow \SmAff_k)$ is cofiltered, and for $X \in \Sch_k$, we define $\tch ^q (X,n;m):= \underset{(X \downarrow \SmAff_k)^{\op}}{\colim} T^q _{n , m}$.
\end{defn}




\begin{prop}\label{prop:presh}Let $\mathcal{C}$ be a category and $\mathcal{D}$ be a full subcategory such that for each $X \in \mathcal{C}$, the category $(X \downarrow \mathcal{D})$ is cofiltered. Let $F$ be a presheaf of abelian groups on $\mathcal{D}$ and let $\mathcal{F}$ be as in Definition \ref{defn:tch_colimit}.

Let $f: X \to Y$ be a morphism in $\mathcal{C}$. Then for $X \in \mathcal{C}$, the association $X \mapsto \mathcal{F}(X)$ satisfies the following properties:
\begin{enumerate}
\item There is a canonical homomorphism $\alpha_X: \mathcal{F} (X) \to F(X)$.
\item If $X \in \mathcal{D}$, then $\alpha_X$ is an isomorphism, and $\alpha: \mathcal{F} \to F$ defines an isomorphism of presheaves on $\mathcal{D}$.
\item There is a canonical pull-back $f^* : \mathcal{F}(Y) \to \mathcal{F}(X)$. If $g: Y \to Z$ is another morphism in $\mathcal{C}$, then we have $(g \circ f)^* = f^* \circ g^*$. So, $\mathcal{F}$ is a presheaf of abelian groups on $\mathcal{C}$. In particular, $\tch^q (-, n;m)$ is a presheaf of abelian groups on $\Sch_k$, which is isomorphic to $\TH^q (-, n;m)$ on $\SmAff_k$.
\end{enumerate}
\end{prop}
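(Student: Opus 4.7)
The proof is formal, relying on the universal property of filtered colimits together with the functoriality of comma categories under composition. For parts (1) and (2) (interpreted for $X \in \mathcal{D}$, where $F(X)$ is defined), I would first observe that when $X \in \mathcal{D}$, the identity $\id_X : X \to X$ is an initial object of $(X \downarrow \mathcal{D})$: for any $h : X \to A$ with $A \in \mathcal{D}$, the map $h$ itself is the unique morphism from $\id_X$ to $h$ in the comma category. Dually, $\id_X$ is terminal in $(X \downarrow \mathcal{D})^{\op}$, and the filtered colimit of any diagram with a terminal object coincides with the value at that object, giving $\mathcal{F}(X) \cong F(X)$. More invariantly, the assignment $(h : X \to A) \mapsto F(h) : F(A) \to F(X)$ constitutes a cocone on $F$ with vertex $F(X)$, since for any $g : A \to B$ with $g h_1 = h_2$ one has $F(h_1) \circ F(g) = F(g h_1) = F(h_2)$; this produces $\alpha_X : \mathcal{F}(X) \to F(X)$ via the universal property, and the initial-object argument above shows that $\alpha_X$ is an isomorphism. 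Naturality of $\alpha$ on $\mathcal{D}$ reduces, by the same universal property, to the functoriality of $F$ on $\mathcal{D}$.

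For part (3), given any $f : X \to Y$ in $\mathcal{C}$, composition with $f$ defines a functor
\[
\Phi_f : (Y \downarrow \mathcal{D}) \to (X \downarrow \mathcal{D}), \qquad (h : Y \to A) \mapsto (h \circ f : X \to A),
\]
acting as the identity on the morphisms $g : A \to B$ between comma objects. Since $F$ depends only on the target $A$, one has $F \circ \Phi_f = F$ on $(Y \downarrow \mathcal{D})^{\op}$, so the colimit structure maps $F(A) \to \mathcal{F}(X)$ attached to the objects $h \circ f$ assemble into a cocone on $F : (Y \downarrow \mathcal{D})^{\op} \to (\Ab)$ with vertex $\mathcal{F}(X)$. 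The universal property of $\mathcal{F}(Y)$ then yields the desired $f^* : \mathcal{F}(Y) \to \mathcal{F}(X)$, and functoriality $(g \circ f)^* = f^* \circ g^*$ follows from the equality $\Phi_{g \circ f} = \Phi_f \circ \Phi_g$ together with the uniqueness in the universal property.

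The final claim about $\tch^q(-, n; m)$ is obtained by specializing to $\mathcal{C} = \Sch_k$, $\mathcal{D} = \SmAff_k$, and $F = \TH^q(-, n; m)$: the presheaf property of $F$ on $\SmAff_k$ is \thmref{thm:contravariant}, and the cofilteredness of $(X \downarrow \SmAff_k)$ was already noted in \defref{defn:tch_colimit}. There is no genuine obstacle in this argument; the only step that deserves any care is checking that the induced functors on comma categories respect composition, which is immediate from the associativity of composition in $\mathcal{C}$.
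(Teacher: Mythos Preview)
Your proof is correct and follows essentially the same approach as the paper: both construct $\alpha_X$ from the cocone $(h:X\to A)\mapsto F(h)$, observe that $\id_X$ is terminal in $(X\downarrow\mathcal{D})^{\op}$ when $X\in\mathcal{D}$, and build $f^*$ from the functor $\Phi_f$ (which the paper denotes $f^{\sharp}$) on comma categories, deducing $(g\circ f)^*=f^*\circ g^*$ from $\Phi_{g\circ f}=\Phi_f\circ\Phi_g$ and the universal property. Your parenthetical remark that part~(1) in the abstract setting only makes sense for $X\in\mathcal{D}$ is apt; the paper's proof of (1) tacitly uses the extra structure present in the intended application (namely that $\TH^q(X,n;m)$ and the pull-back $h^*$ exist for arbitrary $X\in\Sch_k$ even though functoriality is only known on $\SmAff_k$).
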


\begin{proof}(1) Let $(X \xrightarrow{h} A) \in (X \downarrow \mathcal{D})^{\op}$. By the given assumption, we have the pull-back $h^* : F(A) \to F(X)$. Regarding $F(X)$ as a constant functor on $(X\downarrow \mathcal{D})^{\op}$, this gives a morphism of functors $F\to F(X)$. Taking the colimits over all $h$, we obtain $\mathcal{F} (X) \to F(X)$, where $\alpha_X = \colim_h h^*$.

(2) When $X \in \mathcal{D}$, the category $(X \downarrow \mathcal{D})^{\op}$ has the terminal object ${\rm Id}_X: X \to X$. Hence, the colimit $\mathcal{F} (X)$ is just $F(X)$. 

(3) A morphism $f: X \to Y$ in $\mathcal{C}$ defines a functor $f^{\sharp}: (Y \downarrow \mathcal{D})^{\op} \to (X \downarrow \mathcal{D})^{\op}$ given by $( Y\overset{h}{ \to} A) \mapsto (X \overset{f}{\to} Y \overset{h}{\to} A).$ Thus, taking the colimits of the functors induced by $F$, we obtain $f^*: \mathcal{F} (Y) \to \mathcal{F} (X)$. For another morphism $g: Y \to Z$, that $(g \circ f)^* = f^* \circ g^*$ can be checked easily using the universal property of the colimits. 

In the special case when $\mathcal{C} = \Sch_k$ and $\mathcal{D} = \SmAff_k$ with $F = \TH^q (-, n;m)$, by Theorem \ref{thm:contravariant} we know that $F$ is a presheaf on $\SmAff_k$. So, the above general discussion holds.
\end{proof}

\begin{remk}
Since additive higher Chow groups have pull-backs for flat maps (see \cite[Lemma 4.7]{KL}), it follows that for $X \in \Sm_k$, $\alpha_{(-)}$ defines a map of presheaves $\tch^q (-, n;m) \to \TH^q (-, n;m)$ on the small Zariski site $X_{\rm Zar}$ of $X$. Proposition \ref{prop:presh}(2) says that this map is an isomorphism for affine open subsets of $X$. Thus, this map of presheaves on $X_{\rm Zar}$ induces an isomorphism of their Zariski sheafifications.
\end{remk}

\subsection{Moving lemma for smooth semi-local schemes}\label{sec:local moving}
One remaining objective in Section \ref{sec:moving} is 
to prove the following semi-local variation of Theorem \ref{thm:moving affine}:

\begin{thm}\label{thm:moving local}Let $Y \in \SmLoc_k$. Let $\mathcal{W}$ be a finite set of locally closed subsets of $Y$. Then the inclusion $\TZ^q _{\mathcal{W}} (Y, \bullet;m ) \hookrightarrow \TZ^q (Y, \bullet;m)$ is a quasi-isomorphism.
\end{thm}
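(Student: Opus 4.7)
The plan is to reduce the semi-local case to W. Kai's affine moving lemma (Theorem \ref{thm:moving affine}) via a filtered colimit argument along open neighborhoods. Since $Y \in \SmLoc_k$ is smooth semi-local essentially of finite type, I write $Y = \Spec(\sO_{X,S})$ for some smooth affine $k$-variety $X$ and a finite set of points $S \subset X$. Then $Y = \lim_U U$, where $U$ ranges over the cofiltered system of affine open neighborhoods of $S$ in $X$, with transition maps the open immersions. For each $W \in \mathcal{W}$, after shrinking $X$, I choose a locally closed $\widetilde{W} \subset X$ whose pull-back to $Y$ is $W$, and set $\mathcal{W}_U := \{\widetilde{W} \cap U : W \in \mathcal{W}\}$, giving a cofinal system of extensions.

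The central step is the continuity assertion
\[
\TZ^q(Y,\bullet;m) \;=\; \colim_{U \supset S} \TZ^q(U,\bullet;m), \qquad \TZ^q_{\mathcal{W}}(Y,\bullet;m) \;=\; \colim_{U \supset S} \TZ^q_{\mathcal{W}_U}(U,\bullet;m).
\]
To prove this, I take an irreducible cycle $Z \subset Y[1] \times \square^{n-1}$ and spread it out to its scheme-theoretic closure $\widetilde{Z} \subset U[1] \times \square^{n-1}$ for some neighborhood $U$. The face condition is open, so after possibly shrinking $U$ one obtains proper intersection with every face $U[1] \times F$ (only finitely many faces are involved). The same applies to the auxiliary condition defining $\TZ^q_{\mathcal{W}_U}$, namely proper intersection of $\widetilde{Z}$ with each $\widetilde{W}[1] \times F$ for $\widetilde{W} \in \mathcal{W}_U$ and face $F$. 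Conversely, flat pull-back via $Y[1] \hookrightarrow U[1]$ recovers $Z$ from $\widetilde{Z}$, so the colimit map is surjective; injectivity and the analogous statements for chain-level boundaries are immediate.

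The one delicate point is the modulus condition. By Definition \ref{defn:modulus} it is tested on the normalization of the Zariski closure inside $U[1] \times \ov{\square}^{n-1}$. Since $Y[1] \hookrightarrow U[1]$ is an open immersion, the closure of $Z$ in $Y[1] \times \ov{\square}^{n-1}$ is exactly the restriction of the closure of $\widetilde{Z}$ in $U[1] \times \ov{\square}^{n-1}$, and normalization commutes with open restriction. Consequently, the inequality $\nu^*_Z(D_{m+1} \times \ov{\square}^{n-1}) \le \nu^*_Z(Y[1] \times F_{n-1}^1)$ valid on $\ov{Z}^N$ extends to the corresponding inequality on $\widetilde{Z}^N$ once we shrink $U$ so that the locus where it could fail (a proper closed subset of $\widetilde{Z}^N$ disjoint from the generic fiber) is excluded; more precisely, I apply \lemref{effective} (or its mild variant) to the difference divisor $\nu^*_{\widetilde{Z}}(U[1] \times F_{n-1}^1 - D_{m+1} \times \ov{\square}^{n-1})$, which is effective on the fiber over $Y$ and hence effective over some neighborhood of $S$. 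This establishes the two colimit formulas.

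Once the continuity is in hand, \thmref{thm:moving affine} applied with $X = U$ and $\un{m} = (m+1)$ (so that $\mathcal{W}_U[1]$ plays the role of the finite family there) yields that
\[
\TZ^q_{\mathcal{W}_U}(U,\bullet;m) \hookrightarrow \TZ^q(U,\bullet;m)
\]
is a quasi-isomorphism for every $U$ in the system. Since filtered colimits of abelian groups are exact, taking the colimit over $U \supset S$ preserves this quasi-isomorphism, and in view of the two continuity formulas this is exactly the map $\TZ^q_{\mathcal{W}}(Y,\bullet;m) \hookrightarrow \TZ^q(Y,\bullet;m)$. The main obstacle is the bookkeeping in the continuity step, in particular verifying that the modulus inequality on $\widetilde{Z}^N$ can be achieved after finite shrinking of $U$; this is where \lemref{effective} (or an equivalent spreading argument for effectivity of Weil divisors) is essential.
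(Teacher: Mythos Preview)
Your proposal is correct and follows essentially the same strategy as the paper: both reduce to \thmref{thm:moving affine} by spreading cycles from $Y$ to affine open neighborhoods $U \supset \Sigma$ while preserving the face and modulus conditions. The paper packages this spreading as Lemmas~\ref{lem:local spread 0}--\ref{lem:bdry spread} and then argues surjectivity and injectivity on homology by hand; you package the same content as a continuity isomorphism $\TZ^q(Y,\bullet;m) = \colim_U \TZ^q(U,\bullet;m)$ and then invoke exactness of filtered colimits. Your formulation is arguably cleaner, but the technical core---finitely many faces to check, and shrinking $U$ to kill the finitely many prime divisors on $\widehat{Z}^N$ where the modulus inequality fails---is identical.

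One small remark: your appeal to \lemref{effective} for the modulus spreading is not quite the right citation, since that lemma requires a \emph{dominant} map of normal integral varieties and the projection $\widehat{Z}^N \to U$ need not be dominant. The correct (and simpler) argument is the one you sketch parenthetically and the paper carries out in \lemref{lem:local spread 0}: the failure locus of the modulus inequality is a finite union of prime divisors $P_i \subset \widehat{Z}^N$, their images $Q_i$ in $X \times (\P^1)^n$ are closed and miss $Y \times (\P^1)^n$, hence already miss $U_2 \times (\P^1)^n$ for some affine open $U_2 \supset \Sigma$. No effectivity-spreading lemma is needed.
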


We begin with some basic results related to cycles over semi-local schemes. Recall that when $A$ is a ring and $\Sigma= \{ p_1, \cdots, p_N \}$ is a finite subset of $\Spec (A)$, the localization at $\Sigma$ is the localization $A \to S^{-1} A$, where $S= \bigcap_{i=1} ^N (A \setminus p_i)$. For a quasi-projective $k$-scheme $X$ and a finite subset $\Sigma$ of (not necessarily closed) points of $X$, the localization $X_{\Sigma}$ is defined by reducing it to the case when $X$ is affine by the following elementary fact (see \cite[Proposition 3.3.36]{Liu}) that we use often.

\begin{lem}\label{lem:FA} Let $X$ be a quasi-projective $k$-scheme. Given any finite subset $\Sigma \subset X$ and an open subset $U \subset X$ containing $\Sigma$, there exists an affine open subset $V \subset U$ containing $\Sigma$.
\end{lem}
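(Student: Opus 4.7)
My plan is to reduce to a projective setting and apply homogeneous prime avoidance to produce a single hypersurface cutting out the desired affine open. Since $X$ is quasi-projective, I fix an embedding $X \hookrightarrow \P^N_k$ and let $\bar{X} \subset \P^N$ denote the scheme-theoretic closure. Then $X$ is open in $\bar{X}$, hence so is $U$, and $Y := \bar{X} \setminus U$ is closed in $\bar{X}$, thus closed in $\P^N$. By hypothesis $\Sigma \cap Y = \emptyset$.

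Next, let $R = k[x_0, \ldots, x_N]$ and let $I \subset R_+$ be a homogeneous ideal cutting out $Y$ scheme-theoretically in $\P^N$ (one may take the saturated ideal when $Y \neq \emptyset$, or $I = R_+$ when $Y = \emptyset$). Each $p_i \in \Sigma$ corresponds to a homogeneous prime $\mathfrak{p}_i \subset R$ not containing the irrelevant ideal $R_+$, and the condition $\Sigma \cap Y = \emptyset$ translates to $I \not\subset \mathfrak{p}_i$ for every $i$. I would then invoke the homogeneous prime avoidance lemma to produce a single homogeneous element $f \in I$ with $f \notin \mathfrak{p}_i$ for all $i$; since $I \subset R_+$, such an $f$ automatically has $\deg f \geq 1$. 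Setting $H := V_+(f) \subset \P^N$ yields $Y \subset H$ and $\Sigma \cap H = \emptyset$.

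Finally, I put $V := \bar{X} \setminus H = \bar{X} \cap D_+(f)$. Since $D_+(f)$ is an affine open of $\P^N$ and $\bar{X}$ is closed in $\P^N$, $V$ is a closed subscheme of an affine scheme, hence affine. By construction $\Sigma \subset V$, and $V \subset \bar{X} \setminus Y = U$, so $V$ is the required affine open.

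The only nontrivial ingredient is the homogeneous version of prime avoidance, and this is where I expect the main care to be needed. I plan to prove it by induction on the number of primes, using the standard degree-matching trick: if $g \in I$ is homogeneous and avoids $\mathfrak{p}_1, \ldots, \mathfrak{p}_{r-1}$ but lies in $\mathfrak{p}_r$, and if $a \in \bigcap_{i<r}\mathfrak{p}_i \setminus \mathfrak{p}_r$ and $b \in I \setminus \mathfrak{p}_r$ are homogeneous (which exist by the inductive hypothesis together with $I \not\subset \mathfrak{p}_r$), then $g^m + (ab)^n$ works for integers $m, n$ chosen so that both monomials have the same degree. This device equalizes degrees and bypasses any need to assume that $k$ be infinite, which is essential since the lemma is asserted over an arbitrary base field.
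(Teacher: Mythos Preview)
Your argument is correct and is precisely the standard one: the paper does not prove this lemma but simply cites \cite[Proposition~3.3.36]{Liu}, and Liu's proof is exactly the hypersurface-avoidance argument via homogeneous prime avoidance that you outline. One small point you may want to make explicit in the prime-avoidance induction is the harmless reduction to the case where no $\mathfrak{p}_i$ is contained in another (keep only the maximal ones among the $\mathfrak{p}_i$; an $f$ avoiding a maximal $\mathfrak{p}_j$ automatically avoids any $\mathfrak{p}_i \subset \mathfrak{p}_j$), so that $\bigcap_{i<r}\mathfrak{p}_i \not\subset \mathfrak{p}_r$ is guaranteed and your choice of $a$ is available.
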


For $X\in \Sch_k$ and a point $x \in X$, the open neighborhoods of $x$ form a cofiltered category and we have functorial flat pull-back maps $(j_U^V)^*: \un{\TZ}^q (V, n; m) \to \un{\TZ}^q (U, n;m)$ for $j^V _U : U \hookrightarrow V$ in this category. 

\begin{lem}\label{lem:local spread 0} Let $X \in \Sch_k$ and let $x \in X$ be a scheme point. Let $Y= \Spec (\mathcal{O}_{X,x})$. Then we have $\colim_{ x \in U} \un{\TZ}^q (U, n;m) \xrightarrow{\simeq} \un{\TZ}^q (Y, n;m)$, where the colimit is taken over all open neighborhoods $U$ of $x$.
\end{lem}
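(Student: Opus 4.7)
The plan is to prove the natural map from the colimit to $\un{\TZ}^q(Y,n;m)$ is both surjective and injective by standard spreading-out, adapted to include the modulus condition. By Lemma \ref{lem:FA}, the affine open neighborhoods of $x$ are cofinal in the system of all open neighborhoods of $x$, so I would restrict the colimit to affine $U \ni x$. The transition maps and the map to $\un{\TZ}^q(Y,n;m)$ are flat pullbacks along localizations, which preserve admissibility of cycles with modulus by \cite[Proposition 2.12]{KPv}; in particular the colimit map is well-defined.

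For surjectivity, I would take an integral admissible cycle $Z \subset Y[1] \times \square^{n-1}$ and spread it out to an integral closed subscheme $\tilde Z \subset U[1] \times \square^{n-1}$ of the same codimension for some affine $U \ni x$, using that $Y[1] \times \square^{n-1}$ is a cofiltered limit of the $U[1] \times \square^{n-1}$ with affine transition maps. For each face $F \subset \square^{n-1}$, every component of $\tilde Z \cap (U[1] \times F)$ of excess dimension has image in $U$ not containing $x$ (since the face condition already holds over $Y$), so shrinking $U$ removes these finitely many obstructions. For the modulus condition, I would consider on the normalization $\nu: \ov{\tilde Z}^N \to U[1] \times \ov{\square}^{n-1}$ the Weil divisor $E := \nu^*(U[1] \times F_{n-1}^1) - \nu^*(D_{m+1} \times \ov{\square}^{n-1})$. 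Since normalization commutes with flat base change, the pullback of $E$ to $\ov{Z}^N$ is the analogous modulus divisor for $Z$, which is effective by assumption; hence each irreducible component of $E$ with negative coefficient has image in $U$ not containing $x$, and shrinking $U$ further makes $\tilde Z$ admissible.

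For injectivity, I would take $\zeta = \sum_i n_i V_i \in \un{\TZ}^q(U,n;m)$ with distinct integral $V_i$ and nonzero $n_i$, assume its pullback to $Y$ is zero, and observe that since $Y \to U$ is flat the pullback of each $V_i$ is the open subscheme $V_i \cap (Y[1] \times \square^{n-1})$, which is either empty or irreducible with the same generic point as $V_i$. Distinct $V_i$ therefore produce distinct nonempty pullbacks, so each $V_i$ with $n_i \neq 0$ must pull back to zero, i.e., its image in $U$ does not contain $x$; shrinking $U$ to the complement of the union of these finitely many images kills $\zeta$.

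The main obstacle is the verification of the modulus condition for $\tilde Z$ in the surjectivity step. It relies on normalization commuting with localization (which holds for excellent schemes in this setting) so that the modulus divisor on $\ov{\tilde Z}^N$ restricts correctly to the modulus divisor on $\ov{Z}^N$, and on the fact that the images in $U$ of the finitely many irreducible components of $\ov{\tilde Z}^N$ on which $E$ has negative coefficient are closed subsets missing $x$, so that the offending loci can be removed by a further affine shrinking (again using Lemma \ref{lem:FA}).
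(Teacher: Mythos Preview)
Your proposal is correct and mirrors the paper's spreading-out argument closely: the paper likewise reduces to affine $X$, spreads an irreducible $Z$ to its closure $\ov{Z}$, and shrinks $U$ separately to force the face and modulus conditions (it omits your injectivity paragraph as routine). The one place the paper is more explicit is in the modulus step: rather than working in $U[1]\times\ov{\square}^{n-1}$, it passes to the further closure $\widehat{Z}\subset X\times\P^1\times(\P^1)^{n-1}$, compactifying the $\A^1$-coordinate as well, so that the projection to $X$ is proper and the images $Q_i$ of the offending divisors are visibly closed; your claim that these images are closed in $U$ is still true (each such component has positive order along $\nu^*D_{m+1}$, hence lies over $\{t=0\}\cong U\times(\P^1)^{n-1}$, where the projection to $U$ is proper), but you should say so.
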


\begin{proof}Replacing $x$ by an affine open neighborhood of $x \in X$, we may assume that $X$ is affine and write $X= \Spec (A)$. Let $\mathfrak{p}_x \subset A$ be the prime ideal that corresponds to the point $x$ and let $S:= A \setminus \mathfrak{p}_x$, so that $Y= \Spec (S^{-1} A)$. To facilitate our proof, using the automorphism $y \mapsto 1/ (1-y)$ of $\mathbb{P}^1$, we identify $\square$ with $\mathbb{A}^1$ and take $\{0, 1 \} \subset \mathbb{A}^1$ as the faces. So, $X \times B_n = X \times \mathbb{A}^1 \times \mathbb{A}^{n-1} = \Spec (A[ t, y_1, \cdots, y_{n-1}])$. 

Let $\alpha \in \un{\TZ} ^q (Y, n;m)$. We need to find an open subset $U \subset X$ containing $x$ such that the closure of $\alpha$ in $U \times \mathbb{A}^1 \times \mathbb{A}^{n-1}$ is admissible. For this, we may assume $\alpha$ is irreducible, i.e., it is a closed irreducible subscheme $Z \subset Y \times \mathbb{A}^1 \times \mathbb{A}^{n-1}$. Let $\ov{Z}$ be its Zariski closure in $X \times \mathbb{A}^1 \times \mathbb{A}^{n-1}$. Let $\mathfrak{p}$ be the prime ideal of $B:=A[t, y_1, \cdots, y_{n-1}]$ such that $V(\mathfrak{p}) = \ov{Z}$.

For the proper intersection with faces, let $\mathfrak{q}\subset B$ be the prime ideal $(y_{i_1} - \epsilon_1, \cdots, y_{i_s} - \epsilon_s)$, where $1 \leq i_1< \cdots < i_s \leq n-1$ and $\epsilon_j \in \{ 0, 1 \}$. Let $\mathfrak{P}$ be a minimal prime of $\mathfrak{p} + \mathfrak{q}$. One checks immediately from the behavior of prime ideals under localizations that there is $a \in S$ such that either $\mathfrak{P} B[a^{-1}] = B[a^{-1}]$ or ${\rm ht} (\mathfrak{P} B[a^{-1}]) \geq q + s$. This means, over $U_{\mathfrak{q}}:= \Spec (A[a^{-1}])$, either the intersection of $\ov{Z}_{U_\mathfrak{q}}$ with $V (\mathfrak{q})$ is empty, or has codimension $\geq q + s$. Applying this argument to all faces, we can take $U_1:= \bigcap_{\mathfrak{q}} U_{\mathfrak{q}}$. Then $\ov{Z}_{U_1}$ intersects all faces of $U_1 \times \mathbb{A}^1 \times \mathbb{A}^{n-1}$ properly.

For the modulus condition, let $\nu: \widehat{Z}^N \to \widehat{Z} \hookrightarrow X \times \mathbb{P}^1 \times (\mathbb{P}^1)^{n-1}$ be the normalization composed with the closed immersion of the further Zariski closure $\widehat{Z}$ of $\ov{Z}$. Let $F_n^{\infty} = \sum_{i=1} ^{n-1} \{ y_i = \infty \}$ be the divisor at infinity. For an open set $j: U \hookrightarrow X$, the modulus condition of $\ov{Z}_U$ means $(m+1) [ j^* \nu^* \{ t=0  \}] \leq [ j^* \nu^* (F_n ^{\infty})]$ on $\widehat{Z}_U ^N$. Note that there exist only finitely many prime Weil divisors $P_1, \cdots, P_\ell$ on $\widehat{Z}^N$ such that ${\rm ord} _{P_i} ( \nu^* (F_n ^{\infty}) - (m+1) \nu^* \{ t = 0 \}) < 0$. Their images $Q_i$ under the normalization map $\widehat{Z}^N \to \widehat{Z}$ are still irreducible proper closed subsets of $\widehat{Z}$, thus of $X \times \mathbb{P}^1 \times (\mathbb{P}^1)^{n-1}$. Since $Z= \ov{Z}_Y$ has the modulus condition on $Y \times B_n$ by the given assumption, we have $(Y \times \widehat{B}_n) \cap Q_i = \emptyset$ for each $1 \leq i \leq \ell$. Thus, there is an affine open subset $U_2 \subset X$ containing $x$ such that $(U_2 \times \widehat{B}_n) \cap Q_i= \emptyset$ for each $1 \leq i \leq \ell$. Now, by construction, $\ov{Z}_{U_2}$ on $U_2 \times B_n$ satisfies the modulus condition. So, taking an affine open subset $U \subset U_1 \cap U_2$ containing $x$, we have $\ov{Z}_U \in \un{\TZ} ^q (U, n;m)$. That $(\ov{Z}_U)_Y= Z$ is obvious. 
\end{proof}

We can extend this colimit description to semi-local schemes:

\begin{lem}\label{lem:local spread}Let $Y$ be a semi-local $k$-scheme obtained by localizing at a finite set $\Sigma$ of scheme points of a quasi-projective $k$-variety $X$. For a cycle $Z$ on $Y \times B_n$, let $\ov{Z}$ be its Zariski closure in $X \times B_n$.

Then $Z \in \un{\TZ}^q (Y, n;m)$ if and only if there exists an affine open subset $U \subset X$ containing $\Sigma$, such that $\ov{Z}_U \in \un{\TZ}^q (U, n;m)$, where $\ov{Z}_U$ is the pull-back of $ \ov{Z}$ via the open immersion $U \to X$.
\end{lem}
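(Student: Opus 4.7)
The ``if'' direction is immediate: the canonical map $Y \hookrightarrow U$ is a localization, hence flat, so by \cite[Proposition~2.12]{KPv} its flat pull-back carries admissible cycles on $U \times B_n$ to admissible cycles on $Y \times B_n$; by construction this pull-back sends $\ov{Z}_U$ to $Z$.

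For the ``only if'' direction, my plan is to mimic the single-point argument of \lemref{lem:local spread 0}, with the role of the single prime $\mathfrak{p}_x$ now played by the multiplicative set $S := \bigcap_{i=1}^{N}(A \setminus \mathfrak{p}_{x_i})$ cutting out $Y$. First, by \lemref{lem:FA}, I replace $X$ by an affine open $\Spec(A)$ containing $\Sigma$; since $Z$ is supported over $Y \subset \Spec(A)$, this does not affect $\ov{Z}$. I may assume $Z$ is irreducible, defined by a prime $\mathfrak{p} \subset B := A[t,y_1,\ldots,y_{n-1}]$. The elementary observation I will use repeatedly is that, for $a \in A$, the basic open $D(a) \subset X$ contains $\Sigma$ if and only if $a \in S$. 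For the face condition: for each face ideal $\mathfrak{q}$ of codimension $s$ and each minimal prime $\mathfrak{P}$ of $\mathfrak{p} + \mathfrak{q}$, the proper-intersection hypothesis over $Y$ forces either $\mathfrak{P} \cap S \neq \emptyset$ or $\mathrm{ht}(\mathfrak{P}) \geq q + s$; choosing some $a_{\mathfrak{P}} \in \mathfrak{P} \cap S$ in the bad case and taking the product of these over the finitely many bad primes (across all faces) produces an $a \in S$ such that $\ov{Z}_{D(a)}$ meets every face properly.

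For the modulus condition, let $\widehat{Z}$ denote the closure of $\ov{Z}$ in $X \times \ov{\square}^n$, with normalization $\nu\colon \widehat{Z}^N \to \widehat{Z}$, and let $P_1,\ldots,P_\ell$ be the finitely many prime Weil divisors on $\widehat{Z}^N$ contributing negatively to $\nu^*(F_n^\infty) - (m+1)\nu^*\{t=0\}$. Their images $Q_i \subset \widehat{Z}$ project, thanks to properness of $\ov{\square}^n$, to closed subsets $\pi_i \subset X$. The hypothesis that $Z$ has the correct modulus over $Y$ forces $Y \cap \pi_i = \emptyset$, and since $Y$ coincides with the set of generalizations in $X$ of points of $\Sigma$ (by prime avoidance applied to $S$), this is equivalent to $\Sigma \cap \pi_i = \emptyset$. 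Hence $X \setminus \bigcup_i \pi_i$ is an open set containing $\Sigma$; intersecting it with $D(a)$ and invoking \lemref{lem:FA} one last time yields the desired affine open $U \ni \Sigma$ on which $\ov{Z}_U$ is admissible. The main technical step is precisely this passage from the semi-local modulus condition to the avoidance $\Sigma \cap \pi_i = \emptyset$, which rests on properness of $\ov{\square}^n$ to make the $\pi_i$ closed in $X$ and on the identification of $Y$ with the generalizations of $\Sigma$; the remainder is a finite-intersection repackaging of the single-point argument.
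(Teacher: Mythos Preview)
Your proof is correct, but the route differs from the paper's. The paper does not redo the single-point argument with the semi-local multiplicative set $S$; instead it invokes \lemref{lem:local spread 0} for each $x\in\Sigma$ separately to obtain affine opens $U_x$ with $\ov{Z}_{U_x}$ admissible, takes $W=\bigcup_{x\in\Sigma}U_x$, appeals to \lemref{lem:admislocal} (admissibility is Zariski-local on the base) to conclude $\ov{Z}_W$ is admissible, and then finds an affine $U\subset W$ containing $\Sigma$ via \lemref{lem:FA}. This is shorter and more modular, since the face and modulus arguments are already packaged in \lemref{lem:local spread 0}. Your version has the virtue of being self-contained and of making the passage from $Y\cap\pi_i=\emptyset$ to $\Sigma\cap\pi_i=\emptyset$ explicit (via closedness of $\pi_i$ and the identification of $Y$ with the generizations of $\Sigma$), and it avoids invoking \lemref{lem:admislocal}; but it essentially reproves \lemref{lem:local spread 0} in the semi-local setting. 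One minor remark: your sentence ``this does not affect $\ov{Z}$'' is imprecise---shrinking $X$ does change $\ov{Z}$---but what you need, and what is true, is that $\ov{Z}_U$ is independent of whether the closure is taken in $X$ or in the chosen affine open, for any $U$ contained in the latter.
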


\begin{proof}
The direction $(\Leftarrow)$ is obvious by pulling back via the flat morphism $Y \hookrightarrow U$. For the direction $(\Rightarrow)$, by Lemma \ref{lem:local spread 0},  for each $x \in \Sigma$ we have an affine open neighborhood $U_x \subset X$ of $x$ such that $\ov{Z}_{U_x} \in \un{\TZ}^q (U_x, n;m)$. Take $W= \bigcup _{x \in \Sigma} U_x$. This is an open subset of $X$ containing $\Sigma$. By Lemma \ref{lem:admislocal}, we have $\ov{Z}_W \in \un{\TZ} ^q (W, n;m)$. On the other hand, by Lemma \ref{lem:FA}, there exists an affine open subset $U \subset W$ containing $\Sigma$. By taking the flat pull-back via the open immersion $U \hookrightarrow W$, we get $\ov{Z}_U \in \un{\TZ} ^q (U, n;m)$.
\end{proof}

\begin{lem}\label{lem:bdry spread}Let $Y$ be a semi-local integral $k$-scheme obtained by localizing at a finite set $\Sigma$ of scheme points of an integral quasi-projective $k$-scheme $X$. Let $Z \in \TZ^q (Y, n;m)$, $W \in \TZ^q (Y, n+1;m)$, and let $\ov{Z}$, $\ov{W}$ be their Zariski closures in $X \times B_n$ and $X \times B_{n+1}$, respectively. For every open subset $U\subset X$, the subscript $U$ means the pull-back to $U$. Then we have the following:
\begin{enumerate}
\item If $\partial Z = 0$, we can find  an affine open subset $U \subset X$ containing $\Sigma$ such that $\ov{Z}_U \in \TZ^q (U, n;m)$ and $\partial \ov{Z}_U = 0$. 
\item If $Z= \partial W$, we can find an affine open subset $U \subset X$ containing $\Sigma$ such that $\ov{Z}_U \in \TZ^q (U, n;m)$, $\ov{W}_U \in \TZ^q (U, n+1;m)$ and $\ov{Z}_U = \partial \ov{W}_U$.
\end{enumerate}
\end{lem}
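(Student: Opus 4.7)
The plan is to reduce both parts to \lemref{lem:local spread}, which spreads admissibility from $Y$ to some affine open neighborhood of $\Sigma$, combined with a ``spreading of vanishing'' argument for the boundary relation, based on prime avoidance.

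\textbf{Step 1 (spread admissibility).} Apply \lemref{lem:local spread} to $Z$ (resp.\ to both $Z$ and $W$) to obtain an affine open $U_1 \subset X$ containing $\Sigma$ such that $\ov{Z}_{U_1} \in \TZ^q(U_1,n;m)$ and, in case (2), also $\ov{W}_{U_1} \in \TZ^q(U_1,n+1;m)$. Set
\[
\alpha := \partial \ov{Z}_{U_1} \ \text{in case (1)}, \qquad \alpha := \partial \ov{W}_{U_1} - \ov{Z}_{U_1} \ \text{in case (2)}.
\]
Since $\partial$ commutes with flat pull-back along the localization $j : Y \hookrightarrow U_1$, the hypothesis $\partial Z = 0$ (resp.\ $Z = \partial W$) translates into $j^*\alpha = 0$.

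\textbf{Step 2 (spread the vanishing of $\alpha$).} Write $U_1 = \Spec(A)$ and $Y = \Spec(S^{-1}A)$ with $S = A \setminus \bigcup_{\sigma \in \Sigma}\fp_\sigma$. Decompose $\alpha = \sum_i a_i [V_i]$ into its prime components, with $a_i \neq 0$ and $V_i$ pairwise distinct integral closed subschemes corresponding to primes $P_i$ in a polynomial ring over $A$. Because $j$ is a flat localization, each $(V_i)_Y$ is either empty (precisely when $P_i \cap S \neq \emptyset$) or still integral (defined by $S^{-1}P_i$), and the correspondence $P_i \mapsto S^{-1}P_i$ is injective on those $P_i$ for which $S^{-1}P_i$ is proper. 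Hence distinct $V_i$ produce distinct nonzero prime cycles on $Y$, so no cancellation can occur after restriction. Therefore $j^*\alpha = 0$ forces $(V_i)_Y = \emptyset$ for every $i$; equivalently, $I_i := P_i \cap A$ is not contained in any $\fp_\sigma$. By prime avoidance applied to the finite family $\{\fp_\sigma\}_{\sigma \in \Sigma}$, there exists $f_i \in I_i$ with $f_i \notin \fp_\sigma$ for all $\sigma$. Since $f_i \in P_i$, the closed subscheme $V_i$ is contained in the pull-back of $V(f_i) \subset U_1$ along the projection, so
\[
U_2 := U_1 \setminus \bigcup_i V(f_i)
\]
is an open subset of $U_1$ containing $\Sigma$ on which each $V_i$ restricts to the empty subscheme, whence $\alpha_{U_2} = 0$.

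\textbf{Step 3 (go affine).} By \lemref{lem:FA}, choose an affine open $U \subset U_2$ containing $\Sigma$. Flat pull-back along $U \hookrightarrow U_1$ preserves admissibility, so $\ov{Z}_U$ (and $\ov{W}_U$) are admissible, and $\alpha_U = 0$ yields $\partial \ov{Z}_U = 0$ in case (1) and $\partial \ov{W}_U = \ov{Z}_U$ in case (2).

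The main technical point is the no-cancellation observation in Step 2, which rests on the flatness of the localization $j$: without it, $j^*\alpha = 0$ could a priori arise from cancellations among the $j^*[V_i]$'s, and one could not conclude that each individual $V_i$ avoids $\Sigma$. Once this is settled, the remaining content is prime avoidance together with \lemref{lem:FA}.
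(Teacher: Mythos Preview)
Your proof is correct and follows essentially the same strategy as the paper's: spread admissibility via \lemref{lem:local spread}, observe that the error cycle $\alpha$ (the paper's $Z' = \ov{Z} - \partial\ov{W}$) restricts to zero on $Y$ so each of its components misses $\Sigma$, shrink the open set to kill $\alpha$, and go affine by \lemref{lem:FA}. The only cosmetic difference is that the paper removes the components of $\alpha$ topologically by taking $U_0 = \bigcap_i \pi((Z'_i)^c)$ for the open projection $\pi: X \times B_n \to X$, whereas you do it algebraically via principal opens; your invocation of prime avoidance is in fact unnecessary, since $(V_i)_Y = \emptyset$ already gives an element of $I_i \cap S$ directly.
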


\begin{proof}Note that (1) is a special case of (2), so we prove (2) only. Let $Z':= \ov{Z} - \partial \ov{W} \in z^q (X \times B_n)$. If $Z'$ is $0$ as a cycle, then take $U_0=X$. If not, let $Z_1', \cdots, Z_s '$ be the irreducible components of $Z'$. Since $Z = \partial W$, each component $Z'_i$ has empty intersection with $Y \times B_n$. So, each $\pi ((Z_i ')^c)$ is a non-empty open subset of $X$ containing $\Sigma$, where $\pi: X \times B_n \to X$ is the projection, which is open. Take $U_0 = \bigcap_{i=1} ^s \pi ( (Z_i ')^c)$. 

On the other hand, Lemma \ref{lem:local spread} implies that there exist open sets $U_1, U_2 \subset X$ containing $\Sigma$ such that $\ov{Z}_{U_1} \in \TZ^q (U_1, n;m)$ and $\ov{W}_{U_2} \in \TZ^q (U_2, n+1;m)$. Choose an affine open subset $U \subset U_0 \cap U_1 \cap U_2$ containing $\Sigma$, using Lemma \ref{lem:FA}. Then part (2) holds over $U$ by construction.
\end{proof}

\begin{proof}[{Proof of Theorem \ref{thm:moving local}}]
We show that the chain map $\TZ^q _{\mathcal{W}} (Y, \bullet;m ) \hookrightarrow \TZ^q (Y, \bullet;m)$ is a quasi-isomorphism. Let $X$ be a smooth affine $k$-variety with a finite subset $\Sigma\subset X$ such that $Y = \Spec (\mathcal{O}_{X, \Sigma})$. 

For surjectivity on homology, let $Z \in \un{\TZ}^q (Y, n;m)$ be such that $\partial Z = 0$. Let $\ov{Z}$ be the Zariski closure of $Z$ in $X \times B_n$. Here, $\partial \ov{Z}$ may not be zero, but by Lemma \ref{lem:bdry spread}(1), there exists an affine open subset $U\subset X$ containing $\Sigma$ such that we have $\partial \ov{Z}_U =0$, where $\ov{Z}_U$ is the pull-back of $\ov{Z}$ to $U$. Let $\mathcal{W}_U = \{ {W}_U | W \in \mathcal{W} \}$, where $W_U$ is the Zariski closure of $W$ in $U$. Then the quasi-isomorphism $\TZ_{\mathcal{W}_U} ^q (U, \bullet; m) \hookrightarrow \TZ^q (U, \bullet;m)$ of Theorem \ref{thm:moving affine} shows that there are some $C \in \TZ^q (U, n+1;m)$ and $Z'_U \in \TZ^q _{\mathcal{W}_U} (U, n;m)$ such that $\partial C = \ov{Z}_U - Z'_U$. Let $\iota: Y \hookrightarrow U$ be the inclusion. So, by applying the flat pull-back $\iota^*$ (which is equivariant with respect to taking faces), we obtain $\partial (\iota ^* C) = Z - \iota ^* Z'_U$, and here $\iota^* Z'_U \in \TZ^q _{\mathcal{W}} (Y, n;m)$, \emph{i.e.}, $Z$ is equivalent to a member in $\TZ^q _{\mathcal{W}} (Y, n;m)$. 

For injectivity on homology, let $Z \in \TZ^q _{\mathcal{W}} (Y, n;m)$ be such that $ Z = \partial Z'$ for some $Z' \in \TZ^q (Y, n+1;m)$. Let $\ov{Z}$ and $\ov{Z}'$ be the Zariski closures of $Z$ and $Z'$ on $X \times B_n$ and $X \times B_{n+1}$, respectively. Then by Lemma \ref{lem:bdry spread}(2), there exists a nonempty open affine subset $U \subset X$ containing $\Sigma$ such that $\ov{Z}_U = \partial \ov{Z}'_U$. Then the quasi-isomorphism $\TZ_{\mathcal{W}_U} ^q (U, \bullet; m) \hookrightarrow \TZ^q (U, \bullet;m)$ of Theorem \ref{thm:moving affine} shows that there exists $Z'' \in \TZ^q_{\mathcal{W}_U}  (U, n+1;m)$ such that $\ov{Z}_U = \partial Z''$. Pulling back via $\iota:Y \hookrightarrow U$ then shows $Z= \partial (\iota^* Z'')$, with $\iota^* Z'' \in \TZ^q _{\mathcal{W}} (Y, n+1;m)$. 
\end{proof}

Using an argument identical to Theorem \ref{thm:contravariant} (see \cite[Theorem 7.1]{KP}), we get:

\begin{cor}\label{cor:contra local}Let $f:Y_1 \to Y_2$ be a morphism in $\Sch^{\rm ess}_k$, where $Y_2 \in \SmLoc_k$. Then there is a natural pull-back $f^*: \TH^q (Y_2 , n;m) \to \TH^q (Y_1, n;m)$. 
\end{cor}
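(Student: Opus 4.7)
The plan is to mimic the construction in the proof of Theorem \ref{thm:contravariant} (equivalently \cite[Theorem~7.1]{KP}) in the semi-local setting, substituting Theorem \ref{thm:moving local} for Theorem \ref{thm:moving affine} as the relevant moving lemma. The remaining ingredients---the graph factorization for a morphism to a smooth target, flat pullback, and the containment lemma (Proposition \ref{prop:CL*}) for the modulus condition---carry over without change.

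Concretely, factor the given morphism as $Y_1 \xrightarrow{\Gamma_f} Y_1 \times Y_2 \xrightarrow{p_2} Y_2$. Since $Y_2 \in \SmLoc_k$ is smooth over $k$, the graph embedding $\Gamma_f$ is a regular closed immersion and $p_2$ is flat, so the flat pullback $p_2^{*}$ is defined on cycle complexes without any moving. To define $\Gamma_f^{*}$ at the level of cycle classes, apply Theorem \ref{thm:moving local} to replace a given representative $\alpha \in \TZ^q(Y_2, n; m)$ by a homologous cycle $\alpha' \in \TZ^q_{\mathcal{W}_f}(Y_2, n; m)$, for a suitable finite family $\mathcal{W}_f$ of locally closed subsets of $Y_2$ chosen so that $p_2^{*}\alpha'$ meets $\Gamma_f \times F$ properly on $Y_1 \times Y_2 \times \mathbb{A}^1 \times \square^{n-1}$ for every face $F$. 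The desired pullback is then $f^{*}[\alpha] := [\Gamma_f^{*}(p_2^{*}\alpha')]$. Admissibility of this cycle on $Y_1 \times \mathbb{A}^1 \times \square^{n-1}$ is automatic: the face condition is exactly the chosen properness, and the modulus condition is preserved because $\Gamma_f^{*}$ is a restriction along a regular embedding that pulls the ambient modulus divisor back to $D_{m+1}$ on $Y_1$, so Proposition \ref{prop:CL*} applies. Independence of $f^{*}[\alpha]$ on the choice of representative follows by a second application of Theorem \ref{thm:moving local} to bounding chains, and functoriality $(g \circ f)^{*} = f^{*} \circ g^{*}$ is a formal consequence of the compatibility of graphs under composition together with functoriality of flat pullback.

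The main obstacle is the choice of the family $\mathcal{W}_f$ that makes the graph intersection proper for every face simultaneously. The proof of Theorem \ref{thm:contravariant} handles this by stratifying the source into smooth locally closed pieces and taking $\mathcal{W}_f$ to consist of the (closures of the) images of these strata in the target. The same prescription applies here: any $Y_1 \in \Sch_k^{\ess}$ is a localization of a finite-type $k$-scheme, which admits a finite stratification into smooth locally closed subvarieties, and the images of these strata under $f$, intersected with $Y_2$, provide the required finite family of locally closed subsets. Theorem \ref{thm:moving local} then supplies the crucial representatives in $\TZ^q_{\mathcal{W}_f}(Y_2, n; m)$, completing the argument in parallel with Theorem \ref{thm:contravariant}.
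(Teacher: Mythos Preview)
Your proposal is correct and follows exactly the route the paper intends: the paper's proof is a one-line reference to the argument of Theorem~\ref{thm:contravariant} (which in turn cites \cite[Theorem~7.1]{KP}), with Theorem~\ref{thm:moving local} substituted for Theorem~\ref{thm:moving affine} as the moving lemma on the semi-local target. You have simply unpacked the details of that argument---graph factorization, flat pullback along $p_2$, choice of $\mathcal{W}_f$ via a stratification of the source, and the containment lemma for the modulus condition---all of which are the standard ingredients of \cite[Theorem~7.1]{KP}.
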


\section{The Pontryagin product}\label{sec:DGA}
Let $R$ be a commutative ring and let $(A,d_A)$ be a differential graded algebra over $R$. Recall that (\emph{left}) differential graded module $M$ over $A$ is a left $A$-module $M$ with a grading $M = \oplus_{ n \in \mathbb{Z}} M_n$ and a differential $d_M$ such that $A_mM_n \subset M_{m+n}$, $d_M(M_n) \subset M_{n+1}$ and $d_M(ax) = d_A(a) x + (-1)^n a d_M(x)$ for $a \in A_n$ and $x \in M$. A homomorphism of differential graded modules $f:M \to N$ over $A$ is an $A$-module map which is compatible with gradings and differentials. 

In this section, we show that the multivariate additive higher Chow groups have a product structure that resembles the Pontryagin product. We construct a differential operator on these groups in the next section and show that the product and the differential operator together turn multivariate additive higher Chow groups groups into a differential graded module over $\W_m\Omega^{\bullet}_R$ for suitable $m$, when $X = \Spec(R)$ is in $\SmAff_k ^{\ess}$. This generalizes the DGA-structure on additive higher Chow groups of smooth projective varieties in \cite{KP2}. The base field $k$ is perfect in this section.

\subsection{Some cycle computations}\label{sec:cycle computations}
We generalize some of \cite[\S 3.2.1, 3.2.2, 3.3]{KP2}. Let $(X,D)$ be a $k$-scheme with an effective divisor.

Recall that a permutation $\sigma\in \mathfrak{S}_n$ acts naturally on $\square^n$ via $\sigma (y_1, \cdots, y_n) := (y_{\sigma(1)}, \cdots, y_{\sigma(n)})$. This action extends to cycles on $X \times \square^n$ and $X \times \ov{\square}^n$.

Let $ n, r \geq 1$ be given. Consider the finite morphism $\chi_{n,r} : X \times \square^n \to X \times \square^n$ given by $(x, y_1, \cdots, y_n) \mapsto (x, y_1 ^r, y_2, \cdots, y_n)$. Given an irreducible cycle $Z \subset X \times \square^n$, define $Z\{r \}:= (\chi_{n,r})_* ([Z]) = [k (Z): k (\chi_{n,r} (Z))] \cdot [ \chi_{n,r} (Z)].$ We extend it $\mathbb{Z}$-linearly.

\begin{lem} \label{lem:brace} If $Z$ is an admissible cycle with modulus $D$, then so is $Z\{r \}$.
\end{lem}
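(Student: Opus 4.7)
The plan is to verify both admissibility conditions for $Z\{r\}$, namely proper intersection with all faces and the modulus condition relative to $D$. We may reduce to the case where $Z$ is an irreducible closed subvariety. The key tool is the extension $\ov{\chi}_{n,r}: X \times \ov{\square}^n \to X \times \ov{\square}^n$ of $\chi_{n,r}$, given in homogeneous coordinates on the first factor of $\ov{\square}^n$ by $[s_0:s_1] \mapsto [s_0^r : s_1^r]$ and by the identity on the remaining factors and on $X$. This map is finite, so $\ov{Z\{r\}} = \ov{\chi}_{n,r}(\ov{Z})$ is again irreducible, and by the universal property of normalization the composition $\ov{\chi}_{n,r} \circ \nu_Z: \ov{Z}^N \to \ov{Z\{r\}}$ factors through $\nu_{Z\{r\}}$, producing a dominant morphism $f: \ov{Z}^N \to \ov{Z\{r\}}^N$ of normal integral varieties satisfying $\nu_{Z\{r\}} \circ f = \ov{\chi}_{n,r} \circ \nu_Z$.

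For the face condition, the set-theoretic preimage $\chi_{n,r}^{-1}(X \times F)$ of any face $F \subset \square^n$ coincides with $X \times F$, since $y_1^r$ vanishes (resp.\ has a pole) exactly where $y_1$ does and the remaining coordinates are untouched. Hence $\chi_{n,r}(Z) \cap (X \times F) = \chi_{n,r}(Z \cap (X \times F))$, and since $\chi_{n,r}$ is finite and so preserves dimensions, proper intersection of $Z$ with $X \times F$ passes to proper intersection of $Z\{r\}$ with $X \times F$.

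For the modulus condition, the main input consists of two divisorial facts on $X \times \ov{\square}^n$: first, $\ov{\chi}_{n,r}^*(D \times \ov{\square}^n) = D \times \ov{\square}^n$, since $\ov{\chi}_{n,r}$ is the identity on the $X$-factor; and second, $\ov{\chi}_{n,r}^*(X \times F_n^1) \geq X \times F_n^1$ as Weil divisors, because for $i \geq 2$ the pullback of $\{y_i = 1\}$ equals itself, while for $i = 1$ the pullback of $\{y_1 = 1\}$ is the effective divisor $\{y_1^r = 1\}$, which dominates $\{y_1 = 1\}$ regardless of the characteristic. Combining these identities with the modulus condition $\nu_Z^*(D \times \ov{\square}^n) \leq \nu_Z^*(X \times F_n^1)$ for $Z$ on $\ov{Z}^N$, we get
\[
f^* \nu_{Z\{r\}}^*(D \times \ov{\square}^n) = \nu_Z^*(D \times \ov{\square}^n) \leq \nu_Z^*(X \times F_n^1) \leq \nu_Z^* \ov{\chi}_{n,r}^*(X \times F_n^1) = f^* \nu_{Z\{r\}}^*(X \times F_n^1).
\]

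Since $f$ is a dominant morphism of normal integral varieties, \lemref{lem:cancel} descends the resulting effectivity of $f^*(\nu_{Z\{r\}}^*(X \times F_n^1) - \nu_{Z\{r\}}^*(D \times \ov{\square}^n))$ on $\ov{Z}^N$ to effectivity of $\nu_{Z\{r\}}^*(X \times F_n^1) - \nu_{Z\{r\}}^*(D \times \ov{\square}^n)$ on $\ov{Z\{r\}}^N$, which is exactly the modulus condition for $Z\{r\}$. The only genuinely nontrivial step is establishing the divisorial inequality $\ov{\chi}_{n,r}^*(X \times F_n^1) \geq X \times F_n^1$; with this in hand, the rest is a formal functoriality computation combined with the cancellation lemma.
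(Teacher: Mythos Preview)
Your proof is correct and follows essentially the same approach as the paper: both reduce to irreducible $Z$, verify the face condition, and then establish the modulus condition via the commutative square relating $\ov{Z}^N$ and $\ov{Z\{r\}}^N$, the pullback identities $\ov{\chi}_{n,r}^*(D\times\ov{\square}^n)=D\times\ov{\square}^n$ and $\ov{\chi}_{n,r}^*\{y_1=1\}\ge\{y_1=1\}$, and \lemref{lem:cancel}. The only difference is in the face condition: the paper computes $\partial_i^{\epsilon}(\chi_{n,r}(Z))$ explicitly and inducts on the codimension of faces, whereas you argue more directly from $\chi_{n,r}^{-1}(X\times F)=X\times F$ set-theoretically together with finiteness of $\chi_{n,r}$, which is a cleaner route to the same conclusion.
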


\begin{proof}The proof is almost identical to that of \cite[Lemma 3.11]{KP2}, except that the divisor $(m+1) \{ t = 0 \}$ there should be replaced by $D \times \ov {\square}^n$. We give its argument for the reader's convenience.

We may assume $Z$ is irreducible. It is enough to show that $\chi_{n,r} (Z)$ is admissible with modulus $D$. We first check that it satisfies the face condition of Definition \ref{defn:partial complex}. When $n=1$, the proper faces of $\square$ are of codimension $1$, and for $\epsilon \in \{ 0, \infty\}$, we have $\partial_1 ^{\epsilon} (\chi_{n,r} (Z)) = r \partial_1 ^{\epsilon} (Z)$. When $n \geq 2$, for $\epsilon \in \{ 0, \infty\}$, we have $\partial_1 ^{\epsilon} (\chi_{n,r} (Z)) = r \partial_1 ^{\epsilon} (Z)$ and $\partial_i ^{\epsilon} (\chi_{n,r} (Z)) =\chi_{n-1, r} (\partial _i ^{\epsilon} (Z))$ if $ i \geq 2$. For faces $F \subset \square^n$ of higher codimensions, we consequently have $F \cdot (\chi_{n,r} (Z)) = r (F \cdot Z)$ if $F$ involves the equations $\{ y_1 = \epsilon \}$, and $F \cdot (\chi_{n,r} (Z)) = \chi_{n-c, r} (F\cdot Z)$, otherwise, where $c$ is the codimension of $F$. Since the intersection $F \cdot Z$ is proper, so is $\chi_{n-c, r} (F \cdot Z)$ by induction on the codimension of faces. This shows $\chi_{n,r} (Z)$ satisfies the face condition.

To show that $W:= \chi_{n,r} (Z)$ has modulus $D$, consider the commutative diagram
$$
\xymatrix{ \ov{Z}^N \ar[r] ^{\nu_Z}  \ar[d] ^{\ov{\chi}_{n,r} ^N} & \ov{Z} \ar[d] ^{\ov{\chi}_{n,r}} \ar@{^{(}->}[r] ^{\iota_Z \ \ \ } & X \times \ov{\square}^n \ar[d] ^{\chi_{n,r}}\\
\ov{W}^N \ar[r]^{\nu_W} & \ov{W} \ar@{^{(}->}[r] ^{\iota_W \ \ \ } & X \times \ov{\square}^n,}
$$
where $\ov{Z}$, $\ov{W}$ are the Zariski closures of $Z$ and $W$ in $X \times \ov{\square}^n$ and $\nu_Z, \nu_W$ are the respective normalizations. The morphisms $\chi_{n,r}$, $\ov{\chi}_{n,r}$ are the natural induced maps, and $\ov{\chi}_{n,r} ^N$ is induced by the universal property of normalization. Since $Z$ has modulus $D$, we have the inequality 
\begin{equation}\label{eqn:brace}
[\nu_Z^* \iota_Z ^* (D \times \ov{\square}^n)] \leq \sum_{i=1} ^n [\nu_Z^* \iota_Z ^*\{ y_i = 1 \}] 
\end{equation}
By the definition of $\chi_{n,r}$, we have $\chi_{n,r} ^* (D \times \ov{\square}^n) = D \times \ov{\square}^n$, $\chi_{n,r} ^* \{ y_1 = 1 \} \geq \{ y_1 = 1\}$, and $\chi_{n,r} ^* \{ y_i = 1 \} = \{ y_i = 1 \}$ for $i \geq 2$. Hence \eqref{eqn:brace} implies that $[\nu_Z^* \iota_Z ^* \chi_{n,r} ^* (D \times \ov{\square}^n)] \leq \sum_{i=1} ^n [\nu_Z^* \iota_Z ^* \chi_{n,r} ^*\{ y_i = 1 \}]$. By the commutativity of the diagram, this implies that ${\ov{\chi}_{n,r} ^N}^* \left( \sum_{i=1} ^n \nu_W ^* \iota_W ^* \{ y_i = 1 \} - \nu_W ^* \iota_W ^* (D \times \ov{\square}^n)\right) \geq 0$. By Lemma \ref{lem:cancel}, this implies $\sum_{i=1} ^n \nu_W ^* \iota_W ^* \{ y_i = 1 \} - \nu_W ^* \iota_W ^* (D \times \ov{\square}^n ) \geq 0$, which means $W$ has modulus $D$. This completes the proof.
\end{proof}

Let $n, i \geq 1$. Suppose $X$ is smooth quasi-projective essentially of finite type over $k$. Let $(x, y_1, \cdots, y_n, y, \lambda)$ be the coordinates of $X \times \ov{\square}^{n+2}$. Consider the closed subschemes $ V_X ^i$ on $X \times \square^{n+2}$ given by the equation $(1-y) (1-\lambda) = 1- y_1$ if $i=1$, and $(1-y) (1-\lambda) = (1-y_1) (1 + y_1 + \cdots + y_1 ^{i-1} - \lambda (1 + y_1 + \cdots + y_1 ^{i-2}))$ if $i \geq 2$. 

Let $\widehat{V}_X ^i$ be the Zariski closure of $V_X ^i$ in $X \times \ov{\square} ^{n+2}$. Let $\pi_1: X \times \ov{\square} ^{n+2} \to X \times \ov{\square} ^{n+1}$ be the projection that drops $y_1$, and let $\pi_1 ':=\pi_1 |_{{V}_X ^i}$. As in \cite[Lemma 3.12]{KP2}, one sees that $\pi_1'$ is proper surjective. For an irreducible cycle $Z \subset X \times \square^n$, define (see \cite[Definition 3.13]{KP2}) $\gamma_Z ^i :={ \pi_1 ' }_*(V_X ^i \cdot (Z \times \square^2))$ as an abstract algebraic cycle. One checks that it is also the Zariski closure of $\nu^i (Z \times \square)$, where $\nu^i: X \times \square^n \times \square \to X \times \square^{n+1}$ is the rational map given by $\nu^i (x, y_1, \cdots, y_n, y) = (x, y_2, y_3, \cdots, y_n, y, \frac{y-y_1 ^i}{y-y_1 ^{i-1}})$. We extend the definition of $\gamma_Z^i$ $\mathbb{Z}$-linearly.

\begin{lem}\label{lem:gamma^i}
Let $Z \in z^q (X|D, n)$. Then $\gamma_Z ^i \in z^q (X|D, n+1)$.
\end{lem}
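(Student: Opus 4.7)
The approach is to verify the two defining conditions of $z^q(X|D, n+1)$—proper intersection with all faces of $\square^{n+1}$ and the modulus condition with respect to $D$—separately, using the presentation $\gamma_Z^i = (\pi_1')_*(V_X^i \cdot (Z \times \square^2))$. The strategy is a direct generalization of the proof of \cite[Lemma 3.13]{KP2}, with the specific additive-modulus divisor there replaced by the abstract divisor $D \times \ov{\square}^{n+2}$.

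For the modulus condition, I would first note that since $Z \times \square^2$ is the flat pullback of $Z$ along the projection $X \times \square^{n+2} \to X \times \square^n$, it has modulus $D$ on $X \times \square^{n+2}$ by \cite[Proposition 2.12]{KPv}. The closed subscheme $V_X^i$ is a hypersurface cut out by a single equation that does not vanish identically on $Z \times \square^2$, so the intersection $V_X^i \cdot (Z \times \square^2)$ is a proper intersection and each of its components is a closed subscheme of $Z \times \square^2$. By the containment lemma \propref{prop:CL*}, each such component inherits the modulus $D$. Finally, $\pi_1'$ is proper and surjective (as observed in the definition of $\gamma_Z^i$), so applying \lemref{lem:projective image} to the components of $V_X^i \cdot (Z \times \square^2)$ produces a cycle with modulus $D$ on $X \times \square^{n+1}$, confirming that $\gamma_Z^i$ satisfies the modulus condition.

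For the face condition, I would use the alternative description of $\gamma_Z^i$ as the Zariski closure of $\nu^i(Z \times \square)$ in $X \times \square^{n+1}$. For a face $F \subset \square^{n+1}$, any component of $\gamma_Z^i \cap (X \times F)$ is dominated by a component of $(\nu^i)^{-1}(X \times F) \cap (Z \times \square)$ in $X \times \square^n \times \square$. When $F$ only involves the coordinates corresponding to $y_2, \ldots, y_n$, this is the product of a face constraint on $Z$ with $\square$, and properness follows from the admissibility of $Z$. When $F$ involves the coordinate $y$ or the last coordinate $\lambda = (y - y_1^i)/(y - y_1^{i-1})$, I would substitute into the rational formula defining $\nu^i$ and verify in each case that the imposed constraints either reduce to a face constraint on $Z$ together with an extra equation in $(y_1, y)$, or else force $y_1$ onto a proper closed subset, in each case yielding the required codimension by the admissibility of $Z$. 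This case analysis follows the same pattern as the corresponding computation in \cite[Lemma 3.13]{KP2}.

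The main obstacle is the careful divisorial bookkeeping in the modulus step: specifically, when one pulls back the divisor $F_{n+1}^1$ versus $D \times \ov{\square}^{n+1}$ to the normalization of the Zariski closure of $\gamma_Z^i$ inside $X \times \ov{\square}^{n+1}$, one must use that the defining equation of $V_X^i$ was designed so that the divisor $\{y = 1\} + \{\lambda = 1\}$ on the closure $\widehat{V}_X^i$ is bounded below (up to an extra effective divisor) by $\{y_1 = 1\}$. This algebraic identity—a direct verification on the equation $(1-y)(1-\lambda) = (1-y_1)(\cdots)$—is what makes the containment argument plus \lemref{lem:projective image} deliver the modulus bound with the correct face $F_{n+1}^1$ of $\square^{n+1}$, and is the essential computation that must be carried out in detail.
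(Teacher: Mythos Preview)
Your overall strategy is correct and you have put your finger on the crucial ingredient: the divisorial inequality coming from the defining equation $(1-y)(1-\lambda)=(1-y_1)(\cdots)$ of $V_X^i$. But the way you package the two steps does not quite go through, and differs from the paper's argument in two places.

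For the modulus condition, invoking \lemref{lem:projective image} after the containment lemma does not work as written. Containment in $Z\times\square^2$ only tells you that each component $Z'$ of $V_X^i\cdot(Z\times\square^2)$ has modulus $D$ \emph{relative to $F^1_{n+2}$}, i.e.\ the face divisor still includes $\{y_1=1\}$. To apply \lemref{lem:projective image} to the projection that drops $y_1$ (with $Y=X\times\P^1$, base $X$, and the remaining $\square^{n+1}$ fixed) you would need $Z'\in z^q((X\times\P^1)\mid D\times\P^1,\,n+1)$: this demands the modulus bound \emph{without} the $\{y_1=1\}$ term, and in addition the full face condition on $(X\times\P^1)\times\square^{n+1}$, which you have not verified. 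The key inequality $\nu_{Z'}^*\{y_1=1\}\le\nu_{Z'}^*\{y=1\}+\nu_{Z'}^*\{\lambda=1\}$ does let you trade away the $\{y_1=1\}$ contribution, but the clean way to descend afterwards is \lemref{lem:cancel} applied to the dominant map ${\ov{Z}'}^N\to{\ov{W}'}^N$, not \lemref{lem:projective image}. This is precisely what the paper does: pull the modulus inequality for $Z$ up to ${\ov{Z}'}^N$, replace $\{y_1=1\}$ by $\{y=1\}+\{\lambda=1\}$ using the equation of $V_X^i$, and then cancel the pullback via \lemref{lem:cancel}.

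For the face condition, working through the rational map $\nu^i$ and ``substituting into the formula'' breaks down at the face $\{\lambda=\infty\}$: the map $\nu^i$ is undefined on the locus $y=y_1^{i-1}$, and it is exactly the closure across this locus that produces $\partial_{n+1}^{\infty}(\gamma_Z^i)$, so no component of that face is dominated by anything in the domain of $\nu^i$. The paper instead computes the four codimension-one faces in the $(y,\lambda)$-directions directly from the equation of $V_X^i$, obtaining (up to the cyclic permutation $\sigma$) the cycles $Z$, $Z\{i\}$, $0$, and $Z\{i-1\}$; their admissibility, and hence proper intersection with all deeper faces, is exactly \lemref{lem:brace}, which your sketch does not invoke. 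The remaining faces in the $y_2,\dots,y_n$ directions satisfy $\partial_j^{\epsilon}(\gamma_{Z,n}^i)=\gamma_{\partial^{\epsilon}Z,\,n-1}^i$ and are handled by induction on $n$.
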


\begin{proof}
Once we have Lemma \ref{lem:brace}, the proof of Lemma \ref{lem:gamma^i} is very similar to that of \cite[Lemma 3.15]{KP2}, except we replace $(m+1) \{ t = 0 \}$ by $D \times \ov{\square}^{n+1}$. We give its argument for the reader's convenience.

We may assume $Z$ is irreducible. To keep track of $n$, we write $\gamma_{Z, n} ^i = \gamma_Z^i$. We first check that it satisfies the face condition of Definition \ref{defn:partial complex}. Let $\epsilon \in \{ 0, \infty\}$. 

Let $F \subset \square^{n+1}$ be a face. If $F$ involves the equation $\{ y_j = \epsilon\}$ for $j=n, n+1$, then by direction computations, we see that $\partial_n ^0 (\gamma_{Z,n} ^i) = \sigma \cdot Z, \partial_{n+1} ^0 (\gamma_{Z,n} ^i) = \sigma \cdot (Z\{ i \})$ for the cyclic permutation $\sigma = (1, 2, \cdots, n)$, and $\partial_n ^{\infty} (\gamma_{Z,n} ^i) = 0$, $\partial_{n+1} ^{\infty} (\gamma_{Z,n} ^i) = \sigma \cdot (Z \{ i-1 \})$. Since $Z$ is admissible with modulus $D$, so are $Z \{ i \}$ and $Z \{ i-1\}$ by Lemma \ref{lem:brace}. In particular, all of $\sigma \cdot Z, \sigma \cdot (Z \{ i \})$, and $\sigma \cdot ( Z \{ i-1 \})$ intersect all faces properly. Hence $\gamma_{Z,n}^i$ intersects $F$ properly.

In case $F$ does not involve the equations $\{y_j = \epsilon \}$ for $j=n, n+1$, we prove it by induction on $n \geq 1$. By direction calculations, for $j < n$, we have $\partial_j ^{\epsilon} (\gamma_{Z,n} ^i) = \gamma_{\partial_j ^{\epsilon} Z, n-1} ^i$ so that the dimension of $\partial _j ^i (\gamma_{Z, n} ^i)$ is at least one less by the induction hypothesis. Repeated applications of this argument for all other defining equations of $F$ then give the result. 

It remains to show that $\gamma_Z ^i$ has modulus $D$. Every irreducible component of $\gamma_Z ^i$ is of the form $W'= \pi_1' (Z')$, where $Z'$ is an irreducible component of $V_X ^i \cdot (Z \times \square^2)$. We prove $W'$ has modulus $D$. Consider the following commutative diagram
$$
\xymatrix{ {\ov{Z}'}^N \ar[d] ^{\pi_1 ^N} \ar[r] ^{\nu_{Z'}} & \widehat{V}_X ^i \ar@{^{(}->}[r] ^{\iota \ \ \ }\ar[d] ^{\ov{\pi}_1'} \ar[dr] ^{\pi_1 '} & X \times \ov{\square}^{n+2} \ar[d] ^{\pi_1} \\
{\ov{W}'}^N \ar[r] ^{\nu} & \ov{W}' \ar@{^{(}->}[r] ^{\iota_{W'} \ \ \ } & X \times \ov{\square}^{n+1},}
$$
where $\nu_{Z'}$ is the normalization of the Zariski closure $\ov{Z}'$ of $Z'$ in $\widehat{V}_X ^i$, $\nu$ is the normalization of the Zariski closure $\ov{W}'$ of $W'$ in $X \times \ov{\square}^{n+1}$, and $\pi_1^N$, $\ov{\pi}_1'$ are the induced morphisms. We use $(x, y_1, \cdots, y_n, y, \lambda) \in X \times \ov{\square}^{n+2}$ and $(x, y_2, \cdots, y_n, y, \lambda) \in X \times \ov{\square}^{n+1}$ as the coordinates. From the modulus $D$ condition of $Z$, we deduce 
\begin{equation}\label{eqn:gamma_Z}
\nu_{Z'} ^* \iota^*  (D \times \ov{\square} ^{n+2}) \leq \sum_{j=1} ^n \nu_{Z'} ^* \iota^* \{ y_j = 1 \}. 
\end{equation}
Note that the above does not involve the divisors $\{ y = 1 \}$ and $\{ \lambda = 1 \}$. Since $V_X ^i$ is an effective divisor on $X \times \square^{n+2}$ defined by the equation $(1-y_1)(*) = (1-y)(1-\lambda)$ for some polynomial $(*)$, we have $[\nu_{Z'} ^* \iota^* \{ y_1 = 1 \}] \leq [ \nu_{Z'} ^* \iota^* \{ y = 1 \}] + [ \nu_{Z'} ^* \iota^* \{ \lambda = 1 \}]$. 

Since the above diagram commutes, from \eqref{eqn:gamma_Z} we deduce ${ \pi_1 ^N}^* \nu^* \iota_{W'} ^* (D \times \ov{\square} ^{n+1}) \leq { \pi_1 ^N} ^* \left( \sum_{j=2} ^{n}  \nu^* \iota_{W'} ^*\{ y_j = 1 \} + \{ y = 1 \} + \{ \lambda = 1 \}\right)$. Hence by Lemma \ref{lem:cancel}, we deduce $\nu^* \iota_{W'} ^* (D \times \ov{\square} ^{n+1}) \leq   \sum_{j=2} ^{n} \nu^* \iota_{W'} ^*\{ y_j = 1 \} + \{ y = 1 \} + \{ \lambda = 1 \}$, which means $W'$ has modulus $D$. This finishes the proof.
\end{proof}

\begin{lem}\label{lem:permutation}
Let $n \geq 2$ and let $Z \in z^q (X|D, n)$ such that $\partial_i ^{\epsilon} (Z) = 0$ for all $1 \leq i \leq n$ and $\epsilon \in \{ 0, \infty \}$. Let $\sigma \in \mathfrak{S}_n$. Then there exists $\gamma_Z ^\sigma \in z^q (X|D, n+1)$ such that $Z = (\sgn (\sigma)) (\sigma \cdot Z) + \partial (\gamma_Z ^{\sigma}).$
\end{lem}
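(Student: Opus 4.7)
The plan is to reduce to the case of a single adjacent transposition and then build the general case by induction on length. Since $\mathfrak{S}_n$ is generated by the adjacent transpositions $\tau_j = (j, j+1)$, every $\sigma \in \mathfrak{S}_n$ can be written as a product $\sigma = \tau_{j_1}\tau_{j_2}\cdots\tau_{j_k}$ with $k \equiv \ell(\sigma) \pmod 2$, so $\sgn(\sigma) = (-1)^k$. I will first establish a \emph{base lemma}: for every $Z$ with trivial codimension-$1$ faces and every $j$, there exists $\beta_{Z,j} \in z^q(X|D,n+1)$ with $\partial \beta_{Z,j} = Z + \tau_j\cdot Z$. Granting this, the full statement follows by iterating: define inductively $Z_0 := Z$ and $Z_\ell := \tau_{j_\ell}\cdot Z_{\ell - 1}$; each $Z_\ell$ again has trivial codimension-$1$ faces since permuting the coordinates of $\square^n$ permutes its faces. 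Setting
\[
\gamma_Z^\sigma := \sum_{\ell=1}^{k} (-1)^{\ell - 1}\, \beta_{Z_{\ell-1},\, j_\ell},
\]
a telescoping boundary computation together with $\sgn(\sigma) = (-1)^k$ and $Z_k = \sigma \cdot Z$ gives the claimed identity $Z = \sgn(\sigma)(\sigma\cdot Z) + \partial \gamma_Z^\sigma$.

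The base lemma is the content of the argument and is proved by a geometric construction modeled on \lemref{lem:gamma^i}. The key point is to exhibit a cycle on $X \times \square^{n+1}$ which, under the normalization hypothesis on $Z$, has only two nonzero codimension-$1$ faces: one equal to $Z$ and one equal to $\tau_j\cdot Z$. For this I would adapt the closed subscheme $V_X^i$ of \lemref{lem:gamma^i}: replace the special role played there by $y_1$ with $y_j$, and use $y_{j+1}$ together with an auxiliary new coordinate $\lambda$ in place of $(y, \lambda)$, so that the corresponding rational map $\nu$ restricts to the identity on one boundary face and to the swap of $y_j, y_{j+1}$ on another. Concretely, define $\beta_{Z,j} := (p)_*(i)_*(\pi)^*(Z)$ for the analogous projection-correspondence-projection, and run the boundary computation of \lemref{lem:gamma^i}; the normalization assumption $\partial_i^\epsilon Z = 0$ kills all contributions coming from faces involving the coordinates $y_1,\ldots,y_{n}$ other than $y_j$ and $y_{j+1}$, while the contributions of the remaining faces collapse (after using the cases $i=1,2$ of the boundary identity in \lemref{lem:gamma^i}) to exactly $Z + \tau_j \cdot Z$. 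Admissibility is checked by the same pattern as in \lemref{lem:brace} and \lemref{lem:gamma^i}: the face condition follows from the proper intersection of $Z$ with faces of $X\times\square^n$ plus Bloch's type moving arguments, and the modulus condition reduces via a commutative diagram over the normalization of the closure to \lemref{lem:cancel}.

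The main obstacle is the base lemma: pinning down the geometric cycle on $X\times\square^{n+1}$ whose boundary is exactly $Z + \tau_j\cdot Z$ with no spurious terms, while preserving the modulus condition. The bookkeeping of which $\partial_i^\epsilon$ contributes which summand is the delicate part; the modulus side is essentially routine once the construction is fixed, since it parallels the proof of \lemref{lem:gamma^i} verbatim. The reduction step from general $\sigma$ to adjacent transpositions, by contrast, is purely formal once the base lemma is available.
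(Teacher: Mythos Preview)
Your proposal is correct and follows essentially the same two-step strategy as the paper: establish the identity $\partial(\beta) = Z + \tau_j\cdot Z$ for adjacent transpositions, then telescope over a reduced word for $\sigma$ with alternating signs.

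The one place where your execution differs from the paper's is in the base lemma. You propose to rebuild the geometric correspondence of \lemref{lem:gamma^i} at each position $j$, adapting the closed subscheme $V_X^i$ so that $y_j, y_{j+1}$ play the distinguished role. The paper instead simply takes the cycle $\gamma_Z^1$ already furnished by \lemref{lem:gamma^i} (the case $i=1$, which involves $y_1$) and hits it with a fixed permutation $\xi$ of the $\square^{n+1}$-coordinates to move the distinguished slots to positions $j, j+1$; admissibility is then immediate because permutations preserve $z^q(X|D,-)$, and the boundary computation reduces to the one already done in \lemref{lem:gamma^i}. This is cleaner than reconstructing the correspondence from scratch and avoids having to recheck the modulus and face conditions at each $j$. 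Also, your aside about ``Bloch's type moving arguments'' for the face condition is unnecessary: the face condition for $\gamma_Z^1$ (and hence for its permutations) is a direct computation using the explicit boundary formulas, not a moving-lemma argument.
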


\begin{proof}Its proof is almost identical to that of \cite[Lemma 3.16]{KP2}, except that we use Lemma \ref{lem:gamma^i} instead of \cite[Lemma 3.15]{KP2}. We give its argument for the reader's convenience.

First consider the case when $\sigma$ is the transposition $\tau= (p, p+1)$ for $1 \leq p \leq n-1$. We do it for $p=1$ only, i.e. $\tau= (1,2)$. Other cases of $\tau$ are similar. Let $\xi$ be the unique permutation such that $\xi \cdot (x, y_1, \cdots, y_{n+1}) = (x, y_n, y_1, y_{n+1}, y_2, \cdots, y_{n-1})$. Consider the cycle $\gamma_Z ^{\tau}:= \xi \cdot \gamma_Z ^1$, where $\gamma_Z ^1$ is as in Lemma \ref{lem:gamma^i}. Being a permutation of an admissible cycle, so is this cycle $\gamma_Z ^\xi$. Furthermore, by direction calculations, we have $\partial_1 ^{\infty} (\gamma_Z ^{\tau}) = 0$, $\partial_1 ^0 (\gamma_Z ^{\tau}) = \tau \cdot Z$, $\partial_3 ^{\infty} (\gamma_Z ^{\tau}) = 0$ and $\partial_3 ^{0} (\gamma_Z ^{\tau}) = Z$. On the other hand, for $\epsilon \in \{ 0, \infty\}$, $\partial_2 ^{\epsilon} (\gamma_Z ^{\tau})$ is a cycle obtained from $\gamma_{\partial_2 ^{\epsilon} (Z)} ^1$ by a permutation action. So, it is $0$ because $\partial_2 ^{\epsilon} (Z) = 0$ by the given assumptions. Similarly for $j \geq 4$, we have $\partial_j ^{\epsilon} (\gamma_Z ^\tau) = 0$. Hence $\partial (\gamma_Z ^{\tau}) = Z + \tau \cdot Z$, as desired.

Now let $\sigma \in \mathfrak{S}_n$ be any. By a basic result from group theory, we can express $\sigma = \tau_r \tau_{r-1} \cdots \tau_2 \tau_1$, where each $\tau_i$ is a transposition of the form $(p, p+1)$ as considered before. Let $\sigma _0 := {\rm Id}$ and $\sigma_{\ell} := \tau_{\ell} \tau_{\ell-1} \cdots \tau_1$ for $1 \leq \ell \leq r$. For each such $\ell$, by the previous case considered, we have $(-1)^{\ell -1} \sigma _{\ell -1} \cdot Z + (-1)^{\ell -1} \tau _{\ell} \cdot \sigma _{\ell -1} \cdot Z = \partial ( ( -1)^{\ell -1} \gamma _{\sigma _{\ell -1} \cdot Z} ^{\tau _{\ell}}).$ Since $\tau _{\ell} \cdot \sigma_{\ell -1} = \sigma _{\ell}$, by taking the sum of the above equations over all $1 \leq \ell \leq r$, after cancellations, we obtain $Z + (-1)^{r-1} \sigma \cdot Z = \partial (\gamma _Z ^{\sigma})$, where $\gamma_Z ^{\sigma}:= \sum_{\ell=1} ^r (-1)^{\ell -1} \gamma_{\sigma _{\ell -1} \cdot Z} ^{\tau_{\ell}}$. Since $(-1)^r = \sgn (\sigma)$, we obtain the desired result.
\end{proof}

\subsection{Pontryagin product}\label{sec:Pont}
Let $X \in \Sch^{\rm ess}_k$ be an equidimensional scheme. For $\un{m} = (m_1, \cdots , m_r) \ge 1$, let $\CH(X[r]|D_{\un{m}}):= \oplus_{q,n} \CH^q(X[r]|D_{\un{m}},n)$. For $m \ge 1$, we let $\TH(X;m) = \oplus_{q,n} \TH^q(X,n; m) = \oplus_{q,n} \CH^q(X[1]|D_{m+1},n-1)$. The objective of \S \ref{sec:Pont} is to prove the following result which generalizes \cite[\S 3]{KP2}.

\begin{thm}\label{thm:algebra}Let $k$ be a perfect field. 
Let $m \ge 0$ and let $\un{m}= (m_1, \cdots, m_r) \ge 1$. Let $X, Y$ be both either in $\SmAff_k ^{\ess}$ or in $\SmProj_k$. Then we have the following:
\begin{enumerate}
\item $\TH(X;m)$ is a graded commutative algebra with respect to a product $\wedge_X$.
\item $\CH(X[r]|D_{\un{m}})$ is a graded module over $\TH(X; |\un{m}|-1)$.
\item For $f: Y \to X$ with $d= \dim Y - \dim X$, $f^*: \CH(X[r]|D_{\un{m}}) \to \CH(Y[r]|D_{\un{m}})$ and $f_*: \CH(Y[r]|D_{\un{m}}) \to \CH(X[r]|D_{\un{m}})[-d]$ (if $f$ is proper in addition) are morphisms of graded $\TH(X;|\un{m}|-1)$-modules. 
\end{enumerate}
\end{thm}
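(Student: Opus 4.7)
The plan is to extend the Pontryagin-product construction of \cite[\S 3]{KP2}, which handled the univariate projective case, simultaneously to the smooth affine setting and to the multivariate setting. Given $Z_1 \in \TZ^q(X, n_1; |\un{m}|-1)$, viewed as a cycle on $X \times \A^1_t \times \square^{n_1-1}$, and $Z_2 \in z^{q'}(X[r]|D_{\un{m}}, n_2-1)$ on $X \times \A^r_{\vec{s}} \times \square^{n_2-1}$, I would define $Z_1 \wedge_X Z_2$ by forming the external product $Z_1 \boxtimes_k Z_2$, pulling back along the diagonal $\Delta_X \colon X \hookrightarrow X \times X$, and then pushing forward along the Pontryagin map
\[
\mu \colon X \times \A^1 \times \A^r \longrightarrow X \times \A^r, \qquad (x, t, s_1, \ldots, s_r) \longmapsto (x, t s_1, s_2, \ldots, s_r).
\]
Since $\mu$ is not proper, the pushforward has to be made rigorous via a proper compactification (replacing $\A^1$ by $\P^1$) together with a blowup resolving the indeterminacy of the extension, exactly as in \cite{KP2}.

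Three ingredients enter the verification at the cycle level. First, to guarantee that the diagonal pullback is defined, I would invoke the moving lemma---Kai's Theorem \ref{thm:moving affine} in the affine case and Theorem \ref{thm:moving proj} in the projective case---to place $Z_1$ (or $Z_2$) in general position with respect to $X \times X \times F$ for all faces $F$ of $\square^{\ast}$. Second, for the modulus condition: the external-product-plus-diagonal cycle $V$ inherits modulus $|\un{m}|\{t=0\} + m_1\{s_1=0\} + \cdots + m_r\{s_r=0\}$ on the normalization of its closure, while the desired modulus on the image pulls back under $\mu$ as
\[
\mu^{\ast}\bigl(m_1\{u_1=0\} + \textstyle\sum_{i\ge 2} m_i \{s_i=0\}\bigr) \ = \ m_1\{t=0\} + m_1\{s_1=0\} + \textstyle\sum_{i\ge 2} m_i \{s_i=0\},
\]
which is dominated by the modulus of $V$ precisely because $m_1 \le |\un{m}|$; Lemma \ref{lem:cancel} then lets me descend this inequality from the blowup to the image, showing $V' := \mu_{\ast} V$ has modulus $D_{\un{m}}$. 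Third, I would use the normalization theorem (Theorem \ref{thm:normalization}) to choose representatives whose codimension-one faces vanish, which kills spurious boundary contributions both in the well-definedness argument and in the commutativity proof.

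Once $\wedge_X$ is defined at the cycle level, associativity is immediate from associativity of multiplication in the first $\A^r$-coordinate, and graded commutativity on $\TH(X;m)$---where $t$ and $s_1$ are symmetric under $ts_1 = s_1 t$---follows by combining this symmetry with Lemma \ref{lem:permutation} to account for the Koszul sign from reordering the $\square$-factors of $Z_1$ and $Z_2$. The module axioms in (2) are verified by the same construction applied to $\TH(X; |\un{m}|-1) \times \CH(X[r]|D_{\un{m}})$, and the functoriality statements in (3) reduce to the standard compatibilities of external products, diagonal pullbacks, and $\mu$-pushforwards with $f^{\ast}$ (Theorem \ref{thm:contravariant}) and with proper $f_{\ast}$ via the projection formula. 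The main obstacle I anticipate is the careful handling of the blowup and the accompanying normalization inequalities needed to verify the modulus condition after pushforward by the non-proper map $\mu$; this is where Kai's moving lemma and Theorem \ref{thm:normalization} are used most delicately, and it closely mirrors, while extending, the corresponding argument in the projective univariate case of \cite[\S 3]{KP2}.
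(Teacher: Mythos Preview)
Your outline follows the same architecture as the paper's proof: external product, a Pontryagin map $\mu$, diagonal pullback, with the moving lemmas (Theorems~\ref{thm:moving affine}, \ref{thm:moving proj}) supplying $\Delta_X^*$, the normalization theorem (Theorem~\ref{thm:normalization}) and Lemma~\ref{lem:permutation} supplying graded commutativity, and the projection formula handling $f_*$ in (3). Two points deserve comment.

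First, your $\mu$ multiplies $t$ into the \emph{first} affine coordinate only, whereas the paper's Definition~\ref{defn:mu} sends $(t,t_1,\ldots,t_r)\mapsto (tt_1,\ldots,tt_r)$, multiplying into \emph{every} coordinate. Both choices yield an associative action with the correct unit, and your modulus inequality $m_1\le |\un{m}|$ is indeed enough for your map; the paper's symmetric choice makes the pull-back of $D_{\un{m}}$ match $(|\un{m}|,\un{m})$ on the nose (Lemma~\ref{lem:mu mod}), which is why the $\A^1$-modulus is taken to be exactly $|\un{m}|$. Either works for the theorem as stated. The paper also applies $\mu_*$ \emph{before} $\Delta_X^*$; since the relevant square is Cartesian this order is immaterial.

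Second, and more substantively: you anticipate that ``the main obstacle'' is a blowup argument to make $\mu_*$ rigorous, ``exactly as in \cite{KP2}''. This mis-remembers the mechanism. Neither \cite[Lemma~3.4]{KP2} nor its generalization here (Proposition~\ref{prop:mu finite}) defines $\mu_*$ via a blown-up compactification; rather, one proves that $\mu|_{V_1\times V_2}$ is already a \emph{finite} morphism, by taking the graph closure in $X\times\P^1\times(\P^1)^r\times(\P^1)^r$ and checking, using the modulus conditions $m\ge 1$ and $\un{m}\ge 1$, that the closure of $V_1\times V_2$ misses the boundary strata over $\A^r$. Once finiteness is known, $\mu_*$ is the usual finite push-forward (Definition~\ref{defn:mu-times}) and the modulus is transported by Lemma~\ref{lem:cancel} as you say. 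So the delicate step is a finiteness statement, not a resolution of indeterminacy, and no blowup enters.
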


The proof requires a series of results and will be over after Lemma \ref{lem:asscomm}.

\begin{lem}\label{lem:modulus concat}
Let $X_1, X_2 \in \Sch_k ^{\ess}$. For $i=1, 2$ and $r_i \ge 1$, let $V_i$ be a cycle on $X_i \times \mathbb{A}^{r_i} \times \square^{n_i}$ with modulus $\un{m}_i = (m_{i1}, \cdots, m_{ir_i})$, respectively. Then $V_1 \times V_2$, regarded as a cycle on $X_1 \times X_2 \times \mathbb{A}^{r_1 +r_2} \times \square^{n_1 + n_2}$ after a suitable exchange of factors, has modulus $(\un{m}_1, \un{m}_2)$. 
\end{lem}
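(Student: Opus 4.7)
The strategy is to check the modulus inequality for $V_1 \times V_2$ on the normalization of its Zariski closure by pulling it back to the product $\bar V_1^N \times \bar V_2^N$, where the inequality splits cleanly into the modulus conditions for the factors. The role of the perfectness of $k$ (in force in Section \ref{sec:DGA}) is to guarantee that this product is reduced, so that $\bar V_1^N \times \bar V_2^N \to \bar V_1 \times \bar V_2$ is a finite surjection and Lemma \ref{lem:cancel} can be applied.

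\textbf{Step 1 (Setup and reductions).} By $\mathbb Z$-linearity and additivity of the modulus condition on components, assume $V_1, V_2$ are irreducible. Write $X := X_1[r_1] \times X_2[r_2]$ and $\un m := (\un m_1, \un m_2)$. After the exchange of factors, $V_1 \times V_2$ sits in $X \times \square^{n_1+n_2}$ with Zariski closure $\bar Z := \bar V_1 \times \bar V_2$ inside $X \times \ov\square^{n_1+n_2}$. The key algebraic identities are the equalities of Cartier divisors
\[
D_{\un m} \times \ov\square^{n_1+n_2}
= \bigl(D_{\un m_1} \times \ov\square^{n_1}\bigr) \boxplus \bigl(D_{\un m_2} \times \ov\square^{n_2}\bigr),
\qquad
F_{n_1+n_2}^1 = F_{n_1}^1 \boxplus F_{n_2}^1,
\]
where $\boxplus$ denotes the external sum of divisors coming from each factor, obtained from $t_1^{\un m_1}\!\cdot t_2^{\un m_2}$ and from the defining equation $\prod_i(1-y_i)=0$ of $F^1$ respectively.

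\textbf{Step 2 (Perfectness and a finite surjective cover).} Because $k$ is perfect and each $\bar V_i^N$ is normal integral, the fibre product $W := \bar V_1^N \times_k \bar V_2^N$ is reduced, and the map $p := \nu_1 \times \nu_2 : W \to \bar Z$ is finite and surjective. For every irreducible component $\bar Z_\alpha$ of $\bar Z$, pick an irreducible component $W_\alpha \subset W$ mapping onto it, let $W_\alpha^N$ be its normalization, and let $f_\alpha : W_\alpha^N \to \bar Z_\alpha^N$ be the induced finite dominant map of normal integral schemes (existing by the universal property of normalization).

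\textbf{Step 3 (Pull back the modulus inequalities to $W$).} The modulus condition for $V_i$ reads
\[
\nu_i^*\bigl(D_{\un m_i} \times \ov\square^{n_i}\bigr) \;\le\; \nu_i^*\bigl(X_i[r_i] \times F_{n_i}^1\bigr)
\quad\text{on }\bar V_i^N.
\]
Pulling back these two inequalities to $W$ via the two projections and summing, then using the additive identities of Step 1, yields
\[
p^*\bigl(D_{\un m} \times \ov\square^{n_1+n_2}\bigr) \;\le\; p^*\bigl(X \times F_{n_1+n_2}^1\bigr) \quad\text{on } W,
\]
and hence the analogous inequality on each $W_\alpha^N$.

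\textbf{Step 4 (Descend via Lemma \ref{lem:cancel}).} Let $E_\alpha$ be the Cartier divisor on $\bar Z_\alpha^N$ obtained as the difference of the right and left sides of the desired modulus inequality. Step 3 says $f_\alpha^* E_\alpha \ge 0$. Since $f_\alpha$ is dominant between normal integral schemes and is surjective (so every generic point of $\Supp(E_\alpha)$ lies in its image), Lemma \ref{lem:cancel} gives $E_\alpha \ge 0$. This is precisely the modulus condition for the component $\bar Z_\alpha$, and ranging over $\alpha$ completes the proof.

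\textbf{Main obstacle.} The only non-formal point is Step 2: the product $\bar V_1 \times \bar V_2$ need not be irreducible or normal even when the $\bar V_i$ are, so one cannot identify $\bar Z^N$ with $\bar V_1^N \times \bar V_2^N$. The descent from $W_\alpha^N$ to $\bar Z_\alpha^N$ is exactly what Lemma \ref{lem:cancel} is designed for, and requires perfectness of $k$ to ensure $W$ is reduced so that the map $p$ is well-behaved.
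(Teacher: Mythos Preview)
Your argument is correct, but it takes a slightly longer route than the paper's and is built around a misconception. You assert in your ``Main obstacle'' that one cannot identify $\bar Z^N$ with $\bar V_1^N \times \bar V_2^N$; in fact, over a perfect field one \emph{can}. The paper invokes \cite[Lemma~3.1]{KL}: since $k$ is perfect, the product $\bar V_1^N \times_k \bar V_2^N$ of normal varieties is itself normal (though possibly reducible), and hence the map $\nu_{\bar V_1} \times \nu_{\bar V_2}: \bar V_1^N \times \bar V_2^N \to \bar V_1 \times \bar V_2$ \emph{is} the normalization. Consequently, for each irreducible component $W$ of $V_1 \times V_2$, the universal property of normalization gives a factorization $\bar W^N \xrightarrow{\iota^N} \bar V_1^N \times \bar V_2^N \to \bar V_1 \times \bar V_2$, and the modulus inequality on $\bar W^N$ follows immediately by pulling back the sum of the two factor inequalities along $\iota^N$. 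No descent via Lemma~\ref{lem:cancel} is needed.

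Your detour through $W_\alpha^N \to \bar Z_\alpha^N$ and Lemma~\ref{lem:cancel} does work (the map $f_\alpha$ is finite surjective, so the lemma applies), and it has the mild advantage of only using that the product is reduced rather than normal. But since both facts follow from perfectness of $k$ with comparable effort, the paper's approach is more direct.
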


\begin{proof}
We may assume that $V_1$ and $V_2$ are irreducible. It is enough to show that each irreducible component $W \subset V_1 \times V_2$ has modulus $(\un{m}_1, \un{m}_2)$. Let $\iota_i: \ov{V}_i \hookrightarrow X_i \times \mathbb{A}^{r_i} \times \ov{\square}^{n_i}$ be the Zariski closure of $V_i$, and let $\nu_{\ov{V}_i} : \ov{V}_i ^N \to \ov{V}_i$ be the normalization for $i=1,2$.  Since $k$ is perfect, \cite[Lemma 3.1]{KL} says that the morphism $\nu:= \nu_{\ov{V}_1} \times \nu_{\ov{V}_2} : \ov{V}_1 ^N \times \ov{V}_2 ^N \to \ov{V}_1 \times \ov{V}_2 = \ov{V_1 \times V_2}$ is the normalization. Hence, the composite $\ov{W}^N \overset{\nu_W}{\to} \ov{W} \overset{\iota}{\hookrightarrow} \ov{V}_1 \times \ov{V}_2$, where $\ov{W}$ is the Zariski closure of $W$ and $\nu_W$ is the normalization of $\ov{W}$, factors into $\ov{W}^N \overset{\iota^N}{\to} \ov{V}_1 ^N \times \ov{V}_2 ^N \overset{\nu}{\to} \ov{V}_1 \times \ov{V}_2$, where $\iota^N$ is the natural inclusion.

Let $(t_1, \cdots, t_{r_1}, t_1 ', \cdots, t_{r_2}' , y_1, \cdots, y_{n_1 + n_2}) \in \mathbb{A}^{r_1+ r_2} \times \ov{\square}^{n_1 + n_2}$ be the coordinates. Consider two divisors $D^1:= \sum_{i=1} ^{n_1} \{ y_i = 1 \} - \sum_{j=1} ^{r_1} m_{1j} \{ t_j = 0 \}, D^2 := \sum_{ i=n_1 + 1} ^{n_1 + n_2} \{ y_i = 1 \} - \sum_{j=1} ^{r_2} m_{2j} \{ t_j ' = 0 \}$. By the modulus conditions satisfied by $V_1$ and $V_2$, we have $((\iota_1 \times 1) \circ (\nu_{\ov{V}_1} \times 1))^* D^1 \geq 0$ and $((1 \times \iota_2) \circ (1 \times \nu_{\ov{V}_2}))^* D^2 \geq 0$. Thus, we have $\nu^* (\iota_1 \times \iota_2)^* (D^1 + D^2) \geq 0$ on $\ov{V}_1 ^N \times \ov{V}_2 ^N$ so that $(\iota^N)^* \nu^* (\iota_1 \times \iota_2)^* (D^1 + D^2) \geq 0$ on $\ov{W}^N$. Since $\iota \circ \nu_W = \nu \circ \iota^N$, this is equivalent to $\nu_W ^* \iota^* (\iota_1 \times \iota_2 )^* (D^1 + D^2) \geq 0$, which shows $W$ has modulus $(\un{m}_1, \un{m}_2)$.
\end{proof}

\begin{defn}\label{defn:mu}
Let $r \ge 1$ be an integer and define $\mu: X_1 \times \mathbb{A}^1 \times \square^{n_1} \times X_2 \times \mathbb{A}^r \times \square^{n_2} \to X_1 \times X_2 \times \mathbb{A}^{r} \times \square^{n_1 + n_2}$ by $(x_1, t, \{y_j\}) \times (x_2, \{t_i\}, \{y_j'\}) \mapsto (x_1, x_2, \{tt_i\}, \{y_j\}, \{y_j'\})$.
\end{defn}
The map $\mu$ is flat, but not proper. But, the following generalization of \cite[Lemma 3.4]{KP2} gives a way to take a push-forward:

\begin{prop}\label{prop:mu finite}
Let $V_1 \subset X_1 \times \mathbb{A}^1 \times \square^{n_1}$ and $V_2 \subset X_2 \times \mathbb{A}^r \times \square^{n_2}$ be closed subschemes with moduli $m$ and $\un{m} \ge 1$, respectively. Then $\mu|_{V_1 \times V_2}$ is finite.
\end{prop}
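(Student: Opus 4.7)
The plan is to prove $\mu|_{V_1 \times V_2}$ is finite by establishing both quasi-finiteness and properness, adapting the strategy of \cite[Lemma~3.4]{KP2}. The key step, used for both, is a \emph{non-intersection lemma}: the hypotheses $m, m_i \ge 1$ force
\[
V_1 \cap (X_1 \times \{0\} \times \square^{n_1}) = \emptyset \quad\text{and}\quad V_2 \cap \bigl(X_2 \times \textstyle\bigcup_i \{t_i = 0\} \times \square^{n_2}\bigr) = \emptyset.
\]
Suppose $(x_1, 0, \un{y}) \in V_1$ with $\un{y} \in \square^{n_1}$. Let $V_1^0$ be an irreducible component of $V_1$ through this point, with closure $\ov{V}_1^0 \subset X_1 \times \mathbb{P}^1 \times \ov{\square}^{n_1}$ and normalization $\nu_1 \colon \ov{V}_1^{0,N} \to \ov{V}_1^0$. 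The intersection $\ov{V}_1^0 \cap \{t=0\}$ is codimension one in $\ov{V}_1^0$ and contains $(x_1, 0, \un{y})$, so there is a prime divisor $E \subset \ov{V}_1^{0,N}$ through a preimage of this point with $\nu_1(E) \subset \{t=0\}$. The irreducible $\nu_1(E)$ meets a point with every $y_j \ne 1$, hence is contained in no $\{y_j = 1\}$, giving $\text{mult}_E(\nu_1^* \sum_j \{y_j = 1\}) = 0$. The modulus inequality $m\,\nu_1^*\{t=0\} \le \nu_1^*\sum_j \{y_j=1\}$ then forces $m \le 0$, contradicting $m \ge 1$. Using each $m_i \ge 1$, the same argument gives the $V_2$ half.

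\emph{Quasi-finiteness and properness.} Over a point $P = (x_1, x_2, \un{s}, \un{y}, \un{y}')$ of the target, the ambient fiber $\mu^{-1}(P)$ is the curve $\{(t, \un{s}/t) : t \in \mathbb{G}_m\}$ if $\un{s} \ne \un{0}$, and $(\{0\}_t \times \mathbb{A}^r) \cup (\mathbb{A}^1 \times \{\un{0}\}_{\un{t}})$ otherwise. The non-intersection lemma makes this fiber empty in $V_1 \times V_2$ when $\un{s} = \un{0}$; for $\un{s} \ne \un{0}$, an infinite intersection would force $V_1$ to contain the line $\{x_1\} \times \mathbb{G}_m \times \{\un{y}\}$, whose closure in $\ov{V}_1$ contains $(x_1, 0, \un{y})$, contradicting non-intersection. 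For properness, I apply the valuative criterion. Given a DVR $A$ with fraction field $K$, $\sigma \colon \Spec K \to V_1 \times V_2$, and an extension $\tau \colon \Spec A \to X_1 \times X_2 \times \mathbb{A}^r \times \square^{n_1+n_2}$ of $\mu \circ \sigma$, I extend $\sigma$ to $\tilde\sigma \colon \Spec A \to \ov{V}_1 \times \ov{V}_2$ via the proper compactifications $\ov{V}_1 \subset X_1 \times \mathbb{P}^1 \times \ov{\square}^{n_1}$ and $\ov{V}_2 \subset X_2 \times (\mathbb{P}^1)^r \times \ov{\square}^{n_2}$. The $(\un{y}, \un{y}')$-coordinates of $\tilde\sigma$ at the closed point agree with $\tau$, hence lie in $\square$. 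If $t \to \infty$ at the closed point, then $t_i = s_i/t \to 0$ for all $i$ (since $\un{s}$ extends over $A$), and the closed point of $\tilde\sigma|_{\ov{V}_2}$ lies in $\ov{V}_2 \cap (X_2 \times \mathbb{A}^r \times \square^{n_2}) = V_2$ at $(x_2, \un{0}, \un{y}')$, contradicting non-intersection. Symmetrically, $t_i \to \infty$ forces $t \to 0$ and contradicts the $V_1$ non-intersection. Hence $\tilde\sigma$ factors through $V_1 \times V_2$, as required.

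The main difficulty is the non-intersection lemma, which hinges on the multiplicity bookkeeping on the normalization of a component of $\ov{V}_i$ outlined above; one may invoke a local form of \lemref{lem:cancel} to descend inequalities from the normalization if needed. The remainder is routine case analysis.
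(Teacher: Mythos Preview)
Your proof is correct and rests on the same key ingredient as the paper's argument---the non-intersection lemma that the modulus $\ge 1$ hypotheses force $V_1 \cap \{t=0\} = \emptyset$ and $V_2 \cap \{t_i=0\} = \emptyset$---but you package the conclusion differently. The paper observes that since $\mu$ is affine, it suffices to show $\mu|_{V_1 \times V_2}$ is projective; it does this by taking the closure $\ov{\Gamma}$ of the graph of $\mu$ inside $X \times \P^1 \times (\P^1)^r \times (\P^1)^r$, identifying the boundary locus $\ov{\Gamma}^0 \setminus \Gamma$ explicitly as a union of strata where $t=0$ or some $t_i=0$, and then invoking the non-intersection lemma to see that the closure $\ov{\Gamma}^0_V$ of the graph of $\mu|_V$ misses this boundary, hence equals $\Gamma_V \simeq V$ and is already projective over the target. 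You instead argue quasi-finiteness by direct fiber analysis and properness by the valuative criterion, extending to the compactified $\ov{V}_1 \times \ov{V}_2$ and ruling out $t \to \infty$ or $t_i \to \infty$ by the same non-intersection lemma. Both routes are short; the paper's graph-closure argument avoids the case analysis on the shape of $\mu^{-1}(P)$ and the DVR bookkeeping, while your approach is perhaps more elementary in that it never leaves the category of schemes over the base and does not require identifying the boundary strata of $\ov{\Gamma}$. One small remark: in your quasi-finiteness step you should phrase ``infinite fiber'' as ``positive-dimensional fiber'' to cover non-closed points cleanly, but the substance of the argument is unchanged.
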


\begin{proof}
Since $\mu$ is an affine morphism, the proposition is equivalent to show that $\mu|_{V_1 \times V_2}$ is projective.

Set $X =  X_1 \times X_2 \times \square^{n_1+n_2}$. Let $\Gamma \inj X_1 \times X_2 \times \A^1 \times \A^{r} \times \A^r \times \square^{n_1+n_2} = X \times \A^1 \times \A^{r} \times \A^r$ denote the graph of the morphism $\mu$ and let $\ov{\Gamma} \inj  X \times \P^1 \times (\P^1)^{r} \times (\P^1)^r=X \times P_1 \times P_2 \times P_3$ be its closure, where $P_1 = \mathbb{P}^1$ and $P_2 = P_3 = (\mathbb{P}^1)^r$. Let $p_i$ be the projection of $X \times \P^1 \times (\P^1)^{r} \times (\P^1)^r$ to $X \times P_i$ for $1 \le i \le 3$. Set $\ov{\Gamma}^0 = p^{-1}_3(X \times \A^r)$. Then $p_3: \ov{\Gamma}^0 \to X \times \A^r$ is projective.

Using the homogeneous coordinates of $P_1 \times P_2 \times P_3$, one checks easily that $Z:= \ov{\Gamma} ^0 \setminus \Gamma \subset E \cup (\bigcup_{i=1} ^r E_i)$ (the union is taken inside $X \times P_1 \times P_2 \times P_3$), where $E= X \times \{ \infty \} \times (\{ 0 \})^r \times \mathbb{A}^r$ and $E_i = X \times \{ 0 \} \times (  (\mathbb{P}^1)^{i-1} \times \{ \infty \} \times (\mathbb{P}^1)^{r-i}) \times \mathbb{A}^r$. 

Let $V= V_1 \times V_2$. Let $\Gamma_V$ be the graph $\Gamma$ restricted to $V$ and let $\ov{\Gamma}_V$ be its Zariski closure in $X \times P_1 \times P_2 \times P_3$. Since $p_3: \ov{\Gamma} ^0 \to X \times \mathbb{A}^r$ is projective, so is the map $\ov{\Gamma}^0_V := \ov{\Gamma}_V \cap \ov{\Gamma}^0 \to X \times \A^r$. So, if we show $\ov{\Gamma} ^0 _V \cap Z = \emptyset$, then $V \simeq \Gamma_V = \ov{\Gamma}_V ^0$ is projective over $X \times \mathbb{A}^r$, which is the assertion of the proposition. 

To show $\ov{\Gamma} ^0 _V \cap Z = \emptyset$, consider the projections $X \times P_1 \times P_2 \times P_3 \overset{p_1}{\to} X \times P_1 \overset{\pi_1}{\to} X_1 \times P_1 \times \square^{n_1}$. Since the closure $\ov{V}_1$ has modulus $m \ge 1$ on $X_1 \times P_1 \times \square^{n_1}$, we have $\ov{V}_1 \cap (X_1 \times \{0\} \times \square^{n_1}) = \emptyset$. In particular, $\ov{\Gamma}_V \cap E_i \inj (\pi_1 \circ p_1)^{-1}(\ov{V}_1 \cap (X_1 \times \{0\} \times \square^{n_1})) = \emptyset$ for $1 \le i \le r$.

To show that $\ov{\Gamma}^0_V \cap E = \emptyset$, consider the projections $X \times P_1 \times P_2 \times P_3 \overset{p_2}{\to} X \times P_2 \overset{\pi_2}{\to} X_2 \times P_2 \times \square^{n_2}$. Since the closure $\ov{V}_2 $ has modulus $\un{m} \geq 1$ on $X_2 \times P_2 \times \square^{n_2}$, we have $\ov{V}_2 \cap (X_2 \times (\{0\})^r \times \square^{n_2}) = \emptyset$. In particular, $\ov{\Gamma}_V \cap E \inj (\pi_2 \circ p_2)^{-1}(\ov{V}_2 \cap (X_2 \times (\{0\})^r \times \square^{n_2})) = \emptyset$. This finishes the proof.
\end{proof}


%

\begin{lem}\label{lem:mu mod}
Let $X\in \Sch_k^{\ess}$ and let $V$ be a cycle on $X \times \mathbb{A}^{1} \times \mathbb{A}^r  \times \square^n$ with modulus $(|\un{m}|, \un{m})$, where $\un{m} = (m_{1}, \cdots, m_{r}) \ge 1$. Suppose $\mu|_V$ is finite. Then the closed subscheme $\mu(V)$ on $X \times \mathbb{A}^r \times \square^n$ has modulus $\un{m}$. 
\end{lem}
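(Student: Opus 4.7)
The plan is to compare the modulus inequality on $\ov V^N$ (which holds by hypothesis) with the desired one on $\ov W^N$, via the normalization of the natural map induced by $\bar\mu$, and then to invoke the cancellation result (Lemma \ref{lem:cancel}). We may assume $V$ is integral, so $W := \mu(V)$ is integral as well. Write $X' := X \times \A^1 \times \A^r$ and $X'' := X \times \A^r$. The identity on $\ov\square^n$ together with $\mu$ assembles into a morphism $\bar\mu: X' \times \ov\square^n \to X'' \times \ov\square^n$. Since $\bar\mu^{-1}(\ov W)$ is closed and contains $V$, it contains $\ov V$ as well, so $\bar\mu$ restricts to a morphism $g: \ov V \to \ov W$; since $g(V) = W$ is dense in $\ov W$, the map $g$ is dominant. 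By the universal property of normalization, $g$ lifts uniquely to a dominant morphism $g^N: \ov V^N \to \ov W^N$ of normal integral $k$-schemes, fitting into the obvious commutative square with $\nu_V$ and $\nu_W$.

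The heart of the computation is the divisor-level identity
\[
\bar\mu^*(D' \times \ov\square^n) = D \times \ov\square^n, \qquad \bar\mu^*(X'' \times F_n^1) = X' \times F_n^1,
\]
where $D' := \sum_{i=1}^r m_i\{s_i = 0\}$ on $X''$ and $D := |\un m|\{t=0\} + \sum_{i=1}^r m_i\{t_i = 0\}$ on $X'$. The first holds because $\bar\mu^*(s_i) = t \cdot t_i$, so $\bar\mu^*\{s_i = 0\} = \{t = 0\} + \{t_i = 0\}$, and summing with weights $m_i$ collapses the $\{t = 0\}$-contributions to $|\un m|\{t = 0\}$. Combining these identities with the modulus hypothesis $\nu_V^*(D \times \ov\square^n) \le \nu_V^*(X' \times F_n^1)$ on $\ov V^N$ and the commutativity of the normalization square, we obtain on $\ov V^N$
\[
(g^N)^*\!\left[\nu_W^*\bigl(X'' \times F_n^1 - D' \times \ov\square^n\bigr)\right] \;=\; \nu_V^*\bigl(X' \times F_n^1 - D \times \ov\square^n\bigr) \;\ge\; 0.
\]

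To descend this inequality to $\ov W^N$, we apply Lemma \ref{lem:cancel} to $g^N$ and the divisor $\nu_W^*(X'' \times F_n^1 - D' \times \ov\square^n)$. This requires that the generic points of the support of this divisor on $\ov W^N$ lie in $g^N(\ov V^N)$, for which it suffices that $g^N$ be surjective. Here the hypothesis that $\mu|_V$ is finite enters crucially: since $\bar\mu$ is the identity on $\ov\square^n$ and $\mu|_V: V \to W$ is proper, a valuative-criterion argument shows that $g: \ov V \to \ov W$ is proper. Being proper and dominant, $g$ is surjective, hence so is $g^N$, and Lemma \ref{lem:cancel} then yields the desired modulus inequality on $\ov W^N$, which is precisely the modulus $\un m$ condition for $W = \mu(V)$.

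The main obstacle I anticipate is establishing properness of $g: \ov V \to \ov W$ from the finiteness of $\mu|_V$. This is precisely where the modulus condition on $V$ (which controls how $\ov V$ can degenerate toward $\{t = 0\}$ and $\{t_i = 0\}$ relative to $F_n^1$) is essential: it prevents $\ov V$ from acquiring boundary components along which $\bar\mu$ would fail to be proper, thereby closing the valuative-criterion argument in the case where the specialization of a lift lands in $\ov W \setminus W \subset X'' \times F_n^1$.
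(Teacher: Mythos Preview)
Your overall approach matches the paper's (which gives only a one-line reference to \cite[Proposition~3.8]{KP2} plus the remark that the $\A^1$-modulus being $|\un m|$ is essential): set up the normalization square over $\bar\mu$, compute $\bar\mu^*(D'\times\ov\square^n)=D\times\ov\square^n$ from $\bar\mu^*\{s_i=0\}=\{t=0\}+\{t_i=0\}$, and descend the inequality via Lemma~\ref{lem:cancel}. Your divisor identity and the reduction to Lemma~\ref{lem:cancel} are exactly right.

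The gap is in your justification of surjectivity of $g^N$. You propose to get properness of $g:\ov V\to\ov W$ from the valuative criterion, with the modulus condition on $V$ ``closing'' the bad case where the specialization lands in $\ov W\setminus W$. But the modulus condition on $V$ is formulated on the closure $\ov V$ inside $X\times\A^1\times\A^r\times\ov\square^n$, where the $\A^{r+1}$-factor is \emph{not} compactified; it constrains the orders of $\{t=0\}$ and $\{t_i=0\}$ relative to $F_n^1$, but says nothing about $t\to\infty$ along a DVR specialization. So the mechanism you name cannot by itself rule out a specialization with $v'(t)<0$ (and correspondingly $v'(t_i)>0$) over a boundary point of $\ov W$. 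What actually drives surjectivity is the finiteness hypothesis on $\mu|_V$: since $t,t_1,\dots,t_r$ are regular on $V$ and $\sO_V$ is a finite $\sO_W$-algebra, these functions are integral over $\sO_W$; one then checks (using that $\bar\mu$ is affine, so the relevant preimages in $\ov V^N$ are affine with normal coordinate ring) that this integrality forces any extension $v'$ of the DVR at a codimension-one point $\eta$ of $\ov W^N$ to satisfy $v'(t),v'(t_i)\ge 0$, hence to have a center on $\ov V^N$ lying over $\eta$. Equivalently, one shows $g:\ov V\to\ov W$ is finite (affine plus integral), hence surjective. Once you replace your ``modulus prevents bad boundary'' heuristic with this integrality argument, the application of Lemma~\ref{lem:cancel} goes through.
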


\begin{proof}
This is a straightforward generalization of \cite[Proposition 3.8]{KP2} and is a simple application of Lemma \ref{lem:cancel}. We skip the detail. We only remark that it is crucial for the proof that the $\A^1$-component of the modulus is at least $|\un{m}|$. 
\end{proof}

\begin{defn}\label{defn:mu-times}
For any irreducible closed subscheme $V \subset X \times \mathbb{A}^{1} \times \mathbb{A}^r \times \square^n$ such that $\mu|_V: V \to \mu(V)$ is finite, where $\mu$ is as in Definition \ref{defn:mu}, define $\mu_* (V)$ as the push-forward $\mu_* (V) = \deg ( \mu|_V) \cdot [ \mu (V)]$. Extend it $\mathbb{Z}$-linearly. 

If $V_1$ is a cycle on $X_1 \times \mathbb{A}^1 \times \square^{n_1}$ and $V_2$ is a cycle on $X_2 \times \mathbb{A}^r \times \square^{n_2}$ such that $\mu|_{V_1 \times V_2}$ is finite, we define the external product $V_1 \times_{\mu} V_2:= \mu_* (V_1 \times V_2)$. If $p_i= \dim \ V_i$, then $\dim (V_1 \times_{\mu} V_2) = p_1 + p_2$. If $X_1 \times X_2$ is equidimensional and if $q_i$ is the codimension of $V_i$, then $V_1 \times_{\mu} V_2$ has codimension $q_1 + q_2 - 1$.
\end{defn}

\begin{lem}\label{lem:proper int}
Let $V_1 \in z^{q_1} (X_1 [1] |D_{m}, n_1)$ and $V_2 \in z^{q_2} (X_2 [r] |D_{\un{m}}, n_2)$ with $X_1, X_2 \in \Sch_k^{\ess}$ and $m, \un{m} \ge 1$. Then $V_1 \times_{\mu}  V_2$ intersects all faces of $X_1 \times X_2 \times \mathbb{A}^r \times \square^{n_1 + n_2}$ properly.
\end{lem}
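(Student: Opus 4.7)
The plan is to reduce the face condition for $V_1 \times_{\mu} V_2$ to the face conditions for $V_1$ and $V_2$ separately, using that $\mu$ acts trivially on the cube coordinates and that $\mu|_{V_1 \times V_2}$ is finite by Proposition \ref{prop:mu finite}.

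First I would observe that every face $F \subset \square^{n_1 + n_2}$ decomposes uniquely as $F = F_1 \times F_2$ with $F_i \subset \square^{n_i}$, since the defining equations of a face split disjointly between the $y_j$-coordinates (coming from $V_1$) and the $y'_j$-coordinates (coming from $V_2$). Let $c_i := \codim_{\square^{n_i}}(F_i)$. Because $\mu$ is the identity on all cube coordinates, after a suitable rearrangement of factors we have
\[
\mu^{-1}(X_1 \times X_2 \times \mathbb{A}^r \times F) = X_1 \times \mathbb{A}^1 \times F_1 \times X_2 \times \mathbb{A}^r \times F_2,
\]
so that
\[
(V_1 \times V_2) \cap \mu^{-1}(X_1 \times X_2 \times \mathbb{A}^r \times F) = \bigl(V_1 \cap (X_1 \times \mathbb{A}^1 \times F_1)\bigr) \times \bigl(V_2 \cap (X_2 \times \mathbb{A}^r \times F_2)\bigr).
\]

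Next, the face conditions on $V_1$ and $V_2$ bound the dimension of each factor by $\dim V_i - c_i$, so the product has dimension at most $\dim V_1 + \dim V_2 - c_1 - c_2$. Since $\mu|_{V_1 \times V_2}$ is finite by \propref{prop:mu finite}, it preserves dimension on closed subsets, and hence
\[
\dim\bigl((V_1 \times_{\mu} V_2) \cap (X_1 \times X_2 \times \mathbb{A}^r \times F)\bigr) \le \dim V_1 + \dim V_2 - c_1 - c_2.
\]
A direct count, using $\dim V_1 = \dim X_1 + 1 + n_1 - q_1$, $\dim V_2 = \dim X_2 + r + n_2 - q_2$, and the fact (noted in \defref{defn:mu-times}) that $V_1 \times_{\mu} V_2$ has codimension $q_1 + q_2 - 1$ in the ambient space, confirms that this bound is exactly the proper intersection inequality against the codimension $c_1 + c_2$ cycle $X_1 \times X_2 \times \mathbb{A}^r \times F$.

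I do not expect any serious obstacle: the only nontrivial input is the finiteness of $\mu|_{V_1 \times V_2}$, which is already supplied by \propref{prop:mu finite} (and which is where the modulus hypotheses $m, \un{m} \ge 1$ are actually used). The rest reduces to the dimension count above, made possible by the product structure of faces in $\square^{n_1+n_2}$ and the triviality of $\mu$ on cube coordinates.
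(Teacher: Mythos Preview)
Your proposal is correct and follows essentially the same approach as the paper: the paper's proof also reduces to the observation that $V_1 \times V_2$ intersects all faces properly (which you make explicit via the product decomposition $F = F_1 \times F_2$) and then invokes the finiteness of $\mu|_{V_1 \times V_2}$ from \propref{prop:mu finite}, remarking that the proper intersection property then ``follows exactly like that of the finite push-forwards of Bloch's higher Chow cycles.'' You have simply spelled out that last standard step with an explicit dimension count.
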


\begin{proof}
We may assume that $V_1$ and $V_2$ are irreducible. $V_1 \times V_2$ clearly intersects all faces of $X_1 \times X_2 \times \A^1 \times \mathbb{A}^r \times \square^{n_1 + n_2}$ properly. It follows from \propref{prop:mu finite} that $\mu|_{V_1 \times V_2}$ is finite. In this case, the proper intersection property of $\mu(V_1 \times_{\mu}  V_2)$ follows exactly like that of the finite push-forwards of Bloch's higher Chow cycles.
\end{proof}

\begin{cor}\label{cor:prod mid summ}
Let $X_1, X_2, X_3 \in \Sch_k^{\ess}$ be equidimensional and let $\un{m} \ge 1$. Then there is a product
\[
\times_{\mu}: z^{q_1} (X_1[1]|D_{|\un{m}|}, n_1) \otimes 
z^{q_2} (X_2[r] |D_{\un{m}}, n_2) \to 
z^{q_1 + q_2 - 1}((X_1 \times X_2)[r]|D_{\un{m}}, n_1 + n_2)
\]
which satisfies the relation $\partial (\xi \times_{\mu} \eta )= \partial (\xi) \times_{\mu} \eta + (-1)^{n_1} \xi \times_{\mu} \partial (\eta)$. It is associative in the sense that $(\alpha_1 \times_{\mu} \alpha_2) \times_{\mu} \beta = \alpha_1 \times_{\mu} (\alpha_2 \times_{\mu} \beta)$ for $\alpha_i \in z^{q_i} (X_i [1] | D_{|\un{m}|}, n_i)$ for $i = 1,2$ and $\beta \in z ^{q_3} (X_3 [r] | D_{\un{m}}, n_3)$. In particular, it induces operations $\times_{\mu}: \CH^{q_1} (X_1[1]|D_{|\un{m}|}, n_1) \otimes \CH^{q_2} (X_2[r] |D_{\un{m}}, n_2) \to \CH^{q_1 + q_2 - 1}((X_1 \times X_2)[r]|D_{\un{m}}, n_1 + n_2)$. 
\end{cor}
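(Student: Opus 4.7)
The plan is to take exterior products of cycles and then push forward along $\mu$, verifying each required admissibility condition via the preparatory lemmas already established.

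Given irreducible cycles $V_1 \in z^{q_1}(X_1[1]|D_{|\un{m}|}, n_1)$ and $V_2 \in z^{q_2}(X_2[r]|D_{\un{m}}, n_2)$, I would first form their exterior product on $X_1 \times X_2 \times \mathbb{A}^1 \times \mathbb{A}^r \times \square^{n_1+n_2}$ after the obvious reordering of factors. By \lemref{lem:modulus concat} this product has modulus $(|\un{m}|, \un{m})$, and by \propref{prop:mu finite} the restriction of $\mu$ to it is finite. Hence $V_1 \times_{\mu} V_2 := \mu_*(V_1 \times V_2)$, in the sense of \defref{defn:mu-times}, is a well-defined algebraic cycle of codimension $q_1 + q_2 - 1$ on $(X_1 \times X_2) \times \mathbb{A}^r \times \square^{n_1+n_2}$; it has modulus $\un{m}$ by \lemref{lem:mu mod} and intersects all faces properly by \lemref{lem:proper int}. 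Extending $\mathbb{Z}$-linearly produces the desired map on cycle groups.

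For the Leibniz rule, I would exploit that $\mu$ is the identity on the cube factor $\square^{n_1+n_2}$. For each codimension-one face inclusion of $\square^{n_1+n_2}$, its preimage under $\mu$ is the corresponding face on the source, and the proper intersection supplied by \lemref{lem:proper int} means the projection formula gives $\partial_i^{\epsilon} \circ \mu_* = \mu_* \circ \partial_i^{\epsilon}$. The standard cubical identity $\partial(V_1 \times V_2) = \partial(V_1) \times V_2 + (-1)^{n_1} V_1 \times \partial(V_2)$ for exterior products then translates immediately into $\partial(V_1 \times_{\mu} V_2) = \partial(V_1) \times_{\mu} V_2 + (-1)^{n_1} V_1 \times_{\mu} \partial(V_2)$. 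In particular $\times_{\mu}$ descends to the claimed pairing on the Chow groups $\CH^{*}(\cdot|D_{\cdot},\cdot)$ in the standard way.

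For associativity, given $\alpha_i \in z^{q_i}(X_i[1]|D_{|\un{m}|}, n_i)$ for $i=1,2$ and $\beta \in z^{q_3}(X_3[r]|D_{\un{m}}, n_3)$, I would first check that the intermediate cycle $\alpha_1 \times_{\mu} \alpha_2$ lies in $z^{q_1+q_2-1}((X_1 \times X_2)[1]|D_{|\un{m}|}, n_1+n_2)$ — this is the preceding construction applied with $r=1$ and singleton modulus $(|\un{m}|)$. Both $(\alpha_1 \times_{\mu} \alpha_2) \times_{\mu} \beta$ and $\alpha_1 \times_{\mu} (\alpha_2 \times_{\mu} \beta)$ are therefore defined, and a direct coordinate computation shows each equals the pushforward of the triple exterior product $\alpha_1 \times \alpha_2 \times \beta$ along the common composite $(t_1, t_2, u_1, \ldots, u_r) \mapsto (t_1 t_2 u_1, \ldots, t_1 t_2 u_r)$ from $\mathbb{A}^1 \times \mathbb{A}^1 \times \mathbb{A}^r$ to $\mathbb{A}^r$. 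Equality then follows from functoriality of proper pushforward. The only real subtlety is to confirm that the $\mathbb{A}^1$-modulus of $\alpha_1 \times_{\mu} \alpha_2$ is still exactly $|\un{m}|$, since this is precisely what licenses a second application of \lemref{lem:mu mod} and \propref{prop:mu finite}; beyond this bookkeeping, the proof is formal once the preparatory results are in hand.
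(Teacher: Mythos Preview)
Your proposal is correct and follows essentially the same approach as the paper: combine \propref{prop:mu finite}, \lemref{lem:mu mod}, and \lemref{lem:proper int} for existence, use that $\mu$ is the identity on the cube factors to commute $\mu_*$ with $\partial$, and deduce associativity from that of the Cartesian product and the multiplication map on $\mathbb{A}^1$. Your flagged ``subtlety'' about the $\mathbb{A}^1$-modulus of $\alpha_1 \times_\mu \alpha_2$ is not an issue --- it is exactly the output of \lemref{lem:mu mod} applied with $r=1$ and single modulus $|\un{m}|$.
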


\begin{proof}
The existence of $\times_{\mu}$ on the level of cycle complexes follows from the combination of Proposition \ref{prop:mu finite}, Lemma~\ref{lem:mu mod} and Lemma \ref{lem:proper int}. The associativity follows from that of the Cartesian product $\times$ and the product $\mu: \mathbb{A}^1 \times \mathbb{A}^1 \to \mathbb{A}^1$.

By definition, one checks $\partial (\xi \times \eta) = \partial (\xi) \times \eta + (-1)^{n_1} \xi \times  \partial (\eta)$. So, by applying $\mu_*$, we get the required relation. That $\times_{\mu}$ descends to the homology follows.
\end{proof}

\begin{defn}\label{defn:internal product}
Let $\un{m}= (m_1, \cdots, m_r) \ge 1$ and let $X$ be in $\SmAff_k ^{\ess}$ or in $\SmProj_k$. For cycle classes $\alpha_1 \in \CH^{q_1} (X[1]|D_{|\un{m}|}, n_1)$ and $\alpha_2 \in \CH^{q_2} (X[r]|D_{\un{m}}, n_2)$, define the internal product $\alpha_1 \wedge_X \alpha_2$ to be $\Delta_X ^* (\alpha_1 \times_{\mu} \alpha_2)$ via the diagonal pull-back $\Delta_X ^*: \CH^{q_1 + q_2 -1} ((X \times X)[r]|D_{\un{m}}, n_1 + n_2) \to \CH^{q_1 + q_2 -1} (X[r]|D_{\un{m}}, n_1 + n_2)$. This map exists by Theorem \ref{thm:contravariant} and Corollary \ref{cor:contra local}. 
\end{defn}

\begin{lem}\label{lem:asscomm}
$\wedge_X$ is associative in the sense that $(\alpha_1 \wedge_X \alpha_2) \wedge_X \beta = \alpha_1 \wedge_X(\alpha_2 \wedge_X \beta)$ for $\alpha_1, \alpha_2 \in \CH (X[1]|D_{|\un{m}|})$ and $\beta \in \CH (X[r]|D_{\un{m}})$. $\wedge_X$ is also graded-commutative on $\CH (X[1]|D_{|\un{m}|})$.
\end{lem}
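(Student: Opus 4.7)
The plan is to deduce both claims from properties of the external product $\times_\mu$ established in Corollary~\ref{cor:prod mid summ}, combined with the functoriality of pull-backs (Theorem~\ref{thm:contravariant} and Corollary~\ref{cor:contra local}), and, for graded commutativity, the permutation identity of Lemma~\ref{lem:permutation}.

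For associativity, I would first note that by iterating Corollary~\ref{cor:prod mid summ}, the triple external product $\alpha_1 \times_\mu \alpha_2 \times_\mu \beta$ is a well-defined cycle class on $(X \times X \times X)[r]$ with modulus $\un{m}$, independent of the bracketing. Decompose the triple diagonal as $\Delta_X^{(3)} = (\Delta_X \times \id_X) \circ \Delta_X = (\id_X \times \Delta_X) \circ \Delta_X$. The functoriality $(g \circ f)^* = f^* \circ g^*$, combined with the compatibility $(f_1 \times f_2)^*(\eta_1 \times_\mu \eta_2) = f_1^*(\eta_1) \times_\mu f_2^*(\eta_2)$ of pull-backs with external products (which follows from the analogous compatibility for the Cartesian product $\times$ together with flat base change applied to the finite morphism $\mu$ via Proposition~\ref{prop:mu finite}), then shows that both $(\alpha_1 \wedge_X \alpha_2) \wedge_X \beta$ and $\alpha_1 \wedge_X (\alpha_2 \wedge_X \beta)$ equal $(\Delta_X^{(3)})^*(\alpha_1 \times_\mu \alpha_2 \times_\mu \beta)$.

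For graded commutativity on $\CH(X[1]|D_{|\un m|})$, I would work at the level of cycle complexes. Given $\alpha_i \in z^{q_i}(X[1]|D_{|\un m|}, n_i)$ for $i=1,2$, the passage from $\alpha_1 \times \alpha_2$ to $\alpha_2 \times \alpha_1$, once both are transported to $X \times X \times \mathbb{A}^1 \times \mathbb{A}^1 \times \square^{n_1+n_2}$ via the exchange-of-factors isomorphism, amounts to three independent swaps: of the two copies of $X$, of the two copies of $\mathbb{A}^1$, and of the two cubical blocks $\square^{n_1}$ and $\square^{n_2}$. The multiplication map defining $\mu$ on the two $\mathbb{A}^1$-factors is commutative, so after $\mu_*$ the second swap is the identity; and since swapping the two factors of $X \times X$ before composing with $\Delta_X$ does not change $\Delta_X$, the pull-back $\Delta_X^*$ kills the first swap. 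The third swap is the permutation $\sigma \in \mathfrak{S}_{n_1 + n_2}$ interchanging the first $n_1$ cube-coordinates with the last $n_2$, of sign $\sgn(\sigma) = (-1)^{n_1 n_2}$. By Theorem~\ref{thm:normalization} I may assume each $\alpha_i$ has vanishing codimension-$1$ faces, whence so does $\alpha_1 \times_\mu \alpha_2$; then Lemma~\ref{lem:permutation} yields a bounding chain witnessing $\sigma \cdot (\alpha_1 \times_\mu \alpha_2) \equiv \sgn(\sigma)\,(\alpha_1 \times_\mu \alpha_2)$ modulo boundaries. Pulling back by $\Delta_X^*$ gives $\alpha_2 \wedge_X \alpha_1 = (-1)^{n_1 n_2}\, \alpha_1 \wedge_X \alpha_2$.

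The main obstacle will be the sign analysis in the commutativity step. It requires first reducing to a normalized cycle representative via Theorem~\ref{thm:normalization} (and verifying that the external product of two normalized cycles is still normalized, or else absorbing the unwanted boundary contributions into the homotopy supplied by Lemma~\ref{lem:permutation}) before the permutation identity can be invoked. The associativity is by comparison essentially formal, and its only substantive input is the compatibility of pull-backs with external products at the cycle-complex level; this in turn reduces to flat base change for the finite morphism $\mu|_{V_1 \times V_2}$ of Proposition~\ref{prop:mu finite}, so no new moving-lemma input is needed beyond Theorem~\ref{thm:contravariant} and Corollary~\ref{cor:contra local}.
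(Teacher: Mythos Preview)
Your proposal is correct and follows essentially the same approach as the paper: for associativity the paper simply cites Corollary~\ref{cor:prod mid summ}, and for graded commutativity it invokes Theorem~\ref{thm:normalization} to choose normalized representatives and then Lemma~\ref{lem:permutation}, exactly as you do. Your sign $(-1)^{n_1 n_2}$ for the block permutation is the correct one; the paper writes $(-1)^{n_1+n_2}$, which appears to be a typo.
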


\begin{proof}
The associativity holds by Corollary \ref{cor:prod mid summ}. For the graded-commutativity, first note by Theorem \ref{thm:normalization} that we can find representatives $\alpha_1$ and $\alpha_2$ of the given cycle classes whose codimension $1$ faces are all trivial. Let $\sigma$ be the permutation that sends $(1, \cdots, n_1, n_1 +1, \cdots, n_1 + n_2)$ to $(n_1 + 1, \cdots, n_1 + n_2, 1, \cdots, n_1)$ so that $\sgn (\sigma) = (-1)^{n_1+ n_2}$. It follows from Lemma \ref{lem:permutation} that $\alpha_1 \wedge_X \alpha_2 = (-1)^{n_1+ n_2} \alpha_2 \wedge_X \alpha_1 + \partial (W)$ for some admissible cycle $W$, as desired.
\end{proof}

\begin{proof}[Proof of Theorem \ref{thm:algebra}]
The proof of (1) and (2) is just a combination of the above discussion under the observation that $\TH^q(X,n;m) = \CH^q(X[1]|D_{m+1}, n-1)$ for $m \ge 0$ and $n \ge 1$. To prove (3) for $f^*$, consider the commutative diagram

\begin{equation}\label{eqn:pull-back-wedge}
\xymatrix@C1pc{
Y[r] \times \square^n \ar[d]^f  \ar[r]^<<{\Delta_{Y}} & (Y \times Y)[r] \times \square^n \ar[d]^{f \times f} & (Y \times Y)[r+1] \times \square^n \ar[d]^{f \times f} \ar[l]_{\mu_{Y}} \\
X[r] \times \square^n \ar[r]^<<<{\Delta_{X}} & (X \times X)[r] \times \square^n & (X \times X)[r+1] \times \square^n \ar[l]_{\mu_{X}}.}
\end{equation}

There is a finite set $\sW$ of locally closed subsets of $X$ such that $f^*: z^{q_1}_{\sW} (X[1]|D_{|\un{m}|}, \bullet) \to z^{q_1} (Y[1]|D_{|\un{m}|}, \bullet)$ and $f^*: z^{q_2}_{\sW} (X[r]|D_{\un{m}}, \bullet) \to z^{q_2} (Y[r]|D_{\un{m}}, \bullet)$ can be defined as taking cycles associated to the inverse images. Moreover, it is enough to consider the product of cycles in $z^{q_1}_{\sW} (X[1]|D_{|\un{m}|}, \bullet)$ and $z^{q_2}_{\sW} (X[r]|D_{\un{m}}, \bullet)$ by the moving lemmas Theorems~\ref{thm:moving affine} and ~\ref{thm:moving proj}. For irreducible cycles $V_1 \in z^{q_1} (X[1]|D_{|\un{m}|}, n_1)$ and $V_2 \in z^{q_2} (X[r]|D_{\un{m}}, n_2)$, the map $\mu_{Y}$ is finite when restricted to $f^*(V_1) \times f^*(V_2)$ by \lemref{prop:mu finite}. In particular, $\mu_{Y}(f^*(V_1) \times f^*(V_2)) \in z^{q_1+q_2-1}((Y \times Y)[r]|D_{\un{m}}, n_1 + n_2)$.

Since the right square in the diagram ~\eqref{eqn:pull-back-wedge} is transverse, it follows that $f^*(\mu_{X}(V_2 \times V_2)) = \mu_{Y}(f^*(V_1) \times f^*(V_2))$ as cycles. The desired commutativity of the product with $f^*$ now follows from the commutativity of the left square in ~\eqref{eqn:pull-back-wedge} and the composition law of \thmref{thm:contravariant}.

The proof of (3) for $f_*$ is just the projection formula, whose proof is identical to the one given in \cite[Theorem~3.19]{KP2} in the case when $X_1, X_2 \in \SmProj_k$.
\end{proof}

As applications, we obtain: 

\begin{cor}\label{cor:Witt-module}
Let $X$ be in $\SmAff_k ^{\ess}$ or in $\SmProj_k$. Then for $q, n \ge 0$ and $\un{m} \ge 1$, the group $\CH^q(X[r]|D_{\un{m}},n)$ is a $\W_{(|\un{m}|-1)}(k)$-module. 
\end{cor}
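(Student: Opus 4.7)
The plan is to realize $\W_{|\un{m}|-1}(k)$ as (a subring of) the Pontryagin algebra acting on $\CH^q(X[r]|D_{\un{m}}, n)$ in a bidegree-preserving way, and then invoke the module statement already built into Theorem~\ref{thm:algebra}.

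First I would specialize the external product of Corollary~\ref{cor:prod mid summ} to $X_1 = \Spec k$ and $X_2 = X$; since $\Spec k \times X = X$, this gives a pairing
\[
\times_{\mu}: \CH^{q_1}(\A^1|D_{|\un{m}|}, n_1) \otimes \CH^q(X[r]|D_{\un{m}}, n) \to \CH^{q_1+q-1}(X[r]|D_{\un{m}}, n_1+n).
\]
The restriction to $(q_1, n_1) = (1, 0)$ is an action of $\CH^1(\A^1|D_{|\un{m}|}, 0) = \TH^1(\Spec k, 1; |\un{m}|-1)$ on $\CH^q(X[r]|D_{\un{m}}, n)$ which preserves both $q$ and $n$. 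By the associativity clause of Corollary~\ref{cor:prod mid summ} and the graded-commutativity of Lemma~\ref{lem:asscomm}, the group $\TH^1(\Spec k, 1; |\un{m}|-1)$ is itself a commutative ring under $\times_\mu$, and the above pairing makes each $\CH^q(X[r]|D_{\un{m}}, n)$ into a module over it.

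To finish, I would invoke the classical identification
\[
\W_{|\un{m}|-1}(k) \xrightarrow{\;\simeq\;} \TH^1(\Spec k, 1; |\un{m}|-1)
\]
of big Witt vectors with additive higher $0$-cycles of a field, due to Bloch-Esnault \cite{BE2} for $m=1$ and R\"ulling \cite{R} in general. Precomposing the module action above with this ring isomorphism yields the desired $\W_{|\un{m}|-1}(k)$-module structure on $\CH^q(X[r]|D_{\un{m}}, n)$.

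The main obstacle, and the only nontrivial input beyond the machinery developed in Sections~\ref{sec:moving} and~\ref{sec:DGA}, is verifying that R\"ulling's ring structure on $\W_m(k)$ coincides with the Pontryagin product $\times_\mu$ on $\TH^1(\Spec k, 1; m)$ defined in \S\ref{sec:Pont}. Both are induced by the multiplication map $\mu: \A^1 \times \A^1 \to \A^1$, so at the level of cycle representatives this reduces to tracing through definitions, but this compatibility is really the crux of the corollary.
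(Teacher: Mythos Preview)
Your proof is correct and matches the paper's own argument almost exactly: obtain the action of $\TH^1(k,1;|\un{m}|-1)$ on $\CH^q(X[r]|D_{\un{m}},n)$ from the external product of Corollary~\ref{cor:prod mid summ} (the paper phrases this as Theorem~\ref{thm:algebra} along the structure map $X\to\Spec k$, which amounts to the same thing since $\Spec k\times X=X$), and then cite R\"ulling's ring isomorphism $\W_m(k)\simeq\TH^1(k,1;m)$. The ``obstacle'' you flag is not an obstacle at all: R\"ulling's product in \cite[Corollary~3.7]{R} is defined via push-forward along the very same multiplication $\mu:\A^1\times\A^1\to\A^1$, so the compatibility of ring structures is part of the cited result rather than something left to check.
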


\begin{proof}
Applying \thmref{thm:algebra} to $X$ and the structure map $X \to \Spec(k)$, it follows that $\CH(X[r]|D_{\un{m}})$ is a graded module over $\TH(k; |\un{m}|-1)$. By Corollary \ref{cor:prod mid summ}, this yields a $\TH^1(k, 1; |\un{m}|-1)$-module structure on each $\CH^q(X[r]|D_{\un{m}},n)$. The corollary now follows from the fact that there is a ring isomorphism $\W_m(k) \overset{\sim}{\to} \TH^1(k,1;m)$ for every $m \ge 1$ by \cite[Corollary~3.7]{R}.
\end{proof}

We can explain the homotopy invariance of the groups $\CH^q(X, n)$ in terms of additive higher Chow groups as follows.

\begin{cor}\label{cor:mod-1}
For $X \in \Sch^{\rm ess}_k$ which is equidimensional and for $q,n \ge 0$, we have $\CH^q(X[1]|D_1, n) = 0$.
\end{cor}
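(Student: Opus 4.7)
The argument is to construct an explicit chain contraction on the complex $z^q(X[1]|D_1,\bullet)$. A preliminary observation is that any $Z \in z^q(X[1]|D_1, n)$ is supported in $X\times\G_m\times\square^n$: since $D_1 = \{t=0\}$ is reduced, at any prime $p$ of $\ov{Z}^N$ lying over $\{t=0\}$ one has $v_p(\nu^*\{t=0\}) \ge 1$, so the modulus inequality $\nu^*\{t=0\} \le \nu^*F_n^1$ forces $p$ also to lie over $F_n^1$. Taking closures, $\ov{Z}\cap(\{t=0\}\times\ov{\square}^n)\subset\{t=0\}\times F_n^1$; intersecting with $X\times\A^1\times\square^n$ gives $Z\cap(\{t=0\}\times\square^n)=\emptyset$.

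Next, define $\tilde{\Phi}_n:X\times\A^1_t\times\square^{n+1}\to X\times\A^1_t\times\square^n$ by $(x,t,y_1,\dots,y_{n+1})\mapsto (x,t/(1-y_{n+1}),y_1,\dots,y_n)$. Because $1-y_{n+1}$ is invertible on $\square=\P^1\setminus\{1\}$, this is a genuine morphism, and after swapping factors it is $\id_{X\times\square^n}\times\phi$ with $\phi:\A^1\times\square\to\A^1$, $(t,y)\mapsto t/(1-y)$ a flat smooth map, so $\tilde{\Phi}_n$ is flat. Set $h_n(Z):=\tilde{\Phi}_n^*(Z)$. Direct substitution gives $h_n(Z)|_{y_{n+1}=0}=Z$, $h_n(Z)|_{y_{n+1}=\infty}=\emptyset$ (using $Z\cap\{t=0\}=\emptyset$), and $h_n(Z)|_{y_i=\epsilon}=h_{n-1}(\partial_i^\epsilon Z)$ for $i\le n$, which verifies the face condition.

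For the modulus, parametrize $Z$ locally as $(x(\xi),t_Z(\xi),y(\xi))$; then $h_n(Z)$ is locally parametrized as $(x(\xi),(1-y_{n+1})t_Z(\xi),y(\xi),y_{n+1})$, so the factorization $t = (1-y_{n+1})\cdot t_Z$ holds on $h_n(Z)$. Setting $w=1-y_{n+1}$, at any prime $p$ of the normalization of the closure of $h_n(Z)$ in $X\times\A^1\times\ov{\square}^{n+1}$ lying over $\{y_{n+1}=1\}$, the modulus of $Z$ (pulled back) yields $v_p(t_Z)\le\sum_{i=1}^n v_p(\{y_i=1\})$, while $v_p(w)=v_p(\{y_{n+1}=1\})$. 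Hence
\[
v_p(t)\;=\;v_p(w)+v_p(t_Z)\;\le\; v_p(\{y_{n+1}=1\})+\sum_{i=1}^n v_p(\{y_i=1\})\;=\;v_p(F_{n+1}^1).
\]
For $p$ not over $\{y_{n+1}=1\}$, $w$ is a unit at $p$ and the bound reduces immediately to the modulus of $Z$. Thus $h_n(Z) \in z^q(X[1]|D_1, n+1)$.

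Combining these computations, the boundary operator $\partial=\sum_i(-1)^i(\partial_i^\infty-\partial_i^0)$ gives $\partial h_n(Z) = h_{n-1}(\partial Z) + (-1)^n Z$. Consequently, whenever $\partial Z=0$, the identity $Z=(-1)^n\partial h_n(Z)$ shows that $[Z]=0$ in $\CH^q(X[1]|D_1,n)$. The main obstacle is the modulus verification at the bad locus $\{y_{n+1}=1\}$, where $\tilde{\Phi}_n$ does not extend to the compactification $\ov{\square}^{n+1}$; the explicit factorization $t=(1-y_{n+1})\cdot t_Z$ along the homotopy is precisely what makes the estimate go through.
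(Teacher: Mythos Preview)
Your homotopy $h_n$ is exactly the paper's construction in explicit form: after the change of coordinates $y\mapsto 1/(1-y)$, your $h_n(Z)$ coincides (up to reordering the $\square$-factors) with $C\times_{\mu}Z$, where $C=\{ty=1\}$ is the very curve the paper uses to kill $[1]\in\CH^1(pt[1]|D_1,0)$. The paper packages the argument as ``$[1]\times_{\mu}\alpha=\alpha$ and $[1]=\partial C$'', invoking the $\times_{\mu}$-machinery of \S\ref{sec:Pont} (Proposition~\ref{prop:mu finite}, Lemmas~\ref{lem:modulus concat} and~\ref{lem:mu mod}) for admissibility; you unpack the same homotopy and verify admissibility by hand. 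Your face computations and the formula $\partial h_n(Z)=h_{n-1}(\partial Z)+(-1)^nZ$ are correct. (A minor slip: $\phi:(t,y)\mapsto t/(1-y)$ is flat but not smooth --- the fibre over $0$ is $\{t=0\}\cup\{y=\infty\}$ --- though flatness is all you need.)

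There is, however, a genuine gap in the modulus check at primes $P$ of $\ov{h_n(Z)}^N$ lying over $\{y_{n+1}=1\}$. You assert that ``the modulus of $Z$ (pulled back) yields $v_P(t_Z)\le\sum_{i=1}^n v_P(\{y_i=1\})$'', but the rational map $\ov{h_n(Z)}^N\dashrightarrow\ov{Z}^N$ sending $(x,t,\un{y},y_{n+1})\mapsto(x,t/(1-y_{n+1}),\un{y})$ is undefined precisely along $\{y_{n+1}=1\}$, so there is no pull-back available at $P$. What your parametrization actually gives is a \emph{morphism in the other direction}, $\hat\psi:\ov{Z}^N\times\A^1_{y_{n+1}}\to\ov{h_n(Z)}$, birational onto its image; on the source one computes $\hat\psi^*\bigl(F_{n+1}^1-\{t=0\}\bigr)=\bigl(\sum_{i=1}^n\{y_i=1\}-\{s=0\}\bigr)\times\A^1\ge 0$ by the modulus of $Z$, and one must then descend this inequality to $\ov{h_n(Z)}^N$ via Lemma~\ref{lem:cancel}. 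This is exactly the content of Lemma~\ref{lem:mu mod}. Note also that $\hat\psi$ need not be an isomorphism onto $\ov{h_n(Z)}^N$: it collapses $\ov{Z}^N\times\{1\}$ onto $\{t=0,\,y_{n+1}=1\}$, and $\ov{h_n(Z)}$ may meet $\{y_{n+1}=\infty\}$ inside $F_n^1$, so your informal identification of $\ov{h_n(Z)}^N$ with the parameter space is not justified. The fix is short --- invoke Lemma~\ref{lem:cancel} --- but as written the argument is incomplete.
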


\begin{proof}
By Corollary~\ref{cor:prod mid summ}, we have a map $\times_{\mu}: \CH^1(pt[1]|D_1, 0) \otimes \CH^q(X[1]|D_1, n) \to \CH^q(X[1]|D_1, n)$ and it follows from the definition of $\times_{\mu}$ that $[1] \times_{\mu} \alpha = \alpha$ for every $\alpha \in \CH^q(X[1]|D_1, n)$, where $[1] \in \CH^1(pt[1]|D_1, 0)$ is the cycle given by the closed point $1 \in \A^1(k)$. It therefore suffices to show that the homology class of $1$ is zero. To do so, we may use the identification $(\square, \{ \infty, 0 \}) \simeq (\mathbb{A}^1, \{ 0, 1 \})$ given by $y \mapsto 1/ (1-y)$ again. Then the cycle $C \subset \A^2$ given by $\{(t,y)\in \A^2| ty =1\}$ is an admissible cycle in $z^1(pt[1]|D_1, 1)$ such that $\partial_1([C]) = [1]$ and $\partial_0([C]) = 0$.
\end{proof}

\section{The structure of differential graded modules}
In this section, we construct a differential operator on the graded module of \S \ref{sec:DGA} of multivariate additive higher Chow groups over the univariate additive higher Chow groups, generalizing \cite[\S 4]{KP2}. We assume that $k$ is perfect and ${\rm char}(k) \neq 2$.

 \subsection{Differential}\label{sec:differential}

Let $X$ be a smooth quasi-projective scheme essentially of finite type over $k$. Let $r \ge 1$ and let $\un{m}= (m_1, \cdots, m_r) \ge 1$. Let $(\mathbb{G}_m ^r)^{\times} := \{ ( t_1, \cdots, t_r ) \in \mathbb{G}_m ^r \ | \ t_1 \cdots t_r \neq 1 \}$. Consider the morphism $\delta_n : (\mathbb{G}_m^r)^{\times}  \times \square^n \to \mathbb{G}_m^r  \times \square^{n+1},$ $ (t_1, \cdots, t_r, y_1, \cdots, y_n) \mapsto (t_1, \cdots, t_r, \frac{1}{t_1 \cdots t_r}, y_1, \cdots, y_n).$ It induces $\delta_n : X \times (\mathbb{G}_m^r ) ^\times  \times \square^n \to X \times  \mathbb{G}_m^r \times \square^{n+1}$.

Recall a closed subscheme $Z \subset X \times \mathbb{A}^r \times \square^n$ with modulus $\un{m}$ does not intersect the divisor $\{ t_1 \cdots t_r = 0 \}$. So, it is closed in $X \times \mathbb{G}_m ^r \times \square^n$. For such $Z$, we define $Z^{\times}:= Z |_{X \times (\mathbb{G}_m^r)^{\times} \times \square^n}$.

\begin{lem}\label{lem:deltaZ mod-00}
For a closed subscheme $Z \subset X \times \mathbb{A}^r \times \square^n$ with modulus $\un{m}$, the image $\delta_n (Z^{\times})$ is closed in $X \times \mathbb{G}_m ^r \times \square^{n+1}$. 
\end{lem}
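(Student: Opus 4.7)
The plan is to realize $\delta_n$ as a graph morphism and exhibit its image as an explicit closed subscheme of the ambient $X \times \mathbb{G}_m^r \times \square^{n+1}$. Set $g: X \times (\mathbb{G}_m^r)^\times \times \square^n \to \square$ by $(x, t_1, \dots, t_r, y_1, \dots, y_n) \mapsto \frac{1}{t_1 \cdots t_r}$; this is a well-defined morphism because $(\mathbb{G}_m^r)^\times$ excludes both $\{t_1 \cdots t_r = 0\}$ and $\{t_1 \cdots t_r = 1\}$, so $g$ takes values in $\mathbb{G}_m \setminus \{1\} \subset \square$. Then $\delta_n$ is nothing but the graph embedding of $g$, composed with the open inclusion $X \times (\mathbb{G}_m^r)^\times \times \square^{n+1} \hookrightarrow X \times \mathbb{G}_m^r \times \square^{n+1}$.

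I would then introduce the subscheme $\Gamma \subset X \times \mathbb{G}_m^r \times \square^{n+1}$ cut out by the equation $y_{n+1}\, t_1 \cdots t_r = 1$, and verify that it is a closed subscheme by a chart-by-chart calculation on the two standard affine pieces covering the $y_{n+1}$-coordinate on $\square$: on the piece where $y_{n+1}$ is an affine coordinate the defining equation reads $y_{n+1} t_1 \cdots t_r - 1 = 0$, and on the complementary piece with affine coordinate $u = y_{n+1}^{-1}$ it reads $t_1 \cdots t_r - u = 0$. The key observation is that $\Gamma$ set-theoretically avoids the locus $\{t_1 \cdots t_r = 1\}$, because any such point would force $y_{n+1} = 1$, which is excluded from $\square$. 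Consequently $\Gamma$ lies inside the open subscheme $X \times (\mathbb{G}_m^r)^\times \times \square^{n+1}$, and projection dropping the $y_{n+1}$-coordinate provides a two-sided inverse to the restriction $\delta_n: X \times (\mathbb{G}_m^r)^\times \times \square^n \xrightarrow{\sim} \Gamma$.

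To conclude, I use that $Z$ has modulus $\un{m}$, so $Z$ avoids $\{t_1 \cdots t_r = 0\}$ and is therefore closed in $X \times \mathbb{G}_m^r \times \square^n$; consequently $Z^{\times}$ is closed in the open $X \times (\mathbb{G}_m^r)^\times \times \square^n$. Transporting along the isomorphism $\delta_n \simeq \Gamma$ shows that $\delta_n(Z^{\times})$ is closed in $\Gamma$, and since $\Gamma$ itself is closed in $X \times \mathbb{G}_m^r \times \square^{n+1}$, the image $\delta_n(Z^{\times})$ is closed there as well. The only mildly technical point in this plan is the verification that $\Gamma$ is a closed subscheme of the partially projective ambient space $X \times \mathbb{G}_m^r \times \square^{n+1}$ together with the fact that it is disjoint from $\{t_1 \cdots t_r = 1\}$; both reduce to a short explicit check in the two standard charts on $\square$.
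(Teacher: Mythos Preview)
Your proposal is correct and takes essentially the same approach as the paper: both show that $\delta_n$ is a closed immersion by identifying its image with the closed subscheme $\{t_1 \cdots t_r \cdot y = 1\}$ of $X \times \mathbb{G}_m^r \times \square^{n+1}$, then observing this locus avoids $\{t_1 \cdots t_r = 1\}$ since $y=1$ is excluded from $\square$. The only cosmetic discrepancy is that the paper inserts the new $\square$-coordinate in the \emph{first} slot rather than the last, so the defining equation there involves $y_1$ instead of your $y_{n+1}$; this does not affect the argument.
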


\begin{proof}
It is enough to show that $\delta_n: X \times (\mathbb{G}_m ^r)^{\times} \times \square^n \to X \times \mathbb{G}_m ^r \times \square^{n+1}$ is a closed immersion. It reduces to show that the map $(\mathbb{G}_m ^r)^{\times} \to \mathbb{G}_m ^r \times (\mathbb{P}^1 \setminus \{ 1 \})$ given by $(t_1, \cdots, t_r) \mapsto (t_1, \cdots, t_r, {1}/{(t_1 \cdots t_r)})$ is a closed immersion. This is obvious because the image coincides with the closed subscheme given by the equation $t_1 \cdots t_r y = 1 $, where $(t_1, \cdots, t_r , y ) \in \mathbb{G}_m ^r \times \square$ are the coordinates.
\end{proof}

\begin{defn}[{\emph{cf.} \cite[Definition 4.3]{KP2}}]
For a closed subscheme $Z \subset X \times \mathbb{A}^r \times \square^n$ with modulus $\un{m}$, we write $\delta_n (Z):= \delta_n (Z^{\times})$. If $Z$ is a cycle, we define $\delta_n(Z)$ by extending it $\mathbb{Z}$-linearly. We may often write $\delta (Z)$ if no confusion arises.
\end{defn}

\begin{lem}\label{lem:deltaZ mod}
Let $Z$ be a cycle on $X \times \mathbb{A}^r \times \square^n$ with modulus $\un{m}$. Then $\delta_n(Z)$ is a cycle on $X \times \mathbb{A}^r \times \square^{n+1}$ with modulus $\un{m}$.
\end{lem}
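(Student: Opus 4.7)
The idea is to exhibit $\delta_n$ as the restriction of a closed immersion on the full compactification $X \times \A^r \times \ov{\square}^n$, so that the face and modulus conditions for $\delta_n(Z)$ pull back directly from those of $Z$. Using homogeneous coordinates $[u_0 : u_1]$ on the adjoined $\P^1$-factor (with $y = u_1/u_0$, so $\{y = 1\} = \{u_0 = u_1\}$ and $\{y = \infty\} = \{u_0 = 0\}$), define
\[
\tilde{\phi} \colon X \times \A^r \times \ov{\square}^n \to X \times \A^r \times \ov{\square}^{n+1}, \quad (x, \un{t}, \un{y}) \mapsto (x, \un{t}, [t_1 \cdots t_r : 1], \un{y}).
\]
Since the second homogeneous coordinate never vanishes, $\tilde{\phi}$ is a well-defined morphism, and it is a closed immersion onto the effective divisor $\{u_0 = t_1 \cdots t_r\, u_1\}$ (the projection dropping the $\P^1$-coordinate is an inverse). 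Moreover, $\tilde{\phi}$ coincides with $\delta_n$ on the open subscheme $X \times (\G_m^r)^{\times} \times \square^n$.

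For an irreducible $Z$ with modulus $\un{m}$ not contained in $\{t_1 \cdots t_r = 1\}$ (the contrary case gives $\delta_n(Z) = 0$ and is trivial), let $\tilde{Z} \subset X \times \A^r \times \ov{\square}^n$ be its Zariski closure and put $V := \delta_n(Z^{\times})$. Since $Z^{\times}$ is open and dense in $Z$ and hence in $\tilde{Z}$, and since $\tilde{\phi}$ is a closed immersion, the closure $\ov{V}$ in $X \times \A^r \times \ov{\square}^{n+1}$ equals $\tilde{\phi}(\tilde{Z})$. Therefore $\tilde{\phi}|_{\tilde{Z}} \colon \tilde{Z} \xrightarrow{\sim} \ov{V}$, $\ov{V}^N = \tilde{Z}^N$, and $\nu_V = \tilde{\phi} \circ \nu_Z$. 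For the face condition, let $F \subset \square^{n+1}$ be a face. If $F$ does not involve the new first coordinate $y_1$, then $V \cap (X \times \A^r \times F)$ is identified via $\delta_n$ with an open subscheme of the proper intersection $Z \cap (X \times \A^r \times F')$ for the corresponding face $F' \subset \square^n$, and the codimension estimates are inherited (with the right shift to match $\codim(V) = \codim(Z) + 1$). If $F$ involves $\{y_1 = 0\}$ or $\{y_1 = \infty\}$, then the defining relation $y_1 = 1/(t_1 \cdots t_r)$ on $V$ forces respectively $t_1 \cdots t_r = \infty$ (impossible on $\A^r$) or $t_1 \cdots t_r = 0$ (excluded by the modulus of $Z$), so the intersection is empty.

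For the modulus condition I use $\nu_V = \tilde{\phi} \circ \nu_Z$ to reduce the required inequality on $\ov{V}^N$ to a divisor inequality on $\tilde{Z}^N$. Since $\tilde{\phi}$ is the identity on $X \times \A^r$, one has $\tilde{\phi}^*(D_{\un{m}} \times \ov{\square}^{n+1}) = D_{\un{m}} \times \ov{\square}^n$. On the other hand, trivializing $\tilde{\phi}^*\sO(1)$ by $u_1 = 1$, one computes $\tilde{\phi}^*(u_0 - u_1) = t_1 \cdots t_r - 1$, whence
\[
\tilde{\phi}^*(X \times \A^r \times F_{n+1}^1) = X \times \A^r \times F_n^1 + \{t_1 \cdots t_r = 1\}.
\]
The extra summand is effective, so the desired inequality follows immediately from the modulus condition of $Z$. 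The main subtlety --- and the reason $\tilde{\phi}$ is the right tool --- is that $\delta_n(Z^{\times})$ is not closed in the compactification: its closure acquires components over $\{t_1 \cdots t_r = 0\}$ where $y_1 = \infty$, and the closed immersion $\tilde{\phi}$ packages this extension cleanly while producing precisely the correction term $\{t_1 \cdots t_r = 1\}$ needed to absorb the modulus inequality.
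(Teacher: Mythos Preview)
Your argument is correct and follows essentially the same strategy as the paper: extend $\delta_n$ to a closed immersion $\tilde{\phi}$ (the paper's $\ov{\delta}_n$) on $X \times \A^r \times \ov{\square}^n$ and pull back the modulus inequality; your observation that $\tilde{\phi}|_{\tilde{Z}}\colon \tilde{Z} \xrightarrow{\sim} \ov{V}$ is an isomorphism even lets you bypass the paper's appeal to \lemref{lem:cancel}. Note that the face-condition discussion is extraneous here (the lemma's hypothesis does not assume $Z$ meets faces properly; that part belongs to \propref{prop:delta admissible}), and you should add one line concluding from the modulus that $V$ is already closed in $X \times \A^r \times \square^{n+1}$, as the paper does in its opening reduction.
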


\begin{proof}
We may suppose that $Z$ is irreducible. Let $V = \delta_n (Z)$, which is \emph{a priori} closed in $X \times \mathbb{G}_m ^r \times \square^{n+1}$. If the closure $V'$ of $V$ in $X \times \mathbb{A}^r \times \square^{n+1}$ has modulus $\un{m}$, then it does not intersect the divisor $\{t_1 \cdots t_r = 0 \}$ of $X \times \mathbb{A}^r \times \square ^{n+1}$, so $V=V'$, and $V$ is closed in $X \times \mathbb{A}^r \times \square^{n+1}$ with modulus $\un{m}$. So, we reduce to show that $V'$ has modulus $\un{m}$.

Let $\ov{Z}$ and $\ov{V}$ be the Zariski closures of $Z$ and $V'$ in $X \times \mathbb{A}^r \times \ov{\square}^n$ and $X \times \mathbb{A}^r \times \ov{\square}^{n+1}$, respectively. Observe that $\delta_n$ extends to $\ov{\delta}_n: X \times \mathbb{A}^r \times \ov{\square}^n \to X \times \mathbb{A}^r \times \ov{\square}^{n+1}$, which is induced from $\mathbb{A}^r \overset{\Gamma}{\to} \mathbb{A}^r \times \ov{\square} \overset{{\rm Id}\times\sigma}{\to} \mathbb{A}^r \times \ov{\square}$, where $\Gamma$ is the graph morphism of the composite $\mathbb{A}^r {\to} \mathbb{A}^1 \hookrightarrow \ov{\square}$ of the product map followed by the open inclusion, $(t_1, \cdots, t_r) \mapsto (t_1 \cdots t_r) \mapsto (t_1 \cdots t_r ; 1)$, while $\sigma: \ov{\square} \to \ov{\square}$ is the antipodal automorphism $(a;b) \mapsto (b;a)$, where $(a; b) \in \ov{\square}=\mathbb{P}^1$ are the homogeneous coordinates. Since $\Gamma$ is a closed immersion and ${\rm Id} \times \sigma$ is an isomorphism, the morphism $\ov{\delta}_n$ is projective. Hence, the dominant map $\delta_n|_{Z^{\times}} : Z ^{\times} \to V$ induces $\ov{\delta}_n|_{\ov{Z}} : \ov{Z} \to \ov{V}$. In particular, we have a commutative diagram
\begin{equation}\label{eqn:delta diag}
\xymatrix{
\ov{Z}^N \ar[d]^{\widetilde{\delta}_n} \ar[r]^{\nu_Z} & \ov{Z} \ar[d] ^{\ov{\delta}_n|_{\ov{Z}}} \ar@{^(->}[r]^{\iota_Z \ \ \ \ \ \ \ } & X \times \mathbb{A}^r \times \ov{\square}^n \ar[d] ^{\ov{\delta}_n} \\
\ov{V}^N \ar[r] ^{\nu_{V}} & \ov{V} \ar@{^(->}[r] ^{\iota_V \ \ \ \ \ \ \ } & X \times \mathbb{A}^r \times \ov{\square}^{n+1},}
\end{equation}
where $\iota_Z, \iota_V$ are the closed immersions, $\nu_Z, \nu_V$ are normalizations, and $\widetilde{\delta}_n$ is given by the universal property of normalization for dominant maps. 

By definition, $\ov{\delta}_n ^* \{ t_j = 0 \} = \{ t_j = 0 \}$ for $1 \leq j \leq r$. First consider the case $n \geq 1$. Then $\ov{\delta}_n ^* F_{n+1, i} ^1 = F_{n, i-1} ^1$ for $2 \leq i \leq n+1$. Now, $\widetilde{\delta}_n ^* \nu_V ^* \iota_V ^* ( \sum_{i=1} ^{n+1} F_{n+1, i} ^1 - \sum_{j=1} ^r m_j \{ t_j= 0 \})  \geq \widetilde{\delta}_n ^* \nu_V^* \iota_V ^* (  \sum_{i=2} ^{n+1} F_{n+1, i} ^1 - \sum_{j=1} ^r m_j \{ t_j= 0 \}) =^{\dagger}  \nu_Z ^* \iota_Z ^* \ov{\delta}_n ^* ( \sum_{i=2} ^{n+1} F_{n+1, i} ^1 - \sum_{j=1} ^r m_j \{ t_j= 0 \}) = \nu_Z ^* \iota_Z ^* (\sum_{i=2} ^{n+1} F_{n, i-1} ^1 - \sum_{j=1} ^r m_j  \{ t_j = 0 \}) = \nu_Z ^* \iota_Z ^* ( \sum_{i=1} ^n F_{n,i} ^1 - \sum_{j=1} ^r m_j \{ t_j = 0 \}) \geq ^{\ddagger} 0,$ where $\dagger$ holds by the commutativity of \eqref{eqn:delta diag} and $\ddagger$ holds as $Z$ has modulus $\un{m}$. Using Lemma \ref{lem:cancel}, we can drop $\widetilde{\delta}_n ^*$, i.e., $V'$ has modulus $\un{m}$.

When $n=0$, we have for $1 \leq j \leq r$, $\widetilde{\delta}_0 ^* \nu_V^* \iota_V ^* \{ t_j = 0 \} = \nu_Z ^* \iota_Z ^* \ov{\delta}_0 ^* \{ t_j = 0 \} = \nu_Z ^* \iota_Z ^* \{ t_j = 0 \}$, which is $0$ because $\ov{Z} \cap \{ t_j = 0 \}  = \emptyset$. Hence, $\widetilde{\delta}_0 ^* \nu_V ^* \iota_V ^* ( F_{1,1} ^1 - \sum_{j=1} ^r m_j \{ t_j = 0 \}) = \widetilde{\delta}_0 ^* \nu_V ^* \iota_V ^* F_{1,1} ^1 \geq 0$. Dropping $\widetilde{\delta}_0 ^*$, we get $V'$ has modulus $\un{m}$.
\end{proof}

\begin{prop}\label{prop:delta admissible}
Let $Z \in z^q (X[r]|D_{\un{m}}, n)$. Then $\delta (Z) \in z^{q+1} (X[r] | D_{\un{m}}, n+1)$. Furthermore, $\delta$ and $\partial$ satisfy the equality $\delta \partial + \partial \delta = 0$.
\end{prop}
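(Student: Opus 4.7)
The proposition has two claims: admissibility of $\delta(Z)$ with codimension $q+1$, and the anticommutation relation $\partial \delta + \delta \partial = 0$. The modulus condition for $\delta(Z)$ is already established in \lemref{lem:deltaZ mod}, so the first claim reduces to verifying the codimension and the face condition. The codimension is immediate: by the proof of \lemref{lem:deltaZ mod-00}, the map $\delta_n$ is a closed immersion from $X \times (\mathbb{G}_m^r)^{\times} \times \square^n$ into $X \times \mathbb{G}_m^r \times \square^{n+1}$, so $\dim \delta_n(Z^\times) = \dim Z^\times = \dim Z$; since the target has dimension one greater than $X \times \mathbb{A}^r \times \square^n$, the codimension of $\delta(Z)$ is $q+1$, and the modulus bound of \lemref{lem:deltaZ mod} ensures no extra components appear in the closure.

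For the face condition, label the coordinates of $\square^{n+1}$ as $(y'_1, \ldots, y'_{n+1})$ so that $y'_1 = 1/(t_1 \cdots t_r)$ on $\delta(Z)$ and $y'_i = y_{i-1}$ for $i \ge 2$. Let $F \subset \square^{n+1}$ be a face. If $F$ imposes a condition on $y'_1$, use homogeneous coordinates $(s_0 : s_1)$ for this $\square$-factor; the defining relation of $\delta(Z)$ then reads $t_1 \cdots t_r \cdot s_1 = s_0$. On $\{y'_1 = 0\} = \{s_1 = 0\}$ this forces $s_0 = 0$, impossible in $\mathbb{P}^1$; on $\{y'_1 = \infty\} = \{s_0 = 0\}$ it forces $t_1 \cdots t_r = 0$, which is excluded by the modulus condition. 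Hence $\delta(Z) \cap (X[r] \times F) = \emptyset$. Otherwise $F = \square \times F_0$ for a face $F_0 \subset \square^n$, and the closed immersion $\delta_n$ identifies $\delta(Z) \cap (X[r] \times F)$ with the image of $Z \cap (X[r] \times F_0)$, whose codimension $\ge q$ in $X[r] \times F_0$ (by admissibility of $Z$) translates to codimension $\ge q+1$ in $X[r] \times F$.

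For the anticommutation, the empty-intersection case above yields $\partial_1^\epsilon(\delta Z) = 0$, while the compatibility of $\delta_n$ with the face inclusions $\iota_{n+1,i,\epsilon}$ for $i \ge 2$ gives $\partial_i^\epsilon(\delta Z) = \delta(\partial_{i-1}^\epsilon Z)$. Therefore
\[
\partial(\delta Z) = \sum_{i=1}^{n+1} (-1)^i (\partial_i^\infty - \partial_i^0)(\delta Z) = \sum_{i=2}^{n+1} (-1)^i \delta\bigl((\partial_{i-1}^\infty - \partial_{i-1}^0)Z\bigr) = -\delta(\partial Z).
\]
The main subtlety is verifying the face condition at the newly inserted coordinate $y'_1$. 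The hypothesis $\un{m} \ge 1$ is precisely what forces $\delta(Z) \cap \{y'_1 = \infty\}$ to be empty, and this vanishing is exactly what produces the clean sign in the anticommutation identity.
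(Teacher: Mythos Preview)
Your argument is correct and follows essentially the same route as the paper: both establish that $\delta(Z)$ misses the faces $\{y'_1 = 0,\infty\}$ (your homogeneous-coordinate computation is the paper's Claim~(i)) and that for $i\ge 2$ the face operators commute with $\delta$ up to an index shift (the paper's Claim~(ii)), then feed these into the same boundary computation. The only place where the paper is slightly more explicit is in justifying the cycle-level identity $\partial_i^\epsilon(\delta Z)=\delta(\partial_{i-1}^\epsilon Z)$ via a Cartesian square and \cite[Proposition~1.7]{Fulton}, which you assert directly; this is fine, but worth a one-line citation if you want to match the paper's level of detail.
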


\begin{proof}
We may assume that $Z$ is an irreducible cycle. Let $\partial_{n, i} ^{\epsilon}$ be the boundary given by the face $F_{n, i} ^{\epsilon}$ on $X \times \mathbb{A}^r \times \square^n$, for $1 \leq i \leq n$ and $\epsilon = 0, \infty$. 

\noindent \textbf{Claim:}
For $\epsilon = 0, \infty$, (i) $\partial _{n+1, 1} ^{\epsilon} \circ \delta_n = 0$, (ii) $\partial_{n+1, i} ^{\epsilon} \circ \delta_n = \delta_{n-1} \circ \partial_{n, i-1} ^{\epsilon}$ for $2 \leq i \leq n+1$.

For (i), we show that $\delta_n (Z) \cap \{ y_1 = \epsilon \} = \emptyset$ for $\epsilon = 0, \infty$. Since $\delta_n (Z) \subset V( t_1 \cdots t_r y_1 = 1 )$, we have $\delta_n(Z) \cap \{ y_1 = 0 \} = \emptyset$. On the other hand, if $\delta_n (Z)$ intersects $\{ y_1 = \infty \}$, then some $t_i$ must be zero on $Z$, i.e., $Z$ intersects $\{ t_ i = 0 \}$ for some $1 \leq i \leq r$. However, since $Z$ has modulus $\un{m}$, this can not happen. Thus, $\delta_n (Z) \cap \{ y_1 = \infty \} = \emptyset$. This shows (i). For (ii), by the definition of $\delta_n$, the diagram
$$
\xymatrix{
(\mathbb{G}_m ^r)^{\times} \times \square^{n-1} \ar[r] ^{\iota_{i-1} ^{\epsilon}} \ar[d] ^{\delta_{n-1}} & (\mathbb{G}_m ^r)^{\times} \times \square^n \ar[d]^{\delta_n}\\
\mathbb{G}_m ^r \times \square^n \ar[r] ^{\iota_i ^{\epsilon}} & \mathbb{G}_m ^r \times \square^{n+1}}
$$
is Cartesian. Thus, $\delta_{n-1} ((\iota_{i-1} ^* (Z)) = (\iota_i ^\epsilon)^* (\delta_n (Z))$ by \cite[Proposition 1.7]{Fulton}, i.e., (ii) holds. This proves the claim.

By Lemma \ref{lem:deltaZ mod}, we know $\delta_n (Z)$ has modulus $\un{m}$. Since $Z$ intersects all faces properly, so does $\delta_n (Z)$ by applying (i) and (ii) of the above claim repeatedly. For $\partial \delta + \delta \partial = 0$, note that $\partial \delta_n (Z) = \sum_{i=1} ^{n+1} (-1)^i ( \partial_{n+1, i} ^{\infty} \delta_n (Z) - \partial_{n+1, i} ^0  \delta_n (Z) )=^{\dagger} \sum_{i=2} ^{n+1} (-1)^i ( \delta_{n-1} \partial_{n, i-1} ^{\infty} (Z) - \delta_{n-1}  \partial_{n, i-1} ^0 (Z) ) = - \sum_{i=1} ^n (-1)^i ( \delta_{n-1}  \partial_{n,i} ^{\infty} (Z) - \delta_{n-1} \partial_{n, i-1} ^0 (Z))= - \delta_{n-1} \sum_{i=1} ^n (-1)^i ( \partial_{n,i} ^{\infty} (Z) - \partial_{n,i} ^0 (Z))  = - \delta_{n-1} \circ \partial (Z),$ where $\dagger$ holds by the claim. 
\end{proof}

Lemma \ref{lem:diff} and Corollary \ref{cor:diff-*} below, which generalize \cite[\S 4.2]{KP2}, have much simpler proofs than \emph{loc.cit.}

\begin{lem}\label{lem:diff} 
Let $Z \in z^q (X[r]|D_{\un{m}}, n)$ be such that $\partial _i ^{\epsilon} (Z) = 0$ for $1 \leq i \leq n$ and $\epsilon = 0, \infty$. Then $2 \delta^2 (Z)$ is the boundary of an admissible cycle with modulus $\un{m}$.
\end{lem}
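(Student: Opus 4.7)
The strategy is to exploit the symmetry of $\delta^2(Z)$ under swapping the two newly inserted $\square$-coordinates, and then invoke \lemref{lem:permutation} to conclude. Concretely, by the definition of $\delta$, the cycle $\delta^2(Z) = \delta_{n+1}(\delta_n(Z))$ is supported on the locus in $X \times \A^r \times \square^{n+2}$ where both of the first two coordinates $y_1, y_2$ equal $1/(t_1 \cdots t_r)$. In particular, $y_1 = y_2$ on the support, so the transposition $\tau = (1,2) \in \mathfrak{S}_{n+2}$ acts trivially on $\delta^2(Z)$: namely, $\tau \cdot \delta^2(Z) = \delta^2(Z)$ as cycles. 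This is the crucial observation.

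Next, I would verify that all codimension $1$ faces of $\delta^2(Z)$ vanish, so that \lemref{lem:permutation} applies. Using the Claim in the proof of \propref{prop:delta admissible}, one has $\partial_{n+2,1}^\epsilon \circ \delta_{n+1} = 0$ and $\partial_{n+2,i}^\epsilon \circ \delta_{n+1} = \delta_n \circ \partial_{n+1,i-1}^\epsilon$ for $i \geq 2$. Applied to $\delta_n(Z)$: for $i=1$ the face is zero directly; for $i=2$ one gets $\delta_n \circ \partial_{n+1,1}^\epsilon \delta_n(Z) = 0$; and for $i \geq 3$ one gets $\delta_n \circ \delta_{n-1} \circ \partial_{n,i-2}^\epsilon(Z) = 0$ by the hypothesis on $Z$. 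Hence $\partial_i^\epsilon \delta^2(Z) = 0$ for every $1 \leq i \leq n+2$ and $\epsilon \in \{0, \infty\}$.

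With these inputs, I would apply \lemref{lem:permutation} in the generality of the pair $(X[r], D_{\un{m}})$ (note that $X[r] = X \times \A^r$ is smooth and quasi-projective essentially of finite type when $X$ is, and the proof of \lemref{lem:permutation} relies only on \lemref{lem:gamma^i}, which is stated in that generality) to the cycle $\delta^2(Z) \in z^{q+2}(X[r]|D_{\un{m}}, n+2)$ and the permutation $\sigma = \tau$. Since $\sgn(\tau) = -1$, the lemma produces an admissible cycle $\gamma \in z^{q+2}(X[r]|D_{\un{m}}, n+3)$ with
\[
\delta^2(Z) = -\,\tau \cdot \delta^2(Z) + \partial(\gamma).
\]
Combining with the $\tau$-invariance established above yields $2\,\delta^2(Z) = \partial(\gamma)$, as required.

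The plan has no serious obstacle: the face-vanishing reduces to a short bookkeeping computation using identities already proved in \propref{prop:delta admissible}, and the conceptual heart of the argument is the trivial but decisive remark that $\delta^2$ forces the coincidence of the two inserted coordinates, so that the antisymmetrization supplied by \lemref{lem:permutation} kills $\delta^2(Z)$ up to the factor $2$ and a boundary. This also explains the hypothesis ${\rm char}(k) \neq 2$ stated at the beginning of the section, as it is only after inverting $2$ that one can deduce that $\delta^2$ vanishes on homology.
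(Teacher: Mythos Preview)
Your proof is correct and follows essentially the same approach as the paper: observe that $\tau=(1,2)$ fixes $\delta^2(Z)$ because both inserted coordinates equal $1/(t_1\cdots t_r)$, then apply \lemref{lem:permutation} to obtain $\delta^2(Z)=-\tau\cdot\delta^2(Z)+\partial(\gamma)$ and conclude $2\delta^2(Z)=\partial(\gamma)$. Your explicit verification that all codimension~$1$ faces of $\delta^2(Z)$ vanish (needed for the hypothesis of \lemref{lem:permutation}) is a detail the paper leaves implicit, but the argument is otherwise identical.
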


\begin{proof} 
Note that $\delta^2 (Z)$ is an admissible cycle on $X \times \mathbb{A}^r \times \square^{n+2}$ with modulus $\un{m}$, by Proposition \ref{prop:delta admissible}. For the transposition $\tau = (1,2)$ on the set $\{1, \cdots, n+2\}$, we have $\tau \cdot \delta^2 (Z) = \delta^2 (Z)$, by the definition of $\delta$. On the other hand, we have $\tau \cdot \delta^2 (Z)  = - \delta ^2 (Z) + \partial (\gamma)$ for some admissible cycle $\gamma$, by Lemma \ref{lem:permutation}. Hence, we have $- \delta^2 (Z) + \partial (\gamma) = \delta^2 (Z)$, i.e., $2 \delta^2 (Z) = \partial (\gamma)$, as desired.
\end{proof}

\begin{cor}\label{cor:diff-*}
Let $k$ be a perfect field of characteristic $\not = 2$ and let $X$ be in $\SmAff_k ^{\ess}$ or in $\SmProj_k$. Let $\un{m} \geq 1$. Then $\delta^2 = 0$ on $\CH^q (X[r]|D_{\un{m}}, n)$.
\end{cor}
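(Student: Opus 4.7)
The plan is to combine Theorem \ref{thm:normalization} with Lemma \ref{lem:diff} to show that $2\delta^2$ vanishes on $\CH^q(X[r]|D_{\un{m}}, n)$, and then to cancel the factor of $2$ using the Witt-module structure provided by Corollary \ref{cor:Witt-module}; the characteristic hypothesis $\Char(k) \neq 2$ is used precisely to make this cancellation legitimate.

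Given a class in $\CH^q(X[r]|D_{\un{m}}, n)$, I would begin by invoking Theorem \ref{thm:normalization} to pick a cycle representative $Z$ all of whose codimension-one faces vanish, i.e.\ $\partial_i^\epsilon(Z) = 0$ for all $1 \le i \le n$ and $\epsilon \in \{0, \infty\}$. In particular $\partial Z = 0$, and Proposition \ref{prop:delta admissible} (via $\partial\delta + \delta\partial = 0$) then ensures that $\delta^2(Z)$ is a closed admissible cycle in $z^{q+2}(X[r]|D_{\un{m}}, n+2)$ whose class depends only on the class of $Z$. Lemma \ref{lem:diff}, applied to this normalized $Z$, produces an admissible cycle $\gamma$ with modulus $\un{m}$ satisfying $\partial \gamma = 2\delta^2(Z)$, so $2\bigl[\delta^2(Z)\bigr] = 0$ in $\CH^{q+2}(X[r]|D_{\un{m}}, n+2)$.

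It then remains to cancel the $2$, which I would handle by splitting on $|\un{m}|$. If $|\un{m}| = 1$, then necessarily $r = 1$ and $m_1 = 1$, and Corollary \ref{cor:mod-1} gives $\CH^q(X[1]|D_1, n) = 0$, so $\delta^2$ vanishes trivially. If $|\un{m}| \ge 2$, Corollary \ref{cor:Witt-module} makes $\CH^{q+2}(X[r]|D_{\un{m}}, n+2)$ a module over $\W_{|\un{m}|-1}(k)$, and it suffices to check that $2 \in \W_{|\un{m}|-1}(k)$ is a unit when $\Char(k) \neq 2$. This follows from the standard fact that a big Witt vector is a unit if and only if its first Witt coordinate is a unit in $k$, together with the observation that the first ghost identity $w(2)_1 = 2$ forces the first Witt coordinate of $2 = 1 + 1 \in \W_m(k)$ to be $2 \in k^\times$. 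The only substantive input is Lemma \ref{lem:diff}; the rest is assembly, and I do not anticipate any serious obstacle beyond remembering to invoke Corollary \ref{cor:mod-1} in the edge case $|\un{m}| = 1$, where the Witt-module argument degenerates.
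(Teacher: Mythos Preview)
Your proposal is correct and follows essentially the same route as the paper's own proof: normalize via Theorem~\ref{thm:normalization}, apply Lemma~\ref{lem:diff} to get $2\delta^2 = 0$, dispose of the degenerate case $|\un{m}|=1$ via Corollary~\ref{cor:mod-1}, and cancel the factor of~$2$ using the $\W_{(|\un{m}|-1)}(k)$-module structure from Corollary~\ref{cor:Witt-module}. The only difference is cosmetic ordering and your slightly more explicit justification that $2$ is a unit in $\W_{(|\un{m}|-1)}(k)$.
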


\begin{proof}
If $r = \un{m} = 1$, by \corref{cor:mod-1}, there is nothing to prove. So, suppose either $r \geq 2$ or $|\un{m}| \ge 2$. But, if $r \geq 2$, then we automatically have $|\un{m}| \geq 2$, so we just consider the latter case.

Given $\alpha \in \CH^q (X[r]|D_{\un{m}}, n)$, by Theorem \ref{thm:normalization}, we can find a representative $Z \in z^q (X[r]|D_{\un{m}}, n)$ such that $\partial _i ^{\epsilon} (Z) = 0$ for $1 \leq i \leq n$ and $\epsilon = 0, \infty$. Then by Lemma \ref{lem:diff}, we have $2 \delta^2(\alpha) = 0$. 

On the other hand, by Corollary \ref{cor:Witt-module}, the group $\CH^q (X[r]|D_{\un{m}}, n)$ is a $\W_{(|\un{m}|-1)} (k)$-module. As $|\un{m}| \ge 2$ and ${\rm char}(k) \neq 2$, it follows that $2 \in (\W_{(|\un{m}|-1)}(k))^{\times}$. In particular, $ \delta^2(\alpha) = 0$.
\end{proof}

\subsection{Leibniz rule}\label{sec:Leibniz}
We now discuss the Leibniz rule, generalizing \cite[\S 4.3]{KP2}. Let $X \in \Sch^{\rm ess}_k$. Let $(x, t, t_1, \cdots, t_r, y_1, \cdots, y_{n+2}) \in X \times \mathbb{A}^{r+1} \times \square^{n+2}$ be the coordinates. Let $T \subset X \times \mathbb{A}^{r+1} \times \square^{n+2}$ be the closed subscheme defined by the equation $ty_{n+1}= y_{n+2} (tt_1 \cdots t_ry_{n+1}-1)$. 

\begin{defn}[{\emph{cf.} \cite[Definition 4.9]{KP2}}]\label{defn:C_Z}
Given a closed subscheme $Z \subset X \times \mathbb{A}^{r+1} \times \square^n$, define $C_Z:= T \cdot (Z \times \square^2)$ on $X \times \mathbb{A}^{r+1} \times \square^{n+2}$. This is extended $\mathbb{Z}$-linearly to cycles. 
\end{defn}

\begin{lem}\label{lem:mod-Leib}
Let $Z$ be a cycle on $X  \times \mathbb{A}^{r+1} \times \square^n$ with modulus $\un{m} = (m_1, \cdots, m_{r+1})$. Then $C_Z$ has modulus $\un{m}$  on $X \times \mathbb{A}^{r+1} \times \square^{n+2}$.
\end{lem}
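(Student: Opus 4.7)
The plan is to reduce to the case where $Z$ is irreducible and then analyze each irreducible component $W$ of the cycle $C_Z = T\cdot(Z\times\square^2)$ via the projection
\[
\pi:X\times\mathbb{A}^{r+1}\times\square^{n+2}\longrightarrow X\times\mathbb{A}^{r+1}\times\square^{n},\qquad (x,t,\un t,y_1,\dots,y_{n+2})\mapsto(x,t,\un t,y_1,\dots,y_n)
\]
that drops $y_{n+1},y_{n+2}$. Writing the defining polynomial of $T$ as
\[
F=ty_{n+1}+y_{n+2}-tt_1\cdots t_r\,y_{n+1}y_{n+2},
\]
the presence of the pure term $y_{n+2}$ shows that $F$ cannot vanish identically in $(y_{n+1},y_{n+2})$; hence $T$ meets $Z\times\square^2$ properly and every component $W$ of $C_Z$ has codimension $q+1$.

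The key intermediate step is that $\pi|_W\colon W\to Z$ is dominant. If not, a dimension count (using $\dim W=\dim Z+1$) forces $W=W'\times\square^2$ for some irreducible $W'\subset Z$, which gives $W'\times\square^2\subset T$; but then $F$ would vanish identically in $(y_{n+1},y_{n+2})$, which we have just excluded. Consequently, taking Zariski closures in $X\times\mathbb{A}^{r+1}\times\ov{\square}^{\bullet}$, the proper extension $\ov{\pi}$ restricts to a dominant morphism $\ov{\pi}|_{\ov W}\colon\ov W\to\ov Z$, which by the universal property of normalization lifts to a morphism of normalizations $\widetilde{\pi}\colon\ov W^N\to\ov Z^N$ fitting into the obvious commutative square.

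The proof concludes by pulling back divisors. Since $\ov\pi^*(D_{\un m}\times\ov\square^n)=D_{\un m}\times\ov\square^{n+2}$ and $\ov\pi^* F_{n,i}^1=F_{n+2,i}^1$ for $1\le i\le n$, the commutative diagram together with the modulus condition on $Z$ gives
\[
\nu_W^*\iota_W^*\Bigl(\sum_{i=1}^{n}F_{n+2,i}^{1}-D_{\un m}\Bigr)=\widetilde\pi^*\,\nu_Z^*\iota_Z^*(F_n^{1}-D_{\un m})\;\ge\;0
\]
on $\ov W^N$, where I use that pullback of an effective divisor along a dominant map remains effective. A codimension count shows $\ov W\not\subset F_{n+2,j}^{1}$ for $j=n+1,n+2$, so $\nu_W^*\iota_W^*(F_{n+2,n+1}^{1}+F_{n+2,n+2}^{1})$ is a well-defined effective divisor; adding it to the inequality above yields $\nu_W^*\iota_W^*(F_{n+2}^{1}-D_{\un m})\ge 0$, which is precisely the modulus condition for $W$, and hence for $C_Z$.

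The main obstacle is proving that $\pi|_W$ is dominant, but as indicated it falls out immediately from the shape of $F$; notably no appeal to Lemma~\ref{lem:cancel} is required here, because pulling back goes in the direction that preserves effectivity automatically.
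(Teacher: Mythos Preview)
Your proof is correct and follows essentially the same route as the paper's: both use the projection ${\rm pr}$ dropping the last two $\square$-coordinates, map the closure $\ov W$ of a component into $\ov Z$, lift to normalizations, and pull back the modulus inequality for $Z$ (adding the nonnegative contributions from $F^1_{n+2,n+1}$ and $F^1_{n+2,n+2}$). Your argument is in fact more careful in one respect --- you explicitly verify dominance of $\pi|_W$ via the shape of $F$, whereas the paper simply asserts that ${\rm pr}^N$ exists by the universal property of normalization for dominant maps; one cosmetic remark: the reason $\ov W\not\subset F^1_{n+2,j}$ is not a codimension count but simply that $W$ lives in $\square^{n+2}$, where $y_j\neq 1$ by definition.
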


\begin{proof}
We may assume $Z$ is irreducible. We show that each irreducible component $V \subset C_Z$ has modulus $\un{m}$. Let $\ov{Z}$ and $\ov{V}$ be the Zariski closures of $Z$ and $V$ in $X \times \mathbb{A}^{r+1} \times \ov{\square}^n$ and $X \times \mathbb{A}^{r+1} \times \ov{\square}^{n+2}$, respectively. The projection ${\rm pr}: X \times \mathbb{A}^{r+1} \times \ov{\square}^{n+2} \to X \times \mathbb{A}^{r+1} \times \ov{\square}^n$ that ignores the last two $\ov{\square}^2$ is projective, while its restriction to $X \times \mathbb{A}^{r+1} \times \square^{n+2}$ maps $V$ into $Z$. So, ${\rm pr}$ maps $\ov{V}$ to $\ov{Z}$, giving a commutative diagram
\begin{equation}\label{eqn:modulus C_Z}
\xymatrix{
\ov{V}^N \ar[r] ^{\nu_V} \ar[d] ^{{\rm pr} ^N} & \ov{V} \ar@{^(->}[r] ^{\iota_V\ \ \ \ \ \ \ \ \ } \ar[d] ^{{\rm pr}|_{\ov{V}}} & X \times \mathbb{A}^{r+1} \times \ov{\square}^{n+2} \ar[d] ^{\rm pr} \\
\ov{Z}^N \ar[r] ^{\nu_Z} & \ov{Z} \ar@{^(->}[r] ^{\iota_Z\ \ \ \ \ \ \ \ \ } & X \times \mathbb{A}^{r+1} \times \ov{\square}^n,}
\end{equation}
where $\iota_V$ and $\iota_Z$ are the closed immersions, $\nu_V$ and $\nu_Z$ are normalizations, and ${\rm pr} ^N$ is induced by the universal property of normalization for dominant maps. The modulus condition for $V$ is now easily verified using the pull-back of the modulus condition for $Z$ on $\ov{Z} ^N$ and the fact that ${\rm pr} ^* \{ t_j = 0 \} = \{ t_j = 0 \}$ for all $ j$ and ${\rm pr} ^* F_{n,i} ^1 = F_{n+2, i} ^1$ for all $ i$.
\end{proof}

\begin{cor}\label{cor:C_Z finite} 
Let $X_1, X_2 \in \Sch^{\rm ess}_k$. Let $V_1 \subset X_1 \times \mathbb{A}^1 \times \square^{n_1}$ and $V_2 \subset X_2 \times \mathbb{A}^r \times \square^{n_2}$ be closed subschemes with moduli $|\un{m}|$ and $\un{m}$, respectively with $\un{m} \ge 1$. 

Under the exchange of factors $X_1 \times \mathbb{A}^1 \times \square^{n_1} \times X_2 \times \mathbb{A}^r \times \square^{n_2} \simeq X_1 \times X_2 \times \mathbb{A}^{r+1} \times \square^n,$ where $n = n_1 + n_2$, consider the cycle $C_{V_1 \times V_2}$ on $X_1 \times X_2 \times \mathbb{A}^{r+1} \times \square^{n+2}$. Then $\mu|_{C_{V_1 \times V_2}}$ is finite. In particular, $\mu_* (C_{V_1 \times V_2})$ as in Definition \ref{defn:mu-times} is well-defined, and has modulus $\un{m}$. 
\end{cor}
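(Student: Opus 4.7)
The plan is to deduce the corollary by assembling three earlier results: the concatenation of modulus (Lemma \ref{lem:modulus concat}), the finiteness of $\mu$ on products (Proposition \ref{prop:mu finite}), and the descent of the modulus along $\mu$ (Lemma \ref{lem:mu mod}), with Lemma \ref{lem:mod-Leib} as the bridge that transfers the modulus of $V_1 \times V_2$ to $C_{V_1 \times V_2}$.

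First I would produce the required modulus on $C_{V_1 \times V_2}$. Regarding $V_1 \times V_2$ as a cycle on $X_1 \times X_2 \times \A^{r+1} \times \square^{n}$ via the exchange of factors, Lemma \ref{lem:modulus concat} shows it has modulus $(|\un{m}|, \un{m})$, where $|\un{m}|$ is the modulus of the $\A^1$-coordinate coming from $V_1$ and $\un{m}$ is the modulus of the $\A^r$-coordinates coming from $V_2$. Lemma \ref{lem:mod-Leib} applied to $Z = V_1 \times V_2$ then yields that $C_{V_1 \times V_2}$ has modulus $(|\un{m}|, \un{m})$ on $X_1 \times X_2 \times \A^{r+1} \times \square^{n+2}$.

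Next I would deal with finiteness of $\mu$. By construction, $C_{V_1 \times V_2}$ is a closed subscheme of $V_1 \times V_2 \times \square^2$. One can view $V_1 \times V_2 \times \square^2$ as the product of $V_1 \subset X_1 \times \A^1 \times \square^{n_1}$ (modulus $|\un{m}| \ge 1$) with $V_2 \times \square^2 \subset X_2 \times \A^r \times \square^{n_2+2}$ (modulus $\un{m} \ge 1$, since trivially extended along the extra $\square^2$). Under the identification of the ambient space with $X_1 \times \A^1 \times \square^{n_1} \times X_2 \times \A^r \times \square^{n_2+2}$, Proposition \ref{prop:mu finite} applies and shows that $\mu$ is finite on $V_1 \times V_2 \times \square^2$. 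Since finiteness is preserved by restriction to closed subschemes, $\mu\vert_{C_{V_1 \times V_2}}$ is finite, which makes the push-forward $\mu_*(C_{V_1 \times V_2})$ in the sense of Definition \ref{defn:mu-times} well-defined.

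Finally, applying Lemma \ref{lem:mu mod} with $n$ replaced by $n+2$ to the cycle $C_{V_1 \times V_2}$ of modulus $(|\un{m}|, \un{m})$ on $X_1 \times X_2 \times \A^1 \times \A^r \times \square^{n+2}$ gives that $\mu(C_{V_1 \times V_2})$ on $X_1 \times X_2 \times \A^r \times \square^{n+2}$ has modulus $\un{m}$, completing the proof.

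The only subtle point I expect is the bookkeeping around the exchange of factors: one must be careful that the two new $\square$-coordinates introduced by the correspondence $T$ in Definition \ref{defn:C_Z} are treated consistently, and in particular that attaching them to the $V_2$-side (or equally to the $V_1$-side) does not disturb either the modulus condition verified in Lemma \ref{lem:modulus concat} or the hypotheses of Proposition \ref{prop:mu finite}. Once this is clear, the rest is a direct invocation of the previously established lemmas.
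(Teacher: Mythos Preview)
Your proof is correct and follows essentially the same route as the paper's. The only cosmetic difference is in the finiteness step: the paper observes directly that on $V_1 \times V_2 \times \square^2$ the map $\mu$ factors as $\mu|_{V_1 \times V_2} \times {\rm Id}_{\square^2}$, so finiteness of $\mu|_{V_1 \times V_2}$ from Proposition~\ref{prop:mu finite} immediately gives finiteness on the larger space; you instead re-apply Proposition~\ref{prop:mu finite} to the pair $(V_1, V_2 \times \square^2)$, which amounts to the same thing once one notes (as you do) that $V_2 \times \square^2$ inherits modulus $\un{m}$.
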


\begin{proof}
We set $V = V_1 \times V_2$. From the definition of $\mu$, the map $\mu: V \times \square^2 \to X_1 \times X_2 \times \A^r \times \square^{n+2}$ is of the form $\mu|_V \times {\rm Id}_{\square^2}$. By Proposition \ref{prop:mu finite}, the map $\mu|_V$ is finite, thus so is $\mu|_{V} \times {\rm Id}_{\square^2}: V \times \square^2 \to X_1 \times X_2 \times \A^r \times \square^{n+2}$. Hence, its restriction to $C_{V} = T \cdot (V \times \square^2)$ is also finite. The modulus condition for $\mu_*(C_V)$ follows from Lemmas~\ref{lem:mu mod} and ~\ref{lem:mod-Leib}.
\end{proof}

\begin{defn}[{\emph{cf.} \cite[Definition 4.12]{KP2}}]\label{defn:cyclic Leibniz}
Let $V_1 \in z^{q_1} (X_1 [1] | D_{|\un{m}|}, n_1)$ and $V_2 \in z^{q_2} (X_2 [r] | D_{\un{m}}, n_2)$ with $X_1, X_2 \in \Sch^{\rm ess}_k$. Let $n= n_1 + n_2$ and define $V_1 \times_{\mu'} V_2$ be the cycle $\sigma \cdot \mu_* (C_{V_1 \times V_2})$, where $\sigma = (n+2, n+1, \cdots, 1)^2 \in \mathfrak{S}_{n+2}$.
\end{defn}

\begin{lem}\label{lem:Leibniz modulus}
Let $V_1, V_2$ be as in Definition~\ref{defn:cyclic Leibniz}. Then $V_1 \times_{\mu'} V_2 \in z^{q_1 + q_2 -1} ( (X_1 \times X_2)[r] |D_{\un{m}}, n_1 + n_2 + 2)$.
\end{lem}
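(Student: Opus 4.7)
The plan is to establish the three defining properties of an admissible cycle for $V_1 \times_{\mu'} V_2 = \sigma \cdot \mu_*(C_{V_1 \times V_2})$, namely the modulus condition, proper intersection with all faces of $\square^{n+2}$, and the asserted codimension, building mainly on Lemma \ref{lem:mod-Leib} and Corollary \ref{cor:C_Z finite}. First, by Lemma \ref{lem:modulus concat}, after exchanging factors the external product $V_1 \times V_2$ is an admissible cycle on $X_1 \times X_2 \times \mathbb{A}^{r+1} \times \square^n$ of codimension $q_1 + q_2$ with modulus $(|\un{m}|, \un{m})$. The intersection $C_{V_1 \times V_2} = T \cdot ((V_1 \times V_2) \times \square^2)$ in Definition \ref{defn:C_Z} is then well-defined as a cycle because the hypersurface $T$ does not contain any component of $(V_1 \times V_2) \times \square^2$ (verifiable from the shape of its defining equation), and Lemma \ref{lem:mod-Leib} equips $C_{V_1 \times V_2}$ with modulus $(|\un{m}|, \un{m})$. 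Corollary \ref{cor:C_Z finite} then shows $\mu|_{C_{V_1 \times V_2}}$ is finite, so $\mu_*(C_{V_1 \times V_2})$ is a well-defined cycle on $(X_1 \times X_2)[r] \times \square^{n+2}$ with modulus $\un{m}$, and standard dimension counting under the finite push-forward delivers the asserted codimension.

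The central step is the verification that $\mu_*(C_{V_1 \times V_2})$ meets every face of $\square^{n+2}$ properly. Since $\mu$ is finite on $C_{V_1 \times V_2}$, the standard push-forward argument used in the proof of Lemma \ref{lem:proper int} reduces proper intersection of the push-forward with $(X_1 \times X_2)[r] \times F$ to proper intersection of $C_{V_1 \times V_2}$ itself with $(X_1 \times X_2) \times \mathbb{A}^{r+1} \times F$. A face of $\square^{n+2}$ decomposes as $F = F_1 \times F_2$ with $F_1 \subset \square^n$ and $F_2 \subset \square^2$. When $F_2 = \square^2$, proper intersection of $(V_1 \times V_2) \times \square^2$ with $(X_1 \times X_2) \times \mathbb{A}^{r+1} \times F_1 \times \square^2$ follows from face admissibility of $V_1$ and $V_2$, and intersecting with the hypersurface $T$ drops codimension by exactly one since $T$ does not contain the resulting cycle. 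For a proper face $F_2 \subset \square^2$, one computes the scheme-theoretic restriction of $T$ in each of the four cases $\{y_{n+1}=0\}$, $\{y_{n+1}=\infty\}$, $\{y_{n+2}=0\}$, $\{y_{n+2}=\infty\}$: the equation $ty_{n+1} = y_{n+2}(tt_1\cdots t_r y_{n+1} - 1)$ collapses onto a lower-dimensional locus in each case (e.g.\ on $\{y_{n+1}=0\}$ it forces $y_{n+2}=0$; on $\{y_{n+1}=\infty\}$ it forces some $t_i=0$ or $t=0$), and the modulus positivity $|\un{m}|, \un{m} \ge 1$ on $V_1$ and $V_2$ forbids $t$ or any $t_i$ from vanishing on $V_1 \times V_2$. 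Thus each face restriction reduces to an intersection controlled by the already-admissible faces of $V_1$ and $V_2$ inside $\square^n$, giving the required codimension bound.

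Finally, the permutation $\sigma \in \mathfrak{S}_{n+2}$ acts as a $k$-automorphism of $X_1 \times X_2 \times \mathbb{A}^r \times \square^{n+2}$ fixing the $\mathbb{A}^r$-factor and permuting the components of the modulus divisor $F^1_{n+2}$ amongst themselves, so it preserves both modulus and face conditions; therefore $V_1 \times_{\mu'} V_2$ lies in the claimed cycle group. The main obstacle is the case-by-case face analysis at the last two cube directions: the face $\{y_{n+1} = \infty\}$ in particular is handled only because the defining equation of $T$ forces $t$ or some $t_i$ to vanish there, which the modulus $\ge 1$ hypothesis on $V_1$ and $V_2$ prevents from meeting the cycle; this use of the modulus positivity is precisely the cycle-theoretic input that makes the Leibniz construction admissible.
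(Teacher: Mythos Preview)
Your modulus argument and the codimension count are fine, and the reduction of the face condition for $\mu_*(C_{V_1\times V_2})$ to one for $C_{V_1\times V_2}$ itself is also correct. The gap is in your face-by-face analysis of $T$ in the last two $\square$-directions: your claim that on $\{y_{n+1}=\infty\}$ the defining equation forces $t=0$ or some $t_i=0$ is false. Clearing the pole at $y_{n+1}=\infty$ in $ty_{n+1}=y_{n+2}(tt_1\cdots t_r y_{n+1}-1)$ gives $t(1-y_{n+2}\,t_1\cdots t_r)=0$; since $t\neq 0$ on $V_1\times V_2$ by the modulus hypothesis, this only pins down $y_{n+2}=1/(t_1\cdots t_r)$, which is not a vacuous condition but exactly the graph defining $\delta(V_2)$. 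Similarly, on $\{y_{n+2}=\infty\}$ one obtains $y_{n+1}=1/(tt_1\cdots t_r)$, which after $\mu_*$ is the graph defining $\delta(V_1\times_\mu V_2)$. So those codimension-one faces are genuinely nonempty and you cannot dispose of them by the empty-intersection trick.

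The paper handles this by computing all codimension-one faces of $W=\sigma\cdot\mu_*(C_{V_1\times V_2})$ explicitly, obtaining
\[
\partial_1^\infty W=\sigma_{n_1}(V_1\times_\mu \delta V_2),\quad \partial_1^0 W=0,\quad \partial_2^\infty W=\delta(V_1\times_\mu V_2),\quad \partial_2^0 W=\delta V_1\times_\mu V_2,
\]
and $\partial_i^\epsilon W=\partial_{i-2}^\epsilon(V_1)\times_{\mu'}V_2$ or $V_1\times_{\mu'}\partial_{i-n_1-2}^\epsilon(V_2)$ for $i\ge 3$. Proper intersection with all faces then follows by induction on the codimension of the face, using the admissibility of $\times_\mu$ (Lemma~\ref{lem:proper int}) and of $\delta$ (Proposition~\ref{prop:delta admissible}). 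Your argument can be repaired, but only by carrying out precisely this identification of the infinite faces with $\delta$-cycles and invoking those earlier results; the shortcut via ``modulus positivity forbids the intersection'' does not work at $y_{n+1}=\infty$ or $y_{n+2}=\infty$.
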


\begin{proof}
By Corollary \ref{cor:C_Z finite}, the cycle $\mu_* (C_{V_1 \times V_2})$ has modulus $\un{m}$, thus so does $W:= V_1 \times_{\mu'} V_2$. It remains to prove that $W$ intersects all faces properly. Let $\sigma_{n_1} = (n_1 +1, n_1, \cdots, 1) \in \mathfrak{S}_{n+1}$. Then by direct calculations, we have
\begin{equation}\label{eqn:Leibniz faces}
\tuborg 
\partial_1 ^{\infty}W= \sigma_{n_1} (V_1 \times_{\mu} \delta (V_2)), \partial_1 ^0 W = 0,  \partial_2 ^{\infty} W = \delta ( V_1 \times_{\mu} V_2), \partial_2 ^0 W = \delta (V_1) \times_{\mu} V_2, \\
\partial_i ^{\epsilon} W = 
	\tuborg \partial_{i-2} ^{\epsilon} (V_1) \times_{\mu'} V_2, & \mbox{ for } 3 \leq i \leq n_1 +2, \\
	V_1 \times_{\mu'} \partial_{i - n_1 -2} ^{\epsilon}(V_2), & \mbox{ for } n_1 + 3 \leq i \leq n+2, 
		\sluttuborg 
\epsilon \in \{ 0, \infty \}. 
\sluttuborg
\end{equation} 
Since each $V_i$ is admissible, using \eqref{eqn:Leibniz faces}, Lemma \ref{lem:proper int}, Proposition \ref{prop:delta admissible} and induction on the codimension of faces, we deduce that $W$ intersects all faces properly.
\end{proof}

\begin{prop}\label{prop:Leibniz1}
Let $X_1, X_2 \in \Sm^{\rm ess}_k$.
Let $\xi \in z^{q_1} (X_1[1] |D_{|\un{m}|}, n_1)$ and $\eta \in z^{q_2} (X_2 [r] |D_{\un{m}}, n_2)$. Let $n = n_1 + n_2$ and $q = q_1 + q_2$. Suppose that all codimension one faces of $\xi$ and $\eta$ vanish. Then in the group $z^{q-1}((X_1 \times X_2)[r]|D_{\un{m}}, n+1)$, the cycle $\delta( \xi \times _{\mu} \eta) - \delta \xi \times_{\mu} \eta - (-1)^{n_1} \xi \times_{\mu} \delta \eta$ is the boundary of an admissible cycle.
\end{prop}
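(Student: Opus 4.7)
The plan is to bound the Leibniz defect by an explicit admissible cycle constructed out of the auxiliary product $\times_{\mu'}$ from \defref{defn:cyclic Leibniz}. Specifically, I take $W:=\xi\times_{\mu'}\eta$; by \lemref{lem:Leibniz modulus} it lies in $z^{q-1}((X_1\times X_2)[r]|D_{\un{m}},n+2)$, and by its construction one expects the first two cubical boundaries of $W$ to recover exactly the three pieces $\delta(\xi\times_{\mu}\eta)$, $\delta\xi\times_{\mu}\eta$ and (a permutation of) $\xi\times_{\mu}\delta\eta$. To make this precise I read off the face formulas \eqref{eqn:Leibniz faces} already recorded in the proof of \lemref{lem:Leibniz modulus}. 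Under the standing hypothesis that every codimension one face of $\xi$ and of $\eta$ vanishes, the entries for $3\le i\le n+2$ in \eqref{eqn:Leibniz faces} are of the form $\partial_{i-2}^{\epsilon}\xi\times_{\mu'}\eta$ or $\xi\times_{\mu'}\partial_{i-n_1-2}^{\epsilon}\eta$, and therefore vanish. Only the $i=1,2$ contributions survive, yielding
\[
\partial W \;=\; \delta(\xi\times_{\mu}\eta)\;-\;\delta\xi\times_{\mu}\eta\;-\;\sigma_{n_1}(\xi\times_{\mu}\delta\eta).
\]

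The next step is to strip the permutation $\sigma_{n_1}$ off the last term, paying only an extra boundary, via \lemref{lem:permutation}. To apply that lemma to $Z:=\xi\times_{\mu}\delta\eta$ I must check that every codimension one face of $Z$ vanishes. This follows from two facts: (a) every codimension one face of $\delta\eta$ vanishes---clauses (i) and (ii) of the Claim in the proof of \propref{prop:delta admissible} give $\partial_1^{\epsilon}\delta\eta=0$ and $\partial_i^{\epsilon}\delta\eta=\delta(\partial_{i-1}^{\epsilon}\eta)=0$ for $2\le i\le n_2+1$; and (b) the product $\times_{\mu}$ distributes over faces coordinate-wise, because the morphism $\mu$ of \defref{defn:mu} does not touch the $\square$-coordinates. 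Concretely, $\partial_i^{\epsilon}(\xi\times_{\mu}\delta\eta)$ equals either $\partial_i^{\epsilon}\xi\times_{\mu}\delta\eta$ (for $1\le i\le n_1$) or $\xi\times_{\mu}\partial_{i-n_1}^{\epsilon}\delta\eta$ (for $n_1<i\le n+1$), and both vanish by the hypothesis and~(a).

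Since $\sigma_{n_1}=(n_1+1,n_1,\dots,1)$ is an $(n_1+1)$-cycle with $\sgn(\sigma_{n_1})=(-1)^{n_1}$, \lemref{lem:permutation} now produces an admissible cycle $\gamma\in z^{q-1}((X_1\times X_2)[r]|D_{\un{m}},n+2)$ with $\sigma_{n_1}(\xi\times_{\mu}\delta\eta)=(-1)^{n_1}(\xi\times_{\mu}\delta\eta)+\partial\gamma$. Substituting this into the expression for $\partial W$ gives
\[
\delta(\xi\times_{\mu}\eta)\;-\;\delta\xi\times_{\mu}\eta\;-\;(-1)^{n_1}\xi\times_{\mu}\delta\eta \;=\; \partial(W+\gamma),
\]
which is the desired Leibniz identity modulo admissible boundaries.

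The step I expect to be the genuine obstacle is the face formula \eqref{eqn:Leibniz faces} itself, which must be verified inside the proof of \lemref{lem:Leibniz modulus}: one has to trace how the defining equation $ty_{n+1}=y_{n+2}(tt_1\cdots t_r y_{n+1}-1)$ of the cycle $T$ from \defref{defn:C_Z} interacts with the specializations $y_{n+1}=\epsilon$ and $y_{n+2}=\epsilon$, and then push those faces through the permutation $\sigma=(n+2,n+1,\dots,1)^2$ to coordinates $1$ and $2$. It is precisely this bookkeeping that causes $\delta$ to materialize on the two surviving boundary terms, with the correct permuted copy of $\xi\times_{\mu}\delta\eta$ appearing at $\partial_1^{\infty}W$. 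Once those face identities are in hand, the remainder of the argument above is a purely formal consequence of \lemref{lem:permutation} and the commutation relation $\delta\partial+\partial\delta=0$ of \propref{prop:delta admissible}.
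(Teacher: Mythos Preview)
Your proposal is correct and follows essentially the same route as the paper's own proof: you take $W=\xi\times_{\mu'}\eta$, use the face formulas \eqref{eqn:Leibniz faces} to reduce $\partial W$ to the three surviving terms, then invoke \lemref{lem:permutation} on $\xi\times_{\mu}\delta\eta$ (after checking its codimension one faces vanish via the Claim in \propref{prop:delta admissible}) to replace $\sigma_{n_1}\cdot(\xi\times_{\mu}\delta\eta)$ by $(-1)^{n_1}\xi\times_{\mu}\delta\eta$ up to a boundary. The only cosmetic difference is a sign on the auxiliary cycle $\gamma$; the paper writes the answer as $\partial(\xi\times_{\mu'}\eta)-\partial(\gamma)$ rather than $\partial(W+\gamma)$, which is the same statement with a different choice of $\gamma$.
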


\begin{proof}
By \eqref{eqn:Leibniz faces}, for $3 \leq i \leq n_1 +2$, we have $\partial_{i} ^{\epsilon} (\xi \times_{\mu'} \eta) = \partial_{i-2} ^{\epsilon} (\xi ) \times_{\mu'} \eta = 0$, while for $n_1 + 3 \leq i \leq n+2$, we have $\partial_i ^{\epsilon} (\xi \times_{\mu'} \eta) = \xi \times_{\mu'} \partial_{i-n_1 -2} ^{\epsilon} (\eta) = 0$. Hence, $ \partial (\xi \times _{\mu'} \eta) = \sum_{i=1} ^{n+2} (-1)^i (\partial_i ^{\infty} - \partial_i ^0) (\xi \times_{\mu'} \eta)  =  \delta (\xi \times_{\mu} \eta) - \{ \sigma_{n_1} \cdot (\xi \times_{\mu} \delta \eta) + \delta \xi \times_{\mu} \eta \}$ by \eqref{eqn:Leibniz faces} for $i=1, 2$. Equivalently,
 \begin{equation}\label{eqn:LLeib}
 \delta (\xi \times_{\mu} \eta) - \delta \xi \times_{\mu} \eta - \sigma_{n_1} \cdot ( \xi \times_{\mu} \delta \eta) = \partial (\xi \times_{\mu'} \eta).
 \end{equation}
But, for $\xi \times_{\mu} \delta \eta$, notice that
\begin{equation}\label{eqn:Leibniz normal}
\partial_i ^{\epsilon} ( \xi \times_{\mu} \delta \eta) = \tuborg \partial_i ^{\epsilon} \xi \times_{\mu} \delta \eta=0, & \mbox{ for } 1 \leq i \leq n_1, \\
 \xi \times_{\mu} \partial_{i- n_1} ^{\epsilon} (\delta \eta), & \mbox{ for } n_1 + 1 \leq i \leq n+1, \sluttuborg \epsilon \in \{ 0, \infty \}.
\end{equation}
We have $\partial_{1} ^{\epsilon} (\delta \eta) = 0$ when $i = n_1 +1$ by Claim (i) of Proposition \ref{prop:delta admissible}, and $\partial_{i - n_1} ^{\epsilon} (\delta \eta) = \delta (\partial_{i - n_1 -1} ^{\epsilon} \eta) = \delta(0) = 0$ when $n_1 + 2 \leq i \leq n+1$ by Claim (ii) of Proposition \ref{prop:delta admissible}. Hence, $\xi \times_{\mu} \delta \eta$ is a cycle with trivial codimension $1$ faces, so, by Lemma \ref{lem:permutation}, for some admissible cycle $\gamma$, we have $\sigma_{n_1} \cdot (\xi \times_{\mu} \delta \eta) = \sgn (\sigma_{n_1}) (\xi \times_{\mu} \delta \eta) + \partial (\gamma) = (-1)^{n_1} \xi \times_{\mu} \delta \eta + \partial (\gamma)$. Putting this back in \eqref{eqn:LLeib}, we obtain $\delta (\xi \times_{\mu} \eta) - \delta \xi \times_{\mu} \eta - (-1)^{n_1} \xi \times_{\mu} \delta \eta = \partial (\xi \times_{\mu'} \eta) - \partial (\gamma),$ as desired.
\end{proof}

The above discussion summarizes as follows:

\begin{thm}\label{thm:DGA}
Let $X$ be in $\SmAff_k ^{\ess}$ or in $\SmProj_k$ over a perfect field $k$ with ${\rm char}(k) \not = 2$. Let $r \geq 1$ and $\un{m} = (m_1, \cdots , m_r) \ge 1$. Then the following hold:
\begin{enumerate}
\item $(\CH (X[1]|D_{|\un{m}|}), \wedge_X, \delta)$ forms a commutative differential graded $\W_{(|\un{m}|-1)}\Omega^{\bullet}_{k}$-algebra. 
\item $(\CH (X[r]|D_{\un{m}}), \delta)$ forms a differential graded $(\CH (X[1]|D_{|\un{m}|}), \wedge_X, \delta)$-module.
\end{enumerate}

In particular, $(\CH (X[r]|D_{\un{m}}), \delta)$ is a differential graded $\W_{(|\un{m}|-1)}\Omega^{\bullet}_{k}$-module.
\end{thm}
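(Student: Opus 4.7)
The plan is to assemble the pieces developed in \S\ref{sec:cycle computations}--\S\ref{sec:Leibniz} into a single statement, with the enrichment to a module over $\W_{(|\un{m}|-1)}\Omega^{\bullet}_k$ coming by functoriality from the structure map $\pi\colon X \to \Spec(k)$. For part (1), the graded commutative algebra structure on $\CH(X[1]|D_{|\un{m}|})$ under $\wedge_X$ is supplied by \thmref{thm:algebra}(1) together with \lemref{lem:asscomm}. \propref{prop:delta admissible} shows that $\delta$ is a degree $+1$ operator at the cycle level satisfying $\delta\partial + \partial\delta = 0$, hence descends to homology, while \corref{cor:diff-*} gives $\delta^2 = 0$ on classes. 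The graded Leibniz rule comes from \propref{prop:Leibniz1}: starting from representatives with trivial codimension one faces, available by the normalization \thmref{thm:normalization}, one has $\delta(\xi \times_{\mu} \eta) \equiv \delta\xi \times_{\mu} \eta + (-1)^{|\xi|}\xi \times_{\mu} \delta\eta$ modulo boundaries on $X \times X$; pulling back along the diagonal $\Delta_X$, which exists by \thmref{thm:contravariant} (or \corref{cor:contra local} in the essentially of finite type case), and using the relation $\wedge_X = \Delta_X^{\ast} \circ \times_{\mu}$, transports the identity to $\CH(X[1]|D_{|\un{m}|})$.

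Part (2) proceeds in parallel. The graded $\CH(X[1]|D_{|\un{m}|})$-module structure on $\CH(X[r]|D_{\un{m}})$ is \thmref{thm:algebra}(2). The differential $\delta$ on $\CH(X[r]|D_{\un{m}})$ is again produced by \propref{prop:delta admissible}, squares to zero by \corref{cor:diff-*}, and the mixed Leibniz rule for $\xi \in \CH(X[1]|D_{|\un{m}|})$ and $\eta \in \CH(X[r]|D_{\un{m}})$ is exactly the statement of \propref{prop:Leibniz1} after diagonal pullback on the univariate first factor. Combining these makes $(\CH(X[r]|D_{\un{m}}), \delta)$ a differential graded module over $(\CH(X[1]|D_{|\un{m}|}), \wedge_X, \delta)$.

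To upgrade to a module over $\W_{(|\un{m}|-1)}\Omega^{\bullet}_k$, I would apply part (1) with $X = \Spec(k)$, so that $\CH(k[1]|D_{|\un{m}|})$ is itself a commutative DGA. The Witt-complex structure on $\{\TH^{\ast}(k,\ast;|\un{m}|-1)\}$ asserted in \thmref{thm:Witt intro}(1), combined with the initiality of $\W_{(|\un{m}|-1)}\Omega^{\bullet}_k$ as a restricted Witt-complex over $k$ (\cite[Proposition~1.15]{R}), produces a canonical DGA map $\W_{(|\un{m}|-1)}\Omega^{\bullet}_k \to \CH(k[1]|D_{|\un{m}|})$. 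Composing with $\pi^{\ast}\colon \CH(k[1]|D_{|\un{m}|}) \to \CH(X[1]|D_{|\un{m}|})$, which is a DGA map by \thmref{thm:algebra}(3), furnishes the required algebra map, and via part (2) equips $\CH(X[r]|D_{\un{m}})$ with its $\W_{(|\un{m}|-1)}\Omega^{\bullet}_k$-module structure. The main obstacle throughout is the repeatedly invoked compatibility of pullbacks with $\delta$: both $\Delta_X^{\ast}$ and $\pi^{\ast}$ must commute with $\delta$ in order to pass the Leibniz rule of \propref{prop:Leibniz1} from $\times_{\mu}$ to $\wedge_X$, and from $X = \Spec(k)$ to a general $X$. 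This reduces to the elementary observation that $\delta_n$ is induced from a closed immersion $(\mathbb{G}_m^r)^{\times} \hookrightarrow \mathbb{G}_m^r \times \square$ acting only on the $\mathbb{G}_m^r$- and $\square$-coordinates, so a Cartesian-square argument settles it.
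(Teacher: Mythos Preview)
Your proposal is correct and follows essentially the same route as the paper's proof: assemble \thmref{thm:algebra}, \corref{cor:diff-*}, and \propref{prop:Leibniz1} via \thmref{thm:normalization}, then pull back from $\Spec(k)$ along the structure map to obtain the $\W_{(|\un{m}|-1)}\Omega^{\bullet}_k$-module structure. One small point: you invoke \thmref{thm:Witt intro}(1) for the map $\W_{(|\un{m}|-1)}\Omega^{\bullet}_k \to \CH(k[1]|D_{|\un{m}|})$, but that theorem is proven \emph{after} \thmref{thm:DGA} in the paper; to avoid any appearance of circularity, the paper instead cites \cite{R} directly for the DGA map into the Milnor part $\oplus_{n\ge 0}\CH^{n+1}(pt[1]|D_{|\un{m}|},n)$, which suffices since this is already known for the base field.
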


\begin{proof}
The commutative differential graded algebra structure on $\CH(X[1]|D_{|\un{m}|})$ and the differential graded module structure on $\CH(X[r]|D_{\un{m}})$ over $\CH(X[1]|D_{|\un{m}|})$ follows by combining \thmref{thm:algebra}, \corref{cor:diff-*} and Proposition~\ref{prop:Leibniz1} using \thmref{thm:normalization}.

The structure map $p:X \to \Spec(k)$ turns $(\CH (X[1]|D_{|\un{m}|}),  \wedge_X, \delta)$ into a differential graded algebra over $(\CH (pt[1]|D_{|\un{m}|}),  \wedge_{pt}, \delta)$ via $p^*$. Since $\oplus_{n \ge 0}\CH^{n+1}(pt[1]|D_{|\un{m}|}, n)$ forms a differential graded sub-algebra of $(\CH (pt[1]|D_{|\un{m}|}),  \wedge_{pt}, \delta)$.
The map of commutative differential graded algebras $\W_{(|\un{m}|-1)}\Omega^{\bullet}_{k} \to \oplus_{n \ge 0}\CH^{n+1}(pt[1]|D_{|\un{m}|}, n)$ (see \cite{R}) finishes the proof of the theorem.
\end{proof}

As a consequence of \thmref{thm:DGA} (use \corref{cor:mod-1} when $|\un{m}| =1$), we obtain the following property of multivariate additive higher Chow groups.

\begin{cor}\label{cor:DGA-rational}
 Let $r \geq 1$ and $\un{m} \geq 1$ and let $X$ be in $\SmAff_k ^{\ess}$ or in $\SmProj_k$. Then each $\CH^q(X[r]|D_{\un{m}}, n)$ is a $k$-vector space provided ${\rm char}(k) = 0$.
\end{cor}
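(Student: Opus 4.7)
The plan is to bootstrap from the $\W_{(|\un{m}|-1)}(k)$-module structure on $\CH^q(X[r]|D_{\un{m}},n)$ furnished by \thmref{thm:DGA} to a $k$-vector space structure, exploiting that big Witt vectors over a $\Q$-algebra are products via the ghost map.

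First I would split into the cases $|\un{m}|=1$ and $|\un{m}|\geq 2$. If $|\un{m}|=1$, the hypothesis $\un{m}\geq 1$ forces $r=1$ and $\un{m}=(1)$. After writing $X$ as the disjoint union of its finitely many equidimensional connected components and applying \corref{cor:mod-1} componentwise, one obtains $\CH^q(X[1]|D_{1},n)=0$, which is trivially a $k$-vector space. This handles the degenerate case, in which the object $\W_{|\un{m}|-1}=\W_0$ that would otherwise appear is not defined by the conventions of \S \ref{subsection:Witt ring}, and is precisely the reason the hint in the paper singles it out.

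For the main case $|\un{m}|\geq 2$, set $m:=|\un{m}|-1\geq 1$ and $A:=\CH^q(X[r]|D_{\un{m}},n)$. By \thmref{thm:DGA}, $A$ is a module over $\W_m(k)=(\W_m\Omega^{\bullet}_k)^0$, so it suffices to equip $\W_m(k)$ with the structure of a $k$-algebra. Since $\Char(k)=0$, $k$ is a $\Q$-algebra, and the classical fact that the ghost map
\[
w\colon \W_m(R)\longrightarrow R^m,\qquad (a_s)_{s=1}^m\longmapsto \Bigl(\sum_{d\mid s} d\,a_d^{s/d}\Bigr)_{s=1}^m,
\]
is a ring isomorphism for every $\Q$-algebra $R$ gives, after composing its inverse with the diagonal embedding $k\hookrightarrow k^m$, a ring homomorphism $k\to\W_m(k)$. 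Via this $k$-algebra structure on $\W_m(k)$, the module $A$ becomes a $k$-vector space by restriction of scalars, completing the proof.

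The only non-formal input is the ghost-map isomorphism over $\Q$-algebras, which is entirely standard (see e.g.\ \cite[\S 1]{Hesselholt AM}), so there is no real obstacle; the only point that requires attention is the edge case $|\un{m}|=1$, which is handled cleanly by \corref{cor:mod-1}.
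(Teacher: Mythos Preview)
Your proof is correct and follows exactly the route the paper indicates: the paper gives no detailed argument beyond the parenthetical hint ``use \corref{cor:mod-1} when $|\un{m}|=1$'' and the attribution to \thmref{thm:DGA}, and you have simply spelled out those details, including the standard fact that the ghost map is a ring isomorphism over a $\Q$-algebra. The case split and the handling of the $|\un{m}|=1$ edge case via \corref{cor:mod-1} match the paper's intent precisely.
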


\section{Witt-complex structure on additive higher Chow groups}\label{sec:Witt-complex}
Let $k$ be a perfect field of characteristic $\not =2$. In this section, a \emph{smooth affine $k$-scheme} means an object in $\SmAff_k ^{\ess}$, i.e., an object of either $\SmAff_k$ or $\SmLoc_k$. 

R\"ulling proved in \cite{R} that the additive higher Chow groups of $0$-cycles over $\Spec(k)$ form a  restricted Witt-complex over $k$. When $X$ is a smooth projective variety over $k$, it was proven in \cite{KP2} that additive higher Chow groups of $X$ form a restricted Witt-complex over $k$. Our objective is to prove the stronger assertion that the additive higher Chow groups of $\Spec(R) \in \SmAff_k ^{\ess}$ have the structure of a restricted Witt-complex over $R$. 

Since we exclusively use the case $r=1$ only, we use the older notations $\TZ^q (X, n; m)$ and $\TH ^q (X, n;m)$ instead of $z^q (X[1]|D_{m+1}, n-1)$ and $\CH^q (X[1]|D_{m+1}, n-1)$. For $X \in \Sch_k^{\ess}$, we let $\TH(X;m): = \oplus_{n,q} \TH^q (X, n;m) $ and $\TH^M (X; m): = \oplus_{n} \TH^n (X, n;m)$. The superscript $M$ is for \emph{Milnor}. Let $\TH(X):= \oplus_m \TH(X;m)$ and $\TH^M (X):= \oplus_m \TH^M (X;m)$. We similarly define $\tch(X;m)$, $\tch^M (X;m)$, $\tch (X)$, and $\tch^M (X)$ for $X \in \Sch_k$ using Definition \ref{defn:tch_colimit}.

\subsection{Witt-complex structure over $k$}\label{sec:Witt over k} 
In this section, we show that the additive higher Chow groups for an object of $\SmAff_k^{\ess}$ form a functorial restricted Witt-complex over $k$. For $r \geq 1$, let $\phi_r: \A^1 \to \A^1$ be the morphism $x \mapsto x^r$, which induces $\phi_r: \Spec (R) \times B_n \to \Spec (R) \times B_n$. By \cite[\S 5.1, 5.2]{KP2}, we have the Frobenius $F_r: \TH^q (R, n; rm+r-1) \to \TH^q (R, n;m)$ and the Verschiebung $V_r: \TH^q (R, n;m) \to \TH^q (R, n;rm+r-1)$ given by $F_r = \phi_{r *}$ and $V_r = \phi^* _r$. We also have a natural inclusion $\mathfrak{R}: \TZ^q (R, \bullet ; m+1) \to \TZ^q (R, \bullet; m)$  for any $m \geq 1$, which induces $\mathfrak{R}: \TH^q (R, n; m+1) \to \TH^q (R, n;m)$, called the \emph{restriction}. Finally, by \thmref{thm:DGA}, there is a differential $\delta : \TZ^q (R, \bullet; m) \to \TZ^q (R, \bullet +1; m)$, which induces $\delta: \TH^q (R, n;m) \to \TH^q (R, n+1;m)$.

\begin{thm}\label{thm:Witt over k}
Let $X \in \SmAff_k ^{\ess}$ and $m \geq 1$. Then $\TH (X ; m)$ is a DGA and $\TH^M (X;m)$ is its sub-DGA. Furthermore, with respect to the operations $\delta, \mathfrak{R}, F_r, V_r$ in the above together with $\lambda= f^*: \mathbb{W}_m (k)= \TH^1 (k, 1;m) \to \TH^1 (X, 1;m)$ for the structure morphism $f: X \to \Spec (k)$, $\TH(X)$ is a restricted Witt-complex over $k$ and $\TH^M (X)$ is a restricted sub-Witt-complex over $k$. These structures are functorial.
\end{thm}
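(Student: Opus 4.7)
The DGA structure on $\TH(X;m) = \CH(X[1]|D_{m+1},\bullet)$ is essentially immediate from \thmref{thm:DGA} specialized to $r=1$ and $\un{m}=(m)$. Closedness of the Milnor subcomplex $\TH^M(X;m) = \bigoplus_n \TH^n(X,n;m)$ under $\wedge_X$ and $\delta$ is a direct index check: $\wedge_X$ sends $\TH^{n_1}(X,n_1;m) \otimes \TH^{n_2}(X,n_2;m)$ into $\TH^{n_1+n_2-1}(X,n_1+n_2-1;m)$, while $\delta$ sends $\TH^n(X,n;m)$ to $\TH^{n+1}(X,n+1;m)$; the operations $\mathfrak{R}$, $F_r$, $V_r$ preserve both indices, and $\lambda$ lands in $\TH^1(X,1;m)\subset\TH^M(X)$.

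For the restricted Witt-complex axioms (i)--(v), my plan is to follow the strategy of \cite[\S 5]{KP2} for the projective case, substituting \thmref{thm:moving affine} and \thmref{thm:moving local} for the projective moving lemma wherever needed. The operations $F_r=\phi_{r*}$, $V_r=\phi_r^*$ (for $\phi_r(x)=x^r$) and the restriction $\mathfrak{R}$ are already at hand. The Teichm\"uller map $\lambda = f^*: \W_m(k) \cong \TH^1(k,1;m) \to \TH^1(X,1;m)$ along the structure morphism $f:X\to\Spec(k)$ exists by \thmref{thm:contravariant} together with \corref{cor:contra local}, and is a ring homomorphism by \thmref{thm:algebra}(3) combined with R\"ulling's isomorphism \cite[Corollary~3.7]{R}. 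Axioms (i), (ii), (iv) are cycle-level identities that follow from standard functorial properties of push-forward/pull-back along the finite flat map $\phi_r$ and carry over verbatim from \cite[\S 5]{KP2}. The projection formula (iii) is verified after reducing, via \thmref{thm:normalization}, to cycles whose codimension-one faces all vanish, using the definition of $\wedge_X$ in terms of $\mu_*$ and the diagonal pull-back.

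The most delicate axiom is (v), the Witt-Teichm\"uller relation $F_r\, d\lambda([a]) = \lambda([a]^{r-1})\, d\lambda([a])$ for $a\in k$. Here my key observation is that $\lambda([a]) = f^*([a])$ is pulled back from $\TH(k)$, where R\"ulling \cite{R} has already established (v). Since the operations $F_r, V_r, \mathfrak{R}, \delta$ all commute with arbitrary pull-backs (directly inspectable from their cycle-theoretic definitions), and $f^*$ is a ring homomorphism commuting with $\wedge$ by \thmref{thm:algebra}(3), the relation in $\TH(X)$ follows by applying $f^*$ to the corresponding relation in $\TH(k)$.

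The main obstacle, as I see it, is not in the formal verification of (i)--(v) but in ensuring that the auxiliary cycle manipulations from \cite[\S 5]{KP2} (in particular those involving $\times_\mu$, the Leibniz cycle $\times_{\mu'}$, and the permutation trick of \lemref{lem:permutation}) remain well-posed for $X\in\SmLoc_k$. For $X\in\SmAff_k$ the arguments are direct via \thmref{thm:moving affine}. For $X\in\SmLoc_k$, the strategy is to use the spreading lemmas \lemref{lem:local spread} and \lemref{lem:bdry spread} to reduce each identity to one on an affine open neighborhood of the distinguished semi-local points, where the identity is already known, and then to conclude by flat pull-back. Functoriality of the entire structure in $X$ is then a formal consequence of the fact that every operation is defined cycle-theoretically and commutes with pull-back along morphisms in $\SmAff_k^{\ess}$ by \thmref{thm:contravariant} and \corref{cor:contra local}.
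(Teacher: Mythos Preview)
Your proposal is correct and follows essentially the same approach as the paper: the paper's proof is little more than the observation that \cite[Theorem~1.1, Scholium~1.2, Theorem~5.13]{KP2} established the DGA and restricted Witt-complex structures conditionally on a moving lemma for $X$, and that this moving lemma is now supplied by Theorems~\ref{thm:moving affine} and~\ref{thm:moving local}. Your treatment of axiom~(v) by pulling back R\"ulling's relation along $f^*$ from $\TH(k)$ is a clean shortcut (the paper simply cites \cite[Theorem~5.13]{KP2} for this), and your caution about the semi-local case via the spreading lemmas is more explicit than the paper but not a different strategy.
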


\begin{proof}
In \cite[Theorem 1.1, Scholium 1.2]{KP2}, it was stated that $\TH(X;m)$ and $\TH^M (X;m)$ are DGAs, and that $\TH(X)$ and $\TH^M (X)$ are restricted Witt-complexes over $k$ with respect to the above $\delta, \mathfrak{R}, F_r, V_r$, provided the moving lemma holds for $X$. But this is now shown in Theorems \ref{thm:moving affine} and ~\ref{thm:moving local}. We give a very brief sketch of this structure and its functoriality.

The functoriality of the restriction operator $\mathfrak{R}$ recalled  above, was stated in \cite[Corollary 5.19]{KP2}, which we easily check here: let $f: X \to Y$ be a morphism in $\SmAff_k^{\ess}$ and consider the following commutative diagram:
$$
\xymatrix{ \TZ^q _{\mathcal{W}} (Y, \bullet; m+1) \ar[r]^{f^*} \ar @{^{(}->}[d] & \TZ^q (X, \bullet; m+1) \ar @{^{(}->} [d] \\
\TZ^q_{\mathcal{W}} (Y, \bullet ; m) \ar[r] ^{f^*} & \TZ^q (X, \bullet; m),}
$$
where $\mathcal{W}$ is a finite set of locally closed subsets of $Y$, and the horizontal maps are chain maps given by the inverse images as in the proof of Theorem \ref{thm:contravariant} and Corollary \ref{cor:contra local}. The diagram and Theorems \ref{thm:moving affine} and \ref{thm:moving local} imply that $f^* \mathfrak{R} = \mathfrak{R} f^*$ because the vertical inclusions induce $\mathfrak{R}$ by definition.

For each $r\geq 1$, the Frobenius $F_r$ and Verschiebung $V_r$ recalled in the above are functorial as proven in \cite[Lemmas 5.4, 5.9]{KP2}, and that $F_r$ is a graded ring homomorphism is proven in \cite[Corollary 5.6]{KP2}.

Finally, the properties (i), (ii), (iii), (iv), (v) in Section \ref{sec:DRW}, are all proven in \cite[Theorem 5.13]{KP2}, where none requires the projectivity assumption. 
\end{proof}

\begin{cor}\label{cor:Witt over k}
Let $m \geq 1$ be an integer. Then $\tch ( - ; m)$ and $\tch^M (-;m)$ define presheaves of DGAs on $\Sch_k$, and the pro-systems $\tch (-)$ and $\tch^M (-)$ define presheaves of restricted Witt-complexes over $k$ on $\Sch_k$.
\end{cor}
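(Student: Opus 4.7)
The plan is to combine Proposition~\ref{prop:presh}, which produces presheaves on $\Sch_k$ from presheaves on $\SmAff_k$ via filtered colimits, with Theorem~\ref{thm:Witt over k}, which provides the DGA and restricted Witt-complex structures on $\TH(-;m)$ over $\SmAff_k^{\ess}$. The crux is the general principle that filtered colimits of abelian groups preserve algebraic structures defined by equational axioms: associativity and graded-commutativity of $\wedge$, the Leibniz rule, $\delta^2 = 0$, and the Witt-complex relations (i)--(v) of Section~\ref{sec:DRW}.

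More concretely, by Theorem~\ref{thm:Witt over k}, for each fixed $m$ the assignment $Y \mapsto \TH(Y;m)$ from $\SmAff_k^{\op}$ takes values in DGAs, and the full pro-system $Y \mapsto \{\TH(Y;m)\}_{m \geq 1}$ takes values in restricted Witt-complexes over $k$; this already includes the functoriality of $\wedge$, $\delta$, $\mathfrak{R}$, $F_r$, and $V_r$ on morphisms in $\SmAff_k^{\op}$. Since $(X \downarrow \SmAff_k)^{\op}$ is filtered by Definition~\ref{defn:tch_colimit}, the products and differentials on the $\TH(Y;m)$ for $(X \to Y) \in (X \downarrow \SmAff_k)$ transfer to $\tch(X;m)$, making it a DGA with $\tch^M(X;m)$ as a sub-DGA. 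The same reasoning applied to $\mathfrak{R}$, $F_r$, $V_r$ (now viewed as natural transformations between the functors $\TH(-;m)$ for various $m$) endows the pro-system $\{\tch(X;m)\}_{m \geq 1}$ with the structure of a restricted Witt-complex over $k$. The map $\lambda_X : \mathbb{W}_m(k) \to \tch^1(X,1;m)$ arises from the observation that $\Spec(k) \in \SmAff_k$ is terminal in $\Sch_k$, so $X \to \Spec(k)$ is an initial object of $(X \downarrow \SmAff_k)^{\op}$, yielding a canonical map $\TH^1(k,1;m) \to \tch^1(X,1;m)$ which we precompose with $\lambda_k$ from Theorem~\ref{thm:Witt over k}.

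Functoriality in $X$ is then essentially automatic: for $f : X \to Y$ in $\Sch_k$, Proposition~\ref{prop:presh}(3) expresses $f^*$ as a colimit (reindexed via $f^{\sharp}: (Y \downarrow \SmAff_k)^{\op} \to (X \downarrow \SmAff_k)^{\op}$) of identity maps at the level of $\TH$, so it respects all the structure componentwise.

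The main obstacle is bookkeeping rather than ideas: one must verify that each operation ($\wedge$, $\delta$, $\mathfrak{R}$, $F_r$, $V_r$, $\lambda$) commutes with the transition maps in $(X \downarrow \SmAff_k)^{\op}$, and that each defining relation of a DGA and of a restricted Witt-complex (involving only finitely many elements at a time, hence detectable at some stage of the cofiltered diagram) descends from $\TH$ to $\tch$. No new geometric input is required beyond Theorem~\ref{thm:Witt over k}; the argument is a formal transfer through the colimit construction of Proposition~\ref{prop:presh}.
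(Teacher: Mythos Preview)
Your proposal is correct and follows essentially the same approach as the paper: both argue that $\tch(X;m)$ is a filtered colimit of the DGAs $\TH(A;m)$ over $(X \downarrow \SmAff_k)^{\op}$, and that the DGA and restricted Witt-complex structures (including $\wedge$, $\delta$, $\mathfrak{R}$, $F_r$, $V_r$, $\lambda$) pass through filtered colimits by Theorem~\ref{thm:Witt over k}. The paper states this more tersely, citing \cite{Jardine} for closure of DGAs under filtered colimits, whereas you spell out the equational-axiom principle and the construction of $\lambda_X$ explicitly; but the substance is the same.
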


\begin{proof}Let $X \in \Sch_k$. By definition, $\tch (X;m)$ is the colimit over all $(X \to A) \in (X\downarrow \SmAff_k)^{\op}$ of $\TH(A; m)$. But the category of DGAs is closed under filtered colimits (see \cite{Jardine}) so that $\tch (X; m)$ is a DGA. For each morphism $f: X \to Y$ in $\Sch_k$, one checks $f^*: \tch (Y;m)\to \tch(X;m)$ is a morphism of DGAs. The other assertions follow easily using Theorem \ref{thm:Witt over k}.
\end{proof}

Before we discuss Witt-complexes over $R$, we state the following behavior of various operators under finite push-forward maps.

\begin{prop}\label{prop:pushforward}
Let $f: X \to Y$ be a finite map in $\SmAff^{\ess}_k$. Then for $r \geq 1$, we have: $(a)~ f_* \mathfrak{R} = \mathfrak{R} f_*; ~ (b) ~ f_* \delta = \delta f_*; \ (c)~ f_* F_r = F_r f_*; \ (d)~ f_* V_r = V_r f_*$.
\end{prop}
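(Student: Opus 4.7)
The plan is to verify all four identities at the level of the cycle complexes $\TZ^q(-, \bullet; m)$, not merely on homology, by exploiting the fact that $f$ acts only on the $X$-factor of $X \times B_n = X \times \mathbb{A}^1 \times \square^{n-1}$, whereas each of $\mathfrak{R}, \delta, F_r, V_r$ is induced by a morphism of the $\mathbb{A}^1 \times \square^{\bullet}$-factor alone. The push-forward $f_* : \TZ^q(X, n; m) \to \TZ^q(Y, n; m)$ itself is defined as the ordinary cycle-theoretic push-forward along the finite map $f \times \id : X \times B_n \to Y \times B_n$: face conditions are trivially preserved since every face $F$ on $X \times B_n$ has the form $(f \times \id)^{-1}(Y \times F)$ for a face $F \subset B_n$; the modulus condition is preserved by \lemref{lem:projective image}.

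For (a), $\mathfrak{R}$ is just the inclusion of subgroups induced by relaxing the modulus from $m+1$ to $m$, and $f_*$ is defined by the same formula on both groups, so the identity is trivial at the cycle level. For (c), observe that by definition $F_r = (\id_X \times \phi_r \times \id_{\square^{n-1}})_*$ is already a push-forward of cycles; since the morphisms $f \times \id$ and $\id \times \phi_r \times \id$ act on complementary factors, they commute as morphisms of schemes, and functoriality of proper push-forward gives $f_* F_r = ((f \times \id) \circ (\id \times \phi_r \times \id))_* = ((\id \times \phi_r \times \id) \circ (f \times \id))_* = F_r f_*$. For (b), recall from \S\ref{sec:differential} that $\delta$ is the composite of the flat restriction to the open subset $X \times \mathbb{G}_m^{\times} \times \square^{n-1}$ followed by push-forward along the closed immersion $\delta_{n-1}$ of \lemref{lem:deltaZ mod-00}; each of these is induced by a morphism acting on the $\mathbb{A}^1 \times \square^{\bullet}$-factor alone, and the argument for (c) applies verbatim (using base change for the flat open restriction and finite push-forward $f \times \id$).

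For (d), the identity $f_* V_r = V_r f_*$ is an instance of the base change formula for proper push-forward and flat pull-back. Consider the square
\[
\xymatrix{
X \times B_n \ar[d]_{f \times \id} \ar[r]^{\id \times \phi_r \times \id} & X \times B_n \ar[d]^{f \times \id} \\
Y \times B_n \ar[r]^{\id \times \phi_r \times \id} & Y \times B_n,
}
\]
which is Cartesian because the horizontal and vertical maps act on disjoint factors. Since $\phi_r : \mathbb{A}^1 \to \mathbb{A}^1$ is finite flat and $f$ is finite, the standard base change formula for algebraic cycles applies and yields the desired identity on $\un{\TZ}^{q}(-, \bullet;m)$. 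The only point requiring mild care is the interaction with the modulus condition and the passage to the quotient by degenerate cycles: the former is preserved under both flat pull-back (by \cite[Proposition~2.12]{KPv}) and finite push-forward (\lemref{lem:projective image}), and the latter is preserved because both operations are given by morphisms of cubical abelian groups. This compatibility is the main (and essentially only) technical point in the proof; once it is established, all four identities descend immediately from the cycle complexes to $\TH^q(-, n; m)$.
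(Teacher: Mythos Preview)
Your proof is correct and follows essentially the same approach as the paper: both arguments exploit that $f$ acts only on the $X$-factor while $\mathfrak{R}, \delta, F_r, V_r$ act on the $\mathbb{A}^1 \times \square^{\bullet}$-factor, treating (a)--(c) via functoriality of push-forward and (d) via the same Cartesian square and the base change formula of \cite[Proposition~1.7]{Fulton}. Your treatment of (b) is slightly more explicit than the paper's in that you separate out the flat open restriction step before the closed-immersion push-forward, but this is a minor elaboration of the same idea.
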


\begin{proof}
The item (a) is obvious and (b) and (c) follow at once from the fact that these operators are defined as push-forward under closed immersion and finite maps and they preserve the faces. For (d), we consider the commutative diagram 
\begin{equation}\label{eqn:pfFV}
\xymatrix{ X \times \A^1 \ar[r] ^{{\rm Id} \times \phi_r} \ar[d] ^{ f \times {\rm Id}} & X \times \A^1 \ar[d] ^{ f \times {\rm Id}} \\
Y \times \A^1 \ar[r] ^{{\rm Id} \times \phi_r} & Y \times \A^1.}
\end{equation} 

Since this diagram is Cartesian and $f$ as well as $\phi$ preserve the faces, we conclude from \cite[Proposition 1.7]{Fulton} that $f_* \circ \phi^*_r = \phi^*_r \circ f_*$.
\end{proof}

\subsection{Witt-complex structure over $R$}\label{sec:Witt affine}
Let $X= \Spec (R)\in \SmAff_k^{\ess}$. The objective of this section is to strengthen Theorem \ref{thm:Witt over k} by showing that $\TH (X)$ is a restricted Witt-complex over $R$. 

\bigskip
Let $m \geq 1$ be an integer. We first define a group homomorphism $\tau_R: \mathbb{W}_m (R) \to \TH^1 (R, 1 ; m)$ for any $k$-algebra $R$. Recall that the underlying abelian group of $\mathbb{W}_m (R)$ identifies with the multiplicative group $(1 + t R[[t]])^{\times} / (1 + t^{m+1} R[[t]])^{\times}$. For each polynomial $p(t) \in (1 + R[[t]])^{\times}$, consider the closed subscheme of $ \Spec (R[t])$ given by the ideal $(p(t))$, and let $\Gamma_{(p(t))}$ be its associated cycle. By definition, $\Gamma _{(p(t))} \cap \{ t = 0 \} = \emptyset$ so that $\Gamma_{(p(t))} \in \TZ^1 (R, 1; m)$. We set $\Gamma_{a,n} = \Gamma_{(1-at^n)}$ for $n \ge 1$ and $a \in R$.

\begin{lem}\label{lem:Witt homo}
Let $f(t), g(t)$ be polynomials in $R[t]$, and let $h(t) \in R[t]$ be the unique polynomial such that $(1-tf(t))(1-tg(t)) = 1-t h(t)$. Then $\Gamma_{(1-th(t))} = \Gamma_{(1-tf(t))} + \Gamma _{(1-tg(t))}$ in $\TZ^1 (R, 1; m)$. 
\end{lem}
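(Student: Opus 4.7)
The plan is to reduce the lemma to the classical additivity of cycles associated to principal effective Cartier divisors on a Noetherian scheme, applied here to $\Spec R[t]$.

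First, I would verify that each of $\Gamma_{(1-tf(t))}, \Gamma_{(1-tg(t))}, \Gamma_{(1-th(t))}$ actually lies in $\TZ^1(R, 1; m) = z^1(\Spec R[t]\,|\,D_{m+1}, 0)$. Here cycles are codimension one cycles on $\Spec R[t]$; the face condition is vacuous since $\square^0 = \Spec k$, and the modulus condition reduces to the requirement that the support avoid $D_{m+1} = (m+1)\{t=0\}$. This is immediate because every polynomial of the form $1-t\psi(t)$ with $\psi \in R[t]$ evaluates to the unit $1 \in R$ at $t=0$, so $V(1-t\psi(t)) \subset \Spec R[t]$ is disjoint from $\{t=0\}$. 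I would also note that each $1-t\psi(t)$ is a non-zero-divisor in $R[t]$: a relation $(1-t\psi(t))s(t)=0$ with $s(t) = \sum_i s_i t^i$ forces $s_0=0$ from the constant term, and then $s_i=0$ for all $i \ge 1$ by induction on the coefficients. Consequently, the ideal $(1-t\psi(t)) \subset R[t]$ generates an effective Cartier divisor whose associated Weil cycle is precisely $\Gamma_{(1-t\psi(t))}$.

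Next, I would invoke the standard additivity of cycles of principal effective Cartier divisors on a Noetherian scheme (see, for instance, \cite[\S 1.2]{Fulton}): for non-zero-divisors $u, v \in R[t]$, one has $[V(uv)] = [V(u)] + [V(v)]$ as codimension one cycles on $\Spec R[t]$. Applying this to $u = 1-tf(t)$ and $v = 1-tg(t)$ and combining with the defining identity $(1-tf(t))(1-tg(t)) = 1-th(t)$, we obtain
\[
\Gamma_{(1-th(t))} = \Gamma_{(1-tf(t))(1-tg(t))} = \Gamma_{(1-tf(t))} + \Gamma_{(1-tg(t))},
\]
which is the asserted equality in $\TZ^1(R, 1; m)$. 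The lemma is essentially a translation of the classical multiplicativity $\Div(uv) = \Div(u) + \Div(v)$ for principal divisors into the additive higher Chow notation, and so presents no real obstacle; the only point requiring care is the non-zero-divisor verification above.
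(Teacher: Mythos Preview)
Your proposal is correct and follows the same approach as the paper: the paper's proof is the single line ``This is obvious by $(1-tf(t))(1-tg(t)) = 1-t h(t)$,'' and you have simply spelled out what makes it obvious---namely the additivity of principal effective Cartier divisors---together with the auxiliary checks (non-zero-divisor, disjointness from $\{t=0\}$) that the paper handles just before stating the lemma.
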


\begin{proof}This is obvious by $(1-tf(t))(1-tg(t)) = 1-t h(t)$.
\end{proof}

\begin{lem}\label{lem:mWitt well-def} 
For $n \geq m+1$, we have $\Gamma_{(1-t^nf(t))} \equiv 0$ in $\TH^1 (R, 1; m)$. 
\end{lem}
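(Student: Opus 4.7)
The plan is to realize $\Gamma_{(1-t^n f(t))}$ as the boundary of an explicit admissible cycle in $\TZ^1(R, 2; m) = z^1(R[1]|D_{m+1}, 1)$. The natural candidate is the graph-type hypersurface $C \subset \Spec(R[t]) \times \square^1$ cut out by the single equation $y - 1 + t^n f(t) = 0$, where $y$ is the affine coordinate on $\square = \mathbb{P}^1 \setminus \{1\}$. (If $f(t) = 0$, then $\Gamma_{(1)} = 0$ and there is nothing to prove, so we may assume $f \neq 0$.)

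First I would verify the face condition. Substituting $y = 0$ in the defining equation of $C$ produces $V(t^n f(t) - 1) = \Gamma_{(1-t^n f(t))}$ as a proper codimension-one cycle on $\Spec R[t]$. Passing to homogeneous coordinates $(s_0 : s_1)$ on $\ov{\square}$ turns the defining equation into $s_1 = s_0(1 - t^n f(t))$, which forces $s_0 \neq 0$ at every point of the closure. Hence the closure $\ov{C}$ of $C$ in $\Spec R[t] \times \ov{\square}$ coincides with $C$ itself and is disjoint from the face $\{y = \infty\}$; in particular $\partial_1^\infty C = 0$. Since $R$ is regular (being essentially smooth over $k$), the isomorphism $\ov{C} \cong \Spec R[t]$ induced by the first projection shows that $\ov{C}$ is already normal, so $\ov{C}^N = \Spec R[t]$.

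For the modulus condition, along $\ov{C}^N \cong \Spec R[t]$ the pullback of $\{t = 0\}$ is $V(t)$, while the pullback of $\{y = 1\}$ is $V(y - 1)|_{\ov{C}} = V(t^n f(t)) = n V(t) + V(f(t))$. Thus the required inequality $(m+1)\nu_C^*\{t = 0\} \leq \nu_C^*\{y = 1\}$ reduces to $(m+1) V(t) \leq n V(t) + V(f(t))$, equivalently $(n - m - 1)V(t) + V(f(t)) \geq 0$, which holds precisely because $n \geq m+1$ and $V(f(t))$ is effective. This confirms $C \in \TZ^1(R, 2; m)$.

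Finally, using the sign convention $\partial = \partial_1^0 - \partial_1^\infty$ on one-cube cycles from \defref{defn:partial complex}, the two face computations combine to give $\partial C = \Gamma_{(1-t^n f(t))}$. Hence $\Gamma_{(1-t^n f(t))}$ is a boundary in the complex $\TZ^1(R, \bullet; m)$ computing $\TH^1(R, 1; m)$, and its class vanishes. I expect no serious obstacle: the only quantitative step is the modulus inequality, where the hypothesis $n \geq m+1$ is used exactly once, to absorb the prescribed pole of order $m+1$ at $\{t = 0\}$ into the $n$-fold vanishing of $t^n f(t)$ along $\{y = 1\}$.
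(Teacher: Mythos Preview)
Your proof is correct and matches the paper's approach exactly: both exhibit the graph hypersurface $C = \{y = 1 - t^n f(t)\}$ as an element of $\TZ^1(R,2;m)$ with $\partial C = \Gamma_{(1-t^nf(t))}$, deducing the modulus inequality $(m+1)\nu^*\{t=0\} \le \nu^*\{y=1\}$ from the relation $1-y = t^nf(t)$. One small point: the lemma is stated (and used in \propref{prop:tau_R}) for an arbitrary $k$-algebra $R$, so your appeal to regularity of $R$ to identify $\ov{C}^N$ with $\Spec R[t]$ is not available in general; the paper instead just pulls back the relation $t^nf(t)=1-y$ to the normalization and reads off $n\,\nu^*\{t=0\}\le\nu^*\{y=1\}$ directly, which needs no hypothesis on $R$.
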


\begin{proof}
Consider the closed subscheme $\Gamma \subset X \times \A^1 \times \square$ given by $y_1 = 1- t^nf(t)$. Let $\nu: \ov{\Gamma} ^N \to \ov{\Gamma} \hookrightarrow  X \times \mathbb{A}^1 \times \mathbb{P}^1$ be the normalization of the Zariski closure $\ov{\Gamma}$ in $X \times \mathbb{A}^1 \times \mathbb{P}^1$. Since $f(t) t^n = 1 - y_1$ on $\ov{\Gamma}$, we see that $n \nu^* \{ t = 0 \} \leq \nu^* \{ y_1 = 1 \}$ on $\ov{\Gamma} ^N$. Since $n \geq m+1$, this shows that $\Gamma$ satisfies the modulus $m$ condition. Since $\partial_1 ^{\infty} (\Gamma) = 0$ and $\partial _1 ^0 (\Gamma) = \Gamma_{(1-t^nf(t))}$ (which is of codimension $1$), the cycle $\Gamma$ is an admissible cycle in $\TZ^1 (R, 2; m)$ such that $\partial \Gamma = \Gamma_{(1-t^nf(t))}$. This shows that $\Gamma_{(1-t^nf(t))} \equiv 0$ in $\TH^1 (R, 1; m)$.
\end{proof}

\begin{prop}\label{prop:tau_R}
Let $R$ be a $k$-algebra. Then the map $\tau_R: (1 + tR[t]) \to \TZ^1 (R, 1; m)$ that sends a polynomial $1- tf(t)$ to $\Gamma_{(1-tf(t))}$, defines a group homomorphism $\tau_R: \mathbb{W}_m (R) \to \TH^1 (R, 1; m)$. 
\end{prop}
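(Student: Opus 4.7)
My plan has three parts. First, I will note that every class in $\W_m(R) = (1+tR[[t]])^\times/(1+t^{m+1}R[[t]])^\times$ admits a polynomial representative in $1+tR[t]$ (since the kernel $(1+t^{m+1}R[[t]])^\times$ contains every power series of the form $1 + t^{m+1}(\cdots)$). Second, I will show that the assignment $p \mapsto [\Gamma_p] \in \TH^1(R, 1; m)$ depends only on the class of $p$ modulo this equivalence. Third, the group homomorphism property then follows from \lemref{lem:Witt homo}: for $p, q \in 1+tR[t]$, the Witt sum $[p] + [q]$ equals $[pq]$ in $\W_m(R)$ (Witt addition is polynomial multiplication), and $\Gamma_{pq} = \Gamma_p + \Gamma_q$ as cycles by that lemma.

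For the descent step, suppose $p, q \in 1+tR[t]$ represent the same class in $\W_m(R)$. The equivalence $pq^{-1} \in 1 + t^{m+1}R[[t]]$, combined with the fact that $q$ is a unit in $R[[t]]$ (as $q(0) = 1$), is equivalent to $p - q = t^{m+1}\, r(t)$ for some $r(t) \in R[t]$. I will then construct an explicit admissible homotopy by taking
\[
H \; := \; \bigl[V\bigl(s_0\, p(t) - s_1\, q(t)\bigr)\bigr] \; \subset \; \Spec(R) \times \A^1_t \times \ov{\square},
\]
where $(s_0 : s_1)$ are homogeneous coordinates on $\ov{\square} = \P^1$ with $y_1 = s_1/s_0$. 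Restriction to $s_1 = 0$ (the face $y_1 = 0$) yields $V(p(t))$ and to $s_0 = 0$ (the face $y_1 = \infty$) yields $V(q(t))$, whence $\partial_1^0 H = \Gamma_p$, $\partial_1^\infty H = \Gamma_q$, and therefore $\partial H = \partial_1^0 H - \partial_1^\infty H = \Gamma_p - \Gamma_q$ once the admissibility $H \in \TZ^1(R, 2; m)$ is established.

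The main obstacle is verifying the modulus condition for $H$ on the Zariski closure $\ov{H} \subset \Spec(R) \times \A^1 \times \P^1$. I will treat horizontal components of $\ov H$ separately: each arises from an irreducible factor of $s_0 p - s_1 q$ lying in $R[t]$, necessarily of the form $d(t)$ with $d \mid p$ and $d \mid q$. Since $p(0) = q(0) = 1$, such $d$ satisfies $d(0) \in R^\times$, so $V(d(t)) \times \P^1$ avoids $\{t = 0\}$ and the modulus is trivial there. On any non-horizontal component, the defining equation $s_0 p = s_1 q$ gives $y_1 = p(t)/q(t)$ generically, hence
\[
y_1 - 1 \;=\; \frac{p(t) - q(t)}{q(t)} \;=\; \frac{t^{m+1}\, r(t)}{q(t)}.
\]
At any prime divisor $P$ of $\ov{H}^N$ above $\{t = 0\}$, we have ${\rm ord}_P(q) = 0$ (as $q(0) = 1$) and ${\rm ord}_P(r) \geq 0$ (as $r \in R[t]$ is regular), so ${\rm ord}_P(y_1 - 1) \geq (m+1)\, {\rm ord}_P(t)$---exactly the modulus-$m$ admissibility. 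This completes the descent, hence the proof.
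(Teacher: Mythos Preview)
Your argument is correct and gives a genuine alternative to the paper's proof. The paper proceeds via the Witt coordinate decomposition: writing $p(t) = \prod_{n\ge 1}(1-a_nt^n)$, it \emph{defines} $\tau_R(p) := \Gamma_{(p^{\le m})}$ with $p^{\le m} = \prod_{n=1}^m(1-a_nt^n)$; well-definedness on $\W_m(R)$ is then tautological, and the homomorphism property is obtained by combining \lemref{lem:Witt homo} with \lemref{lem:mWitt well-def} (the latter kills the factors $(1-a_nt^n)$ with $n>m$). You instead set $\tau_R(p) := [\Gamma_p]$ without truncation, so the homomorphism property is immediate from \lemref{lem:Witt homo} alone, while the work shifts to the descent step, which you handle with the explicit homotopy $H = [V(s_0 p - s_1 q)]$. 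Your construction in fact subsumes \lemref{lem:mWitt well-def}: specializing to $q=1$, $p = 1-t^nf(t)$ recovers exactly the cycle $\Gamma$ used there. So your route is slightly more self-contained, at the cost of a marginally more involved modulus check.

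Two small remarks on presentation. First, your ``horizontal component'' analysis is phrased via irreducible factors $d(t)\in R[t]$ of $s_0p - s_1q$, which implicitly assumes factorization; for a general $k$-algebra $R$ it is cleaner to say: any irreducible component $C$ of $\ov H$ whose projection to $\Spec(R)\times\A^1$ has positive fiber dimension satisfies $C = \bar C \times \P^1$ with $\bar C \subset V(p)\cap V(q)$, and since $p(0)=1$ this forces $\bar C \cap \{t=0\}=\emptyset$. Second, components of $V(s_0p-s_1q)$ lying entirely in $\{y_1=1\}$ (which occur precisely over the locus where $r=0$) are discarded upon restricting to $\square$; this does not affect your boundary computation since on that locus $p=q$ anyway, but it is worth noting explicitly that $\ov H$ here denotes the closure of $H|_\square$.
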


\begin{proof}
Every element $p(t) \in (1 + t R[[t]])^{\times}$ has a unique expression $p(t) = \prod_{ n \geq 1} (1 - a_n t^n)$ for $a_n \in R$. For any such $p(t)$, set $p^{\le m}(t) = \prod_{n=1} ^m  (1-a_n t^n)$. We define $\tau_R(p(t)) =  \Gamma_{(p^{\le m}(t))}$. It follows from Lemmas~\ref{lem:Witt homo} and ~\ref{lem:mWitt well-def} that this map descends to a group homomorphism from $\W_m(R)$.
\end{proof}

Recall from \cite[Appendix A]{R} that for each $r \geq 1$, we have the Frobenius $F_r: \mathbb{W}_{rm+r-1} (R) \to \mathbb{W}_m (R)$ and the Verschiebung $V_r: \mathbb{W}_m (R) \to \mathbb{W}_{rm+r-1}(R)$. They are given by $F_r (1- at^n) = (1- a^{\frac{r}{s}} t^{\frac{n}{s}})^s$, where $s= \gcd (r, n)$ and $V_r (1-at^n) = 1-at^{rn}$. On the other hand, as seen in Section \ref{sec:Witt over k}, we have operations $F_r$ and $V_r$ on $\{ \TH^1 (R, 1;m) \}_{m \in \mathbb{N}}$.

\begin{lem}\label{lem:tau FV}
Let $R$ be a $k$-algebra. Then the maps $\tau_R: \mathbb{W}_m (R) \to \TH^1 (R, 1; m)$ of \propref{prop:tau_R} commute with the $F_r$ and $V_r$ operators on both sides.
\end{lem}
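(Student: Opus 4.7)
The plan is to verify the two commutation identities on a set of additive generators of $\W_m(R)$. Every element of the additive group $\W_m(R)$ has a unique expression as a finite product $\prod_{n=1}^m(1-a_n t^n)$ under the presentation via $(1+tR[[t]])^\times/(1+t^{m+1}R[[t]])^\times$. Since $\tau_R$ is additive by Proposition \ref{prop:tau_R}, and since $F_r$ and $V_r$ are additive on the Witt side and $F_r = \phi_{r*}$ and $V_r = \phi_r^*$ are additive on the cycle side (proper push-forward and flat pull-back under $\phi_r : (x,t) \mapsto (x,t^r)$), it suffices to establish both commutations on generators of the form $1 - at^n$ with $a \in R$ and $n \geq 1$.

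For the Verschiebung identity, the Witt formula $V_r(1-at^n) = 1 - at^{rn}$ gives $\tau_R V_r(1-at^n) = \Gamma_{(1-at^{rn})}$. On the cycle side, $\phi_r$ is a finite flat morphism, so the flat pull-back of the principal effective divisor $V(1-at^n)$ is the principal effective divisor $V(\phi_r^*(1-at^n)) = V(1-at^{rn})$; thus $V_r \tau_R(1-at^n) = \phi_r^* \Gamma_{(1-at^n)} = \Gamma_{(1-at^{rn})}$, matching the Witt side.

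For the Frobenius identity, I would set $s = \gcd(r,n)$ and write $r = sr'$, $n = sn'$ with $\gcd(r',n')=1$. On the Witt side, $F_r(1-at^n) = (1-a^{r'} t^{n'})^s$, so Lemma \ref{lem:Witt homo} gives $\tau_R F_r(1-at^n) = s \cdot \Gamma_{(1-a^{r'} t^{n'})}$. On the cycle side, I would compute $\phi_{r*}\Gamma_{(1-at^n)}$ using the norm formula $\phi_{r*}V(f) = V(\Nm_{R[t]/R[u]}(f))$ valid for the finite flat extension $R[u] \hookrightarrow R[t]$ of degree $r$ (with $u = t^r$ and basis $1, t, \ldots, t^{r-1}$). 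Writing $n = qr + n_0$, the multiplication-by-$(1-at^n) = (1 - au^q t^{n_0})$ map is $I$ minus a $(au^q)$-scaled $r \times r$ matrix corresponding to the permutation $i \mapsto i+n_0 \pmod r$, with a factor of $u$ inserted whenever the shift wraps past $r$. Since $\gcd(n_0,r) = \gcd(n,r) = s$, this permutation has exactly $s$ orbits each of length $r'$; after reordering the basis into orbit-blocks, the matrix becomes block-diagonal with $s$ identical blocks, each contributing determinant $1 - a^{r'} u^{n'}$ (the $u^{n'}$ exponent arising from $qr' + n_0' = n'$ where $n_0' = n_0/s$). Hence $\Nm_{R[t]/R[u]}(1-at^n) = (1-a^{r'} u^{n'})^s$ and $\phi_{r*}\Gamma_{(1-at^n)} = s \cdot \Gamma_{(1-a^{r'} t^{n'})}$, as required.

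The main obstacle is the explicit norm/determinant computation in the Frobenius case, in particular verifying that the accumulated wrap-around factor around each orbit is precisely $u^{n'}$ and not some other power. Once this orbit analysis is carried out, the calculation is uniform in $R$ (it uses only the freeness of $R[t]$ over $R[u]$) and handles cases where $a$ is a zero-divisor or fails to be a unit without additional effort.
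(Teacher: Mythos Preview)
Your proof is correct, and the Verschiebung argument matches the paper's exactly. For the Frobenius, however, you take a genuinely different route. The paper does not compute $\phi_{r*}\Gamma_{(1-at^n)}$ directly; instead it writes $\Gamma_{a,n} = V_n(\Gamma_{a,1})$ and then manipulates $F_r V_n$ using the Witt-complex identities on the cycle side already established in \thmref{thm:Witt over k} (namely $F_sV_s = s$, $F_{r'}V_{n'} = V_{n'}F_{r'}$ when $(r',n')=1$, and multiplicativity of $F,V$) to reduce to the single easy case $F_r(\Gamma_{a,1}) = \Gamma_{a^r,1}$. Your approach instead computes the norm $\Nm_{R[t]/R[u]}(1-at^n)$ explicitly via the orbit decomposition of the shift permutation, and your count of the wrap-around exponent is correct: each of the $s$ orbits has length $r'$, exactly $n_0' = n_0/s$ wraps, and total $u$-exponent $qr' + n_0' = n'$, so each block contributes $\det(I-B) = 1 - a^{r'}u^{n'}$.

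The trade-off: the paper's argument is shorter but leans on the prior structural results from \cite{KP2}, while yours is self-contained and would work even before those identities are available. Your method also makes transparent why the formula is uniform in $R$ (including zero-divisors), since the norm identity is purely a determinant over the free module $R[t]$ of rank $r$ over $R[u]$.
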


\begin{proof}
That $\tau_R V_r = V_r \tau _R$, is easy: we have $V_r (\tau_R (1-at^n)) = V_r (\Gamma_{a,n}) = \Gamma_{a, rn}$, while $\tau_R(V_r (1-a t^n)) = \Gamma_{(1 - a t^{rn})} = \Gamma_{a, rn}$.

That $\tau_R F_r = F_r \tau_R$, is slightly more involved. Recall that $F_r (1- at^n) = (1- a^{\frac{r}{s}} t^{\frac{n}{s}})^s$, where $s= \gcd (r, n)$. Write $n= n' s$ and $r=r' s$, where $1= (r', n')$. Hence, we have $\tau_R F_r (1-at^n) = s \Gamma _{a^{\frac{r}{s}}, \frac{n}{s}} = s V_{\frac{n}{s}} (\Gamma_{a^{\frac{r}{s}}, 1}) = s V_{n'} (\Gamma _{a ^{r'}, 1}) =: \clubsuit$, while $F_r \tau_R (1-at^n) = F_r \Gamma_{a, n} = F_r V_n (\Gamma_{a, 1})=:\heartsuit$. 

First observe that when $n=1$, we have $s=1$, $r=r', n=n'=1$, and we have $\heartsuit=F_r (\Gamma_{a,1}) = \Gamma_{a^r, 1}= \clubsuit$, so that $\tau_R F_r (1-at)= F_r \tau_R (1-at)$, indeed.

For a general $n \geq 1$, we have $F_r V_n = F_{r'}F_{s} V_{s} V_{n'} = F_{r'} \circ (s \cdot {\rm Id}) \circ  V_{n'} = s F_{r'} V_{n'} =^{\dagger} s V_{n'} F_{r'}$, where $\dagger$ holds because $(r', n') = 1$. Since $F_{r'} (\Gamma_{a,1}) = \Gamma_{a ^{r'}, 1}$ (by the first case), we have $\heartsuit=F_r V_n (\Gamma_{a,1}) = s V_{n'} F_{r'} (\Gamma_{a,1}) = s V_{n'} (\Gamma_{a^{r'}, 1})=\clubsuit$. This shows $\tau_R F_r = F_r \tau_R$.
\end{proof}

\begin{remk}
In the proof of Lemma \ref{lem:tau FV}, we saw that for $s= (r,n)$, 
\begin{equation}\label{eqn:FV identity}F_r (\Gamma_{a,n} ) = s \Gamma_{ a^{\frac{r}{s}}, \frac{n}{s}}, \ \ 
V_r (\Gamma_{a,n}) = \Gamma_{a, rn}.
\end{equation}
\end{remk}

\begin{prop}\label{prop:smooth tau R}
For $X = \Spec (R) \in \SmAff_k^{\ess}$, the maps $\tau_R : \mathbb{W} _m (R) \to \TH^1 (R, 1; m)$ form a morphism of pro-rings that commutes with $F_r$ and $V_r$ for $r \geq 1$. 
\end{prop}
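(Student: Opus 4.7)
Beyond the additivity from Proposition~\ref{prop:tau_R} and the $F_r, V_r$ commutation from Lemma~\ref{lem:tau FV}, the plan is to verify two things: compatibility with the restriction maps, and multiplicativity. The restriction compatibility is almost immediate: given $p(t) = \prod_{n=1}^{m+1}(1-a_n t^n) \in \W_{m+1}(R)$, Lemma~\ref{lem:Witt homo} identifies $\tau_R(p(t))$ with $\sum_{n=1}^{m+1}\Gamma_{a_n,n}$, and Lemma~\ref{lem:mWitt well-def} shows that after further restriction to $\TH^1(R,1;m)$ the term $\Gamma_{a_{m+1},m+1}$ becomes a boundary, matching $\tau_R\circ\mathfrak{R}$ on the Witt side. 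The remaining, substantive task is multiplicativity, which I will reduce to a single cycle-level computation plus a formal Witt-complex manipulation.

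The first step is the Teichm\"uller case: $\Gamma_{a,1}\wedge_X\Gamma_{b,1} = \Gamma_{ab,1}$ in $\TH^1(R,1;m)$ for all $a,b\in R$. Tracing through Definitions~\ref{defn:mu}, \ref{defn:mu-times}, and~\ref{defn:internal product}, the external product $\Gamma_{a,1}\times\Gamma_{b,1}$ is cut out in $X\times\A^1_t\times X\times\A^1_{t'}$ by the ideal $(1-p_1^*(a)t,\,1-p_2^*(b)t')$. The map $\mu$ sending $(x_1,t,x_2,t')\mapsto(x_1,x_2,tt')$ is finite on this subscheme by Proposition~\ref{prop:mu finite}, and inspection of the equations shows it is a closed embedding onto the hypersurface $V(1-p_1^*(a)p_2^*(b)s)\subset X\times X\times\A^1_s$, so $\mu_*(\Gamma_{a,1}\times\Gamma_{b,1})$ equals this hypersurface with multiplicity one. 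Pulling back along $\Delta_X$ then gives exactly $V(1-ab\,s) = \Gamma_{ab,1}$.

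The second step extends multiplicativity from Teichm\"uller lifts to all of $\W_m(R)$ using the restricted Witt-complex structure over $k$ already established in Theorem~\ref{thm:Witt over k}. Since every element of $\W_m(R)$ is a sum $\sum_n V_n([a_n])$, by additivity of $\tau_R$ and bilinearity of $\wedge_X$ it suffices to check $\tau_R(V_n([a])\cdot V_l([b])) = V_n(\Gamma_{a,1})\wedge V_l(\Gamma_{b,1})$. Setting $d=\gcd(n,l)$, applying the projection formula $x\cdot V_r(y) = V_r(F_r(x)\cdot y)$ twice, together with the commutation $F_l V_n = d\,V_{n/d}F_{l/d}$ that follows from axioms (i)--(ii), one rewrites
\[
V_n(\Gamma_{a,1})\wedge V_l(\Gamma_{b,1}) = d\,V_{nl/d}\bigl(\Gamma_{a^{l/d},1}\wedge\Gamma_{b^{n/d},1}\bigr),
\]
and the Teichm\"uller case reduces the inner wedge to $\Gamma_{a^{l/d}b^{n/d},1}$, producing $d\,\Gamma_{a^{l/d}b^{n/d},\,nl/d}$. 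The same axioms applied in $\W_m(R)$ give $V_n([a])\cdot V_l([b]) = d\,V_{nl/d}([a^{l/d}b^{n/d}])$, whose image under $\tau_R$ is the same cycle class.

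The main obstacle is Step~1, the Teichm\"uller identity. Conceptually it is a straightforward intersection computation, but one must verify scheme-theoretically that $\mu$ restricts to a closed immersion on $\Gamma_{a,1}\times\Gamma_{b,1}$ (so the push-forward carries multiplicity one) and that the diagonal pull-back is transverse, uniformly in $a,b\in R$ including when $a$ or $b$ is a zero-divisor. Once this cycle-level identity is secured, everything else is either a direct invocation of previously established results (Lemmas~\ref{lem:tau FV}, \ref{lem:Witt homo}, \ref{lem:mWitt well-def}) or a formal manipulation inside a restricted Witt-complex, with the moving lemmas of \S\ref{sec:moving} implicitly ensuring that the products $\wedge_X$ and the operators $F_r,V_r$ are well-defined on cycle classes throughout.
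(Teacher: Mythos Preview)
Your proposal is correct and follows essentially the same approach as the paper's proof: both reduce multiplicativity to the Teichm\"uller identity $\Gamma_{a,1}\wedge_X\Gamma_{b,1}=\Gamma_{ab,1}$ via the projection formula and the $F_rV_n$ commutation relations, and both verify the Teichm\"uller case by a direct computation of $\Delta^*\circ\mu_*\circ\times$. The only cosmetic difference is that the paper splits your Step~2 into two sub-steps (first $v=1$ with general $u$, then general $u,v$), while you do it in one pass; your concern about transversality of the diagonal pull-back is easily resolved since the hypersurface $V(1-(a\otimes 1)(1\otimes b)t)$ never contains $\Delta(X)\times\A^1$ and hence lies in $\TZ^1_{\{\Delta(X)\}}$, so the naive scheme-theoretic pull-back computes $\Delta^*$.
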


\begin{proof} 
It is clear from the definition of $\tau_R$ in \propref{prop:tau_R} that it commutes with $\mathfrak{R}$. We saw that $\tau_R$ commutes with $F_r$ and $V_r$ in Lemma \ref{lem:tau FV}. So, we only need to show that $\tau_R$ respects the products. By \cite[Proposition (1.1)]{Bloch crys}, it is enough to prove that for $a, b \in R$ and $u, v \geq 1$,
\begin{equation}\label{eqn:ring homo}
\Gamma_{a, u} \wedge \Gamma_{b, v} = w \Gamma_{ a^{\frac{v}{w}} b^{\frac{u}{w}}, \frac{uv}{w}} \ \ \mbox{ in } \TH^1 (R, 1; m),
\end{equation}
where $w = \gcd (u, v)$ and $\wedge = \wedge_X$ is the product structure on the ring $\TH^1 (R, 1;m)$ as in Theorem \ref{thm:Witt over k}. 

\noindent \textbf{Step 1.} First, consider the case when $u=v=1$, \emph{i.e.}, we prove $\Gamma_{a,1} \wedge \Gamma_{b,1} = \Gamma_{ab,1}.$ Recall that $\wedge$ is defined as the composition $\Delta^* \circ \mu_* \circ \times$ in
\[
X \times \A^1 \times X \times \A^1 
\overset{\mu}{\to} X \times X \times \A^1 
\overset{\Delta}{\leftarrow} X \times \A^1.
\]

Under the identification $X \times X \simeq \Spec (R \otimes_k R)$, we have $\mu_* (\Gamma_{a,1} \times \Gamma_{b,1}) = \Gamma_{(a \otimes 1)(1 \otimes b), 1}$, and $\Delta^* (\Gamma_{(a \otimes 1)(1 \otimes b), 1}) = \Gamma_{ab, 1}$, because $\Delta$ is given by the multiplication $R \otimes_k R \to R$. This proves \eqref{eqn:ring homo} for {Step 1.}

For the following remaining two steps, we use the projection formula: $x \wedge V_s (y) = V_s (F_s (x) \wedge y)$, which we can use by Theorem \ref{thm:Witt over k}.

 \noindent \textbf{Step 2.} Consider the case when $v=1$, but $u \geq 1$ is any integer. We apply the projection formula to $x= \Gamma_{b,1}$ and $y = \Gamma_{a,1}$ with $s=u$. Since $\TH^1 (R, 1;m)$ is a commutative ring, by the projection formula, we get $V_u (\Gamma_{a,1}) \wedge \Gamma_{b,1} = V_u ( \Gamma_{a,1} \wedge F_u (\Gamma_{b,1})).$ Here, the left hand side is $\Gamma_{a, u} \wedge \Gamma_{b,1}$ by eqn:FV identity, while the right hand side is $=^1 V_u (\Gamma_{a,1} \wedge \Gamma_{b^u, 1}) =^2 V_u ( \Gamma_{ab^u, 1}) =^3 \Gamma_{ab^u, u},$ where $=^1$ and $=^3$ hold by \eqref{eqn:FV identity} and $=^2$ holds by {Step 1.} This proves \eqref{eqn:ring homo} for {Step 2.}

\noindent \textbf{Step 3.} Finally, let $u,v \geq 1$ be any integers. Let $w = \gcd (u,v)$. We again apply the projection formula to $x = V_u (\Gamma_{a, 1})$, $y = \Gamma_{b,1}$, $s=v$, so that $V_u (\Gamma_{a,1}) \wedge V_v (\Gamma_{b,1}) = V_v (F_v (V_u (\Gamma_{a,1})) \wedge \Gamma_{b,1}).$ Its left hand side coincides with that of \eqref{eqn:ring homo} by \eqref{eqn:FV identity}. Its right hand side is $= ^1 V_v (F_v ( \Gamma_{a, u}) \wedge \Gamma_{b,1}) =^2 V_v (w \Gamma_{a^{\frac{v}{w}}, \frac{u}{w}} \wedge \Gamma_{b,1})$, where $=^1$ and $ =^2$ hold by \eqref{eqn:FV identity}. But, Step 2 says that $\Gamma_{a^{\frac{v}{w}}, \frac{u}{w}} \wedge \Gamma_{b,1} = \Gamma_{a ^{\frac{v}{w}} b^{\frac{u}{w}}, \frac{u}{w}}$ so that $V_v (w \Gamma_{a^{\frac{v}{w}}, \frac{u}{w}} \wedge \Gamma_{b,1}) = w V_v (\Gamma_{a ^{\frac{v}{w}} b^{\frac{u}{w}}, \frac{u}{w}}) =^\dagger w \Gamma_{a ^{\frac{v}{w}} b^{\frac{u}{w}}, \frac{uv}{w}}$, where $=\dagger$ holds by \eqref{eqn:FV identity}. This last expression is the right hand side of \eqref{eqn:ring homo}. Thus, we obtain the equality \eqref{eqn:ring homo} and this finishes the proof.
\end{proof}

\begin{thm}\label{thm:sm Witt}
For $\Spec (R) \in \SmAff_k^{\ess}$, $\TH(R)$ is a restricted Witt-complex over $R$, and its sub-pro-system $\TH^M (R)$ is a restricted sub-Witt-complex over $R$. 
\end{thm}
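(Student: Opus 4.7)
The strategy is to build on Theorem~\ref{thm:Witt over k} and Proposition~\ref{prop:smooth tau R}. Axioms (i)--(iv) in the definition of a restricted Witt-complex in \S\ref{sec:DRW} concern only the operators $\mathfrak{R}, F_r, V_r, \delta$ on the pro-DGA $\{\TH(R;m)\}_{m\ge 1}$ and are insensitive to the choice of base ring, so they carry over at once from Theorem~\ref{thm:Witt over k}. The new structure map $\lambda = \tau_R\colon \W_m(R) \to \TH^1(R,1;m)$---a pro-ring homomorphism into the degree-zero part of the DGA, commuting with $F_r$ and $V_r$---is furnished by Proposition~\ref{prop:smooth tau R}. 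Therefore the one remaining axiom to verify is (v): for each $a \in R$ and $r \ge 1$,
\[
F_r\,\delta\,\tau_R([a]) \;=\; \tau_R([a]^{r-1}) \wedge \delta\,\tau_R([a]) \quad \text{in } \TH^2(R,2;m).
\]

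The plan is to verify this identity by a direct cycle-level computation, working with ideals in $R[t,y]$. Writing $\tau_R([a]) = \Gamma_{a,1} = V(1-at) \subset X \times \A^1_t$, the map $\delta_0$ of \S\ref{sec:differential} is $(x,t) \mapsto (x,t,t^{-1})$, and one checks that $\delta\Gamma_{a,1}$ is the cycle associated to the ideal $(1-at,\,ty-1)$ on $X \times \A^1_t \times \square_y$; since in the quotient ring $y = a \cdot ty = a$, this ideal simplifies to $(1-at,\, y-a)$, so $\delta\Gamma_{a,1} = V(1-at,\, y-a)$. Pushing this forward under $\phi_r\colon (x,t,y) \mapsto (x,t^r,y)$, which is generically of degree one on this cycle, yields $F_r\,\delta\Gamma_{a,1} = V(1-a^r t,\, y-a)$. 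On the other hand, unwinding the internal product $\wedge$ via Definition~\ref{defn:internal product}---the external product, the finite push-forward $\mu_*$ of Definition~\ref{defn:mu-times} (well-defined by Proposition~\ref{prop:mu finite}), and the diagonal pull-back $\Delta_X^*$---and using $\tau_R([a]^{r-1}) = \Gamma_{a^{r-1},1} = V(1-a^{r-1}t)$, one computes $\tau_R([a]^{r-1}) \wedge \delta\tau_R([a])$: the $\mu$-factor imposes $t = t_1 t_2$ with $a^{r-1} t_1 = 1$ and $at_2 = 1$, giving $a^r t = 1$, so one again obtains the cycle $V(1-a^r t,\, y-a)$. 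The main obstacle will be carrying out this comparison at the level of ideals rather than merely set-theoretically, so that the argument applies uniformly for all $a \in R$---particularly when $a$ is a zero-divisor (the trivial cases $a=0$ and $a=1$ are immediate since $\Gamma_{0,1}=0$ and $\delta\tau_R([1]) = \delta(1_{\TH^1(R,1;m)}) = 0$).

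Finally, to conclude that $\TH^M(R)$ is a restricted sub-Witt-complex over $R$, observe that $\TH^M(R;m) = \bigoplus_{n\ge 1} \TH^n(R,n;m)$ is closed under every structural operation: the image of $\tau_R$ lies in $\TH^1(R,1;m) \subset \TH^M(R;m)$; by Corollary~\ref{cor:prod mid summ} the product sends $\TH^{n_1}(R,n_1;m) \otimes \TH^{n_2}(R,n_2;m)$ into $\TH^{n_1+n_2-1}(R,n_1+n_2-1;m)$, preserving the Milnor condition $q = n$; by Proposition~\ref{prop:delta admissible} the differential $\delta$ raises both $q$ and $n$ by one; and $\mathfrak{R}, F_r, V_r$ leave both indices fixed. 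Hence the entire restricted Witt-complex structure over $R$ on $\TH(R)$ restricts to $\TH^M(R)$, yielding the second assertion.
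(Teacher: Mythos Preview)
Your proposal is correct and follows essentially the same route as the paper's proof: reduce to axiom (v) via Theorem~\ref{thm:Witt over k} and Proposition~\ref{prop:smooth tau R}, then verify $F_r\delta\tau_R([a]) = \tau_R([a]^{r-1})\wedge\delta\tau_R([a])$ by computing both sides as the cycle $V(1-a^rt,\,y-a)$. Your treatment of $a=1$ via the DGA identity $\delta(1)=0$ is a clean alternative to the paper's direct computation that $\Gamma_{(1-t)}$ restricts to the empty cycle on $(\G_m)^{\times}$, and your explicit closure argument for $\TH^M(R)$ fills in a point the paper leaves implicit.
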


\begin{proof}
As saw in the proof of Theorem \ref{thm:Witt over k}, we already have the restriction $\mathfrak{R}$, the differential $\delta$, the Frobenius $F_r$ and the Verschiebung $V_r$ defined by the same formulas. Furthermore, by Proposition \ref{prop:smooth tau R}, now we have ring homomorphisms $\lambda = \tau_R: \mathbb{W}_m (R) \to \TH^1 (R, 1;m)$ for $m \geq 1$. The properties (i), (ii), (iii), (iv) in Section \ref{sec:DRW} are independent of the choice of the ring, so that what we checked in Theorem \ref{thm:Witt over k} still work. To prove the theorem, the only thing left to be checked is the property (v) that for all $a \in R$ and $r \geq 1$,
\begin{equation}\label{eqn:(v)}
F_r \delta \tau_R ([a]) = \tau_R ([a] ^{r-1}) \delta \tau_R ([a]),
\end{equation}
where we have shrunk the product notation $\wedge$ and taken the ring homomorphism $\lambda$ to be $\tau_R$. To check this, we identify $\mathbb{W}_m (R)$ with $(1+ t R[[t]])^{\times} / (1 + t^{m+1} R[[t]])^{\times}$. 

If $a=0$, then $\tau_R ([a]) = \Gamma _{ (1- 0 \cdot t)}= \emptyset$. So, both sides of \eqref{eqn:(v)} are zero.

If $a=1$, then $\tau_R ([a]) = \tau_R (1- t) = \Gamma_{(1-t)}$. But, in our definition of $\delta$, to compute it, we should first restrict the cycle $\Gamma_{(1-t)} \subset \Spec (R) \times \mathbb{G}_m$ onto $\Spec (R) \times (\mathbb{G}_m \setminus \{ 1 \})$, which becomes empty. Hence, $\delta \tau_R ([a]) = \delta \Gamma_{(1-t)} = 0$, so again both sides of \eqref{eqn:(v)} are zero.

Let $a \in R \setminus\{ 0, 1 \}$. Then $\tau_R ([a]) = \Gamma_{ (1-at)} \subset \Spec (R) \times \mathbb{A}^1$, and $\delta \tau_R ([a])$ is given by the ideal $(1-at, 1-ty_1)$ in $R[ t, y_1]$. Since $t$ is not a zero-divisor in $R[t, y_1]$, we have $(1-at, 1-ty_1) = (1-at, y_1 -a)$ as ideals. Hence, $F_r \delta \tau_R ([a])$ is given by the ideal $(1- a^r t, y_1 -a)$ in $R[t, y_1]$. On the other hand, 
\begin{equation}\label{eqn:explicit}
\begin{array}{lll}
&& \tau_R ([a]^{r-1}) \delta \tau_R ([a]) = \Gamma _{(1- a^{r-1} t)} \wedge \Spec  \left( \frac{R[t, y_1]}{(1-at, y_1 -a)}\right)\\
&=& \Delta^* \left( \frac{ (R \otimes_k R)[ t, y_1] }{  (1- (a^{r-1} \otimes 1) (1 \otimes a) , y_1 -(1 \otimes a))} \right) =^{\dagger} \Spec \left( \frac{R[t, y_1]}{ ( 1-a^r t, y_1 -a)} \right),
\end{array}
\end{equation}
where $\dagger$ holds because $\Delta$ is induced by the product homomorphism $R \otimes_k R \to R$. Hence, both hand sides of \eqref{eqn:(v)} coincide. This completes the proof.
\end{proof}

\begin{thm}\label{cor:sm Witt}
For $\Spec (R) \in \SmAff_k^{\ess}$ and $n, m \geq 1$, there is a unique homomorphism $\tau_{n,m} ^R: \mathbb{W}_m \Omega_R ^{n-1} \to \TH^n (R, n;m)$ that defines a morphism of restricted Witt-complexes over $R$, $\{ \tau^R _{\bullet, m} : \mathbb{W}_m \Omega_R ^{\bullet-1} \to \TH^{\bullet} (R, \bullet; m)\}_m$,
such that $\tau^R_{1,m} = \tau_R$.
\end{thm}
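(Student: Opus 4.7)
The plan is to invoke the universal property: by \cite[Proposition~1.15]{R} (recalled at the end of \S\ref{sec:DRW}), the pro-system $\{\W_m\Omega_R^{\bullet}\}_{m \ge 1}$ is the initial object in the category of restricted Witt-complexes over $R$. So it suffices to exhibit a restricted Witt-complex over $R$ whose degree-zero component in weight $m$ is $\TH^1(R,1;m)$, with structure map $\tau_R$ from Proposition~\ref{prop:smooth tau R}, and then take $\tau^R_{\bullet,m}$ to be the unique induced map.

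First I would fix the grading: set $E^{i}_m := \TH^{i+1}(R,i+1;m)$ for $i\ge 0$, so that the differential $\delta$ raises the cohomological degree $i$ by one, matching the de Rham--Witt convention, and so that $E^{0}_m = \TH^1(R,1;m)$. With this indexing, the sub-pro-system $\TH^M(R) = \bigoplus_m \bigoplus_i E^i_m$ of Theorem~\ref{thm:sm Witt} is a restricted Witt-complex over $R$: the operators $\mathfrak{R}$, $\delta$, $F_r$, $V_r$ all preserve the condition ``codimension equals simplicial degree,'' and the structure map $\lambda = \tau_R \colon \W_m(R) \to E^0_m$ is the one built in Proposition~\ref{prop:smooth tau R}. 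All the axioms (i)--(v) of a restricted Witt-complex over $R$ are already verified inside Theorem~\ref{thm:sm Witt}.

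Then I would apply the universal property to obtain, for each $m$, a unique map of differential graded $\Z$-algebras $\tau^R_{\bullet,m}\colon \W_m\Omega^{\bullet}_R \to E^{\bullet}_m$ that is compatible with $\mathfrak{R}$, $F_r$, $V_r$ and restricts in degree $0$ to $\tau_R$. Setting $\tau^R_{n,m} := \tau^R_{n-1,m}\colon \W_m\Omega^{n-1}_R \to \TH^n(R,n;m)$ gives the desired homomorphism; the case $n=1$ recovers $\tau_R$ by construction. The compatibility of the collection $\{\tau^R_{\bullet,m}\}_m$ with the full restricted Witt-complex structure (i.e.\ that the $\tau^R_{\bullet,m}$ assemble to a morphism of pro-systems) follows from the uniqueness clause in the universal property applied to $\mathfrak{R}$.

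The only mildly delicate point is to confirm that one really lands in the Milnor-type sub-Witt-complex $\TH^M(R)$ rather than the full $\TH(R)$: the product $\wedge_X$ of Theorem~\ref{thm:algebra} sends $\TH^{a}(R,a;m)\otimes \TH^{b}(R,b;m) \to \TH^{a+b}(R,a+b;m)$, the differential $\delta$ raises both $q$ and $n$ by one (see Proposition~\ref{prop:delta admissible}), and $F_r,V_r,\mathfrak{R}$ preserve both indices; thus the diagonal piece $\TH^M(R)$ is closed under all the Witt-complex operations, which is what makes the shift $E^i_m = \TH^{i+1}(R,i+1;m)$ legitimate. Once this bookkeeping is in place, the theorem is immediate from the initiality of $\W_m\Omega^{\bullet}_R$.
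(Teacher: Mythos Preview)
Your proposal is correct and follows the same approach as the paper: invoke Theorem~\ref{thm:sm Witt} to know that $\TH^M(R)$ is a restricted Witt-complex over $R$ with $\lambda=\tau_R$, then apply the initiality of $\{\W_m\Omega_R^{\bullet}\}_m$ from \cite[Proposition~1.15]{R}. The paper's own proof is a two-line citation of exactly these two ingredients; your additional care with the grading shift $E^i_m=\TH^{i+1}(R,i+1;m)$ and the closure of the Milnor piece under $\wedge_X$, $\delta$, $F_r$, $V_r$, $\mathfrak{R}$ simply makes explicit what the paper leaves implicit.
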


\begin{proof}
The theorem follows from Theorem \ref{thm:sm Witt} and \cite[Proposition~1.15]{R}. We have $\tau_{1,m} ^R = \tau_R$ because the map $\lambda$ of \S \ref{sec:DRW} is given by $\tau_R$ in Theorem \ref{thm:sm Witt}.
\end{proof}

We have shown in Propositions~\ref{prop:tau_R} and ~\ref{prop:smooth tau R} that $\tau_R$ is a group homomorphism for any $k$-algebra $R$ and is a ring homomorphism if $R$ is smooth. Here, we provide the following information on $\tau_R$.

\begin{thm}\label{thm:integral Witt}
Let $R$ be an integral domain which is an essentially of finite type $k$-algebra. Then $\tau_R$ is injective. It is an isomorphism if $R$ is a UFD. 
\end{thm}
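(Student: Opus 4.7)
The plan is to prove surjectivity in the UFD case by constructing an explicit cycle-theoretic homotopy, and then to deduce injectivity for a general integral domain by pulling back along the inclusion into the fraction field.

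\textbf{Surjectivity when $R$ is a UFD:} any class in $\TH^1(R, 1; m)$ is represented by a cycle $Z \in \TZ^1(R, 1; m)$, which by the modulus condition avoids $\{t = 0\}$. Since $R[t]$ is a UFD, I may write $Z = \sum_i n_i V(f_i)$ with $f_i \in R[t]$ irreducible and $f_i(0) \in R^{\times}$. Rescaling by units puts $f_i \in 1 + tR[t]$, and the natural candidate is $w = \prod_i f_i^{n_i} \in \W_m(R)$, formed using the group structure. By the additivity of \propref{prop:tau_R}, $\tau_R(w) = \sum_i n_i \Gamma_{(f_i^{\le m})}$, where $f_i^{\le m}$ is the canonical polynomial representative of $f_i$ in $\W_m(R)$; hence it suffices to exhibit a rational equivalence $V(f) = \Gamma_{(f^{\le m})}$ in $\TH^1(R, 1; m)$ for every $f \in 1+tR[t]$.

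I would construct the needed homotopy as the cycle $\Gamma^* \subset X \times \A^1 \times \ov{\square}$ defined in homogeneous coordinates $(s_0 : s_1)$ on $\ov{\square} = \P^1$ by the equation $s_0 f - s_1 f^{\le m} = 0$, i.e., the closure of the graph of $y_1 = f/f^{\le m}$. Direct face computations give $\partial_1^0 \Gamma^* = V(f)$ and $\partial_1^\infty \Gamma^* = V(f^{\le m})$, so $\partial \Gamma^* = V(f^{\le m}) - V(f)$. Since $f, f^{\le m} \in 1 + tR[t]$ represent the same class in $\W_m(R)$, their polynomial difference satisfies $f - f^{\le m} = t^{m+1} H$ for some $H \in R[t]$; along $\Gamma^*$ we compute $y_1 - 1 = t^{m+1} H / f^{\le m}$, so on the normalization $\ov{\Gamma^*}{}^N$ the zero divisor of $y_1 - 1$ dominates $(m+1) \nu^*\{t = 0\} + \nu^* V(H)$, which verifies the modulus inequality $\nu^*((m+1)\{t = 0\}) \le \nu^*\{y_1 = 1\}$. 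Thus $\Gamma^* \in \TZ^1(R, 2; m)$ and the desired equivalence is established.

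For injectivity when $R$ is a general integral domain, let $K = \operatorname{Frac}(R)$. The inclusion $\iota: \Spec(K) \hookrightarrow \Spec(R)$ is flat, so by \cite[Lemma~4.7]{KL} there is a flat pullback $\iota^*: \TH^1(R, 1; m) \to \TH^1(K, 1; m)$. The cycle-theoretic formula $\tau(f) = \Gamma_{(f^{\le m})}$ commutes with flat pullback, yielding a commutative square
\begin{equation*}
\xymatrix{
\W_m(R) \ar[r]^{\tau_R} \ar@{^{(}->}[d]_{\iota_*} & \TH^1(R, 1; m) \ar[d]^{\iota^*}\\
\W_m(K) \ar[r]^{\tau_K} & \TH^1(K, 1; m),
}
\end{equation*}
in which $\iota_*$ is injective (from $R \hookrightarrow K$, by the power series description of $\W_m$), and $\tau_K$ is an isomorphism: its surjectivity is the UFD case applied to the field $K$, while its injectivity is R\"ulling's \cite[Corollary~3.7]{R}. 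A diagram chase then yields injectivity of $\tau_R$. The main obstacle lies in the modulus verification on $\Gamma^*$, where the crucial input is the polynomial congruence $f \equiv f^{\le m} \pmod{t^{m+1}}$ forced by the Witt-vector representation, which supplies the required multiplicity $m+1$ along $\{t = 0\}$ on the normalization.
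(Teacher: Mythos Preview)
Your injectivity argument via the fraction field is exactly the paper's argument.

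For surjectivity in the UFD case, your proof is correct but takes a detour the paper avoids. Both you and the paper start identically: an irreducible $V \in \TZ^1(R,1;m)$ has $(I(V),t)=R[t]$, and since $R[t]$ is a UFD the height-one prime $I(V)$ is principal, so $V=\Gamma_{(1-tf)}$ for some polynomial $f$. At this point the paper is done: the \emph{statement} of \propref{prop:tau_R} says the cycle-level assignment $1-tf \mapsto \Gamma_{(1-tf)}$ itself descends to $\tau_R:\W_m(R)\to\TH^1(R,1;m)$, so $[V]=\tau_R([1-tf])$ immediately. You instead read off from the \emph{proof} of \propref{prop:tau_R} the formula $\tau_R([f])=[\Gamma_{(f^{\le m})}]$ and then must bridge $[\Gamma_{(f)}]$ and $[\Gamma_{(f^{\le m})}]$ by hand. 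Your homotopy $\Gamma^*$ does this correctly (the key point, as you note, is $f-f^{\le m}\in t^{m+1}R[t]$, and near $\{t=0\}$ the denominator $f^{\le m}$ is a unit since $f^{\le m}(0)=1$, so the modulus inequality goes through on the normalization). But this homotopy is really just a mild generalization of the one already used in \lemref{lem:mWitt well-def}, applied to a pair of polynomials congruent modulo $t^{m+1}$ rather than to a single polynomial of the form $1-t^{m+1}g$; you are essentially re-deriving the well-definedness of $\tau_R$. Two minor remarks: the defining equation $s_0 f - s_1 f^{\le m}=0$ can pick up degenerate components $V(\gcd(f,f^{\le m}))\times\square$, which are harmless (they die in the nondegenerate complex and contribute equally to both faces), and your boundary sign is off by the convention $\partial = -(\partial_1^\infty - \partial_1^0)$, which of course does not affect the conclusion.
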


\begin{proof}
Let $K := {\rm Frac}(R)$ and $\iota: R \hookrightarrow K$ be the inclusion. This induces a commutative diagram
$$
\xymatrix{ \mathbb{W}_m (R) \ar[r] ^{\mathbb{W}_m (\iota)}  \ar[d] _{\tau_R} & \mathbb{W}_m (K) \ar[d] ^{\simeq} _{\tau_K} \\
\TH^1 (R, 1; m) \ar[r] & \TH^1 (K, 1; m),}
$$
where the bottom map is the flat pull-back via $\Spec (K) \to \Spec (R)$, and $\tau_K$ is the isomorphism by \cite[Corollary~3.7]{R}. Since $\mathbb{W}_m (\iota)$ is clearly injective (see \cite[Properties A.1.(i)]{R}), it follows that $\tau_R$ is injective, too.

Suppose now $R$ is a UFD and $V$ is an irreducible admissible cycle in $\TZ^1 (R, 1;m)$. Then we must have $(I(V), t) = R[t]$, where $I(V)$ is the ideal of $V$. Since $R[t]$ is a UFD, using basic commutative algebra, one checks that $I(V) = (1-t f(t))$ for some non-zero polynomial $f(t) \in R[t]$. In particular, the map $\tau_R$ is surjective and hence an isomorphism. 
\end{proof}

\subsection{\'Etale descent} 
Finally:

\begin{proof}[Proof of Theorem \ref{thm:descent}]
By \corref{cor:mod-1}, we can assume $|\un{m}| \ge 2$. We set $Y = X/G$, $\lambda = |G|$ and consider the diagram 
\begin{equation}\label{eqn:quot-0}
\xymatrix@C1pc{
G \times X \ar[r]^>>>{\gamma} \ar[d]_{p} & X \ar[d]^{f} \\
X \ar[r]_{f} & Y,}
\end{equation}
where $\gamma$ is the action map and $p$ is the projection. Since $G$ acts freely on $X$, this square is Cartesian and $f$ is {\'e}tale of degree $\lambda$. By \cite[Proposition~1.7]{Fulton}, we have $f^* \circ f_* = p_* \circ \gamma^*: \CH^q(X[r]|D_{\un{m}},n) \to 
\CH^q(X[r]|D_{\un{m}},n)$.

Since $f$ is $G$-equivariant with respect to the trivial $G$-action on $Y$, we see that $f^*$ induces a map $f^*: \CH^q(Y[r]|D_{\un{m}},n) \to \CH^q(X[r]|D_{\un{m}},n)^G$. Moreover, it follows from \cite[Theorem~3.12]{KPv} that $f_* \circ f^*$ is multiplication by $\lambda$. 

On the other hand, it follows easily from the action map $\gamma$ that $p_* \circ \gamma^*(\alpha) = {\underset{g \in G}\sum} \ g^*(\alpha)$. In particular, $p_* \circ \gamma^*(\alpha) = \lambda \cdot \alpha$ if $\alpha \in \CH^q(X[r]|D_{\un{m}},n)^G$.

Since $\lambda \in k^{\times}$ and the Teichm{\"u}ller map is multiplicative with $|\un{m}| \ge 2$, we see that $\lambda \in (\W_{(|\un{m}|-1)}(k))^{\times}$. We conclude from \thmref{thm:algebra}(3) and \corref{cor:Witt-module} that the composite $\CH^q(Y[r]|D_{\un{m}},n) \xrightarrow{f^*} \CH^q(X[r]|D_{\un{m}},n)^G \xrightarrow{\lambda^{-1}f_*} \CH^q(Y[r]|D_{\un{m}},n)$ yields the desired isomorphism.
\end{proof}


\noindent\emph{Acknowledgments.} The authors thank Wataru Kai for letting them know about his moving lemma for affine schemes. The authors also feel very grateful to the referee whose careful and detailed comments had improved the paper. During this research, JP was partially supported by the National Research Foundation of Korea (NRF) grants No. 2013042157 and No. 2015R1A2A2A01004120, Korea Institute for Advanced Study (KIAS) grant, all funded by the Korean government (MSIP), and TJ Park Junior Faculty Fellowship funded by POSCO TJ Park Foundation.

\end{document}